\newtheorem{thm}{Theorem}[section]
\newtheorem{lemma}[thm]{Lemma}
\newtheorem{prop}[thm]{Proposition}
\newtheorem{defi}[thm]{Definition}
\newtheoremstyle{rem}{10pt}{10pt}{\rmfamily}{}{\bfseries}{.}{.5em}{} 
\theoremstyle{rem}
\newtheorem{rem}[thm]{Remark}
\numberwithin{equation}{section} 
\title{Local well-posedness for the Schr\"{o}dinger-KdV system in $H^{s_1}\times H^{s_2}$}
\author[1]{Yingzhe Ban}
\author[2]{Jie Chen}
\author[3,1]{Ying Zhang}
\affil[1]{\scriptsize \textit{The Graduate School of China Academy of Engineering Physics, Beijing, 100088, P.R. China}}
\affil[2]{\scriptsize \textit{School of Sciences, Jimei University, Xiamen 361021, P.R. China}}
\affil[3]{\scriptsize \textit{Academy of Mathematics and Systems Science, CAS, Beijing 100190, P.R. China}}
\date{}
\begin{document}
	\maketitle
	\begin{abstract}
		In this paper, we study local well-posedness theory of the Cauchy problem for Schr\"{o}dinger-KdV system in Sobolev spaces $H^{s_1}\times H^{s_2}$. We obtain the local well-posedness when $s_1\geq 0$, $\max\{-3/4,s_1-3\}\leq s_2\leq \min\{4s_1,s_1+2\}$. The result is sharp in some sense and improves previous one by Corcho-Linares \cite{corcho2007well}. The endpoint case $(s_1,s_2) = (0,-3/4)$ has been solved in \cite{guo2010well,wang2011cauchy}. We show the necessary and sufficient conditions for related estimates in Bourgain spaces. To solve the borderline cases, we use the $U^p-V^p$ spaces introduced by Koch-Tataru \cite{kochtataru} and function spaces constructed by Guo-Wang \cite{guo2010well}. We also use normal form argument to control the nonresonant interaction.
	\end{abstract}
	\section{Introduction}		
	We study the Cauchy problem for the Schr\"{o}dinger-KdV system
	\begin{equation*}\label{model}\tag{S-KdV}
		\left\{
		\begin{aligned}
			&i\partial_t u+\partial_{xx}u = \alpha uv+\beta |u|^2u,\ \  t,x\in \mathbb{R}\\
			&\partial_t v+\partial_{xxx}v+v\partial_x v = \gamma\partial_x(|u|^2),\\
			&(u,v)|_{t = 0} = (u_0,v_0)\in H^{s_1}\times H^{s_2}
		\end{aligned}
		\right.
	\end{equation*}
	where $\alpha, \beta, \gamma$ are real-valued constants, $u$ is complex-valued and $v$ is real-valued. In fact, for local wellposedness theory, the argument also works for complex-valued $v$ and $\alpha,\beta,\gamma$.
	This system arises in fluid mechanics as well as plasma physics. See \cite{corcho2007well} and reference therein.
	
	We recall some early results on this system. Tsutsumi \cite{tsutsumi1993well} obtained global well-posedness for initial data $(u_0,v_0)\in H^{s+1/2}\times H^s$ with $s\in \mathbb{N}$. For the case of $\beta = 0$, Guo-Miao \cite{guo1999well} obtained global well-posedness in $H^s\times H^s$, $s\in\mathbb{N}$. Bekiranov-Ogawa-Ponce \cite{bekiranov1997weak} obtained local well-posedness in $H^s\times H^{s-1/2}$, $s\geq 0$. Corcho-Linares \cite{corcho2007well} obtained local well-posedness in $H^{s_1}\times H^{s_2}$, 
	\begin{align*}
		&s_2>-3/4,~s_1-1\leq s_2\leq 2s_1-1/2,~s_1\in [0,1/2]~~\mathrm{or}\\
		&s_1>1/2,~s_1-1\leq s_2\leq s_1+1/2.
	\end{align*}
	Guo-Wang \cite{guo2010well} obtained local well-posedness in $L^2\times H^{-3/4}$. The same result was also showed in \cite{wang2011cauchy}. 
	There are stronger well-posedness results for KdV and  $1d$ cubic Schr\"{o}dinger equation. See \cite{killip2019kdv,harrop2020sharp}. These results rely heavily on the complete integrability of equations. However, Schr\"{o}digner--KdV system is not completely integrable. See for example \cite{bekiranov1997weak}. To approach lower regularity well-posedness than $L^2\times H^{-3/4}$ for \eqref{model} seems to be difficult.
	
	The main results in this paper are as follows:
	\begin{thm}\label{illpore}
		Let $\alpha,\beta,\gamma\neq 0$. The solution mapping of \eqref{model} from $\mathcal{S}\times \mathcal{S}$ to $C([0,T];H^{s_1}\times H^{s_2})$ can not be $C^3$ with $H^{s_1}\times H^{s_2}$ topology except 
		$$s_1\geq 0,~\max\{-3/4,s_1-3\}\leq s_2\leq \min\{4s_1,s_1+2\}.$$
		
		If $\beta = 0$, $\alpha\gamma\neq 0$, The solution mapping of \eqref{model} from $\mathcal{S}\times \mathcal{S}$ to $C([0,T];H^{s_1}\times H^{s_2})$ can not be $C^2$ with $H^{s_1}\times H^{s_2}$ topology except 
		$$s_1\geq -3/16,~\max\{-3/4,s_1-3\}\leq s_2\leq \min\{4s_1,s_1+2\}.$$
	\end{thm}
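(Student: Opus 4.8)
The plan is to argue by contradiction via the standard Picard/Taylor expansion of the flow: if the data-to-solution map $\Phi:(u_0,v_0)\mapsto(u,v)$ is $C^k$ near the origin, then each coefficient in the Taylor expansion of $\lambda\mapsto\Phi(\lambda u_0,\lambda v_0)$ is a bounded $k$-linear map into $C([0,T];H^{s_1}\times H^{s_2})$. Since $\Phi(0,0)=0$ and the nonlinearity vanishes to second order, the first nontrivial coefficients are the Duhamel bilinear terms built from the quadratic part, together with (when $\beta\neq0$) a genuinely trilinear term built from $\beta|u|^2u$. Writing $U(t)=e^{it\partial_{xx}}$ and $V(t)=e^{-t\partial_{xxx}}$ for the two linear groups, the operators to be controlled are
\begin{align*}
	B_1(u_0,v_0)&=\int_0^t U(t-s)\big[(U(s)u_0)(V(s)v_0)\big]\,ds,\\
	B_2(u_0,u_0)&=\int_0^t V(t-s)\,\partial_x\big[|U(s)u_0|^2\big]\,ds,\\
	T(u_0)&=\int_0^t U(t-s)\big[|U(s)u_0|^2\,U(s)u_0\big]\,ds,
\end{align*}
and $C^2$ (resp.\ $C^3$) regularity forces $\|B_1(u_0,v_0)\|_{H^{s_1}}\lesssim\|u_0\|_{H^{s_1}}\|v_0\|_{H^{s_2}}$ and $\|B_2(u_0,u_0)\|_{H^{s_2}}\lesssim\|u_0\|_{H^{s_1}}^2$ (resp.\ additionally $\|T(u_0)\|_{H^{s_1}}\lesssim\|u_0\|_{H^{s_1}}^3$). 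The whole theorem reduces to producing frequency-localized data violating these bounds outside the stated region.

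For the bilinear obstructions I would pass to the spatial Fourier side, where the relevant objects are the resonance functions
\[
	h_1(\xi_1,\xi_2)=(\xi_1+\xi_2)^2-\xi_1^2-\xi_2^3,\qquad h_2(\xi_1,\xi_2)=\xi^3-\xi_1^2+\xi_2^2,\quad \xi=\xi_1-\xi_2,
\]
which measure the deviation of each interaction from the output characteristic and hence the size of the oscillatory gain coming from the $s$-integration. I would then run through a short menu of interaction geometries---high $v$ against low $u$, high--high $u\bar u$ producing low (or high) output, and comparable high--high $u$--$v$---choosing $\widehat{u_0}$ and $\widehat{v_0}$ to be characteristic functions of thin frequency slabs placed so that $h_1$ or $h_2$ is as small as the geometry allows. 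Computing the two sides and letting the frequency parameter tend to infinity, each geometry yields one boundary line: the KdV-type $u\bar u$ interaction forces $s_2\geq-3/4$; the remaining $u\bar u$ and $u$--$v$ configurations force $s_2\leq 4s_1$, $s_2\leq s_1+2$ and $s_2\geq s_1-3$; and combining $s_2\leq 4s_1$ with $s_2\geq-3/4$ yields $s_1\geq-3/16$. This already proves the $\beta=0$ statement, since then no cubic term is present and $B_1,B_2$ are all that appear at the lowest nontrivial order.

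When $\beta\neq0$ the strengthened constraint $s_1\geq0$ comes from $T$, which is exactly the cubic-NLS Duhamel trilinear form with resonance function $h_3=\xi^2-\xi_1^2+\xi_2^2-\xi_3^2$ on $\xi=\xi_1-\xi_2+\xi_3$. The resonant set $\{h_3=0\}$ is a codimension-one family, and concentrating $\widehat{u_0}$ on a pair of well-separated high-frequency slabs selects a resonant interaction producing secular (in $s$) growth; balancing the three input weights $\langle\xi_j\rangle^{s_1}$ against the unit output weight shows $\|T(u_0)\|_{H^{s_1}}/\|u_0\|_{H^{s_1}}^3\to\infty$ whenever $s_1<0$. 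This obstruction is genuinely cubic and therefore invisible at second order, which is precisely why the general statement is phrased for $C^3$ while the $\beta=0$ statement needs only $C^2$ and reaches only $s_1\geq-3/16$.

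The conceptual content is routine; the real work and the only subtleties lie in two places. First, one must justify that $C^k$ smoothness transfers to boundedness of the individual Taylor coefficients: the $k$-th coefficient has to be identified with the claimed multilinear Duhamel term and cleanly separated from lower-order and remainder contributions. Second, and this is where I expect the main difficulty, one must engineer for each of the five boundary lines a single frequency configuration in which the target estimate is sharp while all competing interactions---in particular the feedback $B_2\to B_1$ that couples the two equations---stay subdominant. Getting the slab widths, separations, and resonance alignment simultaneously right so that both sides are evaluated cleanly, and verifying that the blow-up rate matches the claimed exponent on each line, is where essentially all the effort goes.
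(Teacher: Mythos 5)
Your overall strategy --- reducing $C^k$ smoothness of the flow map to boundedness of the multilinear Taylor coefficients, then disproving those bounds with thin frequency-slab data chosen according to the resonance functions --- is exactly the paper's strategy (Lemma \ref{ssstos}), and your geometries for the lines $s_2\leq 4s_1$, $s_2\leq s_1+2$, $s_2\geq s_1-3$ and for the cubic constraint $s_1\geq 0$ match the paper's counterexamples. But there is a genuine gap at the line $s_2=-3/4$. First, your list of second-order operators omits the KdV self-interaction $\int_0^t V(t-s)\,\partial_x\big[(V(s)v_0)^2\big]\,ds$ coming from $v\partial_x v$, and you attribute $s_2\geq -3/4$ to ``the KdV-type $u\bar u$ interaction.'' No $u\bar u\to v$ configuration can force a \emph{lower} bound on $s_2$: decreasing $s_2$ only weakens the $H^{s_2}$ output norm, and indeed the paper shows that \eqref{sstokdv} holds for all $s_2\leq\min\{4s_1,s_1+2\}$ with no lower restriction on $s_2$. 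The $-3/4$ line can only come from the $v$--$v$ term.

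Second, and more seriously, even the $v$--$v$ second iterate evaluated at a fixed time cannot produce $-3/4$: the paper proves (Lemma \ref{ssstos}, estimate \eqref{kdvvv}) that $\left\|\mathscr{B}((K(t)v_0)^2)(1)\right\|_{H^{s_2}}\lesssim\|v_0\|_{H^{s_2}}^2$ holds if and only if $s_2\geq -1$. So your plan of ``computing the two sides and letting the frequency parameter tend to infinity'' for a fixed configuration yields only $s_2\geq -1$, not $-3/4$. Reaching $-3/4$ requires either a Tzvetkov-type argument in which the evaluation time shrinks with the frequency parameter (exploiting the $\sup_t$ in the $C([0,T];H^{s_2})$ norm, where the oscillatory gain that rescues the fixed-time estimate down to $s_2=-1$ is unavailable), or, as the paper does, observing that $u_0=0$ forces $u\equiv 0$, so the system flow restricted to $\{0\}\times H^{s_2}$ \emph{is} the KdV flow, and importing the known ill-posedness of KdV below $H^{-3/4}$ together with the NLS threshold (Theorems 1.1 and 1.4 of \cite{kenig2001ill}). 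This gap propagates: in your $\beta=0$ statement the constraint $s_1\geq -3/16$ is obtained by intersecting $s_2\leq 4s_1$ with $s_2\geq -3/4$, so it remains unproved as long as the $-3/4$ line is; your method as written gives only $s_2\geq -1$ and hence only $s_1\geq -1/4$. The fix is to add the $v$--$v$ operator to your menu and either run its counterexample at $N$-dependent short times or quote the single-equation results after the $u_0=0$ reduction; your correct instinct about separating competing contributions (the $\beta$-trilinear term versus the iterated $\alpha\gamma$ term at third order) is needed only for the $s_1\geq 0$ line and is handled there by a modulation-size comparison.
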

	
	\begin{thm}\label{mainlocalresults}
		\eqref{model} is local well-posed in $H^{s_1}\times H^{s_2}$ for
		$$s_1\geq 0,~ \max\{-3/4,s_1-3\}\leq s_2\leq \min\{4s_1,s_1+2\}.$$
	\end{thm}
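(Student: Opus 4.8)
The plan is to recast the system in Duhamel form and solve it by contraction in a product of dispersive resolution spaces. Writing the integral equations
$$u(t) = e^{it\partial_{xx}}u_0 - i\int_0^t e^{i(t-t')\partial_{xx}}\bigl(\alpha uv + \beta|u|^2u\bigr)(t')\,dt',$$
$$v(t) = e^{-t\partial_{xxx}}v_0 - \int_0^t e^{-(t-t')\partial_{xxx}}\bigl(\tfrac12\partial_x(v^2) - \gamma\partial_x(|u|^2)\bigr)(t')\,dt',$$
I would, in the interior of the admissible region, take $u$ in the Bourgain space $X^{s_1,b}$ adapted to the Schr\"odinger propagator and $v$ in $Y^{s_2,b}$ adapted to the Airy propagator, with $b$ slightly larger than $1/2$ and a time cutoff $\psi_T$ localizing to $[0,T]$. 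Everything then reduces to four multilinear estimates: the Schr\"odinger self-interaction $\||u|^2u\|_{X^{s_1,b-1}}\lesssim\|u\|_{X^{s_1,b}}^3$, the Schr\"odinger coupling $\|uv\|_{X^{s_1,b-1}}\lesssim\|u\|_{X^{s_1,b}}\|v\|_{Y^{s_2,b}}$, the classical bilinear KdV estimate $\|\partial_x(v^2)\|_{Y^{s_2,b-1}}\lesssim\|v\|_{Y^{s_2,b}}^2$ (sharp for $s_2\ge-3/4$), and the KdV coupling $\|\partial_x(|u|^2)\|_{Y^{s_2,b-1}}\lesssim\|u\|_{X^{s_1,b}}^2$. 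Each is proved by Littlewood--Paley decomposition, a case analysis on which frequency dominates, and exploitation of the resonance functions, which for the coupling terms measure the separation between the parabolic dispersion relation $\tau=-\xi^2$ of the Schr\"odinger flow and the cubic relation $\tau=\xi^3$ of the Airy flow.

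The inequalities defining the region are precisely the thresholds at which these estimates hold. The upper bound $s_2\le 4s_1$ is forced by $\gamma\partial_x(|u|^2)$ in the regime where two high, nearly equal Schr\"odinger frequencies combine into a low KdV output; the bounds $s_2\le s_1+2$ and $s_2\ge s_1-3$ arise from the mismatched-frequency interactions in $\alpha uv$; and $s_2\ge-3/4$ is the sharp threshold of the bilinear KdV estimate itself. That these are the only obstructions is dual to the necessary conditions of Theorem \ref{illpore}; here I need only sufficiency, i.e. that the four estimates close whenever the inequalities hold (non-strictly). In the subcritical interior one gains a positive power of $T$ from the $b$-gap, so the map is a contraction on a ball for $T$ small, and uniqueness and Lipschitz dependence on the data follow by applying the same multilinear bounds to differences.

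The hard part is the borderline cases on the boundary of the region, where the $X^{s,b}$ and $Y^{s,b}$ estimates fail by a logarithmic factor and no power of $T$ is available. On the critical lines I would replace the Bourgain spaces by the $U^p$--$V^p$ spaces of Koch--Tataru together with the refined function spaces of Guo--Wang, which capture the critical scaling without the logarithmic loss. The residual divergence comes entirely from the nonresonant part of the interaction, where the resonance function is large; to absorb it I would perform a normal form transformation, integrating by parts in time to trade one power of the resonance function for a gain of derivatives, thereby moving the nonresonant contribution into a bilinear boundary term plus a better-behaved higher-order remainder, while the genuinely resonant piece is estimated directly. The main technical burden is to verify that after this transformation every resulting term---the boundary terms, the transformed nonlinearities, and the resonant contributions---is controlled in the chosen critical spaces uniformly on $[0,T]$, so that the contraction and the continuous-dependence argument still close on the boundary lines.
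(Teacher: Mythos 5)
There is a genuine gap, and it lies at the heart of your plan: the claim that the four Bourgain-space estimates ``close whenever the inequalities hold (non-strictly)'' is false, and the paper itself disproves it. Proposition \ref{conclusion} shows that \eqref{mainxsbre}--\eqref{mainkkk} hold (for some admissible exponents) \emph{if and only if} $s_1\geq 0$ and $\max\{-3/4,\,s_1/2-11/8,\,s_1-5/2\}<s_2<\min\{4s_1,\,s_1+1\}$, which is strictly smaller than the well-posedness region of the theorem. Concretely, Lemma \ref{sstkdv} shows that the coupling estimate $\|\partial_x(u\bar v)\|_{Y^{s_2,b_2-1}}\lesssim\|u\|_{X^{s_1,b_1}}\|v\|_{X^{s_1,b_1}}$ fails for every $s_2\geq s_1+1$, and Lemma \ref{sktosb} shows that $\|uv\|_{X^{s_1,b_1-1}}\lesssim\|u\|_{X^{s_1,b_1}}\|v\|_{Y^{s_2,b_2}}$ fails for every $s_2\leq s_1-5/2$. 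Hence the two-dimensional open subregions $s_1+1<s_2\leq\min\{4s_1,s_1+2\}$ (region $A$) and $\max\{-3/4,s_1-3\}\leq s_2\leq\max\{s_1/2-11/8,s_1-5/2\}$ (covering region $C$) are entirely out of reach of your contraction scheme; the failure there is by a positive power of the high frequency, not by the logarithmic factor you allow for, so switching to $U^p$--$V^p$ or Guo--Wang refinements alone cannot repair it. Your appeal to duality with Theorem \ref{illpore} is also not decisive: boundedness of the second Picard iterate on free solutions (Lemma \ref{ssstos}) is strictly weaker than the space-time multilinear estimates, and regions $A$ and $C$ are exactly where the two notions diverge. (A smaller slip: the threshold $s_2\leq s_1+2$ comes from $\partial_x(|u|^2)$, i.e.\ \eqref{sstokdv}, not from $\alpha uv$.)

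You do invoke the normal form, and with the right mechanism (integrating by parts in time against the resonance function), but you deploy it only as a boundary-line fix, whereas in the paper it carries the entire regions $A$ and $C$, and in two structurally different ways. For region $A$ one changes unknown to $w=v-B(u(t))+B(u_0)$, where $B=-\partial_xT_M(\cdot,\cdot)$ is the bilinear multiplier with symbol supported on nonresonant pairs $N_1\nsim N^2$ and denominator $\xi^3-\xi_1^2+\xi_2^2$, and runs a contraction for $(u,w)$; the resonant block $N_1\sim N_2\sim N^2$ is estimated directly and is where $s_2\leq 4s_1$ enters (Lemma \ref{resounuuv}). For region $C$ the argument is not even a fixed point at regularity $(s_1,s_2)$: one first solves at a lower pair such as $(1,-3/4)$ or $(s_2+2,s_2)$, where the estimates do close, and then uses the normal form on the $uv$ term --- gaining three derivatives from the resonance $\lambda\xi^2-\lambda\xi_1^2-\xi_2^3$ (Lemmas \ref{reguimprolow}--\ref{reguimpr}) --- to prove persistence of the extra regularity $s_1\leq s_2+3$ of $u$ and continuity of the flow map. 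Your proposal would need both mechanisms made explicit; the parts of your plan that are correct correspond to the genuine borderline cases the paper treats, namely $s_2=\min\{4s_1,s_1+1\}$ via $U^2$--$V^2$ spaces and $s_2=-3/4$, $0\leq s_1<5/4$ via the Guo--Wang spaces after rescaling with $\|v_0\|_{H^{-3/4}}\ll 1$.
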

%
%
%
%

	\begin{rem}
		We show Theorem \ref{illpore} by disproving some multi-linear estimates. One may obtain stronger ill-posedness result.
		For example, the solution mapping may be not uniformly continuous. Here, we only concern the region of $(s_1,s_2)$ that one can not construct solution by contraction mapping argument. Thus it is reasonable to say that Theorem \ref{mainlocalresults} is sharp.
	\end{rem}		
	\begin{rem}
		If $\beta = 0$, \eqref{model} is also local well-posed in $H^{s_1}\times H^{s_2}$ for
		$$s_1\geq -3/16,~ \max\{-3/4,s_1-3\}\leq s_2\leq \min\{4s_1,s_1+2\}.$$
		Since the case $(s_1,s_2) = (-3/16,-3/4)$ needs some extra efforts, we would study this problem in a forthcoming paper.
	\end{rem}
	
	The paper is organized as follows. In Section \ref{multiesinsobo}, we show the necessary and sufficient conditions for the boundedness of second Picard iteration in Sobolev spaces $H^{s_1}\times H^{s_2}$. Combining the ill-posedness result for single equation, we obtain Theorem \ref{illpore}. In Section \ref{multiinbo}, we establish some multi-linear estimates in Bourgain spaces which implies local well-posedness of \eqref{model} in $s_1\geq 0$, $\max\{-3/4,s_1/2-11/8,s_1-5/2\}<s_2<\min\{4s_1,s_1+1\}$ (the region $B$ in Figure \ref{figure}). In Section \ref{borderlinecase}, we first recall the definition and some basic properties of $U^p-V^p$ introduced by Koch-Tataru \cite{kochtataru}.  In Subsection \ref{cases2=4s1}, we consider the case $s_2 = \min\{4s_1,s_1+1\}$. In Subsection \ref{borderlinecaselow}, we consider the case $s_2 = -3/4$, $0\leq s_1<5/4$ by using the function spaces constructed by Guo-Wang \cite{guo2010well}. We use normal form argument to improve the estimates for nonresonant interaction in Section \ref{normalform}. In Subsection \ref{upperreg}, we consider the case 
	$4/3<s_1+1<s_2\leq\max\{4s_1, s_1+2\}$ (the region $A$ in Figure \ref{figure}). In Subsection \ref{lowerreg}, we consider the case 
	$s_2\geq -3/4$, $s_2+2<s_1\leq s_2+3$ ( containing the region $C$ in Figure \ref{figure}). Combining all these cases, we obtain Theorem \ref{mainlocalresults}.
	
		{\centering
		\begin{tikzpicture}[scale=1.6]
		\draw[->,thick](-0.5,0)--(4.5,0)node[below]{$s_1$};
		\draw[->,thick](0,-1)--(0,4)node[left]{$s_2$};
		\node[below left]at(0,0){$O$};
		\node[left]at(0,1){\tiny{$1$}};
		\node[below]at(1,0){\tiny{$1$}};
	
		\draw[gray, thin, loosely dotted,fill=yellow!10](2,4)--(0.67,2.67)--(0.33,1.33)--(3,4)--(2,4);
		\draw[gray, thin,  loosely dotted,fill=cyan!10](3,4)--(0.33,1.33)--(0,0)--(0,-0.75)--(1.25,-0.75)--(2.25,-0.25)--(4.5,2)--(4.5,4)--(3,4);
		\draw[gray, thin, loosely dotted,fill=pink!10](4.5,2)--(2.25,-0.25)--(1.25,-0.75)--(2.25,-0.75)--(2.25,-0.75)--(4.5,1.5)--(4.5,2);
		\draw[step=0.5cm, gray, ultra thin,loosely dotted] (-0.5,-1) grid (4.5,4); 

		\draw[thick](-0.2,0)--(4,0);
		\draw[gray, thin, densely dashed](0,-0.75)--(2.25,-0.75); 	\node[left] at(0,-0.75){\tiny{$-3/4$}}; 
		\draw[gray, thin, densely dashed](0,1.33)--(0.33,1.33); \node[left] at(0,1.34){\tiny{$4/3$}};
		\draw[gray, thin, densely dashed](0,2.66)--(0.66,2.66);  \node[left] at(0,2.66){\tiny{$8/3$}};
		
		\draw[gray, thin, densely dashed](0,0)--(1,4); \node at(1,4.1){\tiny{$4s_1$}}; 
		\draw[gray, thin, densely dashed](1.25,-0.75)--(2.25,-0.25)--(4.5,2);
		\node[right] at(4.5,2){\tiny{$s_1-5/2$}};
		\draw[gray,thin, densely dashed](0,0)--(0.33,1.33)--(3,4);\node[right] at(3,4.1){\tiny{$s_1+1$}};
		\draw[blue!60,thick](2,4)--(0.66,2.66)--(0,0)--(0,-0.75)--(2.25,-0.75)--(4.5,1.5); 
		\node[right] at(2,4.1){\tiny{$s_1+2$}};
		\node[right] at(4.5,1.5){\tiny{$s_1-3$}};
		
		\filldraw [gray] (1,0) circle (0.5pt);\node[below] at(1,0){\tiny{$1$}};
		\filldraw [gray] (2.25,-0.25) circle (0.5pt);\node[below] at(2.25,-0.25){\tiny{$(9/4,-1/4)$}};
		\filldraw [gray] (2.25,-0.75) circle (0.5pt);\node[below] at(2.25,-0.75){\tiny{$(9/4,-3/4)$}};
		\filldraw [gray] (1.25,-0.75) circle (0.5pt);\node[below] at(1.25,-0.75){\tiny{$(5/4,-3/4)$}};

		\node[below] at(2.3,1.8){$B$};
		\node[right] at(1.2,2.8){$A$};
		\node[below right] at(3,0.55){$C$};
	\end{tikzpicture}
	
	Figure 1\label{figure}

	}

	\textbf{Notations.}  For $a,b\in\mathbb{R}^+$, $a\lesssim b$ means that there exists $C>0$ such that $a\leq Cb$ and $a\sim b$ means $a\lesssim b\lesssim a$. $C$ may depends on $s_1,s_2$ or some fixed parameters. We use $\langle \xi\rangle$ to denote $(1+\xi^2)^{1/2}$. For $r\in [1,\infty]$, we denote the conjugate number of $r$ by $r'$.		
	We use $\chi_{\Omega}$ to denote the indicator of set $\Omega$.
	
	Let $\varphi$ be a even, smooth function and $\varphi|_{[-1,1]} = 1$, $\varphi|_{[-5/4,5/4]^c} = 0$. Define $\varphi_N=\varphi(\cdot/N)$, $\psi_N = \varphi_N-\varphi_{N/2}$. We always use $N,L$ to denote a dyadic number larger that $1$. In this article, we use inhomogeneous decomposition. Thus we define Littlewood-Paley projections $P_N$ by
	\begin{align*}
		P_1 f = \mathscr{F}^{-1}\varphi\mathscr{F}f,\quad P_Nf = \mathscr{F}^{-1}\psi_N\mathscr{F}f,~\forall~N\in 2^{\mathbb{N}}
	\end{align*}
	where we use $\mathscr{F}$ to denote the Fourier transform
	$$\mathscr{F}f(\xi) = \hat{f}(\xi) = \frac{1}{(2\pi)^{1/2}}\int_{\mathbb{R}}f(x)e^{-ix\xi}~d\xi.$$
	We use $J$ to denote the Fourier multiplier operator $\mathscr{F}^{-1}\langle \xi\rangle\mathscr{F}$. $\|u\|_{H^s}$ norm is defined by $\|J^su\|_{L^2}$.
	
	We use $P_{>N}$ to denote $\sum_{M>N}P_M $ and $P_{\leq N} = I-P_{>N}$. Sometimes we use $P_{\ll N}, P_{\sim N}, P_{\gg N}$ to denote $P_{\leq N/C}$, $P_{CN}-P_{N/C}$, $P_{>CN}$ where $C$ is a sufficiently large but fixed dyadic number.
	
	In many situations, we will do the Fourier transform for time variable. Thus we use $\mathscr{F}_x$, $\mathscr{F}_t$, $\mathscr{F}_{t,x}$ and $\mathscr{F}^{-1}_\xi$, $\mathscr{F}^{-1}_\tau$, $\mathscr{F}^{-1}_{\tau,\xi}$ to distinguish which variable that we perform Fourier transform on. With a little abuse of notation, we use $\hat{f}$ to denote $\mathscr{F}_{t,x}f$ if $f$ is a function on $\mathbb{R}_{t,x}$, and also use $\hat{f}$ to denote $\mathscr{F}_{x}f$ if $f$ is just a function on $\mathbb{R}_{x}$.
	
	We define $L_t^qL_x^r$, $L_x^rL_t^q$ by
	$\|u\|_{L_t^q L_x^r}:=\|\|u\|_{L_x^r}\|_{L^q_t}$, $\|u\|_{L_x^rL_t^q }:=\|\|u\|_{L^q_t}\|_{L_x^r}$ respectively. Also we define $L_T^qL_x^r$ by $\|u\|_{L_T^qL_x^r} = \|\|u\|_{L_x^r}\|_{L^q((0,T))}$ and similarly for $L_x^r L_T^q$. For a general function space on $\mathbb{R}^2$, we usually use $X_T$ to denote the space $X$ restricted on $[0,T]\times\mathbb{R}_x$.
	
	Let $S(t):=e^{it\partial_{xx}}$, $K(t):=e^{-t\partial_{xxx}}$ and
	\begin{align*}
		\mathscr{A}(f)(t)=\int_0^t S(t-t')f(t')~dt',\quad \mathscr{B}(g)(t)=\int_0^t\partial_x K(t-t')g(t')~dt'.
	\end{align*}
	
	We use the same notation as Corcho-Linares \cite{corcho2007well} to define Bourgain spaces.
	$$\|u\|_{X^{s,b}}:=\|\langle\xi\rangle^s\langle\tau+\xi^2\rangle^b\hat{u}\|_{L^2_{\tau,\xi}},~~\|v\|_{Y^{s,b}}:=\|\langle\xi\rangle^s\langle\tau-\xi^3\rangle^b\hat{v}\|_{L^2_{\tau,\xi}}.$$
	For the definition of $U^p-V^p$ and other notations, see Subsection \ref{defiupvp}. 
	
	In this paper we focus on local well-posedness. Without loss of generality, we assume that $\alpha = \beta = \gamma = 1$.
	
	\section{Multi-linear estimates}\label{multiesinsobo}
	In this section, we consider the boundedness of second Picard iteration. We evaluate the multi-linear terms on time $t = 1$.
	\begin{lemma}\label{ssstos}
		The inequalities
		\begin{align}
			\left\|\mathscr{A}(|S(t)u_0|^2S(t)u_0)(1)\right\|_{H^{s_1}}&\lesssim \|u_0\|_{H^{s_1}}^3;\label{ssstosin}\\
			\left\|\mathscr{A}(S(t)u_0 K(t)v_0)(1)\right\|_{H^{s_1}}&\lesssim \|u_0\|_{H^{s_1}}\|v_0\|_{H^{s_2}};\label{schkdvtosh}\\
			\left\|\mathscr{B}((K(t)v_0)^2)(1)\right\|_{H^{s_2}}&\lesssim \|v_0\|_{H^{s_2}}^2;\label{kdvvv}\\
			\left\|\mathscr{B}(|S(t)u_0|^2)(1)\right\|_{H^{s_2}}&\lesssim \|u_0\|_{H^{s_1}}^2\label{sstokdv}
		\end{align}
		hold if and only if $s_1\geq 0$; $s_2\geq -1$; $s_2\geq \max\{-1,-3+|s_1|\}$; $s_1\geq -1/2,s_2\leq \min\{4s_1,s_1+2\}$ respectively.
	\end{lemma}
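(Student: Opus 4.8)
The plan is to turn each of the four estimates into a weighted $L^2_\xi$ bound for a multilinear convolution, and then to prove sufficiency by a frequency case analysis and necessity by a matching frequency-localized extremizer at each boundary. First I would compute the spatial Fourier transform at $t=1$ of each expression: writing $S(t)u_0$ and $K(t)v_0$ in Fourier and using that a product becomes a convolution, the only $t$-dependence left is in the free phases, and carrying out $\int_0^1(\cdots)\,dt$ produces the universal multiplier
\[
m(\Phi):=\frac{e^{i\Phi}-1}{i\Phi},\qquad |m(\Phi)|\le\min\{1,2/|\Phi|\},\qquad m(\Phi)\to1\ (\Phi\to0),
\]
where $\Phi$ is the resonance function of the interaction, and for the two $\mathscr B$-terms there is in addition a factor $i\xi$ from $\partial_x$. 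The four resonance functions are
\[
\Phi_{1}=2(\xi_{2}-\xi_{1})(\xi_{2}-\xi_{3}),\quad
\Phi_{2}=\xi_{2}\bigl(2\xi_{1}+\xi_{2}+\xi_{2}^{2}\bigr),\quad
\Phi_{3}=3\,\xi\,\xi_{1}\xi_{2},\quad
\Phi_{4}=\xi\bigl(\xi^{2}+\xi_{1}+\xi_{2}\bigr),
\]
with convolution constraints $\xi=\xi_1-\xi_2+\xi_3$ for \eqref{ssstosin}, $\xi=\xi_1+\xi_2$ for \eqref{schkdvtosh} and \eqref{kdvvv}, and $\xi=\xi_1-\xi_2$ for \eqref{sstokdv}. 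After this reduction \eqref{ssstosin} reads $\bigl\|\langle\xi\rangle^{s_1}\int m(\Phi_1)\,\widehat{u_0}(\xi_1)\overline{\widehat{u_0}(\xi_2)}\,\widehat{u_0}(\xi_3)\bigr\|_{L^2_\xi}\lesssim\|u_0\|_{H^{s_1}}^3$, and similarly for the rest, so the whole lemma becomes a statement about these four convolutions. Only the free-evolution Duhamel term enters here, so, in contrast with the later sections, no $U^p$-$V^p$ or normal form machinery is needed.

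For the sufficiency directions I would use $|m(\Phi)|\le\min\{1,2/|\Phi|\}$ to convert time-nonresonance into a frequency-smoothing factor $\langle\Phi\rangle^{-\theta}$ with $\theta\in[0,1]$ at my disposal, then Littlewood-Paley decompose every frequency and split into the interaction regimes (comparable, high-low, high-high). In each regime the output weight $\langle\xi\rangle^{s}$ (together with $|\xi|$ for the KdV output) is balanced against the input weights using the smoothing gain and either Cauchy--Schwarz with the measure of the frequency-constraint set or a bilinear $L^2$/Strichartz estimate. For \eqref{ssstosin} with $s_1\ge0$ this is exactly the trilinear $L^6_{t,x}$ Strichartz estimate underlying $L^2$ well-posedness of the $1$d cubic Schr\"{o}dinger equation; for the other three the claimed thresholds are precisely the exponents at which $\langle\Phi\rangle^{-1}$ in the worst interaction exactly cancels the derivative, so one expects no room to spare.

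For the necessity directions I would plug in frequency-localized data, finite sums of indicators $\widehat{u_0}=\chi_I$, $\widehat{v_0}=\chi_J$, and evaluate both sides. Two mechanisms occur. In resonant configurations the intervals are placed so that $\Phi=O(1)$ on the bulk of the interaction, hence $m\approx1$ and there is no smoothing; this tests the raw derivative balance. Taking all three frequencies comparable in \eqref{ssstosin} forces $s_1\ge0$, and placing the two Schr\"{o}dinger frequencies near $-\xi^2/2$ (so of size comparable to the square of the output KdV frequency) in \eqref{sstokdv} forces the upper bound $s_2\le4s_1$. In extreme nonresonant configurations $|\Phi|$ is as large as possible, so $m\approx(i\Phi)^{-1}$ and the exact smoothing is visible: the high-high-to-low interactions in \eqref{kdvvv} and \eqref{sstokdv} produce the lower bounds on $s_2$ and on $s_1$ (including $s_1\ge-1/2$), a high-low interaction in \eqref{sstokdv} produces $s_2\le s_1+2$, and the high-KdV interaction in \eqref{schkdvtosh} produces the lower bound on $s_2$. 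In this second regime I would isolate the non-oscillatory part $-(i\Phi)^{-1}$ of $m$ and treat the oscillatory part $e^{i\Phi}(i\Phi)^{-1}$ as lower order, so as to obtain a genuine lower bound rather than only an upper one.

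The main obstacle is the necessity optimization. For each boundary one must choose the frequency centers and, crucially, the interval widths so that (a) $\Phi$ stays in the intended regime ($|\Phi|\lesssim1$, or $|\Phi|$ maximal) uniformly over the interaction set, (b) the oscillatory part of $m$ really does not cancel the main term, which has to be verified by a stationary-phase/level-set estimate rather than assumed, and (c) the ratio of the two sides saturates exactly the claimed exponent. Since $\Phi_2,\Phi_3,\Phi_4$ are cubic or quadratic, their zero sets are curved and the resonant sets are thin curved strips whose measure must be computed carefully; this computation is what pins down the precise constants $-1$, $-1/2$, $4s_1$ and $s_1+2$. On the sufficiency side the only real difficulty is the endpoint (equality) cases, where the smoothing exactly balances the derivative and the dyadic sum must be performed without loss, for instance by exploiting the off-diagonal decay of $\langle\Phi\rangle^{-1}$ to apply Cauchy--Schwarz in the dyadic parameters.
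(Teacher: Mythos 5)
Your proposal follows essentially the same route as the paper: both reduce each estimate at $t=1$ to a weighted $L^2_\xi$ convolution bound with the multiplier $(e^{i\Phi}-1)/(i\Phi)$ and the identical resonance functions, prove sufficiency by splitting according to the size of $\Phi$ and applying Cauchy--Schwarz together with a change of variables $y=\Phi$ (your ``level-set'' computation, which is exactly the paper's Jacobian estimates), and prove necessity with indicator-function data in precisely the configurations you list --- comparable frequencies for $s_1\ge 0$, high-high-to-low nonresonant interactions for the $-1$ threshold in \eqref{kdvvv} and for $s_1\ge -1/2$ in \eqref{sstokdv}, a high-low interaction for $s_2\le s_1+2$, Schr\"odinger frequencies near $-\xi^2/2$ on the curved zero set of $\Phi_4$ for $s_2\le 4s_1$, and thin curved strips along the zero set of $\Phi_2$ for the $-1$ bound in \eqref{schkdvtosh}. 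The only cosmetic deviations are that the paper dispatches the common region $s_1\ge 0$, $s_2\ge -3/4$ by citing the known $L^2\times H^{-3/4}$ well-posedness instead of trilinear Strichartz, and reaches the full sufficiency ranges by interpolating from endpoint exponents (e.g.\ $(s_1,s_2)=(-1/2,-2)$ and $(s,s+2)$ for \eqref{sstokdv}) rather than by direct endpoint dyadic summation, neither of which changes the substance of your plan.
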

	\begin{proof}[\textbf{Proof}]
		By the local well-posedness theory of \eqref{model} in $L^2\times H^{-3/4}$, inequalities \eqref{ssstosin}-- \eqref{sstokdv} hold if $s_1\geq 0$, $s_2\geq -3/4$.
		
		\textbf{Necessary part of \eqref{ssstosin}.} For $s<0$ one has
		\begin{align*}
			&\quad\left\|\mathscr{A}(|S(t)u_0|^2S(t)u_0)(1)\right\|_{H^s}\\
			&\sim \left\|\langle \xi\rangle^s\int_{\mathbb{R}^2}\int_0^1 e^{2it'(\xi_1-\xi_2)(\xi-\xi_1)}\widehat{u_0}(\xi_1)\overline{\widehat{u_0}}(\xi_2)\widehat{u_0}(\xi-\xi_1+\xi_2)~d\xi_1d\xi_2dt'\right\|_{L^2}.
		\end{align*}
		Let $N\gg 1$, $0<c\ll 1$, $\widehat{u_0}(\xi) = \chi_{[N,N+c]}(\xi)$. $\forall$ $\xi \in [N,N+c/2]$, one has
		\begin{align*}
			\left|\langle \xi\rangle^s\int_{\mathbb{R}^2}\int_0^1 e^{2it'(\xi_1-\xi_2)(\xi-\xi_1)}\widehat{u_0}(\xi_1)\overline{\widehat{u_0}}(\xi_2)\widehat{u_0}(\xi-\xi_1+\xi_2)~d\xi_1d\xi_2dt'\right|\sim N^{s}.
		\end{align*}
		Thus one has $\left\|\mathscr{A}(|S(t)u_0|^2S(t)u_0)(1)\right\|_{H^s}\gtrsim N^{s}$.
		Note that $\|u_0\|_{H^s}^3\sim N^{3s}$. Then \eqref{ssstosin} can not hold for $s<0$.
		Let $f(\xi) = \widehat{u_0}(\xi)\langle\xi\rangle^{s_1}$, $g(\xi) = \widehat{v_0}(\xi)\langle\xi\rangle^{s_2}$. \eqref{schkdvtosh} is equivalent to
		\begin{align*}
			\left\|\langle\xi\rangle^{s_1}\int_{\mathbb{R}}\frac{1-e^{i(\xi^2-\xi_1^2+(\xi-\xi_1)^3)}}{\xi^2-\xi_1^2+(\xi-\xi_1)^3}\frac{f(\xi_1)g(\xi-\xi_1)}{\langle\xi_1\rangle^{s_1}\langle\xi-\xi_1\rangle^{s_2}}~d\xi_1\right\|_{L^2}\lesssim \|f\|_{L^2}\|g\|_{L^2}
		\end{align*}
		\textbf{Necessary part of \eqref{schkdvtosh}.}
		Let $N\gg 1$, $f=\chi_{[1,2]}$, $g = \chi_{[N,N+4]}$. $\forall~\xi\in [N+2,N+3]$ one has
		\begin{align*}
			\left|\langle\xi\rangle^{s_1}\int_{\mathbb{R}}\frac{1-e^{i(\xi^2-\xi_1^2+(\xi-\xi_1)^3)}}{\xi^2-\xi_1^2+(\xi-\xi_1)^3}\frac{f(\xi_1)g(\xi-\xi_1)}{\langle\xi_1\rangle^{s_1}\langle\xi-\xi_1\rangle^{s_2}}~d\xi_1\right|\sim N^{s_1-s_2-3}.
		\end{align*}
		Note that $\|f\|_{L^2}\|g\|_{L^2}\sim 1$. Thus, $s_1-s_2-3\leq 0$ which means $s_2\geq -3+s_1$.
		
		Let $f = \chi_{[N,N+1]}$, $g = \chi_{[-N-4,-N+4]}$. $\forall$ $\xi\in[1,2]$, one has 
		\begin{align*}
			\left|\langle\xi\rangle^{s_1}\int_{\mathbb{R}}\frac{1-e^{i(\xi^2-\xi_1^2+(\xi-\xi_1)^3)}}{\xi^2-\xi_1^2+(\xi-\xi_1)^3}\frac{f(\xi_1)g(\xi-\xi_1)}{\langle\xi_1\rangle^{s_1}\langle\xi-\xi_1\rangle^{s_2}}~d\xi_1\right|\sim N^{-3-s_1-s_2}
		\end{align*}
		Thus, $-3-s_1-s_2\leq 0$ which means $s_2\geq -3-s_1$.
		
		For $0<c\ll 1$, let $f = \chi_{[-N-cN^{-1},-N]}$,
		$$g = \chi_{[a-N^{-1},a+N^{-1}]}, \quad a = -\frac{1}{2}+\sqrt{\frac{3}{2}N-\frac{1}{4}}.$$
		Then, $\forall$ $\xi\in [a-N,a-N+cN^{-1}]$, one has
		$
		|\xi^2-\xi_1^2+(\xi-\xi_1)^3|\ll 1
		$
		and then
		\begin{align*}
			\left|\langle\xi\rangle^{s_1}\int_{\mathbb{R}}\frac{1-e^{i(\xi^2-\xi_1^2+(\xi-\xi_1)^3)}}{\xi^2-\xi_1^2+(\xi-\xi_1)^3}\frac{f(\xi_1)g(\xi-\xi_1)}{\langle\xi_1\rangle^{s_1}\langle\xi-\xi_1\rangle^{s_2}}~d\xi_1\right|\sim N^{-1-\frac{s_2}{2}}.
		\end{align*}
		Thus, 
		\begin{align*}
			\left\|\langle\xi\rangle^{s_1}\int_{\mathbb{R}}\frac{1-e^{i(\xi^2-\xi_1^2+(\xi-\xi_1)^3)}}{\xi^2-\xi_1^2+(\xi-\xi_1)^3}\frac{f(\xi_1)g(\xi-\xi_1)}{\langle\xi_1\rangle^{s_1}\langle\xi-\xi_1\rangle^{s_2}}~d\xi_1\right\|_{L^2}\gtrsim N^{-1-\frac{s_2}{2}-\frac{1}{2}}
		\end{align*}
		
		Note that $\|f\|_{L^2}\|g\|_{L^2}\sim N^{-1}$. Thus, one has $s_2\geq -1$.
		
		\noindent\textbf{Sufficient part of \eqref{schkdvtosh}.} We decompose the integration into two parts.
		
		\noindent{\hypertarget{cs12}{\bf{Case 1}.}} $\Omega_1$: $\langle \xi^2-\xi_1^2+(\xi-\xi_1)^3\rangle\gtrsim \langle\xi-\xi_1\rangle^3$.
		
		\noindent{\hypertarget{cs22}{\bf{Case 2}.}} $\Omega_2$: $\langle \xi^2-\xi_1^2+(\xi-\xi_1)^3\rangle\ll \langle\xi-\xi_1\rangle^3$.
		
		\hyperlink{cs12}{\textcolor[RGB]{0,0,0}{{\bf{Case 1}}}}. By the definition of $\Omega_1$,  $s_2+3\geq \max\{|s_1|,2\}$ and Cauchy-Schwarz inequality, one has
		\begin{align*}
			&\quad\left\|\langle\xi\rangle^{s_1}\int_{\mathbb{R}}\chi_{\Omega_1}\frac{1-e^{i(\xi^2-\xi_1^2+(\xi-\xi_1)^3)}}{\xi^2-\xi_1^2+(\xi-\xi_1)^3}\frac{f(\xi_1)g(\xi-\xi_1)}{\langle\xi_1\rangle^{s_1}\langle\xi-\xi_1\rangle^{s_2}}~d\xi_1\right\|_{L^2}\\
			&\lesssim \left\|\int_{\mathbb{R}}\frac{\langle\xi\rangle^{s_1}}{\langle\xi-\xi_1\rangle^{3+s_2}\langle\xi_1\rangle^{s_1}}|f(\xi_1)||g(\xi-\xi_1)|~d\xi_1\right\|_{L^2}\\
			&\lesssim \left\|\int_{\mathbb{R}}(\langle\xi\rangle^{-2}+\langle\xi_1\rangle^{-2})|f(\xi_1)||g(\xi-\xi_1)|~d\xi_1\right\|_{L^2}\\    
			&\lesssim \|f\|_{L^2}\|g\|_{L^2}\|\langle\xi\rangle^{-2}\|_{L^2_\xi}+\|\langle\xi_1\rangle^{-2}\|_{L^2_{\xi_1}}\|f(\xi_1)g(\xi-\xi_1)\|_{L^2_{\xi,\xi_1}}\\
			&\lesssim \|f\|_{L^2}\|g\|_{L^2}.
		\end{align*}				
		
		\hyperlink{cs22}{\textcolor[RGB]{0,0,0}{{\bf{Case 2}}}}. By the definition of $\Omega_2$, we have $1\ll|\xi-\xi_1|\ll |\xi|\sim |\xi_1|$. By Cauchy-Schwarz inequality, we only need to control
		\begin{align*}
			\left\|\langle\xi\rangle^{s_1}\chi_{\Omega_2}\frac{1-e^{i(\xi^2-\xi_1^2+(\xi-\xi_1)^3)}}{\xi^2-\xi_1^2+(\xi-\xi_1)^3}\frac{1}{\langle\xi_1\rangle^{s_1}\langle\xi-\xi_1\rangle^{s_2}}\right\|_{L^\infty_\xi L^2_{\xi_1}}
		\end{align*}
		Due to $s_2\geq -1$ and the support set property of $\Omega_2$, one has
		\begin{align*}
			&\quad\left\|\langle\xi\rangle^{s_1}\chi_{\Omega_2}\frac{1-e^{i(\xi^2-\xi_1^2+(\xi-\xi_1)^3)}}{\xi^2-\xi_1^2+(\xi-\xi_1)^3}\frac{1}{\langle\xi_1\rangle^{s_1}\langle\xi-\xi_1\rangle^{s_2}}\right\|_{L^\infty_\xi L^2_{\xi_1}}\\
			&\lesssim\left\|\chi_{\Omega_2}\frac{1-e^{i(\xi^2-\xi_1^2+(\xi-\xi_1)^3)}}{\xi+\xi_1+(\xi-\xi_1)^2}\right\|_{L^\infty_\xi L^2_{\xi_1}}\lesssim\left\|\chi_{1\ll |\xi_2|\ll |\xi|}\frac{1-e^{i\xi_2(2\xi-\xi_2+\xi_2^2)}}{2\xi-\xi_2+\xi_2^2}\right\|_{L^\infty_\xi L^2_{\xi_2}}
		\end{align*}
		Let $y(\xi_2) = 2\xi-\xi_2+\xi_2^2$. Since $1\ll |\xi_2|$, one has $|dy|\sim |\xi_2d\xi_2|$. It is easy to control the integral for $|y(\xi_2)|\gtrsim 1$. For $|y(\xi_2)|\ll 1$, one has $|dy|\sim |\xi|^{1/2}|d\xi_2|$, $|1-e^{i\xi_2(2\xi-\xi_2+\xi_2^2)}|\lesssim \min\{1,|\xi|^{1/2}|y(\xi_2)|\}$. Thus, for $|\xi|\gg 1$, one has
		\begin{align*}
			\left\|\chi_{|y(\xi_2)|\ll 1}\frac{1-e^{i\xi_2(2\xi-\xi_2+\xi_2^2)}}{2\xi-\xi_2+\xi_2^2}\right\|_{L^2_{|\xi_2|\gg 1}}^2\lesssim \int_{|y|\ll 1}\frac{\min\{1,|\xi|y^2\}}{y^2}~\frac{dy}{|\xi|^{1/2}}\lesssim 1.
		\end{align*}
		We finish the proof for \eqref{schkdvtosh}.
		
		By polarization \eqref{kdvvv} is equivalent to
		\begin{align}\label{bilinearkdv}
			\left\|\mathscr{B}((K(t)u_0)K(t)v_0)(1)\right\|_{H^s}\lesssim \|u_0\|_{H^s}\|v_0\|_{H^s}.
		\end{align}
		By the definition of $H^s$-norm, one has
		\begin{align*}
			&\quad\left\|\mathscr{B}((K(t)u_0)K(t)v_0)(1)\right\|_{H^s}\\
			&\sim \left\|\langle\xi\rangle^s\xi\int_{\mathbb{R}}\int_0^1e^{-3it'\xi\xi_1(\xi-\xi_1)}\widehat{u_0}(\xi_1)\widehat{v_0}(\xi-\xi_1)~dt'd\xi_1\right\|_{L^2}
		\end{align*}
		\textbf{Necessary part of \eqref{kdvvv}.} Let 
		$N\gg 1,~~u_0 = \chi_{[N,N+1]},~~v_0 = \chi_{[-N-2,-N+2]}$.  
		$\forall$ $\xi\in [1,2]$, one has
		\begin{align*}
			\left|\langle\xi\rangle^s\xi\int_{\mathbb{R}}\int_0^1e^{-3it'\xi\xi_1(\xi-\xi_1)}\widehat{u_0}(\xi_1)\widehat{v_0}(\xi-\xi_1)~dt'd\xi_1\right|\sim  \frac{1}{N^2}.
		\end{align*}
		Thus, 
		\begin{align*}
			\left\|\int_0^1 e^{t'\partial_{xxx}}\partial_x(e^{-t'\partial_{xxx}}u_0e^{-t'\partial_{xxx}}v_0)~dt'\right\|_{H^s}\gtrsim N^{-2}.
		\end{align*}
		Since $\|u_0\|_{H^s}\|v_0\|_{H^s}\sim N^{2s}$, we obtain $s\geq -1$.	
		
		\noindent\textbf{Sufficient part of \eqref{kdvvv}.} For $s \geq -1$, one has
		\begin{align*}
			&\quad \left\|\mathscr{B}((K(t)u_0)K(t)v_0)(1)\right\|_{H^{-1}}\\
			&\sim \left\|\langle\xi\rangle^{s}\int_{\mathbb{R}}\frac{(1-e^{-3i\xi\xi_1(\xi-\xi_1)})\langle\xi_1\rangle^s\langle\xi-\xi_1\rangle^s}{\xi_1(\xi-\xi_1)}\frac{\widehat{u_0}(\xi_1)\widehat{v_0}(\xi-\xi_1)}{\langle\xi_1\rangle^s\langle\xi-\xi_1\rangle^s}~d\xi_1\right\|_{L^2}\\
			&\lesssim \left\|\int_{\mathbb{R}}\frac{\min\{1,|\xi\xi_1(\xi-\xi_1)|\}\langle\xi_1\rangle^s\langle\xi-\xi_1\rangle^s}{\langle\xi\rangle|\xi_1(\xi-\xi_1)|}\frac{|\widehat{u_0}(\xi_1)\widehat{v_0}(\xi-\xi_1)|}{\langle\xi_1\rangle^{-1}\langle\xi-\xi_1\rangle^{-1}}~d\xi_1\right\|_{L^2}
		\end{align*}
		Let $f(\xi) = \langle\xi\rangle^{s}\widehat{u_0}(\xi)$, $g(\xi) = \langle\xi\rangle^{s}\widehat{v_0}(\xi)$. We only need to prove
		\begin{align*}
			&\quad\left\|\int_{\mathbb{R}}\frac{\min\{1,|\xi\xi_1(\xi-\xi_1)|\}\langle\xi_1\rangle\langle\xi-\xi_1\rangle}{\langle\xi\rangle|\xi_1(\xi-\xi_1)|}f(\xi_1)g(\xi-\xi_1)~d\xi_1\right\|_{L^2}\\
			&\lesssim \|f\|_{L^2}\|g\|_{L^2}
		\end{align*}
		
		We decompose the integration into two parts.
		
		\noindent{\hypertarget{cs11}{\bf{Case 1}.}} $\Omega_1$: $|\xi_1|\gtrsim 1$, $|\xi-\xi_1|\gtrsim 1$ or  $|\xi_1|\ll 1$, $|\xi-\xi_1|\lesssim 1$.
		
		\noindent{\hypertarget{cs21}{\bf{Case 2}.}} $\Omega_2$: $|\xi_1|\ll 1\ll|\xi-\xi_1|$ or $|\xi_1|\gg 1\gg|\xi-\xi_1|$.
		
		For \hyperlink{cs11}{\textcolor[RGB]{0,0,0}{{\bf{Case 1}}}}, by Cauchy-Schwarz inequality, one has
		\begin{align*}
			&\quad\left\|\int_{\mathbb{R}}\frac{\min\{1,|\xi\xi_1(\xi-\xi_1)|\}\langle\xi_1\rangle\langle\xi-\xi_1\rangle}{\langle\xi\rangle|\xi_1(\xi-\xi_1)|}\chi_{\Omega_1}f(\xi_1)g(\xi-\xi_1)~d\xi_1\right\|_{L^2}\\
			&\lesssim \left\|\int_{\mathbb{R}}\frac{|f(\xi_1)g(\xi-\xi_1)|}{\langle\xi\rangle}~d\xi_1\right\|_{L^2}\lesssim \|f\|_{L^2} \|g\|_{L^2}\|\langle\xi\rangle^{-1}\|_{L^2}\\
			&\lesssim \|f\|_{L^2} \|g\|_{L^2}.
		\end{align*}
		For \hyperlink{cs21}{\textcolor[RGB]{0,0,0}{{\bf{Case 2}}}}, by symmetry, we only consider $|\xi_1|\ll 1\ll |\xi-\xi_1|$.	
		By Cauchy-Schwarz inequality, we have
		\begin{align*}
			&\quad\left\|\int_{\mathbb{R}}\frac{\min\{1,|\xi\xi_1(\xi-\xi_1)|\}\langle\xi_1\rangle\langle\xi-\xi_1\rangle}{\langle\xi\rangle|\xi_1(\xi-\xi_1)|}\chi_{|\xi_1|\ll 1\ll|\xi-\xi_1|}f(\xi_1)g(\xi-\xi_1)~d\xi_1\right\|_{L^2}\\
			&\lesssim \left\|\frac{\min\{1,\xi^2\xi_1\}}{\xi\xi_1}\right\|_{L^\infty_{|\xi|\gg 1}L^2_{\xi_1}} \|f(\xi_1)g(\xi-\xi_1)\|_{L^2_{\xi,\xi_1}}\lesssim \|f\|_{L^2}\|g\|_{L^2}
		\end{align*}
		Thus, \eqref{bilinearkdv} holds for $s = -1$. By interpolation, \eqref{bilinearkdv} holds for $s \geq -1$. We finish the proof of \eqref{kdvvv}.

		Similar to the argument for \eqref{schkdvtosh}-\eqref{kdvvv}, \eqref{sstokdv} is equivalent to
		\begin{align}\label{sstokdvv}
			\left\|\langle\xi\rangle^{s_2}\int_{\mathbb{R}}\frac{1-e^{i\xi(\xi-2\xi_1-\xi^2)}}{\xi-2\xi_1-\xi^2}\frac{f(\xi_1)g(\xi-\xi_1)}{\langle\xi_1\rangle^{s_1}\langle\xi-\xi_1\rangle^{s_1}}~d\xi_1\right\|_{L^2}\lesssim \|f\|_{L^2}\|g\|_{L^2}.
		\end{align}
		\textbf{Necessary part of \eqref{sstokdv}.} Let $1\ll M\ll N$, $f = \chi_{[N,N+2M]}$, $g = \chi_{[-N-2M,-N+2M]}$. 
		$\forall$ $\xi\in [M,2M]$, one has
		\begin{align*}
			\left|\langle\xi\rangle^{s_2}\int_{\mathbb{R}}\frac{1-e^{i\xi(\xi-2\xi_1-\xi^2)}}{\xi-2\xi_1-\xi^2}\frac{f(\xi_1)g(\xi-\xi_1)}{\langle\xi_1\rangle^{s_1}\langle\xi-\xi_1\rangle^{s_1}}~d\xi_1\right|\sim M^{s_2+1}N^{-2s_1+1}.
		\end{align*}
		Since $\|f\|_{L^2}\|g\|_{L^2}\sim M$, we obtain $s_1\geq -1/2$.
		
		Let $f = \chi_{[N,N+1]}$, $g = \chi_{[-1,1]}$. For $\xi\in [N,N+1]$, one has
		\begin{align*}
			\left|\langle\xi\rangle^{s_2}\int_{\mathbb{R}}\frac{1-e^{i\xi(\xi-2\xi_1-\xi^2)}}{\xi-2\xi_1-\xi^2}\frac{f(\xi_1)g(\xi-\xi_1)}{\langle\xi_1\rangle^{s_1}\langle\xi-\xi_1\rangle^{s_1}}~d\xi_1\right|\sim N^{s_2-s_1-2}.
		\end{align*}
		Since $\|f\|_{L^2}\|g\|_{L^2}\sim 1$, we obtain $s_2\leq s_1+2$.
		
		Let $f = \chi_{[-N,-N+N^{-1}]}$, $a = 1/2+\sqrt{2N+1/4}$, $0<c\ll 1$. Then, $\forall$ $\xi\in [a-cN^{-1},a+cN^{-1}]$, one has
		$|\xi||\xi-2\xi_1-\xi^2|\ll 1$, $ \forall~\xi_1\in\mathrm{supp}(f)$. 
		Let $g = \chi_{[a+N-N^{-1},a+N+N^{-1}]}$. Then, $\forall~|\xi-a|\leq cN^{-1}$, one has
		$$\left|\langle\xi\rangle^{s_2}\int_{\mathbb{R}}\frac{1-e^{i\xi(\xi-2\xi_1-\xi^2)}}{\xi-2\xi_1-\xi^2}\frac{f(\xi_1)g(\xi-\xi_1)}{\langle\xi_1\rangle^{s_1}\langle\xi-\xi_1\rangle^{s_1}}~d\xi_1\right|\sim N^{\frac{1}{2}+\frac{s_2}{2}-2s_1-1}.$$
		Since $\|f\|_{L^2}\|g\|_{L^2}\sim N^{-1}$, we obtain $1/2+s_2/2-2s_1-1-1/2\leq -1$. Thus, $s_2\leq 4s_1$.
		
		\textbf{Sufficient part of \eqref{sstokdv}.} By interpolation, we only need to show \eqref{sstokdvv} for $(s_1,s_2) = (-1/2,-2)$ and $(s,s+2)$, $\forall~s\geq 2/3$. 
		If $|\xi|\lesssim 1$, by Cauchy-Schwarz inequality one has
		\begin{align*}
			&\quad\left|\langle\xi\rangle^{s_2}\int_{\mathbb{R}}\frac{1-e^{i\xi(\xi-2\xi_1-\xi^2)}}{\xi-2\xi_1-\xi^2}\frac{f(\xi_1)g(\xi-\xi_1)}{\langle\xi_1\rangle^{s_1}\langle\xi-\xi_1\rangle^{s_1}}~d\xi_1\right|\\
			&\lesssim \left|\int_{\mathbb{R}}\frac{\min\{1,|\xi-2\xi_1-\xi^2|\}}{|\xi-2\xi_1-\xi^2|\langle\xi_1\rangle^{-1/2}\langle\xi-\xi_1\rangle^{-1/2}}|f(\xi_1)||g(\xi-\xi_1)|~d\xi_1\right|\\
			&\lesssim \int_{\mathbb{R}}|f(\xi_1)||g(\xi-\xi_1)|~d\xi_1\lesssim \|f\|_{L^2}\|g\|_{L^2}.
		\end{align*}
		Thus,
		\begin{align*}
			\left\|\langle\xi\rangle^{s_2}\int_{\mathbb{R}}\frac{1-e^{i\xi(\xi-2\xi_1-\xi^2)}}{\xi-2\xi_1-\xi^2}\frac{f(\xi_1)g(\xi-\xi_1)}{\langle\xi_1\rangle^{s_1}\langle\xi-\xi_1\rangle^{s_1}}~d\xi_1\right\|_{L^2_{|\xi|\lesssim 1}}\lesssim \|f\|_{L^2}\|g\|_{L^2}.
		\end{align*}
		
		Now, we consider $|\xi|\gg 1$.
		For point $(s_1,s_2) = (-1/2,-2)$, we decompose the integration into two parts.
		
		\noindent{\hypertarget{cs13}{\bf{Case 1.1}.}} $\Omega_1$: $|\xi-2\xi_1-\xi^2|\gtrsim \langle\xi_1\rangle^{1/2}\langle\xi-\xi_1\rangle^{1/2}$.
		
		\noindent{\hypertarget{cs23}{\bf{Case 1.2}.}} $\Omega_2$: $|\xi-2\xi_1-\xi^2|\ll \langle\xi_1\rangle^{1/2}\langle\xi-\xi_1\rangle^{1/2}$.
		
		For \hyperlink{cs13}{\textcolor[RGB]{0,0,0}{{\bf{Case 1.1}}}}, by Cauchy-Schwarz inequality, one has
		\begin{align*}
			&\quad\left\|\int_{\mathbb{R}}\frac{\chi_{\Omega_1}}{\langle\xi\rangle^2}\frac{1-e^{i\xi(\xi-2\xi_1-\xi^2)}}{\xi-2\xi_1-\xi^2}\frac{f(\xi_1)g(\xi-\xi_1)}{\langle\xi_1\rangle^{-1/2}\langle\xi-\xi_1\rangle^{-1/2}}~d\xi_1\right\|_{L^2_{|\xi|\gg 1}}\\
			&\lesssim \|\langle\xi\rangle^{-2}\|_{L^2_{\xi}}\|f\|_{L^2}\|g\|_{L^2}\lesssim \|f\|_{L^2}\|g\|_{L^2}.
		\end{align*}			
		For \hyperlink{cs23}{\textcolor[RGB]{0,0,0}{{\bf{Case 1.2}}}}, one has $|\xi-\xi_1|\sim|\xi_1|\sim |\xi|^2$. By Cauchy-Schwarz inequality one has
		\begin{align*}
			&\quad\left|\int_{\mathbb{R}}\chi_{|\xi|\gg 1}\int_{\mathbb{R}}\frac{\chi_{\Omega_2}}{\langle\xi\rangle^{2}}\frac{1-e^{i\xi(\xi-2\xi_1-\xi^2)}}{\xi-2\xi_1-\xi^2}\frac{f(\xi_1)g(\xi-\xi_1)}{\langle\xi_1\rangle^{-1/2}\langle\xi-\xi_1\rangle^{-1/2}}~d\xi_1h(\xi)~d\xi\right|\\
			&\lesssim \int_{\mathbb{R}}|f(\xi_1)| \|g(\xi-\xi_1)h(\xi)\|_{L^2_{\xi}}\left\|\frac{\chi_{1\ll |\xi|^2\sim |\xi_1|}\min\{1,|\xi(\xi-2\xi_1-\xi^2)|\}}{|\xi-2\xi_1-\xi^2|}\right\|_{L^2_\xi}\\
			&\lesssim \|f\|_{L^2}\|g\|_{L^2}\|h\|_{L^2}\left\|\frac{\chi_{1\ll |\xi|^2\sim |\xi_1|}\min\{1,||\xi_1|^{\frac{1}{2}}(\xi-2\xi_1-\xi^2)|\}}{|\xi-2\xi_1-\xi^2|}\right\|_{L^\infty_{\xi_1}L^2_\xi}
		\end{align*}
		Let $y(\xi) = \xi-2\xi_1-\xi^2$. For $1\ll |\xi|^2\sim |\xi_1|$, one has $|dy(\xi)|\sim |(1-2\xi)d\xi|\sim |\xi_1|^{1/2}|d\xi|$. Thus,
		\begin{align*}
			\left\|\frac{\chi_{1\ll |\xi|^2\sim |\xi_1|}\min\{1,||\xi_1|^{\frac{1}{2}}(\xi-2\xi_1-\xi^2)|\}}{|\xi-2\xi_1-\xi^2|}\right\|_{L^2_\xi}^2&\lesssim \int_{\mathbb{R}}\frac{\min\{1,|\xi_1|y^2\}}{y^2}~\frac{dy}{|\xi_1|^{1/2}}\\
			&\lesssim 1.
		\end{align*}
		By duality one has
		\begin{align*}
			\left\|\int_{\mathbb{R}}\frac{\chi_{\Omega_2}}{\langle\xi\rangle^{2}}\frac{1-e^{i\xi(\xi-2\xi_1-\xi^2)}}{\xi-2\xi_1-\xi^2}\frac{f(\xi_1)g(\xi-\xi_1)}{\langle\xi_1\rangle^{-1/2}\langle\xi-\xi_1\rangle^{-1/2}}~d\xi_1\right\|_{L^2_{|\xi|\gg 1}}\lesssim \|f\|_{L^2}\|g\|_{L^2}.
		\end{align*}
		For points $(s,s+2), s\geq 2/3$, we decompose the integration into two parts.
		
		\noindent{\hypertarget{cs14}{\bf{Case 2.1}.}} $\Omega_1$: $|\xi-2\xi_1-\xi^2|\gtrsim |\xi|^2$.
		
		\noindent{\hypertarget{cs24}{\bf{Case 2.2}.}} $\Omega_2$: $|\xi-2\xi_1-\xi^2|\ll |\xi|^2$.
		
		For \hyperlink{cs13}{\textcolor[RGB]{0,0,0}{{\bf{Case 2.1}}}}, by Cauchy-Schwarz inequality, for $s\geq 2/3$, one has
		\begin{align*}
			&\quad\left\|\langle\xi\rangle^{s+2}\int_{\mathbb{R}}\chi_{\Omega_1}\frac{1-e^{i\xi(\xi-2\xi_1-\xi^2)}}{\xi-2\xi_1-\xi^2}\frac{f(\xi_1)g(\xi-\xi_1)}{\langle\xi_1\rangle^{s}\langle\xi-\xi_1\rangle^{s}}~d\xi_1\right\|_{L^2_{|\xi|\gg 1}}\\
			&\lesssim \left\|\langle\xi\rangle^{s}\int_{\mathbb{R}}\frac{|f(\xi_1)||g(\xi-\xi_1)|}{\langle\xi_1\rangle^{s}\langle\xi-\xi_1\rangle^{s}}~d\xi_1\right\|_{L^2}\\ &\lesssim \left\|\int_{\mathbb{R}}(\langle\xi_1\rangle^{-s}+\langle\xi-\xi_1\rangle^{-s})|f(\xi_1)||g(\xi-\xi_1)|~d\xi_1\right\|_{L^2}\\
			&\lesssim \|\langle\xi\rangle^{-s}\|_{L^2_{\xi}}\|f\|_{L^2}\|g\|_{L^2}\lesssim \|f\|_{L^2}\|g\|_{L^2}.
		\end{align*}
		
		For \hyperlink{cs23}{\textcolor[RGB]{0,0,0}{{\bf{Case 2.2}}}}, recall that we always assume $|\xi|\gg 1$. Thus, $|\xi-\xi_1|\sim|\xi_1|\sim |\xi|^2$. Then,
		$$\frac{\langle\xi\rangle^{s+2}}{\langle\xi_1\rangle^{s}\langle\xi-\xi_1\rangle^{s}}\sim \langle\xi\rangle^{2-3s}\lesssim 1,\quad \forall~s\geq \frac{2}{3}.$$			
		By the same argument for \hyperlink{cs23}{\textcolor[RGB]{0,0,0}{{\bf{Case 1.2}}}}, we obtain the control for this case.			
		Combining all these cases, we finish the proof of this lemma.
	\end{proof}		
	By \eqref{schkdvtosh}--\eqref{sstokdv} and Theorems 1.1, 1.4 in \cite{kenig2001ill}, one has
	Theorem \ref{illpore}.
	
	\section{Multi-linear estimates in Bourgain space}\label{multiinbo}
	In this section, our main result is:
	\begin{prop}\label{conclusion}
		If there exists $1/2<b_1<\tilde{b}_1$, $1/2<b_2<\tilde{b}_2$ such that
		\begin{align}
			\|u\bar{v}w\|_{X^{s_1,\tilde{b}_1-1}}&\lesssim \|u\|_{X^{s_1,b_1}}\|v\|_{X^{s_1,b_1}}\|w\|_{X^{s_1,b_1}},\label{mainxsbre}\\
			\|uv\|_{X^{s_1,\tilde{b}_1-1}}&\lesssim \|u\|_{X^{s_1,b_1}}\|v\|_{Y^{s_2,b_2}},\label{mainsks}\\
			\|\partial_x(u\bar{v})\|_{Y^{s_2,\tilde{b}_2-1}}&\lesssim \|u\|_{X^{s_1,b_1}}\|v\|_{X^{s_1,b_1}},\label{mainssk}\\
			\|\partial_x(uv)\|_{Y^{s_2,\tilde{b}_2-1}}&\lesssim \|u\|_{Y^{s_2,b_2}}\|v\|_{Y^{s_2,b_2}}\label{mainkkk}
		\end{align}
		then $s_1\geq 0$, $\max\{-3/4,s_1/2-11/8,s_1-5/2\}<s_2<\min\{4s_1,s_1+1\}$. Conversely, if $s_1\geq 0$, $\max\{-3/4,s_1/2-11/8,s_1-5/2\}<s_2<\min\{4s_1,s_1+1\}$, there exists $1/2<b_1<\tilde{b}_1$, $1/2<b_2<\tilde{b}_2$ such that \eqref{mainxsbre}--\eqref{mainkkk} hold.
	\end{prop}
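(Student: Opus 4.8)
The plan is to treat the two implications separately, in each case reducing the four estimates \eqref{mainxsbre}--\eqref{mainkkk} to a bound on a multiplier acting on $L^2_{\tau,\xi}$. Writing each factor as $f_j=\langle\xi\rangle^{s}\langle m_j\rangle^{b}\hat{u}_j$, where $m_j$ is the appropriate modulation ($\tau+\xi^2$ for Schr\"odinger factors, $\tau-\xi^3$ for KdV factors), every factor becomes an arbitrary $L^2$ function and each estimate takes the form $\int K\,f_1\cdots f_k\lesssim\prod_j\|f_j\|_{L^2}$, where the kernel $K$ carries the regularity weights divided by the modulation weights and is supported on the resonance hyperplane $\{\sum\xi_j=\xi,\ \sum\tau_j=\tau\}$. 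The decisive algebraic input is the resonance identity: on the support of $K$ the largest modulation is $\gtrsim$ the resonance function, and these resonance functions are precisely the phases $\xi^2-\xi_1^2+(\xi-\xi_1)^3$, $\xi-2\xi_1-\xi^2$, and the pure KdV phase $\xi_1(\xi-\xi_1)$ already encountered in Lemma \ref{ssstos}.

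For the necessity direction I would, for each estimate, insert Fourier-side test functions that are indicators of thin boxes adapted to the three characteristic surfaces, choosing the frequency configuration that saturates the estimate. The pure Schr\"odinger estimate \eqref{mainxsbre} forces $s_1\geq 0$ and the pure KdV estimate \eqref{mainkkk} forces $s_2>-3/4$, exactly as in the classical theory. The remaining constraints come from the coupled estimates: the high-high Schr\"odinger interaction feeding the KdV equation in \eqref{mainssk} yields the upper bounds $s_2<4s_1$ and $s_2<s_1+1$, which are the $X^{s,b}$-refinements of the condition $s_2\leq\min\{4s_1,s_1+2\}$ of Lemma \ref{ssstos}, while near-resonant Schr\"odinger--KdV interactions in \eqref{mainsks} and \eqref{mainssk}, where the resonance function is $\ll 1$ on a box of definite size, produce the lower bounds $s_2>s_1/2-11/8$ and $s_2>s_1-5/2$. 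In each case one evaluates both sides explicitly on the chosen boxes and reads off the exponent inequality; the modulation localization must be taken just below the size of the resonance function so that the factor $\langle m_j\rangle^{b_j-1}$ is genuinely active.

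For the sufficiency direction I would dyadically localize every factor in frequency ($N_j$) and modulation ($L_j$), reduce by duality to a positive $L^2$ integral over the resonance set, and bound each dyadic block. The two workhorse tools are the $L^6_{t,x}$ Strichartz and $L^4$ bilinear estimates for the Schr\"odinger evolution, and the bilinear $L^2$ estimate for the Airy evolution that gains a half-derivative from separation of the two interacting frequencies; for the mixed blocks one exploits the transversality between the parabola $\tau=-\xi^2$ and the cubic $\tau=\xi^3$, which is nondegenerate away from a one-parameter family and hence yields a bilinear $L^2$ gain of a square root of the larger frequency. Using $L_{\max}\gtrsim\langle\text{resonance}\rangle$ to absorb the derivative loss and the regularity weights, each block is bounded by $N_{\max}^{-\varepsilon}L_{\max}^{-\delta}$ times the product of the $L^2$ norms, and one chooses $b_j,\tilde b_j$ slightly above $1/2$ so that the modulation sum converges; the frequency sum then converges precisely under the stated open conditions on $(s_1,s_2)$.

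The step I expect to be the main obstacle is the sufficiency of the coupled estimates \eqref{mainsks} and \eqref{mainssk} in the near-resonant regime. There the resonance function $\xi^2-\xi_1^2+(\xi-\xi_1)^3$ (respectively $\xi-2\xi_1-\xi^2$) can be small while the frequencies are large and comparable, so the crude bound $L_{\max}\gtrsim\langle\text{resonance}\rangle$ gives nothing and one must instead use the transversality-based bilinear estimate together with a careful change of variables along the resonant curve. This is exactly the mechanism that produces the nonstandard thresholds $s_1/2-11/8$ and $s_1-5/2$, and near the boundary it forces the strict inequalities, which ultimately motivates the $U^p$--$V^p$ and normal-form arguments used later for the endpoint cases.
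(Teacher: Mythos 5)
There is a genuine gap in how you account for the threshold $s_2>s_1/2-11/8$. You attribute it to near-resonant counterexamples \emph{inside} \eqref{mainsks} and \eqref{mainssk}, but no test-function computation within a single estimate can produce this line, because $b_2$ is a free parameter there: the paper in fact shows that \eqref{mainsks} \emph{does} hold for $s_2$ well below $s_1-2$ once $b_2$ is taken close to $3/4+s_2/3$, so any box example for \eqref{mainsks} alone only yields $b_2$-dependent conditions. In the paper the line $s_1/2-11/8$ arises from coupling two of the four estimates through the \emph{shared} exponent $b_2$: a counterexample with high KdV modulation for \eqref{mainsks} gives the constraint $s_1-s_2-3b_2-1/2\leq 0$, while Lemma \ref{purekdv} shows that \eqref{mainkkk} forces $b_2<3/4+s_2/3$ (the classical $b$-restriction for the KdV bilinear estimate, which you never record --- you only extract $s_2>-3/4$ from \eqref{mainkkk}); eliminating $b_2$ between the two yields $s_2>s_1/2-11/8$. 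Without this cross-estimate interplay, the nonstandard threshold is simply not reachable, and your attribution of it to \eqref{mainssk} is also off target: \eqref{mainssk} supplies only the upper bounds $s_2<4s_1$ and $s_2<s_1+1$ (Lemma \ref{sstkdv}), while $s_2>s_1-5/2$ comes from \eqref{mainsks} with $b_1>1/2$.

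The same omission breaks your sufficiency plan. You propose choosing all $b_j,\tilde b_j$ slightly above $1/2$ and letting the frequency sums converge ``precisely under the stated open conditions,'' but by Lemma \ref{sktosb} the estimate \eqref{sktosbc} with exponents near $1/2$ already \emph{requires} $s_2\geq s_1-2$, so your scheme cannot cover the strip $s_1/2-11/8<s_2\leq s_1-2$ that the proposition asserts. The paper handles this region by taking $b_2=\min\{3/4+s_2/3,3/4\}-\delta$: this choice is still admissible for \eqref{mainkkk} by the Kenig--Ponce--Vega bilinear estimate (Theorem 2.2 in \cite{kenig1996bilinear}), and the stronger modulation weight in $Y^{s_2,b_2}$ buys the extra derivatives needed to push \eqref{mainsks} below $s_2=s_1-2$. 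Apart from this structural point, your reduction to weighted $L^2$ convolution bounds, the box counterexamples yielding $s_1\geq 0$, $s_2>-3/4$, $s_2>s_1-5/2$ and $s_2<\min\{4s_1,s_1+1\}$, and the dyadic resonance/Strichartz machinery for the remaining sufficiency cases are consistent with the paper's Lemmas \ref{sktosb}--\ref{purekdv}, so the fix is to add the $b_2$-tracking in both directions rather than to redo the analytic estimates.
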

	We will show \eqref{mainsks}--\eqref{mainkkk} in Lemmas \ref{sktosb}--\ref{purekdv} respectively.
	\begin{lemma}\label{sktosb}
		If there exists $b_1>1/2$, $b_2\in\mathbb{R}$ such that 
		\begin{align}\label{counter}
			\|uv\|_{X^{s_1,b_1-1}}\lesssim \|u\|_{X^{s_1,b_1}}\|v\|_{Y^{s_2,b_2}},
		\end{align}
		then $s_2> s_1-5/2$. 
		
		Let $s_1\geq 0$, $s_2\geq -1$. If there exists $\epsilon_0(s_1,s_2)>0$ such that $\forall~0<\varepsilon<\epsilon_0$, 
		\begin{align}\label{sktosbc}
			\|uv\|_{X^{s_1,-1/2+\varepsilon}}\lesssim \|u\|_{X^{s_1,1/2+\varepsilon}}\|v\|_{Y^{s_2,1/2+\varepsilon}},
		\end{align}
		then $s_2\geq s_1-2$. Conversely, if $s_2\geq s_1-2$, then there exists $\varepsilon_0(s_1,s_2)>0$ such that  $\forall~0<\varepsilon<\varepsilon_0$, one has
		\begin{align}\label{sktosbb}
			\|uv\|_{X^{s_1,-1/2+2\varepsilon}}\lesssim \|u\|_{X^{s_1,1/2+\varepsilon}}\|v\|_{Y^{s_2,1/2+\varepsilon}}.
		\end{align}
	\end{lemma}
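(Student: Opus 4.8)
The plan is to reduce all three inequalities to a single trilinear estimate on the convolution hyperplane and to read the thresholds off the resonance function. On $\{\xi=\xi_1+\xi_2,\ \tau=\tau_1+\tau_2\}$ write the three modulations $\sigma=\tau+\xi^2$, $\sigma_1=\tau_1+\xi_1^2$, $\sigma_2=\tau_2-\xi_2^3$. The key algebraic identity is
\[
\sigma-\sigma_1-\sigma_2=\xi^2-\xi_1^2+\xi_2^3=\xi_2\bigl(2\xi-\xi_2+\xi_2^2\bigr)=:H ,
\]
which is exactly the phase already appearing in the necessity part of \eqref{schkdvtosh}. Substituting $f=\langle\xi_1\rangle^{s_1}\langle\sigma_1\rangle^{b_1}\hat u$, $g=\langle\xi_2\rangle^{s_2}\langle\sigma_2\rangle^{b_2}\hat v$ and dualizing the output norm, each of \eqref{counter}, \eqref{sktosbc}, \eqref{sktosbb} becomes
\[
\Bigl|\int M\,f\,g\,\bar h\Bigr|\lesssim\|f\|_2\|g\|_2\|h\|_2,\qquad
M=\frac{\langle\xi\rangle^{s_1}}{\langle\xi_1\rangle^{s_1}\langle\xi_2\rangle^{s_2}}\cdot\frac{\langle\sigma\rangle^{\tilde b_1-1}}{\langle\sigma_1\rangle^{b_1}\langle\sigma_2\rangle^{b_2}},
\]
so the whole problem is to understand for which $(s_1,s_2)$ this multiplier is ``integrable'' against the resonance $H$.

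For the necessity statements I would use Fourier test functions equal to indicators of thin slabs adapted to the characteristic surfaces, exactly in the spirit of Lemma \ref{ssstos}. For $s_2>s_1-5/2$, take $\hat u$ on $\{\xi_1\sim1,\ |\sigma_1|\lesssim1\}$ and $\hat v$ on $\{\xi_2\sim N,\ |\sigma_2|\lesssim1\}$; then $\widehat{uv}$ concentrates at $\xi\sim N$ where $H\sim N^3$ \emph{forces} the output modulation to be $\sim N^{3}$, so the output side gains only $\langle\sigma\rangle^{\tilde b_1-1}\sim N^{3(\tilde b_1-1)}$. Comparing the resulting $\|uv\|$ with $\|u\|\,\|v\|\sim N^{s_2}$ yields $s_2\ge s_1+3\tilde b_1-4$, and $\tilde b_1>1/2$ gives the stated $s_2>s_1-5/2$. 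For the critical statement $s_2\ge s_1-2$ (where $b_1=b_2=\tfrac12+\varepsilon$ and no room is left in $b$) I would instead concentrate the slabs on the resonant set $\{H\approx0\}$, i.e.\ near the degeneracy $2\xi-\xi_2+\xi_2^2\approx0$, so that all three modulation weights are $O(1)$; one then optimizes the slab dimensions against the borderline (logarithmically divergent) modulation integrals produced by $b=\tfrac12$.

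For the sufficiency direction \eqref{sktosbb} the plan is the Cauchy--Schwarz ($L^2$) method: it suffices to show $\sup_{\xi,\tau}\int M^2\,d\xi_1\,d\tau_1\lesssim1$. First carry out the $\tau_1$-integral using $\int\langle\sigma_1\rangle^{-1-2\varepsilon}\langle\sigma_2\rangle^{-1-2\varepsilon}\,d\tau_1\lesssim\langle\sigma-H\rangle^{-1-2\varepsilon}$, which reduces the problem to the one–dimensional bound
\[
\langle\xi\rangle^{2s_1}\langle\sigma\rangle^{-1+4\varepsilon}\int\frac{d\xi_1}{\langle\xi_1\rangle^{2s_1}\langle\xi_2\rangle^{2s_2}\langle\sigma-H(\xi_1)\rangle^{1+2\varepsilon}}\lesssim1 .
\]
The last factor localizes $\xi_1$ to the roots of $H(\xi_1)=\sigma$; since $H'(\xi_1)=-(2\xi_1+3\xi_2^2)$ and $H''\sim\langle\xi_2\rangle$, away from the stationary point $\xi_1=-\tfrac32\xi_2^2$ the $\xi_1$-integral is $\lesssim|H'|^{-1}$, while near it one gains only the van der Corput factor $|H''|^{-1/2}\sim\langle\xi_2\rangle^{-1/2}$. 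A case split according to which of $\sigma,\sigma_1,\sigma_2$ realizes $\max\gtrsim|H|$, together with these two Jacobian bounds, is expected to close the estimate in the stated range; the endpoints can then be reached by interpolating between a good interior point and the borderline.

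The hard part will be the near-resonant, high–high regime — where the root of $H=\sigma$ sits near the stationary point $\xi_1\sim-\tfrac32\xi_2^2$ and the simple $|H'|^{-1}$ bound degenerates to the square-root gain $|H''|^{-1/2}$. This is precisely the region where the necessity test functions and the sufficiency estimate must meet, and where the criticality of $b=\tfrac12$ (hence the need for the output exponent $\tilde b_1-1=-\tfrac12+2\varepsilon$ to sit \emph{strictly} above $-\tfrac12+\varepsilon$) is essential: a naive placement of the modulation gain loses exactly the borderline power. I would therefore spend most of the effort making the degenerate-phase analysis quantitative (either via the stationary-phase bound above or an equivalent bilinear Strichartz estimate), and then verify that the matching family of test functions saturates it, thereby pinning the threshold and establishing both the forward and converse halves of the statement.
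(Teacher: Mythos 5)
Your first counterexample and your sufficiency framework both line up with the paper. For $s_2>s_1-5/2$ the paper uses exactly the slabs you describe: $f$ at unit frequency and modulation, $g$ on the KdV characteristic at frequency $N$, and $h$ at frequency $\sim N$ with $\tau\sim N^3$, so the resonance $H\sim N^3$ is dumped on the output weight $\langle\tau+\xi^2\rangle^{b_1-1}$, giving $s_2\geq s_1-4+3b_1$. For the sufficiency of \eqref{sktosbb} the paper also runs a Cauchy--Schwarz/kernel scheme split according to which of $\sigma,\sigma_1,\sigma_2$ dominates $|H|\sim|\xi|^3$, estimating reduced kernels in mixed $L^\infty L^2$ norms with the same Jacobian bounds you list; the only structural difference is that the paper shortcuts the comparable-frequency region by citing Corcho--Linares' estimate at $(s_1,s_2)=(0,-1)$ and reducing, via monotonicity and multilinear interpolation, to the single point $(1,-1)$, whereas you would verify everything directly. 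Note that your opening claim that $\sup_{\tau,\xi}\int M^2\,d\tau_1 d\xi_1\lesssim 1$ suffices is too strong as stated --- that one-sided Schur test diverges when the dominant modulation sits on an input --- but the case split you announce immediately afterwards is the correct repair, so this part of the plan, though only sketched (``expected to close''), is sound.

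The genuine gap is in your necessity argument for $s_2\geq s_1-2$. You propose to concentrate all three functions on the resonant set $H\approx 0$ with all modulations $O(1)$. But on that set, for $|\xi_2|\gg 1$ the relation $2\xi\approx\xi_2-\xi_2^2$ forces $|\xi|\sim|\xi_1|\sim|\xi_2|^2$, so the frequency factor in your multiplier satisfies $\langle\xi\rangle^{s_1}\langle\xi_1\rangle^{-s_1}\langle\xi_2\rangle^{-s_2}\sim\langle\xi_2\rangle^{-s_2}$: the $s_1$-dependence cancels identically, and resonant examples can only detect constraints on $s_2$ alone --- indeed this is exactly the interaction that yields the condition $s_2\geq -1$ in Lemma \ref{ssstos}, which is already assumed in the hypothesis of the statement. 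The threshold $s_2\geq s_1-2$ comes from the opposite, strongly \emph{nonresonant} regime in which the huge modulation $|H|\sim N^3$ is forced onto the KdV input, whose weight is capped at the fixed power $b_2=1/2+\varepsilon$ and so absorbs only $N^{3/2+3\varepsilon}$ of it. Concretely, the paper takes $f$ at unit frequency on the Schr\"odinger characteristic, $h$ at frequency $N$ on the Schr\"odinger characteristic, and defines $g$ as the correlation $g(\tau,\xi)=\int f(\tau_1,\xi_1)h(\tau_1+\tau,\xi_1+\xi)\,d\tau_1 d\xi_1$, which lives at frequency $\sim N$ but modulation $\langle\tau-\xi^3\rangle\sim N^3$, far off the KdV surface, with $\|g\|_{L^2}\gtrsim N^{-1/2}$; pairing gives a lower bound $N^{s_1-s_2-2-3\varepsilon}\|f\|_{L^2}\|g\|_{L^2}\|h\|_{L^2}$ and hence $s_2\geq s_1-2$. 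Without a high-modulation KdV input of this kind your construction cannot see any $s_1$-dependence, so the middle assertion of the lemma is not reachable by the test functions you propose.
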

	\begin{proof}[\textbf{Proof}]
		By the definition of $X^{s,b},Y^{s,b}$ and duality, \eqref{counter} is equivalent to
		\begin{equation}\label{countereq}
			\begin{aligned}
				&\quad\int_{\substack{\xi_1+\xi_2 = \xi,\\\tau_1+\tau_2 = \tau}}\frac{\langle\xi\rangle^{s_1}f(\tau_1,\xi_1)g(\tau_2,\xi_2)h(\tau,\xi)}{\langle\xi_1\rangle^{s_1}\langle\xi_2\rangle^{s_2}\langle\tau_1+\xi_1^2\rangle^{b_1}\langle\tau_2-\xi_2^3\rangle^{b_2}\langle\tau+\xi^2\rangle^{1-b_1}}\\
				&\lesssim \|f\|_{L^2}\|g\|_{L^2}\|h\|_{L^2},\quad \forall~f,g,h\geq 0.
			\end{aligned}
		\end{equation}	
		Let $f(\tau,\xi) =\chi_{[-2,2]^2}(\tau,\xi)$, $g(\tau,\xi) = \chi_{[N,N+1]}(\xi)\chi_{[0,1]}(\tau-\xi^3)$, and $h(\tau,\xi) =\chi_{[N^3-10N^2,N^3+10N^2]}(\tau) \chi_{[N-10,N+10]}(\xi)$. Then, $\|f\|_{L^2}\|g\|_{L^2}\|h\|_{L^2}\sim N$.
		\begin{align*}
			&\quad\int_{\substack{\xi_1+\xi_2 = \xi,\\\tau_1+\tau_2 = \tau}}\frac{\langle\xi\rangle^{s_1}f(\tau_1,\xi_1)g(\tau_2,\xi_2)h(\tau,\xi)}{\langle\xi_1\rangle^{s_1}\langle\xi_2\rangle^{s_2}\langle\tau_1+\xi_1^2\rangle^{b_1}\langle\tau_2-\xi_2^3\rangle^{b_2}\langle\tau+\xi^2\rangle^{1-b_1}}\\
			&\gtrsim \frac{1}{N^{s_2-s_1+3(1-b)}}\int_{\mathbb{R}^4}f(\tau_1,\xi_1)g(\tau_2,\xi_2)~d\tau_1d\tau_2 d\xi_1 d\xi_2 \sim N^{-s_2+s_1-3+3b}.
		\end{align*}
		Thus, $-s_2+s_1-3+3b_1\leq 1$ which means $s_2> s_1-5/2$.
		
		For \eqref{sktosbc}, it is equivalent to 
		\begin{equation}\label{dualsktosd}
			\begin{aligned}
				&\quad\int_{\substack{\xi_1+\xi_2 = \xi,\\\tau_1+\tau_2 = \tau}}\frac{\langle\xi\rangle^{s_1}f(\tau_1,\xi_1)g(\tau_2,\xi_2)h(\tau,\xi)}{\langle\xi_1\rangle^{s_1}\langle\xi_2\rangle^{s_2}\langle\tau_1+\xi_1^2\rangle^{1/2+\varepsilon}\langle\tau_2-\xi_2^3\rangle^{1/2+\varepsilon}\langle\tau+\xi^2\rangle^{1/2-\varepsilon}}\\
				&\lesssim \|f\|_{L^2}\|g\|_{L^2}\|h\|_{L^2},\quad \forall~f,g,h\geq 0.
			\end{aligned}
		\end{equation}
		Let $f(\tau,\xi) = \chi_{[-10,10]^2}(\tau,\xi)$, $h(\tau,\xi) = \chi_{[N-10,N+10]}(\xi)\chi_{[-100,100]}(\tau+\xi^2)$,
		\begin{align*}
			g(\tau,\xi) = \int_{\mathbb{R}^2}f(\tau_1,\xi_1)h(\tau_1+\tau,\xi_1+\xi)~d\tau_1d\xi_1.
		\end{align*}
		$\forall~\xi \in [N-1,N+1]$, $|\tau+\xi^2|\leq N$, one has $g(\tau,\xi)\gtrsim N^{-1}$. Thus, $\|g\|_{L^2}\gtrsim N^{-1/2}$.
		\begin{equation}
			\begin{aligned}
				&\quad\int_{\substack{\xi_1+\xi_2 = \xi,\\\tau_1+\tau_2 = \tau}}\frac{\langle\xi\rangle^{s_1}f(\tau_1,\xi_1)g(\tau_2,\xi_2)h(\tau,\xi)}{\langle\xi_1\rangle^{s_1}\langle\xi_2\rangle^{s_2}\langle\tau_1+\xi_1^2\rangle^{1/2+\varepsilon}\langle\tau_2-\xi_2^3\rangle^{1/2+\varepsilon}\langle\tau+\xi^2\rangle^{1/2-\varepsilon}}\\
				&\gtrsim N^{s_1-s_2-3/2-3\varepsilon} \int_{\mathbb{R}^4}f(\tau_1,\xi_2)h(\tau_1+\tau_2,\xi_1+\xi_2)g(\tau_2,\xi_2)~d\tau_1 d\tau_2 d\xi_1 d\xi_2\\
				&\sim  N^{s_1-s_2-3/2-3\varepsilon}\|g\|_{L^2}^2 \\
				&\gtrsim N^{s_1-s_2-2-3\varepsilon}\|g\|_{L^2}\sim N^{s_1-s_2-2-3\varepsilon}\|f\|_{L^2}\|g\|_{L^2}\|h\|_{L^2}.
			\end{aligned}
		\end{equation}
		Thus, $s_2\geq s_1-2$.
		
		\eqref{sktosbb} is equivalent to
		\begin{equation}\label{dualsktos}
			\begin{aligned}
				&\quad\int_{\substack{\xi_1+\xi_2 = \xi,\\\tau_1+\tau_2 = \tau}}\frac{\langle\xi\rangle^{s_1}f(\tau_1,\xi_1)g(\tau_2,\xi_2)h(\tau,\xi)}{\langle\xi_1\rangle^{s_1}\langle\xi_2\rangle^{s_2}\langle\tau_1+\xi_1^2\rangle^{1/2+\varepsilon}\langle\tau_2-\xi_2^3\rangle^{1/2+\varepsilon}\langle\tau+\xi^2\rangle^{1/2-2\varepsilon}}\\
				&\lesssim \|f\|_{L^2}\|g\|_{L^2}\|h\|_{L^2},\quad \forall~f,g,h\geq 0.
			\end{aligned}
		\end{equation}			
		By Lemma 3.1 in \cite{corcho2007well}, \eqref{dualsktos} holds for $(s_1,s_2) = (0,-1)$. If $\eqref{dualsktos}$ holds for $(s_1,s_2)$, then it also holds for $(s_1,\tilde{s}_2)$, $\forall~\tilde{s}_2\geq s_2$ and $(s_1+a,s_2+a)$, $\forall~a>0$. By multi-interpolation, we only need to show the inequality for $(s_1,s_2) = (1,-1)$. 
		
		We decompose the integration into two parts.
		
		\noindent{\hypertarget{cs15}{\bf{Case 1}.}} $\Omega_1$: $\langle\xi\rangle\lesssim \langle\xi_1\rangle$.
		
		\noindent{\hypertarget{cs25}{\bf{Case 2}.}} $\Omega_2$: $\langle\xi\rangle\gg \langle\xi_1\rangle$.
		
		For \hyperlink{cs15}{\textcolor[RGB]{0,0,0}{{\bf{Case 1}}}}, the inequality reduces to $(s_1,s_2) = (0,-1)$ which has been shown in \cite{corcho2007well}. For \hyperlink{cs25}{\textcolor[RGB]{0,0,0}{{\bf{Case 2}}}}, one has $|\xi_2|\sim |\xi|\gg \max\{1,|\xi_1|\}$. Then,
		\begin{align*}
			\langle\tau_1+\xi_1^2\rangle+\langle\tau_2-\xi_2^3\rangle+\langle\tau+\xi^2\rangle\gtrsim |\xi_1^2-\xi_2^3-\xi^2|\sim |\xi|^3.
		\end{align*}
		If $\langle\tau+\xi^2\rangle\gtrsim |\xi|^3$, one has
		\begin{equation*}
			\begin{aligned}
				&\quad\int_{\substack{\xi_1+\xi_2 = \xi,\\\tau_1+\tau_2 = \tau}}\frac{\chi_{\langle\xi\rangle\gg \langle\xi_1\rangle, \langle\tau+\xi^2\rangle\gtrsim |\xi|^3}\langle\xi\rangle f(\tau_1,\xi_1)g(\tau_2,\xi_2)h(\tau,\xi)}{\langle\xi_1\rangle\langle\xi_2\rangle^{-1}\langle\tau_1+\xi_1^2\rangle^{1/2+\varepsilon}\langle\tau_2-\xi_2^3\rangle^{1/2+\varepsilon}\langle\tau+\xi^2\rangle^{1/2-2\varepsilon}}\\
				&\lesssim \int_{\substack{\xi_1+\xi_2 = \xi,\\\tau_1+\tau_2 = \tau}}\frac{\chi_{|\xi|\gg \langle\xi_1\rangle}|\xi|^{1/2+6\varepsilon}f(\tau_1,\xi_1)g(\tau_2,\xi_2)h(\tau,\xi)}{\langle\xi_1\rangle\langle\tau_1+\xi_1^2\rangle^{1/2+\varepsilon}\langle\tau_2-\xi_2^3\rangle^{1/2+\varepsilon}}\\
				&\lesssim \left\|\frac{\chi_{|\xi|\gg \langle\xi_1\rangle}|\xi|^{1/2+6\varepsilon}}{\langle\xi_1\rangle\langle\tau_1+\xi_1^2\rangle^{1/2+\varepsilon}\langle\tau-\tau_1-(\xi-\xi_1)^3\rangle^{1/2+\varepsilon}}\right\|_{L^\infty_{\tau,\xi}L^2_{\tau_1,\xi_1}}\|f\|_{L^2}\|g\|_{L^2}\|h\|_{L^2}\\
				&\lesssim \left\|\frac{\chi_{|\xi|\gg \langle\xi_1\rangle}|\xi|^{1/2+6\varepsilon}}{\langle\tau+\xi_1^2-(\xi-\xi_1)^3\rangle^{1/2+\varepsilon}}\right\|_{L^\infty_{\tau,\xi}L^2_{\xi_1}}\|f\|_{L^2}\|g\|_{L^2}\|h\|_{L^2}\\
				&\lesssim \left\|\frac{\chi_{|\xi|\gg \langle\xi_1\rangle}|\xi|^{1/2+6\varepsilon}}{\langle y\rangle^{1/2+\varepsilon}|\xi|}\right\|_{L^\infty_{\xi}L^2_{y}}\|f\|_{L^2}\|g\|_{L^2}\|h\|_{L^2}\lesssim \|f\|_{L^2}\|g\|_{L^2}\|h\|_{L^2}.
			\end{aligned}
		\end{equation*}		
		
		If $\langle\tau_1+\xi_1^2\rangle\gtrsim \max\{\langle\tau+\xi^2\rangle,|\xi|^3\}$, then 
		$$\langle\tau_1+\xi_1^2\rangle^{1/2+\varepsilon}\langle\tau+\xi^2\rangle^{1/2-2\varepsilon}\gtrsim \langle\tau_1+\xi_1^2\rangle^{1/2-2\varepsilon}\langle\tau+\xi^2\rangle^{1/2+\varepsilon}.$$
		By the symmetry between $\xi_1$ and $\xi$, one has the result.
		
		If $\langle\tau_2-\xi_2^3\rangle\gtrsim |\xi|^3$, $\langle\tau+\xi^2\rangle\gtrsim |\xi|$, one has
		\begin{equation*}
			\begin{aligned}
				&\quad\int_{\substack{\xi_1+\xi_2 = \xi,\\\tau_1+\tau_2 = \tau}}\frac{\chi_{\langle\xi\rangle\gg \langle\xi_1\rangle, \langle\tau+\xi^2\rangle\gtrsim |\xi|, \langle\tau_2-\xi_2^3\rangle\gtrsim |\xi|^3}\langle\xi\rangle f(\tau_1,\xi_1)g(\tau_2,\xi_2)h(\tau,\xi)}{\langle\xi_1\rangle\langle\xi_2\rangle^{-1}\langle\tau_1+\xi_1^2\rangle^{1/2+\varepsilon}\langle\tau_2-\xi_2^3\rangle^{1/2+\varepsilon}\langle\tau+\xi^2\rangle^{1/2-2\varepsilon}}\\
				&\lesssim \int_{\substack{\xi_1+\xi_2 = \xi,\\\tau_1+\tau_2 = \tau}}\frac{\langle\xi\rangle^{-\varepsilon}f(\tau_1,\xi_1)g(\tau_2,\xi_2)h(\tau,\xi)}{\langle\xi_1\rangle\langle\tau_1+\xi_1^2\rangle^{1/2+\varepsilon}}\\
				&\lesssim \int_{\mathbb{R}^2}\frac{f(\tau_1,\xi_1)}{\langle\xi_1\rangle\langle\tau_1+\xi_1^2\rangle^{1/2+\varepsilon}}~d\tau_1 d\xi_1\|g\|_{L^2}\|h\|_{L^2}\\
				&\lesssim \|f\|_{L^2}\|g\|_{L^2}\|h\|_{L^2}.
			\end{aligned}
		\end{equation*}	
		
		If $\langle\tau_2-\xi_2^3\rangle\gtrsim |\xi|^3$, $\langle\tau+\xi^2\rangle\ll |\xi|$, one has
		\begin{equation*}
			\begin{aligned}
				&\quad\int_{\substack{\xi_1+\xi_2 = \xi,\\\tau_1+\tau_2 = \tau}}\frac{\chi_{\langle\xi\rangle\gg \langle\xi_1\rangle, \langle\tau+\xi^2\rangle\ll |\xi|, \langle\tau_2-\xi_2^3\rangle\gtrsim |\xi|^3}\langle\xi\rangle f(\tau_1,\xi_1)g(\tau_2,\xi_2)h(\tau,\xi)}{\langle\xi_1\rangle\langle\xi_2\rangle^{-1}\langle\tau_1+\xi_1^2\rangle^{1/2+\varepsilon}\langle\tau_2-\xi_2^3\rangle^{1/2+\varepsilon}\langle\tau+\xi^2\rangle^{1/2-2\varepsilon}}\\
				&\lesssim \int_{\substack{\xi_1+\xi_2 = \xi,\\\tau_1+\tau_2 = \tau}}\frac{\chi_{|\xi|\gg \langle\xi_1\rangle}|\xi|^{1/2}f(\tau_1,\xi_1)g(\tau_2,\xi_2)h(\tau,\xi)}{\langle\xi_1\rangle\langle\tau_1+\xi_1^2\rangle^{1/2+\varepsilon}\langle\tau+\xi^2\rangle^{1/2+\varepsilon}}\\
				&\lesssim \left\|\frac{\chi_{|\xi|\sim|\xi_2|\gg 1}|\xi_2|^{1/2}}{\langle\tau-\tau_2+(\xi-\xi_2)^2\rangle^{1/2+\varepsilon}\langle\tau+\xi^2\rangle^{1/2+\varepsilon}}\right\|_{L^\infty_{\tau_2,\xi_2}L^2_{\xi ,\tau}} \|f\|_{L^2}\|g\|_{L^2}\|h\|_{L^2}\\
				&\lesssim \left\|\frac{\chi_{|\xi|\sim|\xi_2|\gg 1}|\xi_2|^{1/2}}{\langle-\xi^2-\tau_2+(\xi-\xi_2)^2\rangle^{1/2+\varepsilon}}\right\|_{L^\infty_{\tau_2,\xi_2}L^2_{\xi}} \|f\|_{L^2}\|g\|_{L^2}\|h\|_{L^2}\\
				&\lesssim \left\|\frac{\chi_{|\xi_2|\gg 1}|\xi_2|^{1/2}}{|\xi_2|^{1/2}\langle y\rangle^{1/2+\varepsilon}}\right\|_{L^\infty_{\xi_2}L^2_{y}} \|f\|_{L^2}\|g\|_{L^2}\|h\|_{L^2}\lesssim \|f\|_{L^2}\|g\|_{L^2}\|h\|_{L^2}.
			\end{aligned}
		\end{equation*}	
		We finish the proof of this lemma.
	\end{proof}
	
	\begin{lemma}\label{sstkdv}
		Let $s_1\geq 0$. If there exists $b_2>1/2$, $b_1\in\mathbb{R}$ such that
		\begin{align}\label{conutsstokdv}
			\|\partial_x(u\bar{v})\|_{Y^{s_2,b_2-1}}\lesssim \|u\|_{X^{s_1,b_1}}\|v\|_{X^{s_1,b_1}}
		\end{align}
		then $s_2<\min\{4s_1,s_1+1\}$. Conversely, if $s_2<\min\{4s_1,s_1+1\}$, there exists $\varepsilon_0(s_1,s_2)>0$ such that  $\forall~0<\varepsilon<\varepsilon_0$, one has
		\begin{align}\label{sstokdvbou}
			\|\partial_x(u\bar{v})\|_{Y^{s_2,-1/2+2\varepsilon}}\lesssim \|u\|_{X^{s_1,1/2+\varepsilon}}\|v\|_{X^{s_1,1/2+\varepsilon}}.
		\end{align}
	\end{lemma}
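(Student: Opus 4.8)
The plan is to dualize the estimate exactly as in the proof of Lemma~\ref{sktosb}. Writing $u,\bar v$ through their space--time Fourier transforms and using the definitions of $X^{s,b},Y^{s,b}$, the bound \eqref{conutsstokdv} (resp.\ \eqref{sstokdvbou}) is equivalent to the trilinear estimate
\[
\int_{\substack{\xi_1-\xi_2=\xi,\\\tau_1-\tau_2=\tau}}\frac{|\xi|\,\langle\xi\rangle^{s_2}\,f(\tau_1,\xi_1)g(\tau_2,\xi_2)h(\tau,\xi)}{\langle\xi_1\rangle^{s_1}\langle\xi_2\rangle^{s_1}\langle\tau_1+\xi_1^2\rangle^{b_1}\langle\tau_2+\xi_2^2\rangle^{b_1}\langle\tau-\xi^3\rangle^{1-b_2}}\lesssim\|f\|_{L^2}\|g\|_{L^2}\|h\|_{L^2},
\]
for all $f,g,h\ge0$, the reflection $\xi_2\mapsto-\xi_2,\ \tau_2\mapsto-\tau_2$ coming from the conjugate $\bar v$. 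The algebraic backbone is the resonance identity
\[
(\tau-\xi^3)-(\tau_1+\xi_1^2)+(\tau_2+\xi_2^2)=\xi(\xi-2\xi_1-\xi^2)=:H,
\]
i.e.\ the same resonance function $\xi-2\xi_1-\xi^2$ already met in \eqref{sstokdvv}; hence $\max\{\langle\tau_1+\xi_1^2\rangle,\langle\tau_2+\xi_2^2\rangle,\langle\tau-\xi^3\rangle\}\gtrsim\langle H\rangle$.

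For the necessary part I would test the trilinear form against two frequency configurations, both concentrated on the characteristic curves so that the input weights are $\sim1$ and the conclusion is independent of $b_1$. First, to force $s_2<s_1+1$, take $f$ a unit box on the Schr\"odinger curve at $|\xi_1|\sim N$ and $g$ a unit box at $|\xi_2|\sim1$; then $|\xi|\sim N$, the resonance is $|H|\sim N^3$, and, crucially, the output is smeared over a $\tau$-interval of length $\sim N$ (the convolution of two curved unit boxes), so that $\|h\|_{L^2}$ picks up an extra $N^{-1/2}$. Computing the three norms gives a ratio $\sim N^{\,s_2-s_1+3b_2-5/2}$, and letting $b_2\downarrow1/2$ yields $s_2<s_1+1$. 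Second, to force $s_2<4s_1$, I would place $u$ and $\bar v$ at frequencies $|\xi_1|\sim|\xi_2|\sim M^2$ near an exact zero of $\xi-2\xi_1-\xi^2$ (so the output frequency is $|\xi|\sim M$), choosing the $\xi_1$-boxes of width $\sim M^{-2}$ so that $|H|\lesssim1$ over the whole support; then all three modulations stay $\sim1$, the estimate becomes independent of both $b_1,b_2$, and the norm ratio is $\sim M^{\,s_2-4s_1}$, giving $s_2\le4s_1$; the strict inequality follows by a standard dyadic superposition upgrading the borderline to a logarithmic divergence.

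For the sufficient part I would prove the dual trilinear estimate with $b_1=1/2+\varepsilon$ and output exponent $1-b_2=1/2-2\varepsilon$ by the usual split on the largest modulation, after first disposing of $|\xi|\lesssim1$ (where $|\xi|\langle\xi\rangle^{s_2}\sim1$ and Cauchy--Schwarz suffices, exactly as in the $|\xi|\lesssim1$ case of Lemma~\ref{ssstos}). When one of the input modulations dominates, its weight carries the larger power $1/2+\varepsilon\ge1/2$, so $\langle H\rangle^{1/2}$ can be absorbed and the remaining kernel is controlled in $L^\infty_{\tau,\xi}L^2$ after integrating out the harmless modulation variable, as in Case~2 of Lemma~\ref{sktosb}; these cases use $s_2<s_1+1$ only through the high--low region $|\xi_2|\lesssim1$, where $|H|\sim|\xi|^3$.

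The hard part is the region where the \emph{output} modulation dominates while $H$ is near zero, i.e.\ the resonant region $|\xi_1|\sim|\xi_2|\sim|\xi|^2$, $|\xi|\gg1$, in which all three modulations are comparable to $1$: there $\langle\tau-\xi^3\rangle$ supplies no gain and one must exploit the degeneracy of the phase directly. After integrating out the modulation variables with the input weights, fixing $\xi_1$ and changing variables $y=\xi-2\xi_1-\xi^2$ in the output frequency, the Jacobian degenerates as $|dy|\sim|1-2\xi|\,|d\xi|\sim|\xi_1|^{1/2}|d\xi|\sim|\xi|\,|d\xi|$ --- precisely the mechanism used in Case~1.2/2.2 of Lemma~\ref{ssstos}. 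This degeneracy produces exactly the gain that offsets the $\partial_x$ factor $|\xi|$ together with the frequency weight $|\xi|^{\,1+s_2-4s_1}$, the two balancing precisely under $s_2<4s_1$, and Cauchy--Schwarz in the input variables then closes the estimate. I expect this resonant, output-dominant case --- and the bookkeeping of the $\varepsilon$-powers needed to make the $\tau$-integrals converge --- to be the only genuinely delicate point; everything else parallels Lemmas~\ref{ssstos} and \ref{sktosb}.
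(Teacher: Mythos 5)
Your dualization, the resonance identity $(\tau-\xi^3)-(\tau_1+\xi_1^2)+(\tau_2+\xi_2^2)=\xi(\xi-2\xi_1-\xi^2)$, the counterexample forcing $s_2<s_1+1$ (with the $N^{-1/2}$ smearing of $h$ and the ratio $N^{s_2-s_1+3b_2-5/2}$), and the sufficiency scheme (disposing of $|\xi|\lesssim 1$, splitting on the dominant modulation, and extracting the Jacobian gain $|dy|\sim|1-2\xi|\,|d\xi|\sim|\xi|\,|d\xi|$ in the resonant region $|\xi_1|\sim|\xi_2|\sim|\xi|^2$) all coincide with the paper's proof. The genuine gap is in the necessity of the \emph{strict} inequality $s_2<4s_1$. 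Your fully coherent configuration --- boxes of $\xi$-width $\sim M^{-2}$ chosen so that all three modulations stay $\sim 1$ --- can only yield $s_2\le 4s_1$, and the proposed ``standard dyadic superposition'' cannot upgrade this to a strict inequality: for a trilinear form with nonnegative kernel, stacking $J$ frequency-disjoint normalized coherent pieces bounds the form below by $\sim J$ while $\|f\|_{L^2}\|g\|_{L^2}\|h\|_{L^2}\sim J^{3/2}$, so no logarithmic divergence appears (the usual log-stacking needs two of the three functions to be held fixed across scales, which is impossible here because the resonant configurations at different dyadic scales have disjoint frequency supports). More structurally, on your example every modulation weight is $O(1)$, so the family is blind to the value of $b_2$; but the strictness of $s_2<4s_1$ is exactly the trace of the hypothesis $b_2>1/2$ in \eqref{conutsstokdv}, hence no bounded-modulation, $b_2$-independent family of test functions can ever establish it.

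The paper closes this case by thickening the boxes: $f$ and $g$ are supported on $\xi$-intervals of width $\sim N^{-1}$ about $(-N^2+N)/2$ and $(N^2+N)/2$ on the two Schr\"odinger characteristics, so that $h=f*g$ is concentrated at $|\xi-N|\lesssim N^{-1}$ but with output modulation $\langle\tau-\xi^3\rangle$ spread over the full range up to $\sim N$ (the derivative $|1-2\xi|\sim N$ converts $\xi$-width $N^{-1}$ into modulation spread $\sim N$). The weight $\langle\tau-\xi^3\rangle^{-(1-b_2)}$ then costs only $N^{-(1-b_2)}$, and with $\|h\|_{L^2}\gtrsim N^{-3/2}$, $\|f\|_{L^2}\sim\|g\|_{L^2}\sim N^{-1/2}$, the ratio becomes $N^{s_2-4s_1+b_2-1/2}$, which forces $s_2<4s_1$ precisely because $b_2>1/2$. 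Two smaller points: in your first example the phrase ``letting $b_2\downarrow 1/2$'' is not available since $b_2$ is fixed by hypothesis, but the exponent inequality $s_2-s_1+3b_2-5/2\le 0$ with that fixed $b_2>1/2$ already gives strictness, so this is harmless; and in the sufficiency part the constraint $s_2<s_1+1$ is active throughout the nonresonant region $\langle\xi\rangle^4\gg\langle\xi_1\rangle\langle\xi_2\rangle$ at the interpolation endpoint $(s_1,s_2)=(1/3,4/3-\delta)$, not only in the high--low region $|\xi_2|\lesssim 1$, and the reduction by multilinear interpolation to the two points $(0,-\delta)$ and $(1/3,4/3-\delta)$ --- which your sketch leaves implicit --- is what makes the case analysis finite.
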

	\begin{proof}[\textbf{Proof}]
		By the definition of $X^{s,b},Y^{s,b}$ and duality, \eqref{conutsstokdv} is equivalent to
		\begin{equation*}
			\begin{aligned}
				&\quad\int_{\substack{\xi_1+\xi_2 = \xi,\\\tau_1+\tau_2 = \tau}}\frac{|\xi|\langle\xi\rangle^{s_2}f(\tau_1,\xi_1)g(\tau_2,\xi_2)h(\tau,\xi)}{\langle\xi_1\rangle^{s_1}\langle\xi_2\rangle^{s_1}\langle\tau_1+\xi_1^2\rangle^{b_1}\langle\tau_2-\xi_2^2\rangle^{b_1}\langle\tau-\xi^3\rangle^{1-b_2}}\\
				&\lesssim \|f\|_{L^2}\|g\|_{L^2}\|h\|_{L^2},\quad \forall~f,g,h\geq 0.
			\end{aligned}
		\end{equation*}	
		Let $f(\tau,\xi)=\chi_{[-10,10]^2}(\tau,\xi)$, $g(\tau,\xi) = \chi_{[N-10,N+10]}(\xi)\chi_{[-100,100]}(\tau-\xi^2)$, $h = f*g$. Then,
		\begin{align*}
			&\quad\int_{\substack{\xi_1+\xi_2 = \xi,\\\tau_1+\tau_2 = \tau}}\frac{|\xi|\langle\xi\rangle^{s_2}f(\tau_1,\xi_1)g(\tau_2,\xi_2)h(\tau,\xi)}{\langle\xi_1\rangle^{s_1}\langle\xi_2\rangle^{s_1}\langle\tau_1+\xi_1^2\rangle^{b_1}\langle\tau_2-\xi_2^2\rangle^{b_1}\langle\tau-\xi^3\rangle^{1-b_2}}\\
			&\sim N^{1+s_2-s_1-3(1-b_2)}\int_{\substack{\xi_1+\xi_2 = \xi,\\\tau_1+\tau_2 = \tau}}f(\tau_1,\xi_1)g(\tau_2,\xi_2)h(\tau,\xi)\\
			&\sim N^{s_2-s_1-2+3b_2}\|h\|_{L^2}^2.
		\end{align*}
		Since $\|h\|_{L^2}\gtrsim N^{-1/2}$ and  $\|f\|_{L^2}\sim \|g\|_{L^2}\sim 1$, we obtain $$N^{s_2-s_1-2+3b_2}\|h\|_{L^2}^2\gtrsim N^{s_2-s_1+3b_2-5/2}\|f\|_{L^2}\|g\|_{L^2}\|h\|_{L^2}.$$
		Thus, $s_2<s_1+1$.
		
		Let $f(\tau,\xi) = \chi_{[(-N^2+N)/2-10N^{-1},(-N^2+N)/2+10N^{-1}]}(\xi)\chi_{[-10,10]}(\tau+\xi^2)$, $g(\tau,\xi) = \chi_{[-100,100]}(\tau-\xi^2)\chi_{[(N^2+N)/2-10N^{-1},(N^2+N)/2+10N^{-1}]}(\xi)$, $h = f*g$. For $|\xi-N|\leq N^{-1}$, $|\tau-\xi^2-N^2\xi|\leq 1$, one has
		\begin{align*}
			h(\tau,\xi)\gtrsim N^{-1}.
		\end{align*}			
		Thus, $\|h\|_{L^2}\gtrsim N^{-3/2}$. On the support of $h(\tau,\xi)$, one has$\langle\tau-\xi^3\rangle\lesssim N$.
		\begin{align*}
			&\quad\int_{\substack{\xi_1+\xi_2 = \xi,\\\tau_1+\tau_2 = \tau}}\frac{|\xi|\langle\xi\rangle^{s_2}f(\tau_1,\xi_1)g(\tau_2,\xi_2)h(\tau,\xi)}{\langle\xi_1\rangle^{s_1}\langle\xi_2\rangle^{s_1}\langle\tau_1+\xi_1^2\rangle^{b_1}\langle\tau_2-\xi_2^2\rangle^{b_1}\langle\tau-\xi^3\rangle^{1-b_2}}\\
			&\gtrsim N^{1+s_2-4s_1-(1-b_2)}\int_{\substack{\xi_1+\xi_2 = \xi,\\\tau_1+\tau_2 = \tau}}f(\tau_1,\xi_1)g(\tau_2,\xi_2)h(\tau,\xi)\\
			&\sim N^{s_2-4s_1+b_2}\|h\|_{L^2}^2
			\\
			&\gtrsim N^{s_2-4s_1-3/2+b_2}\|h\|_{L^2}\sim N^{s_2-4s_1-1/2+b_2}\|f\|_{L^2}\|g\|_{L^2}\|h\|_{L^2}.
		\end{align*}
		Thus, $s_2<4s_1$ since $b_2>1/2$.
		
		If $\eqref{sstokdvbou}$ holds for $(s_1,s_2)$, then it also holds for $(s_1,\tilde{s}_2)$, $\forall~\tilde{s}_2\leq s_2$ and $(s_1+a,s_2+a)$, $\forall~a>0$. By multi-interpolation, we only need to show the inequality for $(s_1,s_2) = (0,-\delta)$ and $(1/3, 4/3-\delta)$ where $0<\delta\ll 1$. 
		
		For $(s_1,s_2) = (0,-\delta)$, \eqref{sstokdvbou} is equivalent to 
		\begin{align*}
			&\quad\int_{\substack{\xi_1+\xi_2 = \xi,\\\tau_1+\tau_2 = \tau}}\frac{|\xi|f(\tau_1,\xi_1)g(\tau_2,\xi_2)h(\tau,\xi)}{\langle\xi\rangle^{\delta}\langle\tau_1+\xi_1^2\rangle^{1/2+\varepsilon}\langle\tau_2-\xi_2^2\rangle^{1/2+\varepsilon}\langle\tau-\xi^3\rangle^{1/2-2\varepsilon}}\\
			&\lesssim \|f\|_{L^2}\|g\|_{L^2}\|h\|_{L^2},\quad \forall~f,g,h\geq 0.
		\end{align*}
		For $|\xi|\lesssim 1$, one has
		\begin{align*}
			&\quad\int_{\substack{\xi_1+\xi_2 = \xi,\\\tau_1+\tau_2 = \tau}}\frac{\chi_{|\xi|\lesssim 1}|\xi|f(\tau_1,\xi_1)g(\tau_2,\xi_2)h(\tau,\xi)}{\langle\xi\rangle^{\delta}\langle\tau_1+\xi_1^2\rangle^{1/2+\varepsilon}\langle\tau_2-\xi_2^2\rangle^{1/2+\varepsilon}\langle\tau-\xi^3\rangle^{1/2-2\varepsilon}}\\
			&\lesssim \int_{\substack{\xi_1+\xi_2 = \xi,\\\tau_1+\tau_2 = \tau}}\frac{f(\tau_1,\xi_1)g(\tau_2,\xi_2)h(\tau,\xi)}{\langle\tau_1+\xi_1^2\rangle^{1/2+\varepsilon}\langle\tau_2-\xi_2^2\rangle^{1/2+\varepsilon}}\\
			&\lesssim \|\mathscr{F}^{-1}(f(\tau,\xi)/\langle\tau+\xi^2\rangle^{1/2+\varepsilon})\|_{L^4_{t,x}}\|\mathscr{F}^{-1}(g(\tau,\xi)/\langle\tau-\xi^2\rangle^{1/2+\varepsilon})\|_{L^4_{t,x}}\|h\|_{L^2}\\
			&\lesssim \|f\|_{L^2}\|g\|_{L^2}\|h\|_{L^2}.
		\end{align*}
		If $|\xi|\gg 1$ and $\max\{\langle\tau_1+\xi_1^2\rangle,\langle\tau_2-\xi_2^2\rangle,\langle\tau-\xi^3\rangle\}\gtrsim |\xi|^2$,
		then for $\varepsilon\leq \delta/4$, we have
		\begin{align*}
			&\frac{|\xi|}{\langle\xi\rangle^{\delta}\langle\tau_1+\xi_1^2\rangle^{1/2+\varepsilon}\langle\tau_2-\xi_2^2\rangle^{1/2+\varepsilon}\langle\tau-\xi^3\rangle^{1/2-2\varepsilon}}\\
			&\lesssim \frac{\chi_{|\xi|\gg 1}(\langle\tau_1+\xi_1^2\rangle^{1/2+\varepsilon}+\langle\tau_2-\xi_2^2\rangle^{\frac{1}{2}+\varepsilon}+\langle\tau-\xi^3\rangle^{1/2+\varepsilon})}{\langle\tau_1+\xi_1^2\rangle^{1/2+\varepsilon}\langle\tau_2-\xi_2^2\rangle^{1/2+\varepsilon}\langle\tau-\xi^3\rangle^{1/2+\varepsilon}}
		\end{align*}
		By Strichartz estimate, we obtain the result.
		
		If $|\xi|\gg 1$ and $\max\{\langle\tau_1+\xi_1^2\rangle,\langle\tau_2-\xi_2^2\rangle,\langle\tau-\xi^3\rangle\}\ll |\xi|^2$, one has
		$$|\xi||\xi_1-\xi_2+\xi^2| = |\xi_1^2-\xi_2^2+\xi^3|\lesssim \max\{\langle\tau_1+\xi_1^2\rangle,\langle\tau_2-\xi_2^2\rangle,\langle\tau-\xi^3\rangle\}\ll |\xi|^2.$$
		Thus, $|\xi_1-\xi_2+\xi^2|\ll |\xi|$ which means $|\xi_1|\sim |\xi_2|\sim |\xi|^2$. For $\varepsilon\leq \delta/6$, one has
		\begin{align*}
			&\quad\int_{\substack{\xi_1+\xi_2 = \xi,\\\tau_1+\tau_2 = \tau}}\frac{\chi_{|\xi|\gg 1,\max\{\langle\tau_1+\xi_1^2\rangle,\langle\tau_2-\xi_2^2\rangle,\langle\tau-\xi^3\rangle\}\ll |\xi|^2 }|\xi|f(\tau_1,\xi_1)g(\tau_2,\xi_2)h(\tau,\xi)}{\langle\xi\rangle^{\delta}\langle\tau_1+\xi_1^2\rangle^{1/2+\varepsilon}\langle\tau_2-\xi_2^2\rangle^{1/2+\varepsilon}\langle\tau-\xi^3\rangle^{1/2-2\varepsilon}}\\
			&\lesssim \int_{\substack{\xi_1+\xi_2 = \xi,\\\tau_1+\tau_2 = \tau}}\frac{\chi_{\max\{1,|\xi_1-\xi_2+\xi^2|\}\ll |\xi|}|\xi|^{1-\delta+6\varepsilon}f(\tau_1,\xi_1)g(\tau_2,\xi_2)h(\tau,\xi)}{\langle\tau_1+\xi_1^2\rangle^{1/2+\varepsilon}\langle\tau_2-\xi_2^2\rangle^{1/2+\varepsilon}\langle\tau-\xi^3\rangle^{1/2+\varepsilon}}\\
			&\lesssim \int_{\mathbb{R}^2}f(\tau_1,\xi_1) \|g(\tau-\tau_1,\xi-\xi_1)h(\tau,\xi)\|_{L^2_{\tau,\xi}}~d\tau_1 d\xi_1\\
			&\quad \cdot \left\|\frac{|\xi_1|^{(1-\delta+6\varepsilon)/2} \chi_{\max\{1,|2\xi_1-\xi+\xi^2|\}\ll |\xi|}}{\langle\tau-\tau_1-(\xi-\xi_1)^2\rangle^{1/2+\varepsilon}\langle\tau-\xi^3\rangle^{1/2+\varepsilon}}\right\|_{L^\infty_{\tau_1,\xi_1}L^2_{\tau,\xi}}\\
			&\lesssim \left\|\frac{|\xi_1|^{(1-\delta+6\varepsilon)/2}\chi_{\max\{1,|2\xi_1-\xi+\xi^2|\}\ll |\xi|}}{\langle\xi^3-\tau_1-(\xi-\xi_1)^2\rangle^{1/2+\varepsilon}}\right\|_{L^\infty_{\tau_1,\xi_1}L^2_{\xi}}\|f\|_{L^2}\|g\|_{L^2}\|h\|_{L^2} \\
			&\lesssim \left\|\frac{|\xi_1|^{(1-\delta+6\varepsilon)/2}\chi_{|\xi_1|\gg 1}}{\langle y\rangle^{1/2+\varepsilon}|\xi_1|^{1/2}}\right\|_{L^\infty_{\xi_1}L^2_{y}}\|f\|_{L^2}\|g\|_{L^2}\|h\|_{L^2} \lesssim \|f\|_{L^2}\|g\|_{L^2}\|h\|_{L^2}.
		\end{align*}
		
		For $(s_1,s_2) = (1/3,4/3-\delta)$, if $\langle\xi\rangle^4\lesssim \langle\xi_1\rangle\langle\xi_2\rangle$, one can reduce this case to $(s_1,s_2) = (0,-\delta)$. Thus, we only consider the case $\langle\xi\rangle^4\gg \langle\xi_1\rangle\langle\xi_2\rangle$. Thus, $|\xi|\gg \max\{1,|\xi_1|^{1/2}\}$.
		$$ \max\{\langle\tau_1+\xi_1^2\rangle,\langle\tau_2-\xi_2^2\rangle,\langle\tau-\xi^3\rangle\}\gtrsim |\xi||2\xi_1-\xi+\xi^2|\sim |\xi|^3.$$
		If $\langle\tau-\xi^3\rangle = \max\{\langle\tau_1+\xi_1^2\rangle,\langle\tau_2-\xi_2^2\rangle,\langle\tau-\xi^3\rangle\}$, one has
		\begin{align*}
			&\quad\chi_{\langle\xi\rangle^4\gg \langle\xi_1\rangle\langle\xi_2\rangle}\frac{|\xi|\langle\xi\rangle^{4/3-\delta}}{\langle\xi_1\rangle^{1/3}\langle\xi_2\rangle^{1/3}}\frac{1}{\langle\tau_1+\xi_1^2\rangle^{1/2+\varepsilon}\langle\tau_2-\xi_2^2\rangle^{1/2+\varepsilon}\langle\tau-\xi^3\rangle^{1/2-2\varepsilon}}\\
			&\lesssim \chi_{|\xi|\gg \max\{1,|\xi_1|^{1/2}\}}\frac{|\xi|^{1/2-\delta+6\varepsilon}}{\langle\tau_1+\xi_1^2\rangle^{1/2+\varepsilon}\langle\tau_2-\xi_2^2\rangle^{1/2+\varepsilon}}.
		\end{align*}
		For $\varepsilon\leq \delta/6$, by Cauchy-Schwarz inequality, one can control this part by
		\begin{align*}
			&\quad\left\| \frac{\chi_{|\xi|\gg \max\{1,|\xi_1|^{1/2}\}}|\xi|^{1/2-\delta+6\varepsilon}}{\langle\tau_1+\xi_1^2\rangle^{1/2+\varepsilon}\langle\tau-\tau_1-(\xi-\xi_1)^2\rangle^{1/2+\varepsilon}}\right\|_{L^\infty_{\tau,\xi}L^2_{\xi_1,\tau_1}}\|f\|_{L^2}\|g\|_{L^2}\|h\|_{L^2}\\
			&\lesssim \left\| \frac{\chi_{|\xi|\gg \max\{1,|\xi_1|^{1/2}\}}|\xi|^{1/2-\delta+6\varepsilon}}{\langle\tau+\xi_1^2-(\xi-\xi_1)^2\rangle^{1/2+\varepsilon}}\right\|_{L^\infty_{\tau,\xi}L^2_{\xi_1}}\|f\|_{L^2}\|g\|_{L^2}\|h\|_{L^2} \\
			&\lesssim \left\| \frac{\chi_{|\xi|\gg1 }|\xi|^{1/2-\delta+6\varepsilon}}{\langle y\rangle^{1/2+\varepsilon}|\xi|^{1/2}}\right\|_{L^\infty_{\xi}L^2_{y}}\|f\|_{L^2}\|g\|_{L^2}\|h\|_{L^2} \\
			&\lesssim\|f\|_{L^2}\|g\|_{L^2}\|h\|_{L^2} .
		\end{align*}
		If $\langle \tau_1+\xi_1^2\rangle = \max\{\langle\tau_1+\xi_1^2\rangle,\langle\tau_2-\xi_2^2\rangle,\langle\tau-\xi^3\rangle\}$, one has
		\begin{align*}
			&\quad\chi_{\langle\xi\rangle^4\gg \langle\xi_1\rangle\langle\xi_2\rangle}\frac{|\xi|\langle\xi\rangle^{4/3-\delta}}{\langle\xi_1\rangle^{1/3}\langle\xi_2\rangle^{1/3}}\frac{1}{\langle\tau_1+\xi_1^2\rangle^{1/2+\varepsilon}\langle\tau_2-\xi_2^2\rangle^{1/2+\varepsilon}\langle\tau-\xi^3\rangle^{1/2-2\varepsilon}}\\
			&\lesssim\chi_{\langle\xi\rangle^4\gg \langle\xi_1\rangle\langle\xi_2\rangle}\frac{|\xi|\langle\xi\rangle^{4/3-\delta}}{\langle\xi_1\rangle^{1/3}\langle\xi_2\rangle^{1/3}}\frac{1}{\langle\tau_1+\xi_1^2\rangle^{\frac{1}{2}-2\varepsilon}\langle\tau_2-\xi_2^2\rangle^{1/2+\varepsilon}\langle\tau-\xi^3\rangle^{1/2+\varepsilon}}\\
			&\lesssim \chi_{|\xi|\gg \max\{1,|\xi_1|^{1/2}\}}\frac{|\xi|^{1/2-\delta+6\varepsilon}}{\langle\tau_2-\xi_2^2\rangle^{1/2+\varepsilon}\langle\tau-\xi^3\rangle^{1/2+\varepsilon}}.
		\end{align*}
		Let $y(\xi) = \xi^3-\tau_1-(\xi-\xi_1)^2$. Then, $|dy| = |(3\xi^2-2(\xi-\xi_1))d\xi|\sim |\xi|^2|d\xi|$ since $|\xi|\gg \max\{1,|\xi_1|^{1/2}\}$. Let $\xi(y)$ be the inverse function of $y(\xi)$.  By Cauchy-Schwarz inequality, one can control this part by
		\begin{align*}
			&\quad\left\| \frac{\chi_{|\xi|\gg \max\{1,|\xi_1|^{1/2}\}}|\xi|^{1/2-\delta+6\varepsilon}}{\langle\tau-\tau_1-(\xi-\xi_1)^2\rangle^{1/2+\varepsilon}\langle\tau-\xi^3\rangle^{1/2+\varepsilon}}\right\|_{L^\infty_{\tau_1,\xi_1}L^2_{\xi,\tau}}\|f\|_{L^2}\|g\|_{L^2}\|h\|_{L^2}\\
			&\lesssim \left\| \frac{\chi_{|\xi|\gg \max\{1,|\xi_1|^{1/2}\}}|\xi|^{1/2-\delta+6\varepsilon}}{\langle\xi^3-\tau_1-(\xi-\xi_1)^2\rangle^{1/2+\varepsilon}}\right\|_{L^\infty_{\tau_1,\xi_1}L^2_{\xi}}\|f\|_{L^2}\|g\|_{L^2}\|h\|_{L^2} \\
			&\lesssim \left\| \frac{\chi_{|\xi(y)|\gg1 }|\xi(y)|^{1/2-\delta+6\varepsilon}}{\langle y\rangle^{1/2+\varepsilon}|\xi(y)|}\right\|_{L^\infty_{\xi}L^2_{y}}\|f\|_{L^2}\|g\|_{L^2}\|h\|_{L^2} \\
			&\lesssim\|f\|_{L^2}\|g\|_{L^2}\|h\|_{L^2} .
		\end{align*}
		Due to the symmtry between $\xi_1$ and $\xi_2$, the same argument works when $\langle \tau_2-\xi_2^2\rangle = \max\{\langle\tau_1+\xi_1^2\rangle,\langle\tau_2-\xi_2^2\rangle,\langle\tau-\xi^3\rangle\}$.		
		We finish the proof.
	\end{proof}
	
	\begin{lemma}\label{purekdv}
		If there exists $1/2<b<\tilde{b}$ such that
		\begin{align}\label{kkk}
			\|\partial_x(uv)\|_{Y^{s,\tilde{b}-1}}\lesssim \|u\|_{Y^{s,b}}\|v\|_{Y^{s,b}}
		\end{align}
		then $s>-3/4$, $b<3/4+s/3$. Conversely, if $s>-3/4$, $1/2<b<\min\{3/4+s/3,3/4\}$, then there exists $\tilde{b}>b$ such that \eqref{kkk} holds.
	\end{lemma}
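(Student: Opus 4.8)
The plan is to mirror Lemmas~\ref{sktosb}--\ref{sstkdv}: dualize to a positive trilinear form and push everything through the Airy resonance identity. With $f=\langle\xi\rangle^{s}\langle\tau-\xi^{3}\rangle^{b}\hat u$, $g=\langle\xi\rangle^{s}\langle\tau-\xi^{3}\rangle^{b}\hat v$ and a test function $h\ge0$, \eqref{kkk} is equivalent to
\begin{align*}
    \int_{\substack{\xi_1+\xi_2=\xi,\\\tau_1+\tau_2=\tau}}\frac{|\xi|\langle\xi\rangle^{s}\,f(\tau_1,\xi_1)g(\tau_2,\xi_2)h(\tau,\xi)}{\langle\xi_1\rangle^{s}\langle\xi_2\rangle^{s}\langle\tau_1-\xi_1^{3}\rangle^{b}\langle\tau_2-\xi_2^{3}\rangle^{b}\langle\tau-\xi^{3}\rangle^{1-\tilde b}}\lesssim\|f\|_{L^2}\|g\|_{L^2}\|h\|_{L^2}.
\end{align*}
The sole algebraic input is $(\tau-\xi^{3})-(\tau_1-\xi_1^{3})-(\tau_2-\xi_2^{3})=-3\xi\xi_1\xi_2$, so that $\max\{\langle\tau-\xi^{3}\rangle,\langle\tau_1-\xi_1^{3}\rangle,\langle\tau_2-\xi_2^{3}\rangle\}\gtrsim|\xi\xi_1\xi_2|$.

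For the \textbf{necessity} I would test a single family exploiting the degeneracy of $3\xi\xi_1\xi_2$ on the diagonal $\xi_1=\xi_2$. Fix $N\gg1$ and set $\hat u=\hat v=\chi_{\{\xi\in[N,N+N^{-1/2}],\,|\tau-\xi^{3}|\le1\}}$, so $\|f\|_{L^2}=\|g\|_{L^2}\sim N^{-1/4}$ and both inputs have modulation $O(1)$. Since $\partial_{\xi_1}\big(3\xi\xi_1(\xi-\xi_1)\big)=3\xi(\xi-2\xi_1)$ vanishes at $\xi_1=\xi/2$, along each fixed output frequency $\xi\approx2N$ the resonance $3\xi\xi_1\xi_2$ varies by only $O\!\big(N\cdot(N^{-1/2})^{2}\big)=O(1)$; the whole frequency box therefore contributes coherently, while the output modulation is pinned at $\langle\tau-\xi^{3}\rangle\sim N^{3}$. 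Taking $h$ to be the indicator of the output region one finds $\|f\|_{L^2}\|g\|_{L^2}\|h\|_{L^2}\sim N^{-3/4}$ while the trilinear integral is $\sim N^{3\tilde b-s-3}$, so boundedness forces $3\tilde b-s\le9/4$, i.e.\ $\tilde b\le\tfrac34+\tfrac s3$. As $1/2<b<\tilde b$, this gives $b<\tfrac34+\tfrac s3$ immediately, and letting $b\downarrow\tfrac12$ it also yields $s>-3/4$; a single diagonal example thus produces both necessary conditions (the bound $s>-3/4$ also follows from the simpler high--high--to--low configuration $\xi_1\sim N$, $\xi_2\sim-N$).

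For the \textbf{sufficiency} I would, as in the previous lemmas, reduce by interpolation and the monotonicity of these multiplier bounds to the boundary cases of the parameter region, fix $\tilde b$ just above $b$, and perform a Littlewood--Paley and modulation decomposition. On each dyadic block the resonance identity makes one modulation $\gtrsim|\xi\xi_1\xi_2|$; I would put the two light factors into $L^2$ by Cauchy--Schwarz, retaining the heavy weight to absorb the derivative $|\xi|$ and the frequency ratio $\langle\xi\rangle^{s}\langle\xi_1\rangle^{-s}\langle\xi_2\rangle^{-s}$, and reduce the surviving one-dimensional integral to $\int\langle y\rangle^{-1-2\varepsilon}\,dy\lesssim1$ via the substitution $y=\tau-\xi_1^{3}-(\xi-\xi_1)^{3}$, exactly as for \eqref{sktosbb} and \eqref{sstokdvbou}. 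Here $|\partial_{\xi_1}y|=3|\xi|\,|\xi_1-\xi_2|$. Away from the diagonal $|\xi_1-\xi_2|\sim\max\{|\xi_1|,|\xi_2|\}$, the Jacobian is linear and the computation is the clean one already used above; summing the resulting geometric series in the dyadic frequencies and modulations is precisely what the three hypotheses $s>-3/4$, $b<3/4$, $b<3/4+s/3$ guarantee.

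The genuine obstacle is the \textbf{high--high--to--high near-diagonal} regime $\xi_1\approx\xi_2\approx\xi/2$, where the necessity example concentrates. There $\partial_{\xi_1}y$ vanishes and the substitution yields only a square-root Jacobian $|\partial_{\xi_1}y|\sim|\xi|^{1/2}|y-y_*|^{1/2}$ about the critical value $y_*=y(\xi/2)$, so the $y$-integral must be split into $|y-y_*|\lesssim1$ and $|y-y_*|\gtrsim1$ and the near-diagonal frequencies summed by hand. This degenerate change of variables is where the threshold $b<3/4+s/3$ is saturated, and carrying it out carefully---rather than the routine off-diagonal pieces---is the crux of the proof.
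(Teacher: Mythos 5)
Your necessity argument is correct and is essentially the paper's own: the paper tests the dual trilinear form with $f,g$ supported where $\xi\in[N-O(N^{-1/2}),N+O(N^{-1/2})]$ and $|\tau-\xi^3|=O(1)$, takes $h=f\ast g$ (equivalently your indicator of the output region $\xi\approx 2N$, $\langle\tau-\xi^3\rangle\sim N^3$), and obtains exactly your count: the form is $\gtrsim N^{3\tilde b-s-9/4}\|f\|_{L^2}\|g\|_{L^2}\|h\|_{L^2}$, forcing $\tilde b\le 3/4+s/3$, hence $b<3/4+s/3$, and then $s>-3/4$ from the hypothesis $b>1/2$. Your coherence computation (variation of $3\xi\xi_1\xi_2$ of size $O(1)$ along fixed $\xi$ because $\partial_{\xi_1}$ of the resonance vanishes at $\xi_1=\xi/2$) is the correct justification that an $O(1)$ output modulation window captures the whole box, and your parenthetical high--high--to--low remark is not needed (nor used by the paper).

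For sufficiency you depart from the paper, and this is where your proposal has a genuine gap. The paper does only an interpolation reduction --- to $s=0$, $1/2<b<3/4$, and to $0<s+3/4\ll 1$, $1/2<b<3/4+s/3$ --- and then cites Theorem 2.2 of Kenig--Ponce--Vega \cite{kenig1996bilinear}, which is precisely the sharp bilinear estimate on these parameter ranges; no modulation analysis is performed in the paper at all. You instead sketch a from-scratch proof, and you correctly locate the decisive difficulty: the near-diagonal regime $\xi_1\approx\xi_2\approx\xi/2$, where $y=\tau-\xi_1^3-(\xi-\xi_1)^3$ has $|\partial_{\xi_1}y|\sim|\xi|^{1/2}|y-y_\ast|^{1/2}$ and the threshold $b<3/4+s/3$ is saturated. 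But you then declare carrying this out to be ``the crux'' without doing it: you never perform the singular $y$-integrals (one needs calculus bounds of Kenig--Ponce--Vega Lemma~2.3 type, e.g. $\int\langle y\rangle^{-2b}|y-y_\ast|^{-1/2}\,dy\lesssim\langle y_\ast\rangle^{-1/2}$ for $b>1/2$), you do not split according to which of the three modulations is maximal --- the degenerate change of variables must be run in different variables when the largest modulation sits on an input factor rather than on the output --- and you never exhibit the choice of $\tilde b>b$ or the dyadic summations in which $s>-3/4$ and $b<\min\{3/4+s/3,3/4\}$ are actually consumed. As written, the sufficiency half is an accurate roadmap to the Kenig--Ponce--Vega proof rather than a proof; either execute the near-diagonal case in full or, as the paper does, reduce by interpolation and quote \cite{kenig1996bilinear}.
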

	\begin{proof}[\textbf{Proof}]
		\eqref{kkk} is equivalent to
		\begin{align*}
			&\quad\int_{\substack{\xi_1+\xi_2 = \xi,\\\tau_1+\tau_2 = \tau}}\frac{|\xi|\langle\xi\rangle^sf(\tau_1,\xi_1)g(\tau_2,\xi_2)h(\tau,\xi)}{\langle\xi_1\rangle^{s}\langle\xi_2\rangle^{s}\langle\tau_1-\xi_1^3\rangle^{b}\langle\tau_2-\xi_2^3\rangle^{b}\langle\tau-\xi^3\rangle^{1-\tilde{b}}}\\
			&\lesssim \|f\|_{L^2}\|g\|_{L^2}\|h\|_{L^2},\quad \forall~f,g,h\geq 0.
		\end{align*}		
		Let  $f(\tau,\xi) = \chi_{[N-2N^{-1/2}, N+2N^{-1/2}]}(\xi)\chi_{[-2,2]}(\tau-\xi^3)$, $g(\tau,\xi) = \chi_{[-100,100]}(\tau-\xi^3)\chi_{[N-10N^{-1/2}, N+10N^{-1/2}]}(\xi)$, $h = f*g$. For $\xi\in [2N-N^{-1/2},2N+N^{-1/2}]$, $|\tau-\xi^3/4|\leq 1$, one has $h(\tau,\xi)\gtrsim N^{-1/2}$. Thus, $\|h\|_{L^2}\gtrsim N^{-3/4}$. Since
		\begin{align*}
			&\quad\int_{\substack{\xi_1+\xi_2 = \xi,\\\tau_1+\tau_2 = \tau}}\frac{|\xi|\langle\xi\rangle^sf(\tau_1,\xi_1)g(\tau_2,\xi_2)h(\tau,\xi)}{\langle\xi_1\rangle^{s}\langle\xi_2\rangle^{s}\langle\tau_1-\xi_1^3\rangle^{b}\langle\tau_2-\xi_2^3\rangle^{b}\langle\tau-\xi^3\rangle^{1-\tilde{b}}}\\
			&\gtrsim N^{1-s-3(1-\tilde{b})}\|h\|_{L^2}^2\sim N^{-9/4-s+3\tilde{b}}\|f\|_{L^2}\|g\|_{L^2}\|h\|_{L^2},
		\end{align*}
		we obtain $b<\tilde{b}\leq 3/4+s/3$.
		
		By multi-linear interpolation, we only need to consider the case $s = 0$, $1/2<b<3/4$ and $0<s+3/4\ll 1$, $1/2<b<3/4+s/3$. By Theorem 2.2 in \cite{kenig1996bilinear}, \eqref{kkk} holds in such region.
	\end{proof}
	
	\begin{proof}[\textbf{Proof of Proposition \ref{conclusion}}]
		For \eqref{mainxsbre}, one needs $s_1\geq 0$. For \eqref{mainsks}, by the proof of Lemma \ref{sktosb}, one needs $s_2>s_1-5/2$, $s_1-s_2-3b_2-1/2\leq 0$. For \eqref{mainssk}, by Lemmas \ref{sstkdv}, \ref{purekdv}, one needs $s_2<\max\{4s_1,s_1+1\}$. For \eqref{mainkkk}, one needs $s_2>-3/4$, $b_2<3/4+s_2/3$. Thus, $s_2\geq s_1-3b_2-1/2>s_1-3(3/4+s_2/3)-1/2$ which means $s_2>s_1/2-11/8$.
		
		Combining Lemmas \ref{sktosb}--\ref{sstkdv}, we have \eqref{mainxsbre}--\eqref{mainkkk} for some $1/2<b_1<\tilde{b}_1$, $1/2<b_2<\tilde{b_2}$ when $s_1\geq 0$, $\max\{-3/4,s_1-2\}<s_2<\min\{4s_1,s_1+1\}$. For $s_1>0$, $\max\{-3/4,s_1/2-11/8,s_1-5/2\}<s_2\leq s_1-2$, one has \eqref{mainssk} with $\tilde{b}_2 \leq 1$, $b_1>1/2$. By choosing $b_2 = \min\{3/4+s/3,3/4\}-\delta$ for some $0<\delta\ll 1$, we have \eqref{mainkkk} by Lemma \ref{purekdv} and \eqref{mainsks} by slightly modifying the proof of Lemma \ref{sktosb}. We finish the proof of this lemma.
	\end{proof}		
	By Proposition \ref{conclusion}, we have
	\begin{prop}
		\eqref{model} is local well-posed in $H^{s_1}\times H^{s_2}$ where		
		$$s_1\geq 0,\quad \max\{-3/4,s_1/2-11/8,s_1-5/2\}<s_2<\min\{4s_1,s_1+1\}.$$
	\end{prop}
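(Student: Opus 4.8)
The plan is to solve \eqref{model} by a contraction-mapping argument in the time-restricted Bourgain spaces $X^{s_1,b_1}_T\times Y^{s_2,b_2}_T$, feeding in the four multilinear estimates \eqref{mainxsbre}--\eqref{mainkkk} supplied by Proposition \ref{conclusion}. Since the region $s_1\geq 0$, $\max\{-3/4,s_1/2-11/8,s_1-5/2\}<s_2<\min\{4s_1,s_1+1\}$ is precisely where Proposition \ref{conclusion} guarantees a quadruple of exponents $1/2<b_1<\tilde{b}_1$, $1/2<b_2<\tilde{b}_2$ for which all four estimates hold, I would first fix such a quadruple, shrinking if necessary so that $\tilde{b}_1,\tilde{b}_2<3/2$. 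Writing $v\partial_x v=\tfrac12\partial_x(v^2)$, the Duhamel formulation of the system reads
\[
u(t)=S(t)u_0-i\mathscr{A}(uv)(t)-i\mathscr{A}(|u|^2u)(t),\qquad
v(t)=K(t)v_0+\mathscr{B}(|u|^2)(t)-\tfrac12\mathscr{B}(v^2)(t).
\]
After inserting a smooth time cutoff $\eta$ with $\eta\equiv1$ on $[0,T]$ (equivalently, working directly in the restricted spaces), I would define the solution map $\Phi(u,v)$ by the two right-hand sides and seek a fixed point in a ball of $X^{s_1,b_1}_T\times Y^{s_2,b_2}_T$.

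Every ingredient is then standard Bourgain-space machinery except for the multilinear products, which are exactly what the hypotheses provide. The homogeneous estimates $\|\eta S(t)u_0\|_{X^{s_1,b_1}}\lesssim\|u_0\|_{H^{s_1}}$ and $\|\eta K(t)v_0\|_{Y^{s_2,b_2}}\lesssim\|v_0\|_{H^{s_2}}$, together with the inhomogeneous (Duhamel) estimates for $\mathscr{A}$ and $\mathscr{B}$ mapping $X^{s_1,b_1-1}\to X^{s_1,b_1}$ and $Y^{s_2,b_2-1}\to Y^{s_2,b_2}$ (valid since $b_1,b_2>1/2$), control the linear evolution. For the nonlinear terms I would apply \eqref{mainsks} to $\mathscr{A}(uv)$ and \eqref{mainxsbre} to $\mathscr{A}(|u|^2u)=\mathscr{A}(u\bar{u}u)$, and \eqref{mainssk} to $\mathscr{B}(|u|^2)=\mathscr{B}(u\bar{u})$ and \eqref{mainkkk} to $\mathscr{B}(v^2)$. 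The mechanism that closes the contraction is the strict gap $b_i<\tilde{b}_i$: the nonlinearities are bounded in the \emph{stronger} spaces $X^{s_1,\tilde{b}_1-1}$, $Y^{s_2,\tilde{b}_2-1}$, while the Duhamel operators only demand the weaker $X^{s_1,b_1-1}$, $Y^{s_2,b_2-1}$, so the time-localization inequality $\|F\|_{X^{s,b_1-1}_T}\lesssim T^{\tilde{b}_1-b_1}\|F\|_{X^{s,\tilde{b}_1-1}_T}$ (legitimate because $b_1-1,\tilde{b}_1-1\in(-1/2,1/2)$, and likewise for the KdV index) yields a positive power $T^{\theta}$.

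Combining these, for $(u,v)$ in a ball of radius $R$ one obtains
\[
\|\Phi(u,v)\|_{X^{s_1,b_1}_T\times Y^{s_2,b_2}_T}\lesssim \|u_0\|_{H^{s_1}}+\|v_0\|_{H^{s_2}}+T^{\theta}(R^2+R^3),
\]
together with an analogous difference estimate $\|\Phi(u,v)-\Phi(u',v')\|\lesssim T^{\theta}(R+R^2)\|(u-u',v-v')\|$. Choosing $R\sim\|u_0\|_{H^{s_1}}+\|v_0\|_{H^{s_2}}$ and then $T$ small depending on $R$ makes $\Phi$ a contraction on the ball; its unique fixed point is the desired local solution, and the difference estimate simultaneously gives Lipschitz (indeed, by the polynomial structure of the nonlinearity, real-analytic) dependence of the solution on the data.

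The main difficulty here is not conceptual but organizational: one must transfer the global-in-time estimates of Proposition \ref{conclusion} to the time-restricted spaces while honestly tracking the $T^{\theta}$ gain through the cutoff and the extension operators, and verify that a single admissible choice of $(b_1,\tilde{b}_1,b_2,\tilde{b}_2)$ serves all four interactions at once (the cubic term $|u|^2u$ wants $b_1>1/2$ with room below $\tilde{b}_1$, while $\partial_x(v^2)$ is constrained by $b_2<3/4+s_2/3$, as in Lemma \ref{purekdv}). Once these exponents are pinned down consistently, the linear estimates, the multilinear estimates, and the fixed-point step are routine.
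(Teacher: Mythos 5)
Your proposal is correct and follows exactly the route the paper takes: the paper deduces this Proposition directly from Proposition \ref{conclusion} via the standard Bourgain-space contraction argument (cutoff, linear and Duhamel estimates, and the $T^{\tilde{b}_i-b_i}$ gain from the modulation gap), which is precisely what you spell out. Your added bookkeeping — shrinking so that $\tilde{b}_1,\tilde{b}_2<3/2$ for the time-localization lemma, and noting that a single quadruple $(b_1,\tilde{b}_1,b_2,\tilde{b}_2)$ must serve all four estimates (which Proposition \ref{conclusion} already guarantees) — is consistent with the paper's implicit argument.
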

	
	\section{Borderline cases by using \texorpdfstring{$U^p-V^p$}{Up-Vp} spaces}\label{borderlinecase}
	To manipulate the borderline cases, we need the $U^p, V^p$ spaces which were introduced in \cite{kochtataru}. Most of the materials can be found in \cite{hadac2009well}. For reader's convenience, we include the definitions and basic properties here.
	\subsection{\texorpdfstring{$U^p-V^p$}{Up-Vp} spaces}\label{defiupvp}
	
	\begin{defi}[Definition 2.1, 2.3 in \cite{hadac2009well}]
		Let $\mathcal{Z}$ be the set of finite partitions $-\infty=t_0<t_1<\cdots<t_K = \infty$. $1\leq p<\infty$. For $\{t_k\}_{k=0}^K\subset \mathcal{Z}$ and $\{\phi_k\}_{k=0}^{K-1}\subset L^2$ with $\sum_{k=0}^{K-1}\|\phi_k\|_{L^2}^p = 1$ and $\phi_0 = 0$, we call the function $a:\mathbb{R}\rightarrow L^2$ given by $a = \sum_{k=1}^K \chi_{[t_{k-1},t_k)}\phi_{k-1}$ a $U^p$-atom. Define the atomic space
		\begin{equation*}
			U^p:=\left\{u = \sum_{j=1}^\infty \lambda_ja_j: a_j~U^p\mbox{-atom}, \lambda_j\in \mathbb{C}~\mbox{such that}~\sum_{j=1}^\infty |\lambda_j|<\infty\right\}
		\end{equation*}
		with norm
		\begin{equation}
			\|u\|_{U^p}:=\inf\left\{\sum_{j=1}^\infty|\lambda_j|: u=\sum_{j=1}^\infty \lambda_j a_j, \lambda_j\in \mathbb{C}, a_j ~\mbox{is}~U^p\mbox{-atom}\right\}.
		\end{equation}
		Let $1\leq p<\infty$, the space $V^p$ is defined as the normed space of all functions $v:\mathbb{R}\rightarrow L^2$ such that $v(-\infty):=\lim_{t\rightarrow -\infty} v(t)$ exists and for which the norm
		\begin{equation*}
			\|v\|_{V^p}:= \sup_{\{t_k\}_{k=0}^K\in \mathcal{Z}} \left(\sum_{k=1}^K\|v(t_k)-v(t_{k-1})\|_{L^2}^p\right)^{1/p}
		\end{equation*}
		is finite, where we use the convention $v(\infty) = 0$. Let $V^p_{-, rc}$ denote all $v\in V^p$ which are right-continuous and $v(-\infty) = 0$.
	\end{defi}
	\begin{prop}[Basic properties, Proposition 2.2--2.5 and Corollary 2.6 in \cite{hadac2009well}]\label{basicproperty}
		Let $1\leq p<q<\infty$.
		\begin{itemize}
			\item[$\mathrm{(i)}$] $U^p, V^p$ are Banach spaces. $V^p_{-,rc}$ is a closed subspace of $V^p$.
			\item[$\mathrm{(ii)}$] The embedding $U^p\subset V_{-,rc}^p \subset L^\infty(\mathbb{R}, L^2)$ is continuous.
			\item[$\mathrm{(iii)}$] The embedding $V_{-,rc}^p\subset U^q$ is continuous.
		\end{itemize}
	\end{prop}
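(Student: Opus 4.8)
The plan is to establish the three items in order, treating (i) and (ii) as largely bookkeeping and reserving the real work for the strict inequality $p<q$ in (iii).

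For (i), I would first check that $\|\cdot\|_{V^p}$ is genuinely a norm: homogeneity and the triangle inequality follow from Minkowski applied partition-by-partition, while definiteness uses the convention $v(\infty)=0$, so that $\|v\|_{V^p}=0$ forces every increment $v(t)-v(t')$ to vanish and hence $v\equiv 0$. Completeness of $V^p$ comes from taking a Cauchy sequence $\{v_m\}$, noting it is Cauchy in $L^\infty(\mathbb{R},L^2)$ by the elementary bound proved in (ii) below, passing to a pointwise $L^2$-limit $v$, and verifying by lower semicontinuity of the $p$-variation under pointwise limits that $v\in V^p$ with $v_m\to v$. For $U^p$, the norm axioms and completeness are the standard facts for an atomic space whose unit ball is the $\ell^1$-convex hull of atoms; the only nontrivial point, positive definiteness, again follows once the elementary embedding $U^p\subset L^\infty(\mathbb{R},L^2)$ is available. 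Finally, $V_{-,rc}^p$ is closed because right-continuity and the condition $v(-\infty)=0$ are both preserved under uniform ($L^\infty L^2$) limits, which is exactly the topology induced on $V_{-,rc}^p$ by (ii).

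For (ii), the embedding $V_{-,rc}^p\subset L^\infty(\mathbb{R},L^2)$ is immediate: for $v$ with $v(-\infty)=0$ and any $t$, the two-point estimate $\|v(t)\|_{L^2}=\|v(t)-v(-\infty)\|_{L^2}\le \|v\|_{V^p}$ gives $\|v\|_{L^\infty L^2}\le\|v\|_{V^p}$. For $U^p\subset V_{-,rc}^p$ I would first treat a single $U^p$-atom $a=\sum_{k}\chi_{[t_{k-1},t_k)}\phi_{k-1}$: it is right-continuous, satisfies $a(-\infty)=0$ since $\phi_0=0$, and a direct computation of its $p$-variation over an arbitrary refinement, combined with $\sum_k\|\phi_{k-1}\|_{L^2}^p=1$, yields $\|a\|_{V^p}\le 2^{1/p}$. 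The bound then passes to all of $U^p$ through the atomic decomposition and the triangle inequality, and the limit lands in the closed subspace $V_{-,rc}^p$.

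The heart of the proposition is (iii), and this is where I expect the main difficulty. Given $v\in V_{-,rc}^p$ with, after normalization, $\|v\|_{V^p}=1$, the plan is a dyadic stopping-time decomposition. For each integer $k\ge 0$ I would define times $t^k_0=-\infty$ and $t^k_{j+1}=\inf\{t>t^k_j:\|v(t)-v(t^k_j)\|_{L^2}\ge 2^{-k}\}$; the definition of the $p$-variation forces the number $N_k$ of such intervals to satisfy $N_k 2^{-kp}\lesssim\|v\|_{V^p}^p=1$, that is, $N_k\lesssim 2^{kp}$. Setting $v_k$ equal to the step function with value $v(t^k_j)$ on $[t^k_j,t^k_{j+1})$, right-continuity gives $\|v-v_k\|_{L^\infty L^2}\lesssim 2^{-k}$, so that $v=\sum_{k\ge 1}(v_k-v_{k-1})$ converges in $L^\infty L^2$. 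Each difference $v_k-v_{k-1}$ is a step function with at most $\lesssim N_k$ pieces whose values $w_j$ satisfy $\|w_j\|_{L^2}\lesssim 2^{-k}$; hence $\sum_j\|w_j\|_{L^2}^q\lesssim N_k 2^{-kq}\lesssim 2^{-k(q-p)}$, so that $v_k-v_{k-1}=c_k b_k$ with $b_k$ a $U^q$-atom and $c_k\lesssim 2^{-k(1-p/q)}$. Since $p<q$ we have $1-p/q>0$, whence $\sum_k c_k\lesssim\sum_k 2^{-k(1-p/q)}<\infty$ and therefore $\|v\|_{U^q}\le\sum_k c_k\lesssim\|v\|_{V^p}$. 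The delicate points I would have to check carefully are that the stopping-time partitions are well defined and finite (using right-continuity together with $v(-\infty)=0$), that the value count and size bounds for $v_k-v_{k-1}$ on the common refinement are correct so that $b_k$ is genuinely a normalized atom, and that the geometric summation over $k$ is the only place where the strict inequality $p<q$ is actually used.
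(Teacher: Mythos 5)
Your proposal is correct, and since the paper offers no proof of this proposition — it is quoted verbatim from Hadac--Herr--Koch \cite{hadac2009well} — the right comparison is with the proofs there, which your argument reproduces: items (i)--(ii) by the same two-point and atomic-decomposition bookkeeping, and item (iii) by exactly the stopping-time construction with thresholds $2^{-k}$, the count $N_k\lesssim 2^{kp}\|v\|_{V^p}^p$, and the geometric summation $\sum_k 2^{-k(1-p/q)}$ that underlies Proposition 2.5 and Corollary 2.6 of \cite{hadac2009well} (the reference packages it as a quantitative $v=u+w$ decomposition with $\|u\|_{U^p}\lesssim M$, $\|w\|_{U^q}\lesssim e^{-\sigma M}$, of which the plain embedding is the special case you prove). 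Two harmless slips worth fixing: the $V^p$ norm of a $U^p$-atom is bounded by $2$, not $2^{1/p}$ (an atom with alternating values $\pm\phi$ of equal size shows $2^{1/p}$ fails for $p>1$; the bound $2$, from $(a+b)^p\le 2^{p-1}(a^p+b^p)$ with each $\phi_j$ counted twice, serves equally well), and the telescoping sum in (iii) should include the base term, i.e.\ set $v_{-1}:=0$ and write $v=\sum_{k\ge 0}(v_k-v_{k-1})$, noting $v_0$ is itself a bounded multiple of a $U^q$-atom since its $O(1)$ values are bounded by $\|v\|_{V^p}$.
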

	\begin{prop}[Proposition 2.7 in \cite{hadac2009well}]
		Let $1<p<\infty$. For $u\in U^p$ and $v\in V^{p'}$ and a partition $\mathfrak{t}:=\{t_k\}_{k=0}^K\in \mathcal{Z}$, we define
		\begin{equation*}
			B_{\mathfrak{t}}(u,v):=\sum_{k=1}^K (u(t_{k-1}), v(t_k)-v(t_{k-1})).
		\end{equation*}
		$(\cdot,\cdot)$ denotes the $L^2$ inner product. There is a unique number $B(u,v)$ with the property that for all $\varepsilon>0$ there exists $\mathfrak{t}\in \mathcal{Z}$ such that for every $\mathfrak{t}'\supset \mathfrak{t}$ it holds that
		\begin{align*}
			|B_{\mathfrak{t}'}(u,v)-B(u,v)|<\varepsilon,
		\end{align*}
		and the associated bilinear form
		$$B:U^p\times V^{p'}:(u,v)\mapsto B(u,v)$$
		satisfies the estimate
		\begin{equation*}
			|B(u,v)|\leq \|u\|_{U^p}\|v\|_{V^{p'}}.
		\end{equation*}
	\end{prop}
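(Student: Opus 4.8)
The plan is to reduce everything to a single $U^p$-atom via the atomic decomposition and then combine the discrete Hölder inequality with a telescoping (summation-by-parts) argument; the refinement limit is produced by an $\varepsilon$-splitting of the infinite atomic sum.

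First I would establish the bound for one atom. Let $a=\sum_{k=1}^K \chi_{[t_{k-1},t_k)}\phi_{k-1}$ be a $U^p$-atom adapted to the partition $\mathfrak{s}=\{t_k\}_{k=0}^K$, so that $\sum_{k=1}^K\|\phi_{k-1}\|_{L^2}^p=1$ and $\phi_0=0$. For this $a$ and any $v\in V^{p'}$, using $u(-\infty)=\phi_0=0$ and $v(\infty)=0$, one has
$$B_{\mathfrak{s}}(a,v)=\sum_{k=1}^K\bigl(\phi_{k-1},\,v(t_k)-v(t_{k-1})\bigr),$$
and the discrete Hölder inequality with exponents $p,p'$ gives
$$|B_{\mathfrak{s}}(a,v)|\le\Bigl(\sum_{k=1}^K\|\phi_{k-1}\|_{L^2}^p\Bigr)^{1/p}\Bigl(\sum_{k=1}^K\|v(t_k)-v(t_{k-1})\|_{L^2}^{p'}\Bigr)^{1/p'}\le\|v\|_{V^{p'}}.$$
The crucial point is that this value is stable under refinement: if $\mathfrak{t}'\supset\mathfrak{s}$, then on each interval $[t_{k-1},t_k)$ the atom is constant equal to $\phi_{k-1}$, so the $v$-increments telescope and $B_{\mathfrak{t}'}(a,v)=B_{\mathfrak{s}}(a,v)$. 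Hence $B(a,v):=B_{\mathfrak{s}}(a,v)$ is the stabilized value and obeys $|B(a,v)|\le\|v\|_{V^{p'}}$.

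Next I would pass to a general $u\in U^p$. For fixed $\mathfrak{t}$ the map $B_{\mathfrak{t}}(\cdot,v)$ is linear in its first argument, and since $U^p\hookrightarrow L^\infty(\mathbb{R},L^2)$ by Proposition \ref{basicproperty}, any decomposition $u=\sum_j\lambda_j a_j$ converges pointwise in $L^2$; therefore $B_{\mathfrak{t}}(u,v)=\sum_j\lambda_j B_{\mathfrak{t}}(a_j,v)$, with the series absolutely convergent uniformly in $\mathfrak{t}$ because $|B_{\mathfrak{t}}(a_j,v)|\le\|v\|_{V^{p'}}$ and $\sum_j|\lambda_j|<\infty$. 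To produce the number $B(u,v)$ with the asserted refinement property, given $\varepsilon>0$ I would choose $M$ with $\sum_{j>M}|\lambda_j|<\varepsilon/(2\|v\|_{V^{p'}})$, let $\mathfrak{t}$ be the common refinement of the partitions of $a_1,\dots,a_M$, and set $B(u,v):=\sum_j\lambda_j B(a_j,v)$. For every $\mathfrak{t}'\supset\mathfrak{t}$ the head $\sum_{j\le M}\lambda_j B_{\mathfrak{t}'}(a_j,v)$ is already stabilized to $\sum_{j\le M}\lambda_j B(a_j,v)$, while both tails are bounded by $\sum_{j>M}|\lambda_j|\,\|v\|_{V^{p'}}<\varepsilon/2$, so that $|B_{\mathfrak{t}'}(u,v)-B(u,v)|<\varepsilon$. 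Uniqueness is immediate, since two numbers enjoying this property differ by less than $2\varepsilon$ for every $\varepsilon$, and this also shows $B(u,v)$ is independent of the chosen decomposition.

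Finally, the norm estimate follows by passing to the limit in the atom bound: $|B(u,v)|\le\sum_j|\lambda_j|\,|B(a_j,v)|\le\bigl(\sum_j|\lambda_j|\bigr)\|v\|_{V^{p'}}$, and taking the infimum over all admissible decompositions of $u$ yields $|B(u,v)|\le\|u\|_{U^p}\|v\|_{V^{p'}}$. I expect the only delicate step to be the simultaneous control of the infinite atomic sum and the refinement limit, i.e.\ making the $\varepsilon$-splitting into head and tail interact correctly with the telescoping stabilization of each atom; the Hölder estimate and the summation-by-parts identity themselves are routine.
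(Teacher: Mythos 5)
The paper itself gives no proof of this statement: it is imported verbatim as Proposition 2.7 of \cite{hadac2009well}, so the only comparison available is with the standard argument there, and your proposal is essentially that argument (atom-wise H\"older bound, telescoping stability under refinement, $\varepsilon$-splitting of the atomic series, uniqueness forcing independence of the decomposition). One step, however, is asserted but not actually covered by what you prove: you claim $|B_{\mathfrak{t}}(a_j,v)|\leq \|v\|_{V^{p'}}$ \emph{uniformly in} $\mathfrak{t}$, whereas your atom computation establishes the bound only for the atom's own partition $\mathfrak{s}$ and, via telescoping, for refinements $\mathfrak{t}'\supset\mathfrak{s}$. You genuinely need the uniform version: your chosen $\mathfrak{t}$ refines only the partitions of $a_1,\dots,a_M$, so for a tail atom $a_j$ with $j>M$ the quantity $B_{\mathfrak{t}'}(a_j,v)$ is neither stabilized nor estimated by what you wrote. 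The fix is one more line of the same kind: for an arbitrary partition $\mathfrak{t}'=\{s_i\}$, group the indices $i$ according to which step interval $[t_{k-1},t_k)$ of $a_j$ contains $s_{i-1}$; these groups are blocks of consecutive indices, within each block the increments of $v$ telescope, so $B_{\mathfrak{t}'}(a_j,v)=\sum_k\bigl(\phi_{k-1},\,v(r_k)-v(r_{k-1})\bigr)$ for an increasing family $\{r_k\}\subset\mathfrak{t}'$ of consecutive points, and H\"older together with the observation that the $V^{p'}$ increment sum over any such subfamily is at most $\|v\|_{V^{p'}}^{p'}$ (enlarge it to a partition; the extra terms are nonnegative) gives $|B_{\mathfrak{t}'}(a_j,v)|\leq\|v\|_{V^{p'}}$. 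With that line inserted, the head/tail estimate, the refinement limit, uniqueness, well-definedness across decompositions, and the final bound $|B(u,v)|\leq\|u\|_{U^p}\|v\|_{V^{p'}}$ all go through exactly as you describe.
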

	\begin{thm}[Theorem 2.8 in \cite{hadac2009well}]\label{dual}
		Let $1<p<\infty$. We have
		$$(U^p)^* = V^{p'}$$
		in the sense that
		$$T:V^{p'}\rightarrow (U^p)^*, \quad T(v)(u) = B(u,v)$$
		is an isometric isomorphism.
	\end{thm}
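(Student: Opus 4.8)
The plan is to establish three facts in turn: that $T$ is well-defined and contractive, that it is an isometry (hence injective), and that it is surjective. The first is immediate from the preceding proposition: for $v\in V^{p'}$ the bilinear form $B(u,v)$ exists for every $u\in U^p$ and obeys $|B(u,v)|\le \|u\|_{U^p}\|v\|_{V^{p'}}$, so $T(v)\in (U^p)^*$ with $\|T(v)\|_{(U^p)^*}\le \|v\|_{V^{p'}}$. Everything else is the reverse inequality and the construction of a preimage.

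For the isometry I would prove $\|T(v)\|_{(U^p)^*}\ge \|v\|_{V^{p'}}$ by testing against step-function atoms. Fix a partition $-\infty=t_0<\cdots<t_K=\infty$ and set $w_k=v(t_k)-v(t_{k-1})$. Choosing $\phi_{k-1}$ parallel to $w_k$ with $\|\phi_{k-1}\|_{L^2}^p$ proportional to $\|w_k\|_{L^2}^{p'}$, the step function $u=\sum_k \chi_{[t_{k-1},t_k)}\phi_{k-1}$ is, after normalization, a $U^p$-atom, and evaluating $B$ on this step function gives $B(u,v)=\sum_k (\phi_{k-1},w_k)$. The $\ell^p$–$\ell^{p'}$ duality makes this sum comparable to $(\sum_k \|w_k\|_{L^2}^{p'})^{1/p'}$ up to the normalizing factor, so $\|T(v)\|_{(U^p)^*}$ dominates each partition sum and hence $\|v\|_{V^{p'}}$. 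The one nuisance is the convention $\phi_0=0$, which kills the leftmost jump $v(t_1)-v(-\infty)$; I would recover it by inserting an auxiliary partition point far to the left and letting it tend to $-\infty$, using that $v(-\infty)$ exists.

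For surjectivity, given $L\in (U^p)^*$ I would reconstruct $v$. For each $t$ the function $\chi_{[t,\infty)}\phi$ equals $\|\phi\|_{L^2}$ times a $U^p$-atom, so $\phi\mapsto L(\chi_{[t,\infty)}\phi)$ is a bounded linear functional on $L^2$; Riesz representation then defines $v(t)\in L^2$ with $\|v(t)\|_{L^2}\le \|L\|$ and $(v(t),\phi)=-L(\chi_{[t,\infty)}\phi)$. Telescoping gives $(v(t_k)-v(t_{k-1}),\phi)=L(\chi_{[t_{k-1},t_k)}\phi)$ on each finite interval, and feeding the $\ell^p$-optimal $\phi_{k-1}$ into $|L(\sum_k \chi_{[t_{k-1},t_k)}\phi_{k-1})|\le \|L\|\,\|u\|_{U^p}$ yields $\sum_k \|v(t_k)-v(t_{k-1})\|_{L^2}^{p'}\le \|L\|^{p'}$, i.e. $v\in V^{p'}$ with $\|v\|_{V^{p'}}\le \|L\|$. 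Finally I would check $T(v)=L$: both are continuous on $U^p$ and, by the computation $B(\chi_{[t,\infty)}\phi,v)=-(\phi,v(t))=L(\chi_{[t,\infty)}\phi)$ together with linearity, they agree on all step-function atoms, which are dense in $U^p$ by its atomic definition.

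The main obstacle is the surjectivity step, and within it the careful bookkeeping at the endpoints: verifying that $v(-\infty)=\lim_{t\to-\infty}v(t)$ exists, so that $v$ is genuinely a $V^{p'}$ function, and handling the leftmost interval, where the atomic constraint $\phi_0=0$ forbids applying $L$ directly to $\chi_{(-\infty,t_1)}\phi$; as above this is circumvented by a truncation-and-limit argument. By contrast, the isometry is comparatively soft once the $\ell^p$–$\ell^{p'}$ optimization and the density of step functions are in place, and the equality $\|L\|=\|T(v)\|_{(U^p)^*}=\|v\|_{V^{p'}}$ then closes the loop to show $T$ is an isometric isomorphism.
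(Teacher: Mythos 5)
The paper does not prove this statement itself; it is quoted verbatim from Theorem 2.8 of \cite{hadac2009well}, whose proof your proposal essentially reproduces: boundedness of $T$ from the bilinear estimate, the lower bound $\|T(v)\|_{(U^p)^*}\geq\|v\|_{V^{p'}}$ by testing against atoms whose pieces are the $\ell^p$--$\ell^{p'}$--optimal multiples of the jumps $v(t_k)-v(t_{k-1})$, and surjectivity by applying Riesz representation to $\phi\mapsto L(\chi_{[t,\infty)}\phi)$ and telescoping. Your handling of the two genuine subtleties --- the convention $\phi_0=0$ killing the leftmost jump (circumvented by an auxiliary partition point sent to $-\infty$) and the verification that $v(-\infty)$ exists for the reconstructed $v$ --- is correct, so the argument is sound and matches the cited source's approach.
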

	For a real-valued function $h:\mathbb{R}\rightarrow \mathbb{R}$, we define 
	$$\|u\|_{U^p_h}:=\|e^{-ith(-i\partial_x)}u(t)\|_{U^p},\quad \|u\|_{V^p_h}:=\|e^{-ith(-i\partial_x)}u(t)\|_{V^p}.$$
	Let $Q_{h,\leq L}:=\mathscr{F}_{\tau,\xi}^{-1}\varphi((\tau-h(\xi))/L)\mathscr{F}_{t,x}$, $Q_{h,>L}:=I-Q_{h,\leq L}$.
	
	\begin{lemma}[Corollary 2.15 in \cite{hadac2009well}]\label{highmodu}
		We have
		\begin{equation*}
			\begin{array}{c}					
				\|Q_{h,>L}u\|_{L^2_{t,x}}\lesssim L^{-1/2}\|u\|_{V^2_h},\\
				\|Q_{h,\leq L}u\|_{V^p_h}\lesssim \|u\|_{V^p_h},~~ \|Q_{h,>L}u\|_{V^p_h}\lesssim \|u\|_{V^p_h},\\
				\|Q_{h,\leq L}u\|_{V^p_h}\lesssim \|u\|_{U^p_h},~~ \|Q_{h,>L}u\|_{V^p_h}\lesssim \|u\|_{U^p_h}.
			\end{array}
		\end{equation*}
	\end{lemma}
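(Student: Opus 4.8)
The plan is to remove the dispersion by conjugating with the propagator, which turns every line into a universal (dispersion-free) statement about a truncation in the time-frequency variable, and then to isolate the one genuinely analytic input. Write $\tilde u(t):=e^{-ith(-i\partial_x)}u(t)$, so that by definition $\|u\|_{U^p_h}=\|\tilde u\|_{U^p}$ and $\|u\|_{V^p_h}=\|\tilde u\|_{V^p}$, while $\|u\|_{L^2_{t,x}}=\|\tilde u\|_{L^2_{t,x}}$ since $e^{-ith(-i\partial_x)}$ is unitary on $L^2_x$ for each $t$. Using $\mathscr{F}_{t,x}u(\tau,\xi)=\mathscr{F}_{t,x}\tilde u(\tau-h(\xi),\xi)$, a direct computation shows that after conjugation $Q_{h,\le L}$ becomes a pure Fourier multiplier in the time variable alone:
\begin{equation*}
	\mathscr{F}_{t,x}\bigl(e^{-ith(-i\partial_x)}Q_{h,\le L}u\bigr)(\tau,\xi)=\varphi(\tau/L)\,\mathscr{F}_{t,x}\tilde u(\tau,\xi),
\end{equation*}
that is, $e^{-ith(-i\partial_x)}Q_{h,\le L}u=K_L\ast_t\tilde u$, where $K_L=\mathscr{F}^{-1}_\tau[\varphi(\cdot/L)]=L\,\mathscr{F}^{-1}_\tau[\varphi](L\,\cdot)$ is (up to a fixed multiplicative constant) concentrated at scale $L^{-1}$, normalized so that $\int_{\mathbb{R}}K_L=1$ and $\|K_L\|_{L^1}\sim1$ uniformly in $L$. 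Thus $Q_{h,\le L}$ and $Q_{h,>L}$ are intertwined by the propagator with the universal operators $g\mapsto K_L\ast_t g$ and $g\mapsto g-K_L\ast_t g$, and it suffices to prove the three lines for these.

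For the second line, convolution in $t$ with an $L^1$ kernel is bounded on $V^p$. For any partition $\{t_k\}$ one writes $(K_L\ast_t g)(t_k)-(K_L\ast_t g)(t_{k-1})=\int K_L(s)\,[g(t_k-s)-g(t_{k-1}-s)]\,ds$, and Minkowski's inequality applied to the $\ell^p_k(L^2_x)$ sum under the $s$-integral gives $\|K_L\ast_t g\|_{V^p}\le\|K_L\|_{L^1}\|g\|_{V^p}$. Since $\|K_L\|_{L^1}\sim1$ this yields $\|Q_{h,\le L}u\|_{V^p_h}\lesssim\|u\|_{V^p_h}$, and the bound for $Q_{h,>L}=I-Q_{h,\le L}$ follows from the triangle inequality. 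The third line is then immediate: by Proposition \ref{basicproperty}(ii) the embedding $U^p\hookrightarrow V^p$ is continuous, so $\|u\|_{V^p_h}\lesssim\|u\|_{U^p_h}$, and composing with the second line gives both $U^p_h\to V^p_h$ estimates at once.

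The first line carries the real content and is the main obstacle. Using $\int K_L=1$, in adapted coordinates
\begin{equation*}
	e^{-ith(-i\partial_x)}Q_{h,>L}u=\tilde u-K_L\ast_t\tilde u=\int K_L(s)\,\bigl[\tilde u(\cdot)-\tilde u(\cdot-s)\bigr]\,ds,
\end{equation*}
so Minkowski's inequality gives $\|Q_{h,>L}u\|_{L^2_{t,x}}\le\int|K_L(s)|\,\|\tilde u(\cdot)-\tilde u(\cdot-s)\|_{L^2_{t,x}}\,ds$. The crux is the half-derivative gain for functions of bounded $2$-variation, namely $\|\tilde u(\cdot)-\tilde u(\cdot-s)\|_{L^2_{t,x}}\lesssim|s|^{1/2}\|\tilde u\|_{V^2}$, which is exactly where the exponent $2$ in $V^2$ enters. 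This I would prove directly from the definition of the $V^2$ seminorm, keeping the $L^2_x$-valued structure intact: for each shift $r\in[0,|s|)$ the points $\{n|s|+r\}_{n\in\mathbb{Z}}$ form a partition of $\mathbb{R}$, so $\sum_{n}\|\tilde u(n|s|+r)-\tilde u((n-1)|s|+r)\|_{L^2_x}^2\le\|\tilde u\|_{V^2}^2$, and integrating in $r$ over $[0,|s|)$ gives precisely $\int_{\mathbb{R}}\|\tilde u(t)-\tilde u(t-s)\|_{L^2_x}^2\,dt\le|s|\,\|\tilde u\|_{V^2}^2$. Note that a fiberwise reduction in $\xi$ is not available here, since $\int_\xi\|\tilde u(\cdot,\xi)\|_{V^2}^2\,d\xi\ge\|\tilde u\|_{V^2}^2$ points the wrong way for an upper bound; this is why one must argue at the Hilbert-valued level. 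Combining this with the kernel bound $\int|K_L(s)|\,|s|^{1/2}\,ds\lesssim L^{-1/2}$, which follows by rescaling since $\mathscr{F}^{-1}_\tau[\varphi]$ is Schwartz, we obtain $\|Q_{h,>L}u\|_{L^2_{t,x}}\lesssim L^{-1/2}\|u\|_{V^2_h}$. All of the above is standard and contained in \cite{hadac2009well}, to which we refer for the remaining routine verifications.
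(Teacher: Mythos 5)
Your proposal is correct, but note that the paper itself never proves this lemma: it is imported verbatim, with citation, as Corollary 2.15 of \cite{hadac2009well}, so there is no in-paper argument to compare against. Measured against the cited source, your proof reconstructs essentially the standard argument: conjugation by the propagator turns $Q_{h,\leq L}$ into time-convolution with $K_L=L\check{\varphi}(L\,\cdot)$ (up to constants); the $V^p$ bounds, and then the $U^p_h\to V^p_h$ bounds via Proposition \ref{basicproperty}(ii)--(iii), follow from $\|K_L\|_{L^1}\sim 1$ and Minkowski's inequality; and the only quantitative point, $\|\tilde u-\tilde u(\cdot-s)\|_{L^2_{t,x}}\leq |s|^{1/2}\|\tilde u\|_{V^2}$, is obtained by averaging over the translated partitions $\{n|s|+r\}_{n\in\mathbb{Z}}$, $r\in[0,|s|)$, which is exactly the mechanism used in the literature. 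Your observation that this step must be run at the $L^2_x$-valued (Hilbert-valued) level, because a fiberwise-in-$\xi$ reduction produces an inequality pointing the wrong way, is accurate and is the one place where a careless argument would fail.

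Two small points deserve a word, though neither is a gap of substance. First, the bi-infinite sum $\sum_{n\in\mathbb{Z}}\|\tilde u(n|s|+r)-\tilde u((n-1)|s|+r)\|_{L^2_x}^2\leq\|\tilde u\|_{V^2}^2$ should be justified as a monotone limit over finite subfamilies: appending the endpoints $\pm\infty$ required by the paper's definition of $\mathcal{Z}$ only adds nonnegative increments, so each finite truncation is dominated by $\|\tilde u\|_{V^2}^2$. Second, in the Minkowski argument for the $V^p$ bound, the increment at the endpoint $t_K=\infty$ is computed with the convention $v(\infty)=0$ rather than the natural limit of $K_L\ast_t g$; that term is handled separately by $\|(K_L\ast_t g)(t_{K-1})\|_{L^2_x}\leq\|K_L\|_{L^1}\sup_t\|g(t)\|_{L^2_x}\lesssim\|g\|_{V^p}$, using the embedding $V^p\subset L^\infty(\mathbb{R};L^2)$ from Proposition \ref{basicproperty}(ii). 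With these routine remarks added, your proof is complete and faithful to the approach of \cite{hadac2009well}.
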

	In this paper, we define $$\|f\|_{U^p_S}:=\|S(-t)f(t)\|_{U^p},\quad\|f\|_{U^p_K}:=\|K(-t)f(t)\|_{U^p}$$ and similarly for $V^p_S,V^p_K$. Also, let $Q^S_{\leq L}:=\mathscr{F}_{\tau,\xi}^{-1}\varphi_L((\tau+\xi^2))\mathscr{F}_{t,x}$, $Q^S_{>L}:=I-Q^S_{\leq L}$, $Q^K_{\leq L}:=\mathscr{F}_{\tau,\xi}^{-1}\varphi_L((\tau-\xi^3))\mathscr{F}_{t,x}$, $Q_{>L}^K:=I-Q_{\leq L}^K$. In Subsection \ref{borderlinecaselow}, we also use $Q_{\leq L}^{S,\lambda}$ and $Q_{> L}^{S,\lambda}$ to denote $\mathscr{F}_{\tau,\xi}^{-1}\varphi_L((\tau+\lambda\xi^2))\mathscr{F}_{t,x}$ and $I-Q_{\leq L}^{S,\lambda}$ respectively. Replacing $\varphi_L$ with $\psi_L$, we define $Q^S_L$, $Q^K_L$, $Q^{S,\lambda}_L$ similarly.
	
	We have the following transversal estimates.
	\begin{lemma}\label{bilinearrefined}
		For $N\gg 1$, we have
		\begin{equation}\label{sstran}
			\|P_{N}(u_1\bar{u}_2)\|_{L^2_{t,x}}\lesssim N^{-{1/2}}\|u_1\|_{U^2_S}\|u_2\|_{U^2_S}.
		\end{equation}
		For $N_1\gg N_2^2$, we have
		\begin{align}\label{kdvtran1}
			\|P_{N_1}uP_{N_2}v\|_{L^2_{t,x}}\lesssim N_1^{-1/2} \|u\|_{U^2_S}\|v\|_{U^2_K}.
		\end{align}
		For $N_1\ll N_2^2$, we have
		\begin{align}\label{kdvtran2}
			\|P_{N_1}uP_{N_2}v\|_{L^2_{t,x}}\lesssim N_2^{-1} \|u\|_{U^2_S}\|v\|_{U^2_K}.
		\end{align}
	\end{lemma}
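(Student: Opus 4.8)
The plan is to prove each of the three estimates first for \emph{free solutions} and then upgrade to $U^2$ by the atomic structure of these spaces. By the transference principle for $U^p$ spaces (see, e.g., Proposition 2.19 in \cite{hadac2009well}), it suffices to establish \eqref{sstran} with $u_i=S(t)f_i$, and \eqref{kdvtran1}--\eqref{kdvtran2} with $u=S(t)f$, $v=K(t)g$, in each case replacing the $U^2$ norms on the right by the $L^2$ norms of the data $f_i,f,g$; the fixed Fourier multipliers $P_N$, $P_{N_1}$, $P_{N_2}$ stay attached to the free estimate unchanged. All three free estimates are instances of a single bilinear $L^2$ bound whose constant is governed by the transversality of the two characteristic curves.

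For the free version, the space-time Fourier transforms of the two factors are supported on characteristics $\tau=\phi_1(\xi)$ and $\tau=\phi_2(\xi)$, where $\phi(\xi)=-\xi^2$ for $S(t)f$, $\phi(\xi)=+\xi^2$ for $\overline{S(t)f}$, and $\phi(\xi)=\xi^3$ for $K(t)g$. Hence
\begin{align*}
\mathscr{F}_{t,x}(u_1u_2)(\tau,\xi)=c\int_{\mathbb{R}}\hat f_1(\xi_1)\hat f_2(\xi-\xi_1)\,\delta\big(\tau-\Phi_\xi(\xi_1)\big)\,d\xi_1,\qquad \Phi_\xi(\xi_1):=\phi_1(\xi_1)+\phi_2(\xi-\xi_1).
\end{align*}
For each fixed $\xi$ I would change variables $\tau=\Phi_\xi(\xi_1)$ on the support of the integrand. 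Since $\Phi_\xi$ is a polynomial of degree $\le 3$ in $\xi_1$, each value of $\tau$ has $O(1)$ preimages, and on the region where $|\Phi_\xi'(\xi_1)|\ge\alpha$ the Jacobian is bounded below, so that
\begin{align*}
\|u_1u_2\|_{L^2_{t,x}}^2\lesssim \frac1\alpha\int_{\mathbb{R}^2}|\hat f_1(\xi_1)|^2\,|\hat f_2(\xi-\xi_1)|^2\,d\xi_1\,d\xi=\frac1\alpha\|f_1\|_{L^2}^2\|f_2\|_{L^2}^2 .
\end{align*}
Thus everything reduces to the lower bound $\alpha=\inf|\Phi_\xi'|$ on the relevant frequency region.

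It remains to compute $\Phi_\xi'$ in the three cases. For \eqref{sstran} the conjugated factor contributes the symbol $+\xi^2$, so $\Phi_\xi(\xi_1)=-\xi_1^2+(\xi-\xi_1)^2=\xi^2-2\xi\xi_1$ and $\Phi_\xi'(\xi_1)\equiv-2\xi$; the output localization $|\xi|\sim N$ forces $\alpha\sim N$ and yields the gain $N^{-1/2}$, with no hypothesis on the input frequencies. For \eqref{kdvtran1}--\eqref{kdvtran2} one has $\phi_1(\xi_1)=-\xi_1^2$ and $\phi_2(\xi_2)=\xi_2^3$ with $\xi_2=\xi-\xi_1$, so
\begin{align*}
\Phi_\xi'(\xi_1)=-2\xi_1-3\xi_2^2,\qquad |\xi_1|\sim N_1,\quad |\xi_2|\sim N_2 .
\end{align*}
If $N_1\gg N_2^2$ then $|2\xi_1|\gg 3\xi_2^2$, so $\alpha\sim N_1$ and the gain is $N_1^{-1/2}$; if $N_1\ll N_2^2$ then $3\xi_2^2\gg|2\xi_1|$, so $\alpha\sim N_2^2$ and the gain is $(N_2^2)^{-1/2}=N_2^{-1}$. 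In either regime the two terms cannot cancel, so $\Phi_\xi'$ keeps a definite sign and the change of variables above is justified.

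The main obstacle I anticipate is twofold and essentially technical. First, the delta-function manipulation in the second paragraph must be made rigorous; this is the standard reduction of a bilinear $L^2_{t,x}$ estimate to the reciprocal of the Jacobian $|\Phi_\xi'|$, and the degree bound on $\Phi_\xi$ guarantees that the number of preimages, hence the multiplicity picked up by Cauchy--Schwarz, stays $O(1)$. Second, one must invoke the bilinear transference principle correctly and keep track of the complex conjugate in \eqref{sstran}, which flips the Schr\"odinger symbol on the conjugated factor but leaves the $U^2_S$ norm unchanged. Both steps are routine within the $U^p$--$V^p$ framework recalled above, so the only genuine computation is the transversality bookkeeping of the third paragraph.
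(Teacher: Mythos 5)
Your proposal is correct, and it supplies what the paper in fact omits: Lemma \ref{bilinearrefined} is stated without proof, treated as a known transversal estimate. Your two-step scheme --- prove the bound for free solutions by passing to the space-time Fourier side, changing variables $\tau=\Phi_\xi(\xi_1)$, and bounding the Jacobian $|\Phi_\xi'|$ from below on the relevant frequency region, then transferring to $U^2_S$, $U^2_K$ via the atomic structure (Proposition 2.19 in the cited Hadac--Herr--Koch paper) --- is the standard proof, and it is exactly the technique the paper itself deploys later when it proves \eqref{timeres} in Lemma \ref{frequecylocsbilinear} by decomposing $U^2$ atoms into piecewise free solutions and invoking the free estimate \eqref{linearwave}. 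Your resonance computations are right: $\Phi_\xi'\equiv-2\xi$ for \eqref{sstran} (the conjugate flipping the symbol to $+\xi^2$, as you note, while preserving the $U^2_S$ norm), and $\Phi_\xi'=-2\xi_1-3\xi_2^2$ for \eqref{kdvtran1}--\eqref{kdvtran2}, with the two regimes $N_1\gg N_2^2$ and $N_1\ll N_2^2$ making one term dominate, and the degree bound on $\Phi_\xi$ capping the preimage multiplicity at $O(1)$. Two trivial bookkeeping points: in \eqref{kdvtran1} with $N_2=1$ the projection $P_1$ only gives $|\xi_2|\lesssim 1$ rather than $|\xi_2|\sim N_2$, but the domination $|2\xi_1|\gg 3\xi_2^2$ holds all the same; and in the regime of \eqref{kdvtran2} the constraint $N_1\ll N_2^2$ with $N_1\geq 1$ forces $N_2\gg 1$, so $P_{N_2}$ genuinely localizes away from the origin and $3\xi_2^2\sim N_2^2$ is justified.
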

	\subsection{The case \texorpdfstring{$s_1\geq 0,s_2=\min\{4s_1,s_1+1\}$}{s1>=0,s2=min\{4s1,s1+1\}}}\label{cases2=4s1}
	Let $T>0$, $\varepsilon>0$. We denote the indicator of set $[0,T]\subset \mathbb{R}_t$ by  $\chi_T$. Then We define $\|u\|_{X^{s}}:=\|J^su\|_{U^2_S}$.
	\begin{align*}
		\|u\|_{X_{\varepsilon,T}^s}:=T^{-\varepsilon}\|u\|_{X^s},\quad \|v\|_{Y_T^{s}}:=\|J^s v\|_{V^2_K}+\|P_1v\|_{L_x^2 L_T^\infty}.
	\end{align*}
	Also, we define $
		\|u\|_{\mathscr{M}^s}=\|J^su\|_{V^2_S},\quad
		\|v\|_{\mathscr{N}^s}=\|J^sv\|_{U^2_K}$.
	We consider the linear and multi-linear estimates in $X^s_T$, $Y^s_T$.
	\begin{lemma}\label{linearupvp}
		Let $s\in\mathbb{R}$, $\varepsilon>0$. Then $\forall~T>0$, one has
		\begin{align*}
			\|\chi_{[0,\infty)}(t)S(t)u_0\|_{X_{\varepsilon,T}^s}= T^{-\varepsilon}\|u_0\|_{H^s},~
			\|\chi_{[0,\infty)}(t)K(t)v_0\|_{Y^s_T}\lesssim \langle T\rangle^{1/2}\|v_0\|_{H^s}.
		\end{align*}
	\end{lemma}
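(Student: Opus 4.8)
The plan is to reduce both estimates to elementary facts about the constant-in-time profile $\chi_{[0,\infty)}(t)\phi$ in the spaces $U^2$ and $V^2$, using throughout that $J^s$ commutes with $\chi_{[0,\infty)}(t)$, with $S(t)$, and with $K(t)$.

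First I would treat the Schr\"odinger piece. Since $J^s\chi_{[0,\infty)}(t)S(t)u_0=\chi_{[0,\infty)}(t)S(t)J^su_0$ and $\|f\|_{U^2_S}=\|S(-t)f(t)\|_{U^2}$, the definition of $X^s$ gives $\|\chi_{[0,\infty)}(t)S(t)u_0\|_{X^s}=\|\chi_{[0,\infty)}(t)J^su_0\|_{U^2}$. Writing $\phi=J^su_0$, the function $\chi_{[0,\infty)}(t)\phi$ is, after dividing by $\|\phi\|_{L^2}$, exactly a $U^2$-atom for the partition $-\infty<0<\infty$ with $\phi_0=0$, $\phi_1=\phi/\|\phi\|_{L^2}$, so $\|\chi_{[0,\infty)}(t)\phi\|_{U^2}\leq\|\phi\|_{L^2}$; conversely the embedding $U^2\subset L^\infty(\mathbb{R},L^2)$ of Proposition \ref{basicproperty}(ii), which carries constant one (each atom satisfies $\|a(t)\|_{L^2}\leq1$ pointwise), gives $\|\phi\|_{L^2}=\|\chi_{[0,\infty)}(t)\phi\|_{L^\infty_tL^2_x}\leq\|\chi_{[0,\infty)}(t)\phi\|_{U^2}$. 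Hence $\|\chi_{[0,\infty)}(t)\phi\|_{U^2}=\|\phi\|_{L^2}$, and multiplying by $T^{-\varepsilon}$ yields the first identity, with equality.

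For the KdV piece I would bound the two summands of $\|\cdot\|_{Y^s_T}$ separately. The same commutation turns $\|J^s\chi_{[0,\infty)}(t)K(t)v_0\|_{V^2_K}$ into $\|\chi_{[0,\infty)}(t)J^sv_0\|_{V^2}$. For $v(t)=\chi_{[0,\infty)}(t)\phi$ with the convention $v(\infty)=0$, the only nonzero increments in any partition occur across $t=0$ (jump $0\mapsto\phi$) and across $t=\infty$ (jump $\phi\mapsto 0$), so the supremum defining the $V^2$ norm equals $\sqrt 2\,\|\phi\|_{L^2}$, which is $\lesssim\|v_0\|_{H^s}$ with no dependence on $T$. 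For the second summand, note $P_1\chi_{[0,\infty)}(t)K(t)v_0=\chi_{[0,\infty)}(t)K(t)P_1v_0$, so it remains to control the time-maximal quantity $\big\|\sup_{t\in[0,T]}|K(t)P_1v_0|\big\|_{L^2_x}$.

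The only genuinely nontrivial point, and the source of the factor $\langle T\rangle^{1/2}$, is this last maximal-in-time estimate; everything else is bookkeeping. I would handle it by the fundamental theorem of calculus: for fixed $x$, $\sup_{t\in[0,T]}|K(t)P_1v_0(x)|^2\leq|P_1v_0(x)|^2+2\int_0^T|K(t)P_1v_0(x)|\,|\partial_tK(t)P_1v_0(x)|\,dt$. Integrating in $x$, applying Cauchy--Schwarz in $x$, and using that $K(t)$ is $L^2$-unitary together with $\|\partial_tK(t)P_1v_0\|_{L^2}=\|\partial_{xxx}P_1v_0\|_{L^2}\lesssim\|P_1v_0\|_{L^2}$ (the symbol $\xi^3$ is bounded on the support of the $P_1$ multiplier) gives a bound $\lesssim(1+2T)\|P_1v_0\|_{L^2}^2$. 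Finally $\|P_1v_0\|_{L^2}\lesssim\|v_0\|_{H^s}$ for every $s\in\mathbb{R}$, since $\langle\xi\rangle^s\sim 1$ on the support of $P_1$, and collecting the two summands yields $\|\chi_{[0,\infty)}(t)K(t)v_0\|_{Y^s_T}\lesssim\langle T\rangle^{1/2}\|v_0\|_{H^s}$.
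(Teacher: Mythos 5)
Your proof is correct, and its overall skeleton matches the paper's: reduce the $U^2_S$ and $V^2_K$ components to the constant profile $\chi_{[0,\infty)}(t)\phi$ (where the atom structure gives $\|\chi_{[0,\infty)}\phi\|_{U^2}=\|\phi\|_{L^2}$ exactly, and the two-jump computation gives $\sqrt{2}\,\|\phi\|_{L^2}$ in $V^2$), then handle $\|P_1\chi_{[0,\infty)}K(t)v_0\|_{L^2_xL_T^\infty}$ separately. The one substantive difference lies in that last step: the paper simply invokes the Kenig--Ponce--Vega maximal function estimate (its Lemma \ref{maximalesti}, $\|K(t)v_0\|_{L^2_xL_T^\infty}\lesssim\langle T\rangle^{1/2}\|v_0\|_{H^s}$ for $s>3/4$, applied to $P_1v_0$, whose $H^{s'}$ norms are all comparable), whereas you prove an elementary substitute via the fundamental theorem of calculus, using $L^2$-unitarity of $K(t)$ and boundedness of the symbol $\xi^3$ on the support of the $P_1$ multiplier. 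Your route is self-contained and makes transparent that the $\langle T\rangle^{1/2}$ growth on low frequencies costs no smoothness, while the paper's citation is shorter and reuses a lemma it needs elsewhere anyway; both are valid, and your explicit verification of the $U^2$/$V^2$ ``bookkeeping'' (atom normalization with $\phi_0=0$, the convention $v(\infty)=0$ producing the second jump) fills in exactly what the paper dismisses as ``easy to conclude by the definition of $U^2$, $V^2$.''
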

	\begin{proof}[\textbf{Proof}]
		By maximal function estimate, one has
		$$\|P_1\chi_{[0,\infty)}(t)K(t)v_0\|_{L_x^2L_T^\infty}\lesssim \langle T\rangle^{1/2}\|P_1v_0\|_{L^2}\lesssim \langle T\rangle^{1/2}\|v_0\|_{H^s}.$$
		It is easy to conclude the proof of this lemma by the definition of $U^2$, $V^2$. 
	\end{proof}
	
	\begin{lemma}\label{multilinearoutsch}
		Let $s_2\geq s_1\geq 0$, $\varepsilon>0$.  $\forall~T>0$ one has
		\begin{align*}
			\left\|\mathscr{A}(\chi_{T}u\bar{v} w)\right\|_{X_{\varepsilon,T}^{s_1}}\lesssim T^{1/2+2\varepsilon}\|u\|_{X_{\varepsilon,T}^{s_1}}\|v\|_{X_{\varepsilon,T}^{s_1}}\|w\|_{X_{\varepsilon,T}^{s_1}},
		\end{align*}
		and
		\begin{align*}
			\left\|\mathscr{A}(\chi_Tuv)\right\|_{X_{\varepsilon,T}^{s_1}}\lesssim T^{{13}/{16}}\|u\|_{X_{\varepsilon,T}^{s_1}}\|v\|_{Y^{s_2}_T}.
		\end{align*}
	\end{lemma}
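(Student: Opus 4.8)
The plan is to treat both estimates by duality in the $U^2_S$ framework. Since $(U^2)^*=V^2$ (Theorem \ref{dual}) and $\mathscr{A}(F)$ solves the inhomogeneous Schr\"odinger equation with zero data, one has the standard bound $\|\mathscr{A}(F)\|_{U^2_S}\lesssim\sup\{|\int_0^\infty\langle F(t),z(t)\rangle_{L^2_x}\,dt|:\|z\|_{V^2_S}\le1\}$. Applying this with $F=\chi_T J^{s_1}(u\bar vw)$ and $F=\chi_T J^{s_1}(uv)$ reduces the two inequalities (after cancelling the $T^{\pm\varepsilon}$ factors built into $X^{s_1}_{\varepsilon,T}$) to the trilinear integral bounds $|\int_0^T\langle J^{s_1}(u\bar vw),z\rangle|\lesssim T^{1/2}\|u\|_{X^{s_1}}\|v\|_{X^{s_1}}\|w\|_{X^{s_1}}\|z\|_{V^2_S}$ and $|\int_0^T\langle J^{s_1}(uv),z\rangle|\lesssim T^{13/16}\|u\|_{X^{s_1}}\|v\|_{Y^{s_2}_T}\|z\|_{V^2_S}$. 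In both cases I would first move $J^{s_1}$ onto the highest-frequency input via $\langle\xi\rangle^{s_1}\lesssim\sum_i\langle\xi_i\rangle^{s_1}$ (valid since $s_1\ge0$); in the second estimate, when the derivative falls on $v$, I would absorb it through $\langle\xi_2\rangle^{s_1}\le\langle\xi_2\rangle^{s_2}$, which is exactly where the hypothesis $s_2\ge s_1$ is used.

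For the cubic estimate this is then routine: I would place the factor carrying $J^{s_1}$ in $L^2_{[0,T]\times\mathbb R}$, gaining $T^{1/2}$ from $U^2_S\hookrightarrow L^\infty_tL^2_x$ (Proposition \ref{basicproperty}), and the remaining two inputs together with $z$ in $L^6_{t,x}$, using the $1$d Strichartz estimate lifted to $U^6_S$ and the embedding $V^2_S\subset U^6_S$ for $z$; H\"older closes the estimate since $\tfrac12+3\cdot\tfrac16=1$.

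The coupling estimate is the substantive one. I would split $v=P_1v+P_{>1}v$. The low-frequency piece is handled through its dedicated norm $\|P_1v\|_{L^2_xL^\infty_T}$: estimating $\int u(P_1v)\bar z$ by $\|P_1v\|_{L^2_xL^\infty_T}\|u\bar z\|_{L^2_xL^1_T}$ and using H\"older in time with Schr\"odinger Strichartz (and Bernstein on the low frequency) yields a power of $T$ strictly larger than $13/16$. For $P_{>1}v$ I would decompose dyadically into $P_{N_1}u$, $P_{N_2}v$, $P_Nz$ and estimate each block by pairing $z$ in $L^2$ and invoking the transversal smoothing estimates \eqref{sstran}--\eqref{kdvtran2} of Lemma \ref{bilinearrefined}. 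Since these require $U^2_K$ while $v$ only lives in $V^2_K$, I would bridge with a modulation cut-off $v=Q^K_{\le L}v+Q^K_{>L}v$, bounding the high-modulation part directly in $L^2_{t,x}$ by $L^{-1/2}\|v\|_{V^2_K}$ (Lemma \ref{highmodu}) and treating the low-modulation part as essentially a $U^2_K$ function at the cost of an arbitrarily small frequency loss.

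The heart of the matter, and the step I expect to be the main obstacle, is that the transversal estimates alone only produce the time factor $T^{1/2}$, which is too small, whereas the elementary energy bound combined with Bernstein ($\|P_{N_2}v\|_{L^\infty_x}\lesssim N_2^{1/2}\|P_{N_2}v\|_{L^2_x}$) produces $T^1$ but loses a power of frequency and so fails to sum dyadically at low regularity. The resolution is to interpolate these two bounds in the time exponent (H\"older in $t$ with an intermediate Lebesgue index), trading a fraction of the frequency gain for a larger power of $T$. Optimising the interpolation parameter against the dyadic summation, whose worst case is the endpoint $(s_1,s_2)=(0,0)$ where the Sobolev weights give no decay and one relies entirely on the $N^{-1/2}$/$N^{-1}$ gains of Lemma \ref{bilinearrefined}, pins the exponent at exactly $T^{13/16}$; at every other point of $s_2\ge s_1\ge0$ the weights supply extra decay and the same choice is comfortably summable. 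Carrying out this balance in each regime ($N_1\gg N_2^2$ versus $N_1\ll N_2^2$, and high-low versus low-high output) and summing the resulting geometric series in $N,N_1,N_2$ completes the proof.
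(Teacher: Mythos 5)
Your treatment of the cubic estimate is correct and is essentially the paper's argument with a different choice of Strichartz exponents: the paper puts all four factors in $L_T^8L_x^4$ via $U^2_S\hookrightarrow V^2_S\hookrightarrow U^8_S\hookrightarrow L_t^8L_x^4$ and gains $T^{1/2}$ by H\"older in time, while your $L^2_{T,x}\times (L^6_{t,x})^3$ split with $V^2_S\subset U^6_S$ does the same job. The coupling estimate, however, contains a genuine gap. Your dyadic scheme has no tool for the resonant block $N_1\sim N_3\sim N_2^2$: Lemma \ref{bilinearrefined} deliberately covers only $N_1\gg N_2^2$ and $N_1\ll N_2^2$, because for the interaction $u\,v\,\bar z$ the resonance function is $-\xi_2(2\xi_1+\xi_2+\xi_2^2)$, which vanishes exactly when $\xi_1\approx -(\xi_2+\xi_2^2)/2$, i.e.\ when the Schr\"odinger frequencies are comparable to the square of the KdV frequency. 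In that block there is no transversality or modulation gain at all (compare \eqref{timeres} and the dedicated computation \eqref{linearwave}, which in the analogous interaction yield only $T^{1/2}$ with \emph{no} negative power of frequency). At the endpoint $(s_1,s_2)=(0,0)$ --- which you yourself identify as the worst case --- the weights contribute nothing, the energy--Bernstein bound costs $N_2^{+1/2}$, and interpolating a gainless bound against a lossy one cannot manufacture the decay your geometric series requires; so the summation does not close, and the claim that optimisation "pins the exponent at exactly $T^{13/16}$" is not substantiated. (The block is repairable, e.g.\ by applying the Schr\"odinger--Schr\"odinger estimate \eqref{sstran} to $P_{N_1}u\,P_{N_3}\bar z$, whose output frequency is $N_2$, giving $N_2^{-1/2}$ --- but note this also needs a genuine $V^2\to U^2$ transfer with logarithmic loss; your modulation cutoff alone does not provide it, since $Q^K_{\le L}v$ is still only a $V^2$ function.)

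Two secondary points. First, your low-frequency bound does not follow from the stated pairing: controlling $\|u\bar z\|_{L^2_xL^1_T}$ by Strichartz-type norms yields at best $T^{1/2}$ to $T^{3/4}$ at $s_1=0$ (a maximal-function estimate for $u$ would need $s_1>1/4$), which is below $13/16$; the fix is the trivial bound $T\|u\|_{L^\infty_TL^2_x}\|P_1v\|_{L^\infty_{T,x}}\|z\|_{L^\infty_TL^2_x}$ together with Bernstein in the form $\|P_1v\|_{L^\infty_{T,x}}\lesssim\|P_1v\|_{L^2_xL^\infty_T}$. Second, the paper's own proof avoids your entire apparatus: after duality and fractional Leibniz (using $s_2\ge s_1$ to replace $J^{s_1}v$ by $J^{s_2}v$), it estimates the pairing by $\|J^{s_1}u\|_{L_T^{32}L_x^{16/7}}\|J^{s_2}v\|_{L^8_{x,T}}\|J^{-s_1}w\|_{L_T^{32}L_x^{16/7}}$, uses the embeddings $U^2_S\hookrightarrow U^{32}_S\hookrightarrow L_t^{32}L_x^{16/7}$ and $V^2_K\hookrightarrow U^8_K\hookrightarrow L^8_{t,x}$, and obtains $T^{13/16}$ purely from H\"older in time ($13/16 = 2\times(7/16-1/32)$); this is uniform over all frequency interactions, resonant ones included, which is precisely why no decomposition is needed.
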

	\begin{proof}[\textbf{Proof}]
		By duality, we only need to show
		\begin{align*}
			\left|\int_{\mathbb{R}}\int_0^Tu\bar{v} w \bar{f}~dtdx\right|&\lesssim T^{1/2+3\varepsilon}\|u\|_{X_{\varepsilon,T}^{s_1}}\|v\|_{X_{\varepsilon,T}^{s_1}}\|w\|_{X_{\varepsilon,T}^{s_1}}\|f\|_{\mathscr{M}^{-s_1}},\\
			\left|\int_{\mathbb{R}}\int_0^Tuv\bar{w}~dtdx\right|
			&\lesssim T^{{13}/{16}+\varepsilon}\|u\|_{X_{\varepsilon,T}^{s_1}}\|v\|_{Y_T^{s_2}}\|w\|_{\mathscr{M}^{-s_1}}.
		\end{align*}
		By H\"{o}lder inequality, fractional Leibniz rule, one has
		\begin{align*}
			\left|\int_{\mathbb{R}}u\bar{v}w\bar{f}~dx\right|&\leq \|J^{s_1}(u\bar{v} w)\|_{L^{4/3}_x}\|J^{-s_1}\bar{f}\|_{L^4_x}\\
			&\lesssim \|J^{s_1}u\|_{L^4_{x}}\|J^{s_1}v\|_{L^4_{x}}\|J^{s_1}w\|_{L^4_{x}}\|J^{-s_1}f\|_{L^4_x}.
		\end{align*}
		By H\"{o}lder inequality, $U^2_S\hookrightarrow V^2_S\hookrightarrow U^8_S\hookrightarrow L_t^8L_x^4$, we have
		\begin{align*}
			&\quad\left|\int_{\mathbb{R}}\int_0^Tu\bar{v} w \bar{f}~dtdx\right|\\
			&\lesssim T^{1/2}\|J^{s_1}u\|_{L_T^8L^4_x}\|J^{s_1}v\|_{L_T^8L^4_x}\|J^{s_1}w\|_{L_T^8L^4_x}\|J^{-s_1}f\|_{L_{T}^8L_x^4}\\
			&\lesssim T^{1/2}\|J^{s_1}u\|_{U^2_S}\|J^{s_1}v\|_{U^2_S}\|J^{s_1}w\|_{U^2_S}\|J^{-s_1}f\|_{V^2_S}\\
			&= T^{1/2+3\varepsilon}\|u\|_{X_{\varepsilon,T}^{s_1}}\|v\|_{X_{\varepsilon,T}^{s_1}}\|w\|_{X_{\varepsilon,T}^{s_1}}\|f\|_{\mathscr{M}^{-s_1}}.
		\end{align*}
		Similar to the former argument, 
		\begin{align*}
			\left|\int_{\mathbb{R}}u_1v\bar{u}_2~dx\right|&\leq \|J^{s_1}(u_1v)\|_{L_x^{16/9}}\|J^{-s_1} u_2\|_{L^{16/7}_x}\\
			&\lesssim \|J^{s_1}u_1\|_{L_x^{16/7}}\|J^{s_2}v\|_{L^8_x}\|J^{-s_1}u_2\|_{L^{16/7}_x}.
		\end{align*}
		By H\"{o}lder inequality and $V^2_K\hookrightarrow U^8_K\hookrightarrow L_{t,x}^8$, $U^2_S\hookrightarrow V^2_S\hookrightarrow U^{32}_S\hookrightarrow L_t^{32}L_x^{16/7}$, one has
		\begin{align*}
			&\quad\left|\int_{\mathbb{R}}\int_0^Tuv\bar{w}~dtdx\right|\\
			&\lesssim \|J^{s_1}u\|_{L_{x,T}^{16/7}}\|J^{s_2}v\|_{L_{x,T}^8}\|J^{-s_1} w\|_{L_{x,T}^{16/7}}\\
			&\lesssim T^{{13}/{16}}\|J^{s_1}u\|_{L_T^{32}L^{16/7}_x}\|J^{s_2}v\|_{L_{x,T}^8}\|J^{-s_1} w\|_{L_T^{32}L^{16/7}_x}\\
			&\lesssim T^{{13}/{16}} \|J^{s_1}u\|_{U^2_S}\|J^{s_2}v\|_{V^2_K}\|J^{-s_1} w\|_{V^2_S}\\
			&\lesssim T^{{13}/{16}+\varepsilon}\|u\|_{X_{\varepsilon,T}^{s_1}}\|v\|_{Y^{s_2}_T}\|w\|_{\mathscr{M}^{-s_1}}.
		\end{align*}
		We finish the proof of this lemma.
	\end{proof}
	
	\begin{lemma}\label{kdvlowfre}
		Let $0<T<1$, $s_1\geq -1/4$, $s_2\geq 0$. 
		\begin{align*}
			\left\|\mathscr{B}(\chi_TP_1(u\bar{v}))\right\|_{L^2_x L_T^\infty}&\lesssim T^{1/2+2\varepsilon}\|u\|_{X^{s_1}_{\varepsilon,T}}\|v\|_{X^{s_1}_{\varepsilon,T}},\\
			\left\|\mathscr{B}(\chi_TP_1(uv))\right\|_{L^2_x L_T^\infty}&\lesssim T^{5/6}\|u\|_{Y^{s_2}_T}\|v\|_{Y^{s_2}_T}.
		\end{align*}
	\end{lemma}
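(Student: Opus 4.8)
The plan is to treat both estimates by first reducing the low-frequency Airy--Duhamel term to a maximal function acting on the source, and then analysing the bilinear interaction. Since $P_1$ localizes the output of $\mathscr B$ to frequencies $\lesssim 1$ and $\partial_xP_1=O(1)$, the homogeneous maximal bound $\|P_1K(t)\psi\|_{L^2_xL^\infty_T}\lesssim\langle T\rangle^{1/2}\|\psi\|_{L^2}$ from the proof of Lemma~\ref{linearupvp} shows that the non-retarded operator $G\mapsto P_1\partial_xK(t)\int_{\mathbb R}K(-t')G(t')\,dt'$ is bounded from $L^1_{t'}L^2_x$ to $L^2_xL^\infty_T$ with norm $\lesssim\langle T\rangle^{1/2}$. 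The time exponents $1<\infty$ are correctly ordered, so the Christ--Kiselev lemma upgrades this to the retarded operator $\mathscr B$, giving for $0<T<1$
\[
\left\|\mathscr B(\chi_TP_1F)\right\|_{L^2_xL^\infty_T}\lesssim\int_0^T\|P_1F\|_{L^2_x}\,dt .
\]
Alternatively one may bound the left side by $\|\mathscr B(\chi_TP_1F)\|_{U^2_K}$, applying the maximal estimate to $U^2_K$-atoms and then invoking the $U^2_K$--$V^2_K$ duality of Theorem~\ref{dual}; this trilinear form is what I use in the hard regime below.

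For $F=u\bar v$ and $s_1\ge0$ the estimate is soft: since $P_1(u\bar v)$ forces $|\xi_1-\xi_2|\lesssim1$, a Littlewood--Paley sum over comparable frequencies, Hölder in time and the embedding $U^2_S\hookrightarrow L^8_TL^4_x$ give $\int_0^T\|P_1(u\bar v)\|_{L^2_x}\,dt\lesssim T^{3/4}\sum_NN^{-2s_1}\|J^{s_1}P_Nu\|_{U^2_S}\|J^{s_1}P_Nv\|_{U^2_S}$, which is summable (Cauchy--Schwarz in $N$ with the almost-orthogonality of $U^2_S$) and bounded by $T^{1/2}$ times the product of norms. The genuine difficulty is $-1/4\le s_1<0$: the high-high-to-low Schrödinger collision is \emph{non-transversal} (two packets of frequency $\sim N$ share the same group velocity), so there is no bilinear smoothing and $L^8_TL^4_x$ alone fails. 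Here I keep the trilinear form $\int_0^T\!\int u\bar v\,\overline{\partial_xw}$ with $\|w\|_{V^2_K}\le1$ and use that, for output frequency $|\xi|\sim\mu\lesssim1$, the resonance relation forces the largest of the three modulations to be $\gtrsim\mu N$. Splitting according to which of $u$, $v$, $w$ carries this modulation and applying Lemma~\ref{highmodu} gives a gain $(\mu N)^{-1/2}$ in each case, while $\partial_x$ contributes a factor $\mu$; after the powers of $\mu$ cancel this yields the decisive gain $N^{-1/2}$, which matches $N^{-2s_1}$ exactly at $s_1=-1/4$. The localization $\chi_T$ supplies the factor $T^{1/2}$ and truncates the dyadic $\mu$-sum near $\mu\sim(NT)^{-1}$, so the otherwise logarithmic summation is absorbed into the $T^\varepsilon$ slack of $T^{1/2+2\varepsilon}$.

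For $F=uv$ with $s_2\ge0$ I split each factor as $P_1+P_{>1}$; because $P_1(uv)$ has output frequency $\lesssim1$, only the low--low and the comparable high--high interactions survive. For the low--low piece I place one factor in the norm $\|P_1\cdot\|_{L^2_xL^\infty_T}$ built into $Y^{s_2}_T$, estimate the other by Bernstein together with $V^2_K\hookrightarrow L^\infty_TL^2_x$, and integrate in $t$ to gain a factor $T\le T^{5/6}$. For the high--high piece the two Airy packets again travel together, but the KdV resonance is stronger, $\gtrsim N^2\mu$, so Lemma~\ref{highmodu} produces a gain $N^{-1}$; combined with $s_2\ge0$, the embeddings $V^2_K\hookrightarrow U^p_K\hookrightarrow L^p_{t,x}$ used earlier and a Hölder-in-time argument, this is summable in $N$ and gives the stated $T^{5/6}$.

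The main obstacle is the endpoint $s_1=-1/4$ of the first estimate. The resonant nature of the high-high-to-low Schrödinger interaction makes the naive bilinear $L^2$ bound logarithmically divergent, so the $N^{-1/2}$ modulation gain only just suffices; the whole point is to organize the modulation decomposition so that the borderline $\mu$-summation is controlled by the time factor $T^\varepsilon$ rather than producing a logarithm.
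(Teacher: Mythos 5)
Your proposal is correct in substance but takes a genuinely different, and considerably heavier, route than the paper for the first estimate. The paper reduces, via the dual/Christ--Kiselev form of the low-frequency Airy maximal estimate, to the \emph{space-outside} norm $\|u\bar v\|_{L^2_xL^1_T}$ and then obtains the gain with no modulation analysis at all: after Cauchy--Schwarz in time, each factor is placed in $L^4_xL^2_T$, and interpolating the Kato smoothing bound $\|P_Nu\|_{L^\infty_xL^2_T}\lesssim N^{-1/2}\|P_Nu\|_{U^2_S}$ against the trivial bound $\|P_Nu\|_{L^2_xL^2_T}\lesssim T^{1/2}\|P_Nu\|_{U^2_S}$ gives $T^{1/4}N^{-1/4}$ per factor, hence $T^{1/2}N^{-1/2}$ for each high--high block $P_NuP_{\sim N}\bar v$, uniformly for all $s_1\geq -1/4$; the endpoint $s_1=-1/4$ (exponent $N^{-1/2-2s_1}=N^0$) is closed by $\ell^2_N$ Cauchy--Schwarz, and the factor $T^{2\varepsilon}$ is pure bookkeeping from $\|u\|_{X^{s_1}_{\varepsilon,T}}=T^{-\varepsilon}\|u\|_{X^{s_1}}$. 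You correctly diagnose that the high--high-to-low Schr\"odinger interaction is non-transversal and that the global-in-time bilinear $L^2$ bound fails there; what you overlooked is that in the mixed norm $L^2_xL^1_T$ the $N^{-1/2}$ gain survives via local smoothing, so no case split between $s_1\geq 0$ and $s_1<0$ is needed. Your modulation alternative does close: with output frequency $|\xi|\sim\mu\lesssim 1$ the resonance is indeed $\sim \mu N$, and combining the $(\mu N)^{-1/2}$ gain of Lemma \ref{highmodu} with the factor $\mu$ from $\partial_xP_1$ and either Bernstein at the low output frequency (a further $\mu^{1/2}$) or the transversal Schr\"odinger--Airy bound \eqref{kdvtran1} yields a geometrically convergent $\mu$-sum totalling $\lesssim T^{1/2}N^{-1/2}$. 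For this reason your closing narrative is inaccurate: there is no borderline logarithmic $\mu$-summation at $s_1=-1/4$ to be rescued by the $T^{\varepsilon}$ slack, and no truncation at $\mu\sim (NT)^{-1}$ is required --- the only genuinely borderline feature is the $N$-exponent vanishing at the endpoint, which Cauchy--Schwarz in $N$ handles. For the second estimate the paper is again much softer than your sketch: $\|uv\|_{L^1_TL^2_x}\leq\|u\|_{L^2_TL^4_x}\|v\|_{L^2_TL^4_x}$, and interpolating $L^4_x$ between $L^8_{t,x}$ and $L^\infty_TL^2_x$ (both controlled through $V^2_K\hookrightarrow U^8_K\hookrightarrow L^8_{t,x}$ and the $Y^{s_2}_T$ norm) gives exactly $T^{5/6}$; your modulation treatment of the high--high Airy interaction is sound but unnecessary, since $s_2\geq 0$ and $\partial_xP_1$ is bounded, so no derivative or regularity loss has to be recovered there.
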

	\begin{proof}[\textbf{Proof}]
		By maximal function estimate and transversal estimates (lem \ref{bilinearrefined}) , one has
			\begin{align*}
			&	\quad\left\|\mathscr{B}(\chi_TP_1(u\bar{v}))\right\|_{L^2_x L_T^\infty}\\
			&\lesssim \|P_{\lesssim 1}u P_{\lesssim 1} \bar{v}\|_{L_{x}^2L_T^1}+\sum_{N\gg 1}\|P_N uP_{\sim N}\bar{v}\|_{L_x^2L_T^1}	\\
			&\lesssim T^{{1}/{2}}\|P_{\lesssim 1}u\|_{U^{2}_S}\|P_{\lesssim 1}\bar{v}\|_{U^{2}_S} + \sum_{N\gg 1} T^{{1}/{2}} N^{-{1}/{2}}\|P_{N}u\|_{U^2_S}\|P_{\sim N}\bar{v}\|_{U^2_S} \\
			&\lesssim T^{1/2+2\varepsilon}\|J^{-s_1}P_{\lesssim 1}u\|_{X^{s_1}_{\epsilon,T}}\|J^{-s_1}P_{\lesssim 1}\bar{v}\|_{X^{s_1}_{\epsilon,T}}\\
			&\qquad+ \sum_{N\gg 1}T^{1/2+2\varepsilon}N^{-\frac{1}{2}}\|J^{-s_1}P_Nu\|_{X^{s_1}_{\epsilon,T}}\|J^{-s_1}P_{\sim N}\bar{v}\|_{X^{s_1}_{\epsilon,T}}\\
			&\lesssim T^{1/2+2\varepsilon}\|u\|_{X^{s_1}_{\epsilon,T}}\|v\|_{X^{s_1}_{\epsilon,T}}+T^{1/2+2\varepsilon}\sum_{N\gg 1}N^{-1/2-2s_1}\|P_Nu\|_{X^{s_1}_{\epsilon,T}}\|P_{\sim N}\bar{v}\|_{X^{s_1}_{\epsilon,T}}\\
			&\lesssim T^{1/2+2\varepsilon}\|u\|_{X^{s_1}_{\varepsilon,T}}\|v\|_{X^{s_1}_{\varepsilon,T}}.
		\end{align*}
		Similarly, one has
		\begin{align*}
			&\quad\left\|\mathscr{B}(\chi_TP_1(uv))\right\|_{L^2_x L_T^\infty}\lesssim \|u\|_{L_T^2L_x^4}\|v\|_{L_T^2L_x^4}\\
			&\lesssim T^{5/6}\|u\|_{L_{t,x}^8}^{2/3}\|u\|_{L_T^\infty L_x^2}^{1/3}\|v\|_{L_{t,x}^8}^{2/3}\|v\|_{L_T^\infty L_x^2}^{1/3}\\
			&\lesssim T^{5/6}\|u\|_{Y^{s_2}_T}\|v\|_{Y^{s_2}_T}.
		\end{align*}
		We finish the proof of this lemma.
	\end{proof}
	
	\begin{lemma}\label{frequecylocbilinear}
		Let $0<T<1$, $N_1,N_2,N_3\geq 2$, $N_{\max}:=\max\{N_1,N_2,N_3\}$. Then for $s\geq 0$, we have
		\begin{align*}
			\left|\int_{\mathbb{R}}\int_0^T\partial_x(P_{N_1}uP_{N_2}v)P_{N_3}\bar{w}~dtdx\right|\lesssim T^{1/4} N_{\max}^{-1/{16}}\|u\|_{Y^s_T}\|v\|_{Y^s_T}\|w\|_{\mathscr{N}^{-s}}.
		\end{align*}
	\end{lemma}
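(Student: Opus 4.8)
The plan is to reduce to the case $s=0$ and then combine the resonance relation, the high-modulation bound of Lemma \ref{highmodu}, and the bilinear Strichartz estimate for the Airy group.

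First I would strip the Sobolev weights. Writing $u_1=P_{N_1}u$, $u_2=P_{N_2}v$, $u_3=P_{N_3}w$, the hypothesis $N_i\geq 2$ gives $\|u_1\|_{V^2_K}\lesssim N_1^{-s}\|u\|_{Y^s_T}$, $\|u_2\|_{V^2_K}\lesssim N_2^{-s}\|v\|_{Y^s_T}$ and $\|u_3\|_{U^2_K}\lesssim N_3^{s}\|w\|_{\mathscr N^{-s}}$. Since the spatial integral forces $\xi_1+\xi_2=\xi_3$, one has $N_3\lesssim\max\{N_1,N_2\}$, so two of $N_1,N_2,N_3$ are comparable to $N_{\max}$; checking the three configurations shows $N_1^{-s}N_2^{-s}N_3^{s}\lesssim 1$ for every $s\geq 0$. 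Hence it suffices to prove
\[
\Big|\int_{\mathbb R}\int_0^T\partial_x(u_1u_2)\bar u_3\,dt\,dx\Big|\lesssim T^{1/4}N_{\max}^{-1/16}\|u_1\|_{V^2_K}\|u_2\|_{V^2_K}\|u_3\|_{U^2_K}.
\]

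The two analytic inputs are as follows. Setting $\sigma_i=\tau_i-\xi_i^3$, the identity $\sigma_1+\sigma_2-\sigma_3=\xi_1^3+\xi_2^3-\xi_3^3=-3\xi_1\xi_2\xi_3$ gives the resonance bound $\max_i\langle\sigma_i\rangle\gtrsim|\xi_1\xi_2\xi_3|\sim N_1N_2N_3$; by Lemma \ref{highmodu} the factor carrying the largest modulation may be placed in $L^2_{t,x}$ with the gain $(N_1N_2N_3)^{-1/2}$. For the remaining two factors I would use the Airy bilinear Strichartz estimate of Kenig–Ponce–Vega type \cite{kenig1996bilinear}, transferred to the atomic spaces, $\|P_{M_1}f\,P_{M_2}g\|_{L^2_{t,x}}\lesssim|M_1^2-M_2^2|^{-1/2}\|f\|_{U^2_K}\|g\|_{U^2_K}$, together with the embedding $\|f\|_{L^8_{t,x}}\lesssim\|f\|_{V^2_K}$ already used above, which by interpolation with $L^\infty_TL^2_x$ yields $\|f\|_{L^4_{t,x}([0,T])}\lesssim T^{1/6}\|f\|_{V^2_K}$. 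Now $\partial_x$ contributes a factor $\sim N_3\lesssim N_{\max}$. I would localize the factor of largest modulation by $Q^K_{>cN_1N_2N_3}$, bound it in $L^2_{t,x}$ by $(N_1N_2N_3)^{-1/2}$ times its norm, and estimate the product of the other two in $L^2_{t,x}([0,T])$ by Cauchy–Schwarz. For that product I would interpolate the sharp (time-global) bilinear estimate against the crude bound $\|\cdot\|_{L^4_{t,x}}\|\cdot\|_{L^4_{t,x}}\lesssim T^{1/3}\|\cdot\|_{V^2_K}\|\cdot\|_{V^2_K}$; choosing the interpolation parameter so as to produce exactly $T^{1/4}$ still leaves a genuine negative power of $N_{\max}$. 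A short case check — the high$\times$high, the high$\times$low$\to$high, and the balanced high$\times$high$\to$high regimes, selecting in each the non-degenerate pairing (and, when all frequencies are comparable so every pairing degenerates, using instead the strong modulation gain $N_{\max}^{-3/2}$ together with $L^4\times L^4$) — shows that every configuration obeys a bound of the form $T^{1/4}N_{\max}^{-\delta}$ with $\delta\geq 1/16$.

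The hard part is the high$\times$low$\to$high interaction, say $N_1\sim N_3\sim N_{\max}\gg N_2$, when the largest modulation sits on the low-frequency factor $u_2$: then the derivative $N_{\max}$ must be absorbed through the transversality of $u_1$ and $u_3$, whose product has small output frequency $\sim N_2$, so the bilinear gain is only $(N_2N_{\max})^{-1/2}$ and the net decay drops to $N_{\max}^{-1/2}$. This is why the stated exponent $N_{\max}^{-1/16}$ is far from sharp, and why the factor $T^{1/4}$ has to be extracted by trading part of this decay inside the bilinear interpolation rather than from the time interval alone. A minor technical point is that the bilinear estimate naturally lives in $U^2_K$ while $u_1,u_2$ are controlled only in $V^2_K$; I would absorb this via the embedding $V^2_K\hookrightarrow U^{q}_K$ for $q>2$ (Proposition \ref{basicproperty}), at the cost of an $N_{\max}^{\varepsilon}$ which the slack in the exponent comfortably accommodates.
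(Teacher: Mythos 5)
Your argument is correct in substance and would deliver the stated bound (in your worst configuration you even get $T^{1/4}N_{\max}^{-1/8}$ up to $\varepsilon$-losses), but it takes a genuinely different route from the paper. Both proofs start identically: truncate to $[0,T]$, strip the Sobolev weights using that two of the three frequencies are comparable, use the resonance identity $|\xi_1^3+\xi_2^3-\xi^3|=3|\xi_1\xi_2\xi|$ to kill the all-low-modulation piece at $L\sim N_1N_2N_3$, and put the highest-modulation factor in $L^2_{t,x}$ via Lemma \ref{highmodu} with gain $(N_1N_2N_3)^{-1/2}$. From there the paper never invokes bilinear transversal estimates: the single-high-modulation terms are estimated by $L^2_{t,x}\times L^4_tL^2_x\times L^4_tL^\infty_x$, where $T^{1/4}$ comes from H\"older in time on the $L^4_tL^2_x$ factor and the derivative is absorbed by the linear Airy Strichartz embedding $\|\partial_xP_{N_3}w\|_{L^4_tL^\infty_x}\lesssim N_3^{3/4}\|P_{N_3}w\|_{U^4_K}$, giving $T^{1/4}N_{\max}^{-1/4}$; double-high-modulation terms go via $L^2\times L^2\times L^\infty_{t,x}$ (Bernstein), giving $N_{\max}^{-1/2}$; and the odd exponent $1/16$ is produced at the very end by taking a geometric mean with the crude H\"older--Bernstein bound $TN_{\max}^{1/2}$ of \eqref{trialone} with weight $\delta=1/4$. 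What the paper's route buys is economy: since only a $N_{\max}^{-1/16}$ gain is needed, it avoids your case analysis over frequency pairings and, more importantly, sidesteps the $V^2$-versus-$U^2$ endpoint issue entirely (it only ever needs $V^2_{K}\hookrightarrow U^4_K$, which is lossless by Proposition \ref{basicproperty}). What your route buys is a stronger decay exponent, at the price of the $N_{\max}^{\varepsilon}$-losses from $V^2\hookrightarrow U^q$ interpolation, which you correctly flag and which your slack absorbs.

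One point in your write-up needs care: the bilinear estimate as you state it, with gain $|M_1^2-M_2^2|^{-1/2}$ for dyadic blocks, is degenerate when $M_1=M_2$, which is exactly the regime of your hardest case $N_1\sim N_3\gg N_2$. What you actually use there — and what is correct — is the output-frequency-localized refinement $\|P_{N_2}(P_{N_1}u\,P_{N_3}\bar w)\|_{L^2_{t,x}}\lesssim (N_2N_{\max})^{-1/2}\|u\|_{U^2_K}\|w\|_{U^2_K}$, in the spirit of \eqref{kdvtran1}; since your text computes with precisely this gain, the argument goes through, but the estimate should be stated in that localized form rather than through $|M_1^2-M_2^2|$. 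Your fallback in the balanced regime (pure modulation gain $N_{\max}^{-3/2}$ plus $L^4\times L^4$) is sound and is, in flavor, exactly the paper's main mechanism.
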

	\begin{proof}[\textbf{Proof}]
		By replacing $u,v,w$ to $\chi_{[0,T)}(t)u$, $\chi_{[0,T)}(t)v$, $\chi_{[0,T)}(t)w$, we can assume that $u$, $v$, $w$ are supported on $[0,T]\times \mathbb{R}$.
		Firstly, by H\"{o}lder, Bernstein inequalities, we have
		\begin{equation}\label{trialone}
			\begin{aligned}
				&\quad\left|\int_{\mathbb{R}}\int_0^T\partial_x(P_{N_1}uP_{N_2}v)P_{N_3}\bar{w}~dtdx\right|\\
				&\lesssim T\|P_{N_1}u\|_{L_t^\infty L_x^3}\|P_{N_2}v\|_{L_t^\infty L_x^3}\|P_{N_3}w\|_{L_t^\infty L_x^3}\\
				&\lesssim TN_{\max}^{1/2}\|J^su\|_{L_t^\infty L_x^2}\|J^sv\|_{L_t^\infty L_x^2}\|J^{-s}w\|_{L_t^\infty L_x^2}\\  
				&\lesssim TN_{\max}^{1/2}\|u\|_{Y^s_T}\|v\|_{Y^s_T}\|w\|_{\mathscr{N}^{-s}}.
			\end{aligned}	
		\end{equation}
		By Lemma \ref{highmodu}, we have $\|Q^K_{>L}f\|_{L_{t,x}^2}\lesssim L^{-1/2}\|f\|_{V^2_K}$. Since
		\begin{align*}
			|\xi_1^3+\xi_2^3-\xi^3| = |-3\xi_1\xi_2\xi|\geq 3N_1N_2N_3/8,
		\end{align*}
		by choosing $L=N_1N_2N_3/8$, we have
		\begin{align*}
			\left|\int_{\mathbb{R}^2}\partial_x(Q^K_{\leq L}P_{N_1}uQ^K_{\leq L}P_{N_2}v)P_{N_3}\overline{Q^K_{\leq L}w}~dtdx\right| = 0.
		\end{align*}
		By Lemma \ref{highmodu} and H\"{o}lder inequality, we have
		\begin{align*}
			&\quad\left|\int_{\mathbb{R}^2}\partial_x(Q_{> L}^KP_{N_1}uP_{N_2}v)P_{N_3}\bar{w}~dtdx\right|\\
			&\leq \|Q_{>L}^KP_{N_1}u\|_{L_{t,x}^2}\|P_{N_2}v\|_{L_t^4 L_x^2}\|\partial_x P_{N_3}w\|_{L_t^4L_x^\infty}\\
			&\lesssim (N_1N_2N_3)^{-1/2}\|P_{N_1}u\|_{V^2_K}T^{1/4}\|P_{N_2}v\|_{V^2_K}N_3^{3/4}\|P_{N_3}w\|_{U^4_K}\\
			&\lesssim T^{1/4} N_{\max}^{-1/4}\|u\|_{Y^s_T}\|v\|_{Y^s_T}\|w\|_{\mathscr{N}^{-s}}
		\end{align*}
		Similarly, we have 
		\begin{align*}
			&\quad\left|\int_{\mathbb{R}^2}|P_{N_1}uQ^K_{> L}P_{N_2}v\partial_xP_{N_3}\bar{w}|+|P_{N_1}uP_{N_2}v\partial_xP_{N_3}\overline{Q_{> L}^Kw}|~dtdx\right|\\
			&\lesssim  T^{1/4} N_{\max}^{-1/4}\|u\|_{Y^s_T}\|v\|_{Y^s_T}\|w\|_{\mathscr{N}^{-s}}.
		\end{align*}
		By H\"{o}lder inequality, we have
		\begin{align*}
			&\quad \left|\int_{\mathbb{R}^2}\partial_x(Q_{>L}^KP_{N_1}uQ_{>L}^KP_{N_2}v)P_{N_3}\bar{w}~dtdx\right|\\
			&\leq \|Q_{>L}^KP_{N_1}u\|_{L_{t,x}^2}\|Q_{>L}^KP_{N_2}v\|_{L_{t,x}^2}\|\partial_xP_{N_3}w\|_{L_{t,x}^\infty}\\
			&\lesssim (N_1N_2N_3)^{-1}\|P_{N_1}u\|_{V^2_K}\|P_{N_2}v\|_{V^2_K}N_3^{3/2}\|P_{N_3}w\|_{U^2_K}\\
			&\lesssim N_{\max}^{-1/2}\|u\|_{Y^s_T}\|v\|_{Y^s_T}\|w\|_{\mathscr{N}^{-s}}.
		\end{align*}
		We can control other terms similarly. Thus,
		\begin{align*}
			&\quad\left|\int_{\mathbb{R}}\int_0^T\partial_x(P_{N_1}uP_{N_2}v)P_{N_3}\bar{w}~dtdx\right|\\
			&\lesssim (T^{1/4}N_{\max}^{-1/4}+N_{\max}^{-1/2})\|u\|_{Y^s_T}\|v\|_{Y^s_T}\|w\|_{\mathscr{N}^{-s}}.
		\end{align*}
		By interpolation with \eqref{trialone}, we have
		\begin{align*}
			&\quad\left|\int_{\mathbb{R}}\int_0^T\partial_x(P_{N_1}uP_{N_2}v)P_{N_3}\bar{w}~dtdx\right|\\
			&\lesssim (T^{1/4}N_{\max}^{-1/4}+N_{\max}^{-1/2})^{1-\delta}(TN_{\max}^{1/2})^\delta\|u\|_{Y^s_T}\|v\|_{Y^s_T}\|w\|_{\mathscr{N}^{-s}}\\
			&\lesssim T^\delta N_{\max}^{-1/4+3\delta/4}\|u\|_{Y^s_T}\|v\|_{Y^s_T}\|w\|_{\mathscr{N}^{-s}}.
		\end{align*}
		By choosing $\delta = 1/4$, we conclude the proof of the lemma.
	\end{proof}
	\begin{lemma}\label{kdvupvp}
		Let $0<T<1$, $s\geq 0$. Then,
		\begin{align*}
			\left\|\mathscr{B}(\chi_Tuv)\right\|_{Y_T^{s}}&\lesssim T^{1/4}\|u\|_{Y_T^{s}}\|v\|_{Y_T^{s}}.
		\end{align*}
	\end{lemma}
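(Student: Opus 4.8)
The plan is to split the $Y^s_T$-norm of $\mathscr{B}(\chi_Tuv)$ into its two defining pieces, $\|J^s\mathscr{B}(\chi_Tuv)\|_{V^2_K}$ and $\|P_1\mathscr{B}(\chi_Tuv)\|_{L^2_xL^\infty_T}$, and to control each separately, the first producing the decisive power $T^{1/4}$ and the second a better power.

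For the low-frequency piece I would observe that the spatial projection $P_1$ commutes with $\partial_xK(t-t')$ and with the temporal cut-off $\chi_T$, so that $P_1\mathscr{B}(\chi_Tuv)=\mathscr{B}(\chi_TP_1(uv))$. This is exactly the object bounded by the second inequality of Lemma \ref{kdvlowfre} (with $s_2=s\geq 0$), giving $\|P_1\mathscr{B}(\chi_Tuv)\|_{L^2_xL^\infty_T}\lesssim T^{5/6}\|u\|_{Y^s_T}\|v\|_{Y^s_T}\lesssim T^{1/4}\|u\|_{Y^s_T}\|v\|_{Y^s_T}$ since $0<T<1$.

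For the main piece I would first use the embedding $U^2_K\hookrightarrow V^2_K$ (Proposition \ref{basicproperty}) to replace $\|J^s\mathscr{B}(\chi_Tuv)\|_{V^2_K}$ by $\|\mathscr{B}(\chi_Tuv)\|_{\mathscr{N}^s}=\|J^s\mathscr{B}(\chi_Tuv)\|_{U^2_K}$, and then invoke the duality $(U^2)^*=V^2$ (Theorem \ref{dual}) together with the standard duality representation of the Duhamel operator (writing $\partial_xK=K\partial_x$) to obtain
\[
\|\mathscr{B}(\chi_Tuv)\|_{\mathscr{N}^s}\lesssim \sup_{\|w\|_{V^2_K}\leq 1}\left|\int_{\mathbb R}\int_{\mathbb R}\partial_x(\chi_Tuv)\,\overline{J^sw}\,dx\,dt\right|.
\]
Decomposing $u=\sum_{N_1}P_{N_1}u$, $v=\sum_{N_2}P_{N_2}v$ and $J^sw=\sum_{N_3}P_{N_3}(J^sw)$ into Littlewood--Paley pieces reduces the pairing to the trilinear forms estimated in Lemma \ref{frequecylocbilinear}. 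The test function enters the proof of that lemma only through the embeddings $V^2_K\hookrightarrow U^q_K$ ($q>2$) and $V^2_K\hookrightarrow L^\infty_tL^2_x$, so the estimate applies with the dual variable in $V^2_K$ and yields, for each admissible dyadic triple, the gain $T^{1/4}N_{\max}^{-1/16}\|u\|_{Y^s_T}\|v\|_{Y^s_T}$.

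The remaining work, and the main obstacle, is the dyadic summation within the $U^2$--$V^2$ framework, where there is no exact Littlewood--Paley square function. The frequency support of $\partial_x(P_{N_1}uP_{N_2}v)P_{N_3}\bar w$ forces the two largest of $N_1,N_2,N_3$ to be comparable, so for fixed $N_{\max}$ only a diagonal family of triples contributes; I would then sum the gain $N_{\max}^{-1/16}$ over $N_{\max}$ and use Cauchy--Schwarz against the $\ell^2$ almost-orthogonality of the Littlewood--Paley pieces of $J^su$, $J^sv$ and $w$ in $V^2_K$ to absorb the derivative and close the sum to $T^{1/4}\|u\|_{Y^s_T}\|v\|_{Y^s_T}$. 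The finitely many interactions in which some frequency equals $1$, not literally covered by Lemma \ref{frequecylocbilinear}, are handled by the same but simpler direct estimates used in its proof (the regime $N_{\max}\sim 1$, where the crude bound \eqref{trialone} already gives a power of $T$ no worse than $T^{1/4}$ for $T<1$). Combining this with the low-frequency piece completes the proof.
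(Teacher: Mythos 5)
Your overall skeleton matches the paper's proof: peel off the $P_1$ maximal-function piece via Lemma \ref{kdvlowfre}, reduce the main piece by duality to a trilinear form, decompose dyadically, and invoke Lemma \ref{frequecylocbilinear} with direct summation of $N_{\max}^{-1/16}$ (your worry about square functions is unnecessary, since the right-hand side of that lemma already carries the global norms and the number of triples with fixed $N_{\max}$ is only $O((\log N_{\max})^2)$). However, there is a genuine gap in your treatment of the interactions involving frequency $\sim 1$. These are \emph{not} finitely many and are \emph{not} confined to the regime $N_{\max}\sim 1$: the family $P_1u\cdot P_{N}v$ with output at frequency $\sim N$, $N\to\infty$, has one factor at frequency $1$ and $N_{\max}$ arbitrarily large. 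Lemma \ref{frequecylocbilinear} does not apply there, because its modulation gain rests on $|\xi_1\xi_2\xi|\gtrsim N_1N_2N_3$, which fails when a factor's frequency support includes $\xi_1$ near $0$; and the crude bound \eqref{trialone} gives $TN_{\max}^{1/2}$, which is not summable. The paper devotes a separate estimate to exactly this case, pairing the maximal-function component $\|P_1u\|_{L^2_xL^\infty_T}$ of the $Y^s_T$-norm (this is the reason that component is in the norm at all) with the Kato smoothing bound $\|J^{-s}\partial_x w\|_{L^\infty_xL^2_t}\lesssim\|J^{-s}w\|_{U^2_K}$ inherited from Lemma \ref{maximalesti}.

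This is where your choice of dual space bites. The paper does not estimate the $U^2_K$-norm of the Duhamel term: since the $Y^s_T$-norm only involves $V^2_K$, it uses Theorem \ref{dual} in the form $\|f\|_{V^2}=\sup_{\|w\|_{U^2}\leq 1}|B(w,f)|$, so the dual variable lies in the \emph{smaller} space $U^2_K$ (this is the meaning of $\|w\|_{\mathscr{N}^{-s}}$ in the reduced inequality). You instead strengthen the target to $U^2_K$ and take $w\in V^2_K$. You are right that the proof of Lemma \ref{frequecylocbilinear} survives this, since there $w$ enters only through $V^2\hookrightarrow U^4$, the high-modulation bound of Lemma \ref{highmodu}, and $L^\infty_tL^2_x$. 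But the local smoothing estimate needed for the $P_1u\cdot v$ interaction is an $L^2_t$-based estimate that transfers to $U^2_K$ atoms and does \emph{not} hold for $V^2_K$ (this failure is precisely why $\ell^1$-over-modulation spaces like $Y^{s,1/2,1}$ are used elsewhere in the paper). So with your setup the low-high case cannot be closed as stated; reverting to the paper's duality, with $w\in U^2_K$, both repairs this case and makes the rest of your argument go through.
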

	\begin{proof}[\textbf{Proof}]
		By Lemma \ref{kdvlowfre} and duality, we only need to prove
		\begin{align*}
			\left|\int_{\mathbb{R}}\int_0^T\partial_x(uv)\bar{w}~dtdx\right|\lesssim T^{1/4}\|u\|_{Y_T^{s}}\|v\|_{Y_T^{s}}\|w\|_{\mathscr{N}^{-s}}
		\end{align*}
		By triangle inequality, one has
		\begin{align*}
			&\quad\left|\int_{\mathbb{R}}\int_0^T\partial_x(uv)\bar{w}~dtdx\right|\\
			&\lesssim  \sum_{N_1,N_2\geq 2,N_3\geq 1}\left|\int_{\mathbb{R}}\int_0^TP_{N_1}uP_{N_2}v\partial_xP_{N_3}\bar{w}~dtdx\right|\\
			&\quad +\left|\int_{\mathbb{R}}\int_0^T\partial_x(P_1uv)\bar{w}~dtdx\right|+\left|\int_{\mathbb{R}}\int_0^T\partial_x((I-P_1)v_1P_1v_2)\bar{v}_3~dtdx\right|.
		\end{align*}
		It is easy to control when $N_3 = 1$.   If $N_1 = 1$, we have
		\begin{align*}
			&\quad\left|\int_{\mathbb{R}}\int_0^T\partial_x(P_1uv)\bar{w}~dtdx\right|\\
			&\leq \|P_1 u\|_{L_x^2 L_T^\infty}\|J^s v\|_{L_T^2L_{x}^2}\|J^{-s}\partial_x w\|_{L_x^\infty L_t^2}\\ 
			&\lesssim T^{1/2}\|u\|_{Y^s_T}\|J^sv\|_{V^2_K}\|J^{-s}w\|_{U^2_K}\\
			&\lesssim T^{1/2}\|u\|_{Y^s_T}\|v\|_{Y^s_T}\|w\|_{\mathscr{N}^{-s}}.
		\end{align*}
		It is similar when $N_2 = 1$. Thus, we assume $N_1,N_2,N_3\geq 2$. By Lemma \ref{frequecylocbilinear}, we have
		\begin{align*}
			&\quad\sum_{N_1,N_2,N_3\geq 2}\left|\int_{\mathbb{R}}\int_0^TP_{N_1}uP_{N_2}v\partial_xP_{N_3}\bar{w}~dtdx\right|\\
			&\lesssim \sum_{N_1,N_2,N_3\geq 2}T^{1/4}N_{\max}^{-1/16}\|u\|_{Y^s_T}\|v\|_{Y^s_T}\|w\|_{\mathscr{N}^{-s}}\\
			&\lesssim T^{1/4} \|u\|_{Y^s_T}\|v\|_{Y^s_T}\|w\|_{\mathscr{N}^{-s}}.
		\end{align*}
		Since we assume $T<1$, the proof is completed.
	\end{proof}
	
	\begin{lemma}\label{frequecylocsbilinear}
		Let $0<T<1$, $N_1,N_2,N_3\geq 1$. If $N_3\lesssim 1$ or $N_3\gg N_1^{1/2}$, one has
		\begin{equation}\label{timenonreson}
			\begin{aligned}
				\left|\int_{\mathbb{R}}\int_0^T\partial_x(P_{N_1}uP_{N_2}\bar{v})P_{N_3}\bar{w}~dtdx\right|\lesssim N_3^{-1} \|u\|_{U^2_S}\|v\|_{U^2_S}\|w\|_{U^2_K}.
			\end{aligned}
		\end{equation} 
		If $N_3\ll N_1^{1/2}$, one has
		\begin{equation}\label{good}
			\begin{aligned}
				\left|\int_{\mathbb{R}}\int_0^T\partial_x(P_{N_1}uP_{N_2}\bar{v})P_{N_3}\bar{w}~dtdx\right|\lesssim N_1^{-1/2} \|u\|_{U^2_S}\|v\|_{U^2_S}\|w\|_{U^2_K}.
			\end{aligned}
		\end{equation}
		If $N_3\sim N_1^{1/2}\gg 1$, one has
		\begin{equation}\label{timeres}
			\begin{aligned}
				\left|\int_{\mathbb{R}}\int_0^T\partial_x(P_{N_1}uP_{N_2}\bar{v})P_{N_3\sim N_1^{1/2}}\bar{w}~dtdx\right|\lesssim T^{1/2}\|u\|_{U^2_S}\|v\|_{U^2_S}\|w\|_{U^2_K}.
			\end{aligned}
		\end{equation}
	\end{lemma}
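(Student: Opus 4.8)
The plan is to localise in modulation and to read the three regimes off the resonance identity. Writing $\lambda_1=\tau_1+\xi_1^2$, $\lambda_2=\tau_2-\xi_2^2$, $\lambda_3=\tau-\xi^3$ for the modulations attached to $u$, $\bar v$, $w$, and using $\xi_1+\xi_2=\xi$, $\tau_1+\tau_2=\tau$, one has
\begin{align*}
\lambda_1+\lambda_2-\lambda_3=\xi_1^2-\xi_2^2+\xi^3=\xi(\xi_1-\xi_2+\xi^2)=:H,
\end{align*}
so $\max\{\langle\lambda_1\rangle,\langle\lambda_2\rangle,\langle\lambda_3\rangle\}\gtrsim|H|$. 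The output frequency is $|\xi|\sim N_3$, so $\partial_x$ costs a factor $N_3$; after replacing $u,v,w$ by their restrictions to $[0,T]$ as in Lemma \ref{frequecylocbilinear}, everything reduces to the resulting trilinear form. The trichotomy in the statement is exactly that of $|\xi_1-\xi_2+\xi^2|$: when $N_3\not\sim N_1^{1/2}$ this quantity cannot cancel and $|H|\gtrsim N_3\max\{N_1,N_3^2\}$ is large (nonresonant), whereas $N_3\sim N_1^{1/2}$ is the genuinely resonant balance, which forces $N_1\sim N_2\sim N_3^2$.

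For the nonresonant estimates \eqref{good} and \eqref{timenonreson} with $N_3\gg N_1^{1/2}$ I would decompose dyadically according to which of $\langle\lambda_1\rangle,\langle\lambda_2\rangle,\langle\lambda_3\rangle$ exceeds $|H|$, place the high-modulation factor in $L^2_{t,x}$ via Lemma \ref{highmodu} (passing from the $U^2$ norms through $U^2\hookrightarrow V^2$, Proposition \ref{basicproperty}), and estimate the remaining two factors by a transversal bilinear estimate from Lemma \ref{bilinearrefined}: \eqref{sstran} when both survivors are Schr\"odinger waves and \eqref{kdvtran1}/\eqref{kdvtran2} when one is the KdV wave. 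In regime \eqref{good} one has $|H|\sim N_1N_3$, and the placements give, e.g., $N_3\cdot N_3^{-1/2}\cdot(N_1N_3)^{-1/2}\sim N_1^{-1/2}$ (high modulation on $w$) or better; in regime $N_3\gg N_1^{1/2}$ one has $|H|\sim N_3^3$ and the analogous bookkeeping yields $N_3^{-1}$. The leftover piece $N_3\lesssim 1$ of \eqref{timenonreson} needs no cancellation: since $\partial_xP_{N_3}$ is then $O(1)$, H\"older with the Strichartz embeddings $U^2_S,U^2_K\hookrightarrow L^6_{t,x}$ and $0<T<1$ closes it with no frequency loss.

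The hard part is the resonant estimate \eqref{timeres}, where $N_1\sim N_2\sim N_3^2$, the Schr\"odinger$\times$Schr\"odinger pairing $u\bar v$ only gains the output frequency $N_3^{-1/2}$ (too weak to absorb $\partial_x$), and $|H|$ may vanish so that modulation gives nothing. The key observation I would use is that, although the three-wave interaction is resonant, at least one of the two Schr\"odinger$\times$KdV pairings stays transverse: the $u$-$w$ pairing has transversality $|2\xi_1+3\xi^2|$ and the $v$-$w$ pairing has transversality $|2\xi_2+3\xi^2|$, and these sum to $\gtrsim|2\xi+6\xi^2|\sim N_3^2$, so on the support at least one is $\gtrsim N_3^2$. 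I would split the $\xi_1$-support into the region where $u$-$w$ is transverse and its complement where $v$-$w$ is transverse. On the first region, after integrating by parts to move $\partial_x$ onto $\bar w$, I pair $P_{N_1}u$ with $\partial_xP_{N_3}\bar w$ using the transversal bound with transversality $\gtrsim N_3^2$ (gain $N_3^{-1}$, which exactly cancels the derivative) and place the leftover $P_{N_2}v$ in $L^2_{[0,T]}L^2_x\lesssim T^{1/2}\|v\|_{L^\infty_tL^2_x}\lesssim T^{1/2}\|v\|_{U^2_S}$; the complementary region is symmetric. This produces exactly $T^{1/2}\|u\|_{U^2_S}\|v\|_{U^2_S}\|w\|_{U^2_K}$.

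The main technical obstacle I anticipate is that the transversal estimates in Lemma \ref{bilinearrefined} are stated for dyadic blocks with $N_1\gg N_3^2$ or $N_1\ll N_3^2$, whereas \eqref{timeres} sits exactly at $N_1\sim N_3^2$ and uses transversality only after restricting $\xi_1$ to a subregion. I would therefore need a version of the Schr\"odinger-KdV bilinear estimate keyed to the separation $|2\xi_1+3\xi^2|$ of group velocities rather than to the dyadic sizes; this follows from the same change-of-variables argument underlying Lemma \ref{bilinearrefined} (the Jacobian is comparable to the transversality), so the restriction $|2\xi_1+3\xi^2|\gtrsim N_3^2$ is harmless. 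Checking that these refined bilinear bounds survive the passage from free solutions to $U^2$ through atomic decomposition, and that the $L^2_{[0,T]}$ placement of the third factor remains legitimate after the modulation and frequency localisations, are the points I would treat most carefully.
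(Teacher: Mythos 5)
Your handling of the nonresonant cases is essentially the paper's own proof: the same vanishing of the fully low-modulation piece at threshold $L\sim|H|$ (with $|H|\sim N_3^3$ when $N_3\gg N_1^{1/2}$ and $|H|\sim N_1N_3$ when $N_3\ll N_1^{1/2}$), Lemma \ref{highmodu} to put the high-modulation factor in $L^2_{t,x}$, and \eqref{sstran}, \eqref{kdvtran1}, \eqref{kdvtran2} for the remaining pair, with identical numerology; the case $N_3\lesssim1$ is indeed routine.

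The resonant estimate \eqref{timeres}, however, contains a genuine gap, and it is exactly at the step you identify as the key observation. Since $v$ enters conjugated, $\bar v$ is supported near $\tau_2=+\xi_2^2$ and travels with group velocity $+2\xi_2$, so the $\bar v$--$w$ separation is $|2\xi_2-3\xi^2|$, not $|2\xi_2+3\xi^2|$. With the correct sign the two separations $|2\xi_1+3\xi^2|$ and $|2\xi_2-3\xi^2|$ only satisfy a lower bound $\gtrsim|2\xi_1+2\xi_2|=2|\xi|\sim N_3$, and they genuinely degenerate simultaneously inside the block $N_1\sim N_2\sim N_3^2$: at $\xi_1=(\xi-3\xi^2)/2$, $\xi_2=(\xi+3\xi^2)/2$ one computes $2\xi_1+3\xi^2=\xi$ and $2\xi_2-3\xi^2=\xi$, so both Schr\"odinger--KdV pairings have velocity separation $O(N_3)$, each yielding only a gain $N_3^{-1/2}$, which cannot absorb $\partial_x\sim N_3$. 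Thus your dichotomy ``at least one pairing is $N_3^2$-transverse'' is false and the proposed proof of \eqref{timeres} does not close on this region. It is repairable --- on that degenerate set one has $|H|=|\xi(\xi_1-\xi_2+\xi^2)|\sim N_3^3$, so a three-way splitting (transversality near the true resonance $\xi_1\approx(\xi-\xi^2)/2$, where both separations are indeed $\sim2\xi^2\sim N_3^2$, plus a high-modulation argument elsewhere) would work, but that third case is missing from your argument. Note also that your cutoff $|2\xi_1+3\xi^2|\gtrsim N_3^2$ is a condition on $(\xi_1,\xi)$ jointly, hence not a Fourier multiplier acting on $u$ alone, which complicates the $U^2$ transference you flag. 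The paper avoids all of this by proving the free-solution bound \eqref{linearwave} directly on the Fourier side: the Duhamel kernel $(1-e^{it\Phi})/\Phi$ is estimated by Cauchy--Schwarz after the change of variables $y=\xi-2\xi_1-\xi^2$, whose Jacobian $\sim|\xi_1|^{1/2}$ produces the factor $t^{1/2}$ uniformly over the whole resonant block, and the estimate is then transferred to $U^2$ through atoms; this mechanism makes no transversality claim and is insensitive to the degenerate set above.
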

	\begin{proof}[\textbf{Proof}]
		By replacing $u_1$, $u_2$, $v$ to $\chi_{[0,T)}(t)u_1$, $\chi_{[0,T)}(t)u_2$, $\chi_{[0,T)}(t)v$, we can assume that $u_1$, $u_2$, $v$ are supported on $[0,T]\times \mathbb{R}$. It is easy to obtain the estimate when $N_3\lesssim 1$. Thus, we assume $N_3\gg 1$. 
		By Lemma \ref{highmodu}, we have $\|Q^K_{>L}f\|_{L_{t,x}^2}\lesssim L^{-1/2}\|f\|_{V^2_K}$, $\|Q^S_{>L}f\|_{L_{t,x}^2}\lesssim L^{-1/2}\|f\|_{V^2_S}$.
		
		If $N_3\gg N_1^{1/2}$, by choosing $L = cN_3^3$ for some sufficiently small $c>0$, we have
		\begin{align*}
			\int_{\mathbb{R}^2}\partial_x(P_{N_1}Q^S_{\leq L}uP_{N_2}\overline{Q^S_{\leq L}v})P_{N_3}\overline{Q^K_{\leq L}w}~dtdx=0.
		\end{align*}
		By H\"{o}lder inequality, \eqref{kdvtran2},  and Lemma \ref{highmodu}, we have
		\begin{align*}
			&\quad\left|\int_{\mathbb{R}^2}\partial_x(Q^S_{> L}P_{N_1}uP_{N_2}\bar{v})P_{N_3}\bar{w}~dtdx\right|\\
			&\leq \|Q^S_{>L}P_{N_1}u\|_{L_{t,x}^2}\|P_{N_2}\bar{v}\partial_xP_{N_3}\bar{w}\|_{L_{t,x}^2}\\
			&\lesssim N_{3}^{-3/2}\|P_{N_1}u\|_{V^2_S}N_{3}^{-1}\|P_{N_2}v\|_{U^2_S}\|\partial_{x}P_{N_3}w\|_{U^2_K}\lesssim  N_{3}^{-3/2}\|u\|_{U^2_S}\|v\|_{U^2_S}\|w\|_{U^2_K}.
		\end{align*}
		By H\"{o}lder inequality and \eqref{sstran}, we have
		\begin{align*}
			&\quad \left|\int_{\mathbb{R}^2}\partial_x(P_{N_1}uP_{N_2}\bar{v})P_{N_3}\overline{Q_{K,> L}w}~dtdx\right|\\
			&\leq \|P_{N_3}(P_{N_1}uP_{N_2}\bar{v})\|_{L_{t,x}^2}\|\partial_xP_{N_3}Q^K_{> L}w\|_{L_{t,x}^2}\lesssim N_{3}^{-1}\|u\|_{U^2_S}\|v\|_{U^2_S}\|w\|_{U^2_K}.
		\end{align*}
		If $N_3\ll N_1^{1/2}$, then we have $N_2\sim N_1$. By choosing $L = cN_3N_1$ for some sufficiently small $c>0$, we have
		\begin{align*}
			\int_{\mathbb{R}^2}\partial_x(P_{N_1}Q^S_{\leq L}uP_{N_2}\overline{Q^S_{\leq L}v})P_{N_3}\overline{Q^K_{\leq L}w}~dtdx=0.
		\end{align*}
		By Lemma \ref{highmodu}, \eqref{kdvtran1} and H\"{o}lder inequality, we have
		\begin{align*}
			&\quad\left|\int_{\mathbb{R}^2}\partial_x(Q^S_{> L}P_{N_1}uP_{N_2}\bar{v})P_{N_3}\bar{w}~dtdx\right|\\
			&\leq \|Q^S_{>L}P_{N_1}u\|_{L_{t,x}^2}\|P_{N_2}\bar{v}\partial_xP_{N_3}\bar{w}\|_{L_{t,x}^2}\\
			&\lesssim  N_{1}^{-1}N_3^{1/2}\|P_{N_1}u\|_{U^2_S}\|P_{N_2}v\|_{U^2_S}\|P_{N_3}w\|_{U^2_K}.
		\end{align*}
		By  H\"{o}lder inequality and Lemma \ref{highmodu}, we have
		\begin{align*}
			&\quad \left|\int_{\mathbb{R}^2}\partial_x(P_{N_1}uP_{N_2}\bar{v})P_{N_3}\overline{Q^K_{> L}w}~dtdx\right|\\
			&\leq \|P_{N_3}(P_{N_1}uP_{N_2}\bar{v})\|_{L_{t,x}^2}\|\partial_xP_{N_3}Q^K_{> L}w\|_{L_{t,x}^2}\lesssim N_{1}^{-1/2}\|u\|_{U^2_S}\|v\|_{U^2_S}\|w\|_{U^2_K}.
		\end{align*}
		For $N_1\sim N_2\sim N_3^2$, we claim that for any $t>0$
		\begin{equation}\label{linearwave}
			\begin{aligned}
				&\quad\left|\int_{\mathbb{R}}\int_0^t\partial_x(P_{N_1}S(t')fP_{N_2}\overline{S(t')g})P_{N_3\sim N_1^{1/2}}\overline{K(t')h}~dt'dx\right|\\
				&\lesssim t^{1/2}\|f\|_{L_x^2}\|g\|_{L_x^2}\|h\|_{L_x^2}.
			\end{aligned} 
		\end{equation}
		In fact, following the argument for proving \eqref{sstokdv}, proving \eqref{linearwave} reduces to proving
		\begin{align*}
			&\quad\int_{\mathbb{R}}\frac{|1-e^{it\xi(\xi-2\xi_1-\xi^2)}|\chi_{|\xi|^2\sim|\xi_1|\sim N_1\gg 1}}{|\xi-2\xi_1-\xi^2|}|f(\xi_1)g(\xi-\xi_1)h(\xi)|~d\xi_1d\xi\\
			&\lesssim t^{1/2}\|f\|_{L^2}\|g\|_{L^2}\|h\|_{L^2}.
		\end{align*}
		By Cauchy-Schwarz inequality, one has
		\begin{align*}
			&\quad\int_{\mathbb{R}^2}\frac{|1-e^{it\xi(\xi-2\xi_1-\xi^2)}|\chi_{|\xi|^2\sim|\xi_1|\sim N_1\gg 1}}{|\xi-2\xi_1-\xi^2|}|f(\xi_1)g(\xi-\xi_1)h(\xi)|~d\xi_1d\xi\\
			&\lesssim \int_{\mathbb{R}_{\xi_1}}|f(\xi_1)| \|g(\xi-\xi_1)h(\xi)\|_{L^2_{\xi}}\left\|\frac{\chi_{1\ll |\xi|^2\sim |\xi_1|}\min\{1,t|\xi(\xi-2\xi_1-\xi^2)|\}}{|\xi-2\xi_1-\xi^2|}\right\|_{L^2_\xi}\\
			&\lesssim \|f\|_{L^2}\|g\|_{L^2}\|h\|_{L^2}\left\|\frac{\chi_{1\ll |\xi|^2\sim |\xi_1|}\min\{1,t|\xi_1|^{1/2}|\xi-2\xi_1-\xi^2|\}}{|\xi-2\xi_1-\xi^2|}\right\|_{L^\infty_{\xi_1}L^2_\xi}\\
			&\lesssim t^{1/2}\|f\|_{L^2}\|g\|_{L^2}\|h\|_{L^2}.
		\end{align*}
		Then for $u = \sum_j \chi_{I_{j}}(t)S(t)f_j$, $v = \sum_j\chi_{\tilde{I}_{j}}(t)S(t)g_j$, $w = \sum_j\chi_{\tilde{\tilde{I}}_j}(t)K(t)h_j$, by \eqref{linearwave} we have 
		\begin{align*}
			&\quad\left|\int_{\mathbb{R}}\int_0^T\partial_x(P_{N_1}uP_{N_2}\bar{v})P_{N_3\sim N_1^{1/2}}\bar{w}~dtdx\right|\\
			&\leq \sum_{j,k,l}\left|\int_{\mathbb{R}}\int_{I_{j}\cap \tilde{I}_{k}\cap \tilde{\tilde{I}}_l}\partial_x(P_{N_1}S(t')f_jP_{N_2}\overline{S(t')g_k})P_{N_3\sim N_1^{1/2}}\overline{K(t')h_l}~dt'dx\right|\\
			&\lesssim \sum_{j,k,l}|I_{j}\cap \tilde{I}_{k}\cap \tilde{\tilde{I}}_l|^{1/2}\|f_j\|_{L^2}\|g_k\|_{L^2}\|h_l\|_{L^2}\lesssim T^{1/2}\|f_j\|_{l^2_jL^2}\|g_j\|_{l^2_jL^2}\|h_j\|_{l^2_jL^2}.
		\end{align*}
		By the definition of $U^2_K$, $U^2_S$, we obtain \eqref{timeres}.
	\end{proof}
	
	\begin{lemma}\label{bilinearestioutkdv}
		Let $0<T<1$, $s_1\geq 0$, $s_2 = \min\{4s_1,s_1+1\}$.
		\begin{align*}
			\left\|\mathscr{B}(\chi_T u\bar{v})\right\|_{Y^{s_2}_T}&\lesssim T^{2\varepsilon}\|u\|_{X_{\varepsilon,T}^{s_				1}}\|v\|_{X_{\varepsilon,T}^{s_1}}.
		\end{align*}
	\end{lemma}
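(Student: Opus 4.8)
The plan is to estimate the two constituents of the $Y_T^{s_2}$-norm separately, namely $\|J^{s_2}\mathscr{B}(\chi_T u\bar v)\|_{V^2_K}$ and $\|P_1\mathscr{B}(\chi_T u\bar v)\|_{L^2_xL_T^\infty}$. Since $P_1$ is a Fourier multiplier it commutes with $\mathscr{B}$, so the low-frequency constituent equals $\mathscr{B}(\chi_T P_1(u\bar v))$. Because $s_1\geq 0\geq -1/4$ and $s_2=\min\{4s_1,s_1+1\}\geq 0$, the hypotheses of Lemma \ref{kdvlowfre} are met, and its first estimate gives $\|P_1\mathscr{B}(\chi_T u\bar v)\|_{L^2_xL_T^\infty}\lesssim T^{1/2+2\varepsilon}\|u\|_{X_{\varepsilon,T}^{s_1}}\|v\|_{X_{\varepsilon,T}^{s_1}}$, which is $\leq T^{2\varepsilon}\|u\|_{X_{\varepsilon,T}^{s_1}}\|v\|_{X_{\varepsilon,T}^{s_1}}$ since $T<1$. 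It therefore remains to control the $V^2_K$ constituent.

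For the main constituent I would pass to the dual formulation. Using $V^2_K=(U^2_K)^*$ from Theorem \ref{dual} together with the standard identity expressing the $U^2_K$--$V^2_K$ pairing of a test function against a Duhamel term as a spacetime integral, one reduces to
\[
\|J^{s_2}\mathscr{B}(\chi_T u\bar v)\|_{V^2_K}\lesssim \sup_{\|w\|_{\mathscr{N}^{-s_2}}\leq 1}\Big|\int_{\mathbb{R}}\int_0^T \partial_x(u\bar v)\,\bar w\;dt\,dx\Big|,
\]
where the test function enters exactly through $\|J^{-s_2}w\|_{U^2_K}=\|w\|_{\mathscr{N}^{-s_2}}$. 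This is precisely the trilinear expression estimated blockwise in Lemma \ref{frequecylocsbilinear}.

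I would then Littlewood--Paley decompose $u=\sum_{N_1}P_{N_1}u$, $v=\sum_{N_2}P_{N_2}v$, $w=\sum_{N_3}P_{N_3}w$ and apply Lemma \ref{frequecylocsbilinear} to each block. Writing $a_{N_1}=\|J^{s_1}P_{N_1}u\|_{U^2_S}$, $b_{N_2}=\|J^{s_1}P_{N_2}v\|_{U^2_S}$, $c_{N_3}=\|J^{-s_2}P_{N_3}w\|_{U^2_K}$ and restoring these norms produces the weights $N_1^{-s_1}N_2^{-s_1}N_3^{s_2}$, so each block is bounded by a coefficient times $a_{N_1}b_{N_2}c_{N_3}$. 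In the nonresonant regimes \eqref{timenonreson} ($N_3\lesssim 1$ or $N_3\gg N_1^{1/2}$) and \eqref{good} ($N_3\ll N_1^{1/2}$, forcing $N_1\sim N_2$), the respective gains $N_3^{-1}$ and $N_1^{-1/2}$ beat the weights — one checks $-1/2-2s_1+s_2/2<0$ and the analogous inequalities using $s_2=\min\{4s_1,s_1+1\}$ — leaving a net negative power of $N_{\max}$. Since the two largest among $N_1,N_2,N_3$ are always comparable, these sums converge absolutely by a Schur-type argument.

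The crux is the resonant block \eqref{timeres}, where $N_3\sim N_1^{1/2}\gg 1$ and $N_2\sim N_1$, exactly the configuration in which $s_2=\min\{4s_1,s_1+1\}$ is saturated: the weighted coefficient there is $T^{1/2}N_1^{-2s_1+s_2/2}=T^{1/2}N_1^{0}$ when $s_2=4s_1$ (and $\leq T^{1/2}$ when $s_2=s_1+1$, $s_1\geq 1/3$), so no frequency decay is available. Here I would rely on the $T^{1/2}$ gain supplied by \eqref{timeres} and sum the resulting single-parameter family (all of $N_1,N_2,N_3$ being tied) by Cauchy--Schwarz together with the square-function bound $\big(\sum_N\|J^{s_1}P_Nu\|_{U^2_S}^2\big)^{1/2}\lesssim\|J^{s_1}u\|_{U^2_S}$, which holds at the atomic level for $U^2$ and passes to general elements by Minkowski's inequality, and similarly for $v$ and $w$. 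This contributes $\lesssim T^{1/2}\|J^{s_1}u\|_{U^2_S}\|J^{s_1}v\|_{U^2_S}\|w\|_{\mathscr{N}^{-s_2}}$. Collecting all regimes and using $T<1$ yields $\lesssim \|J^{s_1}u\|_{U^2_S}\|J^{s_1}v\|_{U^2_S}=T^{2\varepsilon}\|u\|_{X_{\varepsilon,T}^{s_1}}\|v\|_{X_{\varepsilon,T}^{s_1}}$, as claimed. The main obstacle is this resonant interaction: its borderline character is exactly why the admissible endpoint is $s_2=\min\{4s_1,s_1+1\}$ and why the time-resonant estimate \eqref{timeres} must replace a frequency gain there.
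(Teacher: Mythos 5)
Your proposal is correct and follows essentially the same route as the paper's proof: reduce the low-frequency piece via Lemma \ref{kdvlowfre}, pass by duality to the trilinear form tested against $w$ with $\|w\|_{\mathscr{N}^{-s_2}}\leq 1$, apply Lemma \ref{frequecylocsbilinear} blockwise so that the nonresonant gains $N_3^{-1}$, $N_1^{-1/2}$ absorb the weights, and handle the resonant block $N_1\sim N_2\gg 1$, $N_3\sim N_1^{1/2}$ (where $s_2=\min\{4s_1,s_1+1\}$ makes the exponent borderline) through the $T^{1/2}$ gain of \eqref{timeres} and the square-function bound $\|P_Nu\|_{l^2_NU^2_S}\lesssim\|u\|_{U^2_S}$, exactly as in the paper's final summation. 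No changes are needed.
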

	\begin{proof}[\textbf{Proof}]
		By Lemma \ref{kdvlowfre} and duality, we only need to prove
		\begin{align*}
			\left|\int_{\mathbb{R}}\int_0^T\partial_x(J^{-s_1}uJ^{-s_1}\bar{v})J^{s_2}\bar{w}~dtdx\right|\lesssim \|u\|_{U^2_S}\|v\|_{U^{2}_S}\|w\|_{U^2_K}.
		\end{align*}
		Let $N_{\max}$, $N_{\mathrm{med}}$ be the maximal, medium among $N_1,N_2,N_3$. By triangle inequality, one has
		\begin{align*}
			&\quad\left|\int_{\mathbb{R}}\int_0^T\partial_x(J^{-s_1}uJ^{-s_1}\bar{v})J^{s_2}\bar{w}~dtdx\right|\\
			&\leq \sum_{N_1,N_2\nsim N_3~ \mathrm{or}~N_3\lesssim 1}\left|\int_{\mathbb{R}}\int_0^T\partial_x(P_{N_1}J^{-s_1}uP_{N_2}J^{-s_1}\bar{v})P_{N_3}J^{s_2}\bar{w}~dtdx\right|\\
			&\quad+\sum_{N_1\sim N_2\gg 1}\left|\int_{\mathbb{R}}\int_0^T\partial_x(P_{N_1}J^{-s_1}uP_{N_2}J^{-s_1}\bar{v})P_{N_3\sim N_1^{1/2}}J^{s_2}\bar{w}~dtdx\right|.
		\end{align*}			
		By Lemma \ref{frequecylocsbilinear} and $s_2 = \min\{4s_1,s_1+1\}$, we have
		\begin{align*}
			&\quad\left|\int_{\mathbb{R}}\int_0^T\partial_x(J^{-s_1}u J^{-s_1}\bar{v})J^{s_2}\bar{w}~dtdx\right|\\
			&\lesssim \sum_{\substack{N_1,N_2\nsim N_3^2~ \mathrm{or}~N_3\lesssim 1,\\ N_{\max}\sim N_{\mathrm{med}}}}N_3^{-1}\|P_{N_1}J^{-s_1}u\|_{U^2_S}\|P_{N_2}J^{-s_1}v\|_{U^2_S}\|P_{N_3}J^{s_2}w\|_{U^2_K}\\
			&\quad + \sum_{N_1\sim N_2\gg 1}T^{1/2}\|P_{N_1}J^{-s_1}u\|_{U^2_S}\|P_{N_2}J^{-s_1}v\|_{U^2_S}\|J^{s_2}w\|_{U^2_K}\\
			&\lesssim \sum_{\substack{N_1,N_2\nsim N_3^2~ \mathrm{or}~N_3\lesssim 1,\\ N_{\max}\sim N_{\mathrm{med}}}}N_3^{-1+s_2}N_1^{-s_1}N_2^{-s_1}\|P_{N_1}u\|_{U^2_S}\|P_{N_2}v\|_{U^2_S}\|P_{N_3}w\|_{U^2_K}\\
			&\quad+ \sum_{N_1\sim N_2\gg 1}N_1^{-s_1}N_2^{-s_1}N_1^{s_2/2}\|P_{N_1}u\|_{U^2_S}\|P_{N_2}v\|_{U^2_S}\|w\|_{U^2_K}\\
			&\lesssim \|P_{N}u\|_{l^2_N U^2_S}\|P_{N}v\|_{l^2_N U^2_S}\|w\|_{U^2_K}\\
			&\lesssim \|u\|_{U^2_S}\|v\|_{U^2_S}\|w\|_{U^2_K}.
		\end{align*}
		We finish the proof of this lemma.
	\end{proof}
	Combining Lemmas \ref{linearupvp}, \ref{multilinearoutsch}, \ref{kdvupvp}, \ref{bilinearestioutkdv}, we have
	\begin{prop}
		\eqref{model} is local well-posed in $H^{s_1}\times H^{s_2}$ for
		$s_1\geq 0,s_2=\min\{4s_1,s_1+1\}$. 
	\end{prop}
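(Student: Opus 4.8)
The plan is to recast \eqref{model} as a fixed point problem for the Duhamel map and run a contraction argument in the product space $X^{s_1}_{\varepsilon,T}\times Y^{s_2}_T$, feeding in the four multilinear estimates together with the linear bound. With $\alpha=\beta=\gamma=1$, I would fix a small $\varepsilon>0$ and define $\Phi=(\Phi_1,\Phi_2)$ by
\[
\begin{aligned}
\Phi_1(u,v)&=\chi_{[0,\infty)}(t)S(t)u_0-i\mathscr{A}\big(\chi_T(uv+|u|^2u)\big),\\
\Phi_2(u,v)&=\chi_{[0,\infty)}(t)K(t)v_0+\mathscr{B}\big(\chi_T(|u|^2-\tfrac12 v^2)\big).
\end{aligned}
\]
The cutoffs $\chi_T$ ensure that any fixed point, restricted to $[0,T]$, solves the system, and conversely. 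I would seek $(u,v)$ in a product ball $\{\|u\|_{X^{s_1}_{\varepsilon,T}}\le A,\ \|v\|_{Y^{s_2}_T}\le B\}$. The hypotheses $s_1\ge0$, $s_2=\min\{4s_1,s_1+1\}$ give $s_2\ge s_1\ge0$ and $s_2\ge0$, so all of Lemmas \ref{linearupvp}, \ref{multilinearoutsch}, \ref{bilinearestioutkdv} and \ref{kdvupvp} apply.

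Self-mapping: By Lemma \ref{linearupvp} the linear terms contribute $T^{-\varepsilon}\|u_0\|_{H^{s_1}}$ to $\|\Phi_1\|_{X^{s_1}_{\varepsilon,T}}$ and $\langle T\rangle^{1/2}\|v_0\|_{H^{s_2}}$ to $\|\Phi_2\|_{Y^{s_2}_T}$. By Lemma \ref{multilinearoutsch} the cubic and coupling nonlinearities of $\Phi_1$ are bounded by $T^{1/2+2\varepsilon}A^3$ and $T^{13/16}AB$, while by Lemmas \ref{bilinearestioutkdv} and \ref{kdvupvp} the $|u|^2$- and $v^2$-terms of $\Phi_2$ are bounded by $T^{2\varepsilon}A^2$ and $T^{1/4}B^2$. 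The one term that does \emph{not} improve as $T\to0$ is the $|u|^2$ forcing of the KdV equation: since $\|u\|_{X^{s_1}_{\varepsilon,T}}=T^{-\varepsilon}\|u\|_{X^{s_1}}$, the bound $T^{2\varepsilon}A^2$ is of size $\sim\|u_0\|_{H^{s_1}}^2$ when $A\sim T^{-\varepsilon}\|u_0\|_{H^{s_1}}$. I therefore choose $A\sim T^{-\varepsilon}\|u_0\|_{H^{s_1}}$ and $B\sim \langle T\rangle^{1/2}\|v_0\|_{H^{s_2}}+\|u_0\|_{H^{s_1}}^2$, which absorbs this immediate forcing; after substituting the radii the remaining nonlinear contributions carry a genuinely positive power of $T$ (the cubic is $O(T^{1/2})$, the $uv$-coupling $O(T^{13/16})$, and the $v^2$-term $O(T^{1/4})$ times data-dependent constants), so $\Phi$ maps the ball to itself once $T$ is small depending on $\|u_0\|_{H^{s_1}}$ and $\|v_0\|_{H^{s_2}}$.

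Contraction: For the difference estimates I would use multilinearity of the very same four bounds. The decisive observation is that, although the $|u|^2$ forcing gains no power of $T$ at the level of self-mapping, its Lipschitz constant does: the $\delta u\mapsto\delta v$ entry is of size $T^{2\varepsilon}\big(\|u_1\|_{X^{s_1}_{\varepsilon,T}}+\|u_2\|_{X^{s_1}_{\varepsilon,T}}\big)\sim T^{\varepsilon}\|u_0\|_{H^{s_1}}$, which vanishes as $T\to0$. The other three entries of the $2\times2$ Lipschitz matrix are $O(T^{1/2})$, $O(T^{13/16-\varepsilon})$ and $O(T^{1/4})$ times data norms, hence also vanish. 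Thus for $T$ small $\Phi$ is a contraction, and the Banach fixed point theorem yields a unique solution in the ball; Lipschitz dependence of the fixed point on $(u_0,v_0)$ gives continuous dependence, and $(u,v)\in C([0,T];H^{s_1}\times H^{s_2})$ follows from the embedding properties of Proposition \ref{basicproperty} together with the strong continuity of $S(t)$ and $K(t)$.

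The main obstacle I anticipate is precisely this asymmetry of the quadratic $|u|^2$ coupling in the KdV component: it is the only nonlinearity without a small-time gain, so the argument cannot be closed by shrinking $A$ and $B$ uniformly, and instead forces the two-step device above — enlarge $B$ to accommodate the $O(\|u_0\|_{H^{s_1}}^2)$ immediate forcing for self-mapping, while exploiting the extra $T^{\varepsilon}$ available in the corresponding Lipschitz bound to recover contraction. A secondary, purely technical point is the bookkeeping with the restriction norms $X^{s_1}_{\varepsilon,T}$, $Y^{s_2}_T$ (including the low-frequency maximal piece $\|P_1v\|_{L^2_xL^\infty_T}$) and the matching of the cutoffs $\chi_T$ and $\chi_{[0,\infty)}$, together with upgrading the $V^2$-valued solution to a genuinely time-continuous one.
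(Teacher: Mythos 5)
Your proposal is correct and follows essentially the same route as the paper: the same Duhamel map with cutoffs $\chi_{[0,\infty)}$ and $\chi_T$, the same four estimates (Lemmas \ref{linearupvp}, \ref{multilinearoutsch}, \ref{kdvupvp}, \ref{bilinearestioutkdv}), and the same asymmetric ball $\|u\|_{X^{s_1}_{\varepsilon,T}}\lesssim T^{-\varepsilon}$, $\|v\|_{Y^{s_2}_T}\lesssim 1$. Your two-step device for the $|u|^2$ forcing (absorb the $O(\|u_0\|_{H^{s_1}}^2)$ contribution into the $v$-radius, recover contraction from the extra $T^{\varepsilon}$ in the Lipschitz bound) is exactly the mechanism behind the paper's $T^{-\varepsilon}$-weighted norm, which the paper leaves implicit in its "standard argument".
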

	\begin{proof}[\textbf{Proof}]
		For $T>0$, consider the mapping
		\begin{align*}
			\mathcal{T}:
			\begin{pmatrix}
				u\\
				v
			\end{pmatrix}\mapsto
			\begin{pmatrix}
				\chi_{[0,\infty)}(t)S(t)u_0-i\mathscr{A}(\chi_T(uv+|u|^2u))\\
				\chi_{[0,\infty)}(t)K(t)v_0+\mathscr{B}(\chi_T(|u|^2-v^2/2))
			\end{pmatrix}.
		\end{align*}
		For some sufficiently large $C_2,C_1>0$ (independent to $T$), we define
		\begin{align*}
			D:=\{(u,v)\in X^{s_1}_{\varepsilon,T}\times Y^{s_2}_T:\|u\|_{X^{s_1}_{\varepsilon,T}}\leq C_1T^{-\varepsilon}, \|v\|_{Y^{s_2}_T}\leq C_2\}.
		\end{align*}
		By Lemmas \ref{linearupvp}, \ref{multilinearoutsch}, \ref{kdvupvp}, \ref{bilinearestioutkdv}, we obtain that $\mathcal{T}$ is a contraction mapping from $D$ to $D$ for sufficiently small $T>0$. By standard argument, we conclude the proof.
	\end{proof}
	
	\subsection{The case \texorpdfstring{$s_2 = -3/4, ~0\leq s_1<5/4$}{s2=-3/4,0<=s1<5/4}}\label{borderlinecaselow}
	We need scaling. Following the argument in \cite{guo2010well}, we consider the system
	\begin{equation}\label{scalingeq}
		\left\{
		\begin{aligned}
			&i\partial_t u+\lambda\partial_{xx}u = \lambda uv+ \lambda^{-1}|u|^2u,\\
			&\partial_t v+\partial_{xxx}v+v\partial_x v = \partial_x(|u|^2),\\
			&(u,v)|_{t = 0} = (u_0,v_0)\in H^{s_1}\times H^{s_2}.
		\end{aligned}
		\right.
	\end{equation}
	We assume that $\|v_0\|_{H^{s_2}}\ll 1$, $0<\lambda\ll 1$. Let $$S_\lambda(t) = e^{i\lambda t\partial_{xx}},\quad\mathscr{A}_\lambda(f)(t) = \int_0^tS_\lambda(t-t')f(t')~dt'.$$ Define
	$$\|u\|_{\tilde{X}^{s,b}_\lambda}=\|\langle\xi\rangle^s\langle\tau+\lambda\xi^2\rangle^b\hat{u}(\tau,\xi)\|_{L^2_{\tau,\xi}},~~\|v\|_{Y^{s,b,q}}= \|N^sL^b\|Q_{L}^KP_{N}v\|_{L_{t,x}^2}\|_{l^2_N l^q_L}$$
	and
	$$\|v\|_{F}:=\|P_1v\|_{L_x^2 L_t^\infty}+\|P_{>1}v\|_{Y^{-3/4,1/2,1}}.$$
	Let $F_T$ be the space $F$ restricted on $[0,T]$.
	\begin{lemma}\label{s2-3/4}
		Let $0\leq s< 5/4$. $b>1/2$, $0<\lambda< 1$, $0<T<1$.
		\begin{align*}
			\|uP_{>1}v\|_{\tilde{X}^{s,b-1}_\lambda}\lesssim \lambda{^{-1/2}}\|u\|_{\tilde{X}^{s,b}_\lambda}\|v\|_{Y^{-3/4,1/2,1}}.
		\end{align*}
	\end{lemma}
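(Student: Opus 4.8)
The plan is to dualize, run a dyadic Littlewood--Paley and modulation decomposition, and reduce every resulting piece to a $\lambda$-scaled transversal bilinear $L^2$ estimate (the analogue of Lemma~\ref{bilinearrefined}) together with the high-modulation gain of Lemma~\ref{highmodu}. Concretely, I would rewrite the claim, by duality against $w\in\tilde{X}^{-s,1-b}_\lambda$, as the trilinear bound
\begin{equation*}
\left|\int_{\mathbb{R}^2}u\,(P_{>1}v)\,\bar{w}\,dtdx\right|\lesssim \lambda^{-1/2}\|u\|_{\tilde{X}^{s,b}_\lambda}\|v\|_{Y^{-3/4,1/2,1}}\|w\|_{\tilde{X}^{-s,1-b}_\lambda}.
\end{equation*}
Writing $\xi=\xi_1+\xi_2$, $\tau=\tau_1+\tau_2$ with $u,v,w$ carrying $(\tau_1,\xi_1)$, $(\tau_2,\xi_2)$, $(\tau,\xi)$, I record the resonance identity
\begin{equation*}
(\tau+\lambda\xi^2)-(\tau_1+\lambda\xi_1^2)-(\tau_2-\xi_2^3)=\xi_2\big(\xi_2^2+2\lambda\xi_1+\lambda\xi_2\big)=:H,
\end{equation*}
so that with $L_0=\langle\tau+\lambda\xi^2\rangle$, $L_1=\langle\tau_1+\lambda\xi_1^2\rangle$, $L_2=\langle\tau_2-\xi_2^3\rangle$ one always has $\max\{L_0,L_1,L_2\}\gtrsim\langle H\rangle$. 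I then decompose into pieces $P_{N_1}Q^{S,\lambda}_{L_1}u$, $P_{N_2}Q^K_{L_2}v$, $P_NQ^{S,\lambda}_{L_0}w$, with $N_2\gtrsim1$ forced by $P_{>1}$ and $N\lesssim\max(N_1,N_2)$. The $\ell^2_{N_2}\ell^1_{L_2}$ structure of the $Y$-norm forces, for each $N_2$, an $\ell^1$ summation in $L_2$; this is exactly what removes the logarithmic losses that otherwise appear at the endpoint $s_2=-3/4$.

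The estimate splits according to the geometry of $H$. In the resonant regime $N_1\sim N_2^2/\lambda$ one has $N\sim N_1\gg N_2$ and $\langle\xi\rangle^s/\langle\xi_1\rangle^s\sim1$, and near $\{H=0\}$ the Schr\"odinger--KdV pair $u,v$ has resolving derivative $|2\lambda\xi_1+3\xi_2^2|\sim N_2^2$; the $\lambda$-scaled $S_\lambda$--$K$ estimate of Lemma~\ref{bilinearrefined} then gains $N_2^{-1}$, so pairing $u,v$ and placing $w$ in $L^2$ leaves the summable power $N_2^{3/4-1}=N_2^{-1/4}$ (on the part of this annulus where the derivative degenerates one instead has $\langle H\rangle\sim N_2^3$, and Lemma~\ref{highmodu} supplies the gain). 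The modulation sums close for $1/2<b<1$ precisely because the $L_2$-sum is taken in $\ell^1$: the flat factor $\min(L_1,L_2)^{1/2}L_2^{-1/2}$ would cost a power of $\log$ in $\ell^2$. The non-resonant regimes with $N_1\gtrsim N_2$ are easier, the dominant modulation always being $\gtrsim N_2^3$.

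The crux is the regime $N_2\gg N_1$, where $N\sim N_2$, the weight is $(N_2/N_1)^sN_2^{3/4}$, and $\langle H\rangle\sim N_2^3$. Pairing $u$ with $v$ gains only $N_2^{-1}$, which would force $s<1/4$; instead I would pair the two Schr\"odinger factors $u$ and $w$, whose transversality is governed by the frequency gap $|\xi-\xi_1|=|\xi_2|\sim N_2$, so the scaled analogue of \eqref{sstran} gains $(\lambda N_2)^{-1/2}$ --- this is the sole source of the prefactor $\lambda^{-1/2}$. Combining this with the dominant-modulation gain ($L_2^{-1/2}\le N_2^{-3/2}$ when $L_2$ is largest, or the analogous $L_0,L_1$ gains from Lemma~\ref{highmodu}) produces the net weight $\lambda^{-1/2}(N_2/N_1)^sN_2^{-5/4}$; summing $N_1\lesssim N_2$ costs at most $N_2^s$, and $\sum_{N_2}N_2^{s-5/4}$ converges exactly when $s<5/4$.

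The hard part will be this last regime. One must check that among the available pairings only the Schr\"odinger--Schr\"odinger one yields a power of $N_2$ good for the whole range $0\le s<5/4$, confirm that no power of $\lambda$ beyond $\lambda^{-1/2}$ is lost in any sub-case (in particular where the transversal estimate degenerates), and verify that the $\ell^1_{L_2}$ summation absorbs every borderline modulation sum, so that the endpoint $s_2=-3/4$ is reached with no logarithmic divergence.
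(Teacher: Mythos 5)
Your proposal follows essentially the same route as the paper's proof: after dualizing, the paper likewise reduces to the regime $N_1\ll N_2\sim N$ (citing the argument of Lemma 3.4(a) in Guo--Wang for the remaining frequency interactions, which you instead sketch directly via the resonance identity $H=\xi_2(\xi_2^2+2\lambda\xi_1+\lambda\xi_2)$), runs the modulation case analysis with $L_{\max}\gtrsim N_2^3$, and in the decisive case $L_2=L_{\max}$ pairs the two Schr\"odinger factors to obtain the transversal gain $(\lambda N_2)^{-1/2}(L_1L)^{1/2}$ --- exactly your sole source of $\lambda^{-1/2}$ --- closing under $s+3/4-1/2+3(b-1)+\varepsilon<0$, i.e.\ $s<5/4$ for $b$ near $1/2$. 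Your observations that the $u$--$v$ pairing in this case gains only $N_2^{-1}$ (hence would force $s<1/4$) and that the $\ell^1_{L_2}$ structure of $Y^{-3/4,1/2,1}$ absorbs the borderline modulation sums coincide with the paper's computation.
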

	\begin{proof}[\textbf{Proof}]
		By duality, we only need to show
		\begin{align*}
			\left|\int_{\mathbb{R}^2}uP_{>1} v\bar{w}~dtdx\right|\lesssim \|u\|_{\tilde{X}^{s,b}_\lambda}\|v\|_{Y^{-3/4,1/2,1}}\|w\|_{\tilde{X}^{-s,1-b}_{\lambda}}.
		\end{align*}
		By the argument for Lemma 3.4 (a) in \cite{guo2010well}, we only need to prove
		\begin{align*}
			\sum_{N_1\ll N_2\sim N}\left|\int_{\mathbb{R}^2}P_{N_1}uP_{N_2} v\overline{P_{N}w}~dtdx\right|\lesssim \|u\|_{\tilde{X}^{s,b}_\lambda}\|v\|_{Y^{-3/4,1/2,1}}\|w\|_{\tilde{X}^{-s,1-b}_{\lambda}}.
		\end{align*}
		By modulation decomposition, one has
		\begin{align*}
			&\quad\sum_{N_1\ll N_2\sim N}\left|\int_{\mathbb{R}^2}P_{N_1}uP_{N_2} v\overline{P_{N}w}~dtdx\right|\\
			&\lesssim \sum_{\substack{N_1\ll N_2\sim N\\ L_1,L_2,L}}\left|\int_{\mathbb{R}^2}Q_{L_1}^{S,\lambda}P_{N_1}uQ_{L_2}^KP_{N_2} v\overline{Q_{L}^{S,\lambda}P_{N}w}~dtdx\right|.
		\end{align*}
		Let $L_{\max} = \max\{L_1,L_2,L\}$. Note that $L_{\max}\gtrsim N^3$. Let $0<\varepsilon\ll 1$. If $L_1 = L_{\max}$, we have
		\begin{align*}
			&\quad\left|\int_{\mathbb{R}^2}Q_{L_1}^{S,\lambda}P_{N_1}uQ_{L_2}^KP_{N_2} v\overline{Q_{L}^{S,\lambda}P_{N}w}~dtdx\right|\\
			&\lesssim \|Q_{L_1}^{S,\lambda}P_{N_1}u\|_{L_{t,x}^2}\|Q_{L_2}^KP_{N_2} v\overline{Q_{L}^{S,\lambda}P_{N}w}\|_{L^2_{t,x}}\\
			&\lesssim N_2^{-1}(L_2L)^{1/2}\|Q_{L_1}^{S,\lambda}P_{N_1}u\|_{L_{t,x}^2}\|Q_{L_2}^KP_{N_2} v\|_{L_{t,x}^2}\|Q_{L}^{S,\lambda}P_{N}w\|_{L^2_{t,x}}.
		\end{align*}
		Then for $-5/2+\varepsilon+s+3/4<0$ ($s<7/4$) one has 
		\begin{align*}
			&\quad\sum_{\substack{N_1\ll N_2\sim N\\ L_1=L_{\max}, L_2,L_3}}\left|\int_{\mathbb{R}^2}Q_{L_1}^{S,\lambda}P_{N_1}uQ_{L_2}^KP_{N_2} v\overline{Q_{L}^{S,\lambda}P_{N}w}~dtdx\right|\\
			&\lesssim \sum_{N_2\geq 2,L_2}N_2^{-1+\epsilon-3/2+s}\|u\|_{\tilde{X}^{s,b}_\lambda}L_2^{1/2}\|Q_{L_2}^KP_{N_2} v\|_{L_{t,x}^2}\|w\|_{\tilde{X}^{-s,1-b}_{\lambda}}\\
			&\lesssim \|u\|_{\tilde{X}^{s,b}_\lambda}\|v\|_{Y^{-3/4,1/2,1}}\|w\|_{\tilde{X}^{-s,1-b}_{\lambda}}.
		\end{align*}
		If $L = L_{\max}$, we have
		\begin{align*}
			&\quad\left|\int_{\mathbb{R}^2}Q_{L_1}^{S,\lambda}P_{N_1}uQ_{L_2}^KP_{N_2} v\overline{Q_{L}^{S,\lambda}P_{N}w}~dtdx\right|\\
			&\lesssim \|Q_{L_1}^{S,\lambda}P_{N_1}uQ_{L_2}^KP_{N_2} v\|_{L_{t,x}^2}\|Q_{L}^{S,\lambda}P_{N}w\|_{L^2_{t,x}}\\
			&\lesssim N_2^{-1}(L_1L_2)^{1/2}\|Q_{L_1}^{S,\lambda}P_{N_1}u\|_{L_{t,x}^2}\|Q_{L_2}^KP_{N_2} v\|_{L_{t,x}^2}\|Q_{L}^{S,\lambda}P_{N}w\|_{L^2_{t,x}}.
		\end{align*}
		Thus for $(-1/4+\varepsilon)+3(b+\varepsilon-1)+s<0$ ($s<7/4$) we have
		\begin{align*}
			&\quad\sum_{\substack{N_1\ll N_2\sim N\\ L=L_{\max},L_1,L_2}}\left|\int_{\mathbb{R}^2}Q_{L_1}^{S,\lambda}P_{N_1}uQ_{L_2}^KP_{N_2} v\overline{Q_{L}^{S,\lambda}P_{N}w}~dtdx\right|\\
			&\lesssim \sum_{L\gtrsim N^3}N^{-1+\varepsilon+3/4} \|u\|_{\tilde{X}^{s,b}_\lambda}\|v\|_{Y^{-3/4,1/2,1}}\|Q_{L}^{S,\lambda}P_{N}w\|_{L^2_{t,x}}\\
			&\lesssim \|u\|_{\tilde{X}^{s,b}_\lambda}\|v\|_{Y^{-3/4,1/2,1}}\|w\|_{\tilde{X}^{-s,1-b}_{\lambda}}.
		\end{align*}
		If $L_2 = L_{\max}$, we have
		\begin{align*}
			&\quad\left|\int_{\mathbb{R}^2}Q_{L_1}^{S,\lambda}P_{N_1}uQ_{L_2}^KP_{N_2} v\overline{Q_{L}^{S,\lambda}P_{N}w}~dtdx\right|\\
			&\lesssim \|Q_{L_1}^{S,\lambda}P_{N_1}u\overline{Q_{L}^{S,\lambda}P_{N}w}\|_{L_{t,x}^2}\|Q_{L_2}^KP_{N_2} v\|_{L^2_{t,x}}\\
			&\lesssim (\lambda N)^{-1/2}(L_1L)^{1/2}\|Q_{L_1}^{S,\lambda}P_{N_1}u\|_{L_{t,x}^2}\|Q_{L_2}^KP_{N_2} v\|_{L_{t,x}^2}\|Q_{L}^{S,\lambda}P_{N}w\|_{L^2_{t,x}}.
		\end{align*}
		Thus for $-1/2+\varepsilon+s+3(b-1)<-3/4$ one has
		\begin{align*}
			&\quad\sum_{\substack{N_1\ll N_2\sim N\\ L_2=L_{\max},L_1,L}}\left|\int_{\mathbb{R}^2}Q_{L_1}^{S,\lambda}P_{N_1}uQ_{L_2}^KP_{N_2} v\overline{Q_{L}^{S,\lambda}P_{N}w}~dtdx\right|\\
			&\lesssim \sum_{L_2\gtrsim N_2^3}(\lambda N_2)^{-1/2}N_2^{\varepsilon+s}L_2^{b-1/2} \|u\|_{\tilde{X}^{s,b}_\lambda}\|Q_{L_2}^KP_{N_2} v\|_{L^2_{t,x}}\|w\|_{\tilde{X}^{-s,1-b}_{\lambda}}\\
			&\lesssim \lambda^{-1/2} \|u\|_{\tilde{X}^{s,b}_\lambda}\|v\|_{Y^{-3/4,1/2,1}}\|w\|_{\tilde{X}^{-s,1-b}_{\lambda}}.
		\end{align*}
		Since one can choose $\varepsilon$ sufficiently small, thus we can obtain the desired inequality when $s<5/4$. 
	\end{proof}
	For other terms, the estimates in \cite{guo2010well} are also effective
	here. Thus we obtain
	\begin{prop}\label{wellpose-3/4}
		Let $s_2 = -3/4$, $0\leq s_1<5/4$. Given $(u_0,v_0)\in H^{s_1}\times H^{s_2}$ with $\|v_0\|_{H^{s_2}}\ll 1$, then the equation \eqref{scalingeq} has a unique solution $(u,v)\in \tilde{X}^{s_1,b}_{\lambda,T}\times F_T$.
	\end{prop}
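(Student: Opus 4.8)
The plan is to construct the solution by a contraction mapping argument for the Duhamel formulation of \eqref{scalingeq} in the complete metric space $\tilde{X}^{s_1,b}_{\lambda,T}\times F_T$, where $b>1/2$ is fixed close to $1/2$. A pair $(u,v)$ solves \eqref{scalingeq} on $[0,T]$ exactly when it is a fixed point of
\begin{align*}
	\mathcal{T}\begin{pmatrix}u\\v\end{pmatrix}=\begin{pmatrix}\chi_{[0,\infty)}(t)S_\lambda(t)u_0-i\mathscr{A}_\lambda\big(\chi_T(\lambda uv+\lambda^{-1}|u|^2u)\big)\\ \chi_{[0,\infty)}(t)K(t)v_0+\mathscr{B}\big(\chi_T(|u|^2-v^2/2)\big)\end{pmatrix}.
\end{align*}
First I would record the linear estimates $\|\chi_{[0,\infty)}S_\lambda u_0\|_{\tilde{X}^{s_1,b}_{\lambda,T}}\lesssim\|u_0\|_{H^{s_1}}$ and $\|\chi_{[0,\infty)}Kv_0\|_{F_T}\lesssim\|v_0\|_{H^{s_2}}$, together with the associated energy inequalities bounding $\mathscr{A}_\lambda$ in $\tilde{X}^{s_1,b}_\lambda$ by the $\tilde{X}^{s_1,b-1}_\lambda$ norm of its forcing and $\mathscr{B}$ in $F_T$ by the dual norm of its forcing; these are standard for the modulation-weighted spaces and are precisely the inputs used by Guo--Wang.

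Next I would collect the nonlinear estimates, the only genuinely new one being Lemma \ref{s2-3/4}, which controls the high-frequency part of the coupling: combined with the coefficient $\lambda$ it gives $\|\lambda uP_{>1}v\|_{\tilde{X}^{s_1,b-1}_\lambda}\lesssim\lambda^{1/2}\|u\|_{\tilde{X}^{s_1,b}_\lambda}\|v\|_{Y^{-3/4,1/2,1}}$, a net gain of $\lambda^{1/2}$. For every other term — the low-frequency coupling $\lambda uP_1v$, the cubic $\lambda^{-1}|u|^2u$, the Schr\"odinger-to-KdV term $\partial_x(|u|^2)$, and the pure KdV term $\partial_x(v^2/2)$ — I claim the estimates of \cite{guo2010well} transfer to our spaces, since the $L_x^2L_t^\infty$ control of $P_1v$ and the $\ell^1$-in-modulation norm $Y^{-3/4,1/2,1}$ are insensitive to the $\lambda$-scaled dispersion and to raising $s_1$ up to (but below) $5/4$. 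Crucially, the Schr\"odinger-to-KdV estimate lies strictly inside the admissible region of Lemma \ref{sstkdv}, since $s_2=-3/4<\min\{4s_1,s_1+1\}$ for $s_1\ge0$, so it carries a genuine gain $T^\theta$, $\theta>0$; likewise the low-frequency and cubic terms carry positive powers of $T$.

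The parameters are then fixed in order. I would first use the smallness $\|v_0\|_{H^{-3/4}}\ll1$ to set $R_2\sim\|v_0\|_{H^{-3/4}}$ and close the $v$-equation on $\{\|v\|_{F_T}\le R_2\}$: the endpoint KdV quadratic term obeys $\|\mathscr{B}(\chi_T v^2/2)\|_{F_T}\lesssim\|v\|_{F_T}^2$ with no time gain, so the self-map condition $CR_2^2\le R_2/2$ forces exactly the hypothesis $\|v_0\|\ll1$, while the $\partial_x(|u|^2)$ contribution $CT^\theta R_1^2$ is absorbed by shrinking $T$. Then, on $\{\|u\|_{\tilde{X}^{s_1,b}_{\lambda,T}}\le R_1\sim\|u_0\|_{H^{s_1}}\}$, the coupling contributes $\lesssim\lambda^{1/2}R_1R_2+\lambda T^\theta R_1R_2$, small because $\lambda^{1/2}\|v_0\|\ll1$, and the cubic contributes $\lesssim\lambda^{-1}T^\theta R_1^3$, absorbed by taking $T=T(\lambda,\|u_0\|_{H^{s_1}})$ small enough to beat the $\lambda^{-1}$ loss. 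Applying the same multilinear estimates to differences yields the contraction property, hence existence and uniqueness. The main obstacle is exactly the KdV endpoint $s_2=-3/4$: by Lemma \ref{purekdv} the quadratic KdV interaction admits no gain in $T$ there, so the usual ``shrink $T$'' mechanism is unavailable for it; this is what forces the scaling \eqref{scalingeq} and the data smallness $\|v_0\|_{H^{-3/4}}\ll1$, and it is the interplay of the $\lambda^{1/2}$ gain from Lemma \ref{s2-3/4} against the $\lambda^{-1}$ loss in the cubic term — reconciled by the final choice of $T$ — that makes the fixed-point argument close.
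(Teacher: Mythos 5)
Your proposal follows essentially the same route as the paper: the paper's proof of Proposition \ref{wellpose-3/4} consists precisely of the new coupling estimate of Lemma \ref{s2-3/4} (whose $\lambda^{-1/2}$ loss becomes a net $\lambda^{1/2}$ gain against the coefficient $\lambda$ in \eqref{scalingeq}), with all remaining linear and nonlinear estimates imported from Guo--Wang \cite{guo2010well} and combined in a contraction argument in $\tilde{X}^{s_1,b}_{\lambda,T}\times F_T$, the smallness $\|v_0\|_{H^{-3/4}}\ll 1$ compensating for the absence of a $T$-gain in the endpoint KdV bilinear estimate. Your version merely makes explicit the fixed-point bookkeeping (choice of radii, the interplay of $\lambda^{1/2}$, $\lambda^{-1}$ and $T^\theta$) that the paper leaves implicit, so it is correct and not a genuinely different argument.
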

	By rescaling we obtain the local well-posedness of \eqref{model} with $0\leq s_1<5/4$, $s_2 = -3/4$.
	
	\section{Other regions by using normal form argument}\label{normalform}
	For other regions, the main problem comes from high modulation. Thus we use normal form argument which is a powerful tool to control the high modulation cases.
	\subsection{Upper region}\label{upperreg}
	In this subsection, we show the local well-posedness of  \eqref{model} in $H^{s_1}\times H^{s_2}$, $4/3<s_1+1<s_2\leq\max\{4s_1, s_1+2\}$.
	
	The main problem comes from the term $\partial_x(|u|^2)$. Thus we use normal form argument to separate the high modulation part from this term.
	The integral equation of $v$ is 
	$$v(t) = K(t)v_0+\mathscr{B}(|u|^2)(t)-\mathscr{B}(v^2)(t)/2.$$
	Then,
	\begin{align*}
		\mathscr{B}(|u|^2)(t) &= \sum_{N_1,N_2,N}\mathscr{B}(P_N(P_{N_1}uP_{N_2}\bar{u}))\\
		&=\sum_{N_1\sim N_2\sim N^2}\mathscr{B}(P_N(P_{N_1}uP_{N_2}\bar{u})) + \sum_{N_1\nsim N^2}\mathscr{B}(P_N(P_{N_1}uP_{N_2}\bar{u}))\\
		&:=R(u)(t)+N(u)(t).
	\end{align*}
	Define bilinear operator $T_{M}$ and $\mathscr{B}$ by
	$$T_{M}(f,g)(x) = \mathscr{F}_\xi^{-1}\left(\int_{\xi_1+\xi_2 = \xi} M(\xi,\xi_1)\hat{f}(\xi_1)\hat{\bar{g}}(\xi_2)\mathrm{d}{\xi_1}\right)$$
		$$\mathscr{B}_{M}(u,v)(x) = \partial_x \mathscr{F}_\xi^{-1}\left(\int_{\xi_1+\xi_2 = \xi} \int_0^{t}M(\xi,\xi_1)\hat{u}(t^\prime,\xi_1)\hat{\bar{v}}(t^\prime, \xi_2)\mathrm{d}{t^\prime}\mathrm{d}{\xi_1}\right)$$
	where $ \xi_2 = \xi-\xi_1$ and ($\psi_1 = \varphi$)
	$$M(\xi,\xi_1) := \frac{1}{(2\pi)^{1/2}}\sum_{N_1\nsim N^2}\frac{\psi_{N_1}(\xi_1)\psi_N(\xi)}{\xi^3-\xi_1^2+\xi_2^2}.$$
	Integrating by parts and using the system \eqref{model}, we have
	\begin{align*}
		&\quad N(u)\\
		& = \sum_{N_1\nsim N^2}\frac{1}{(2\pi)^{1/2}} \mathscr{F}^{-1}_\xi \int_{\mathbb{R}}\int_0^te^{-it\xi^3}\xi\psi_N(\xi)\psi_{N_1}(\xi_1)\\
		&\quad\cdot e^{it'\xi_1^2}\hat{u}(t',\xi_1)e^{-it'(\xi_1-\xi)^2}\bar{\hat{u}}(t',\xi_1-\xi)~\frac{de^{it'(\xi^3-\xi_1^2+(\xi_1-\xi)^2)}}{i(\xi^3-\xi_1^2+(\xi_1-\xi)^2)}~d\xi_1\\
		&= -\partial_xT_M(u,u)+\partial_x T_M(u_0,u_0)-i\mathscr{B}(uv+|u|^2u, \bar{u})+i\mathscr{B}(u,\overline{uv+|u|^2u}).
	\end{align*}
	Let 
	\begin{align*}
		&B(u_0):= -\partial_xT_M(u_0,u_0),~~
		C(u,v)(t):=i\mathscr{B}(u,\overline{uv})-i\mathscr{B}(uv,\bar{u}),\\
		&D(u)(t):=i\mathscr{B}(u,|u|^2\bar{u})-i\mathscr{B}(|u|^2u, \bar{u})
	\end{align*}
	and $w(t,x) = v(t,x)-B(u(t))+B(u_0)$. Now system for $(u,w)$ is
	\begin{equation}\label{modifiedeq}
		\left\{
		\begin{aligned}
			u(t) &= S(t)u_0-i\mathscr{A}(uv+|u|^2u)(t),\\
			w(t) &= K(t)v_0+D(u)(t)+R(u)(t)+C(u,v)(t)-\mathscr{B}(v^2/2)(t).
		\end{aligned}
		\right.
	\end{equation}
	We use the following work spaces to solve $(u,w)$.
	\begin{align*}
		\|u\|_{X^{s_1}_T}:=\|J^{s_1}u\|_{U^2_{S,T}},\quad
		\|w\|_{Y^{s_2}_T}:= \|w\|_{C([0,T];H^{s_2})}+\|w\|_{L_x^2L_T^\infty}
	\end{align*}
	where $U^2_{S,T}$ is the space $U^2_S$ restricted on $[0,T]$.
	To establish the local well-posedness, we need the following classical estimates.
	\begin{lemma}[\cite{kenig1991oscillatory}]\label{maximalesti}
		$\|S(t)u_0\|_{L^2}=\|u_0\|_{L^2}$. For $s>1/2$, we have
		\begin{align*}
			\|S(t)u_0\|_{L_x^2L_T^\infty}&\lesssim \langle T\rangle^{1/2}\|u_0\|_{H^s}.
		\end{align*}
		Also, $\|K(t)v_0\|_{L^2}= \|v_0\|_{L^2}$, $\|\partial_x K(t)v_0\|_{L_x^\infty L_t^2}\lesssim \|v_0\|_{L^2}$. For $s>3/4$, 
		we have
		\begin{align*}
			\|K(t)v_0\|_{L_x^2L_T^\infty}&\lesssim \langle T\rangle^{1/2}\|v_0\|_{H^s}.
		\end{align*}
	\end{lemma}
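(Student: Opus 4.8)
The plan is to treat the five assertions in increasing order of difficulty, since they are classical linear estimates for the free evolutions (all contained in \cite{kenig1991oscillatory}); the only genuine work lies in the two maximal function bounds, and everything else is either orthogonality or an elementary change of variables. First I would dispose of the two conservation identities: because $S(t)=\mathscr{F}^{-1}e^{-it\xi^2}\mathscr{F}$ and $K(t)=\mathscr{F}^{-1}e^{it\xi^3}\mathscr{F}$ have unimodular Fourier symbols, Plancherel's theorem immediately gives $\|S(t)u_0\|_{L^2}=\|u_0\|_{L^2}$ and $\|K(t)v_0\|_{L^2}=\|v_0\|_{L^2}$ for every fixed $t$.

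Next I would handle the local smoothing estimate $\|\partial_x K(t)v_0\|_{L_x^\infty L_t^2}\lesssim \|v_0\|_{L^2}$. Writing $\partial_x K(t)v_0(x)=c\int_{\mathbb{R}} i\xi\, e^{ix\xi}e^{it\xi^3}\hat v_0(\xi)\,d\xi$ and viewing this as a function of $t$ for each fixed $x$, I exploit that $\xi\mapsto\xi^3$ is a global diffeomorphism of $\mathbb{R}$. Substituting $\eta=\xi^3$, so that $d\eta=3\xi^2\,d\xi$, turns the integral into the inverse time-Fourier transform of $\eta\mapsto c'\,i\xi^{-1}e^{ix\xi}\hat v_0(\xi)$. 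Plancherel in $t$ then yields $\|\partial_x K(t)v_0(x)\|_{L_t^2}^2=c''\int |\xi|^{-2}|\hat v_0(\xi)|^2\,d\eta=c''\int |\hat v_0(\xi)|^2\,d\xi$, where the Jacobian cancels the factor $\xi^{-2}$ exactly and the unimodular $e^{ix\xi}$ drops out. Since the right-hand side is independent of $x$, taking the supremum over $x$ gives the claim.

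The remaining two bounds are the maximal function estimates, which are the main obstacle. I would first extract the factor $\langle T\rangle^{1/2}$ by a covering argument: partition $[0,T]$ into $O(\langle T\rangle)$ intervals $I_j$ of unit length, use $\sup_{[0,T]}|f|\le\big(\sum_j\|f\|_{L^\infty_{I_j}}^2\big)^{1/2}$, and note that the $H^s$-norm is conserved under $S(t)$ and $K(t)$ (since $\langle\xi\rangle^s$ commutes with the symbols). Writing $S(t)u_0=S(t-t_j)\,S(t_j)u_0$ on $I_j$ reduces everything to the unit-interval bounds $\|S(t)u_0\|_{L_x^2 L_{[0,1]}^\infty}\lesssim\|u_0\|_{H^s}$ ($s>1/2$) and $\|K(t)v_0\|_{L_x^2 L_{[0,1]}^\infty}\lesssim\|v_0\|_{H^s}$ ($s>3/4$). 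Each of these I would reduce, by the triangle inequality, to a single Littlewood-Paley block, relying on the sharp dispersive bounds $\|S(t)P_N u_0\|_{L_x^2 L_{[0,1]}^\infty}\lesssim N^{1/2}\|P_N u_0\|_{L^2}$ and $\|K(t)P_N v_0\|_{L_x^2 L_{[0,1]}^\infty}\lesssim N^{3/4}\|P_N v_0\|_{L^2}$. Summing over dyadic $N$ and applying Cauchy-Schwarz, the series $\sum_N N^{1-2s}$ and $\sum_N N^{3/2-2s}$ converge precisely when $s>1/2$ and $s>3/4$ respectively, which produces exactly the stated thresholds.

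The hard part will be the per-block bounds themselves. A crude Sobolev embedding $H_t^{1/2+}\hookrightarrow L_t^\infty$ in the time variable forces $\tau\sim\xi^2$ (resp. $\tau\sim\xi^3$) on the bulk of the support and therefore loses a full extra power $N^{1/2}$ (resp. $N^{3/4}$), giving only the non-sharp thresholds $s>1$ (resp. $s>3/2$). Recovering the sharp powers $N^{1/2}$ and $N^{3/4}$ requires the oscillatory-integral and stationary-phase analysis of the frequency-localized kernels carried out in \cite{kenig1991oscillatory}; I would invoke those estimates directly rather than reprove them, since the cubic phase of the Airy group — responsible for the $3/4$ threshold in place of $1/2$ — is precisely the delicate point that distinguishes the two cases and is settled there.
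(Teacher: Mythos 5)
Your proposal is correct, and it takes essentially the same route as the paper: the paper gives no proof of this lemma at all but quotes it directly from \cite{kenig1991oscillatory}, and your argument likewise delegates the only genuinely hard content (the frequency-localized maximal bounds $\|S(t)P_Nu_0\|_{L_x^2L_{[0,1]}^\infty}\lesssim N^{1/2}\|P_Nu_0\|_{L^2}$ and $\|K(t)P_Nv_0\|_{L_x^2L_{[0,1]}^\infty}\lesssim N^{3/4}\|P_Nv_0\|_{L^2}$) to that same reference. The scaffolding you supply on top of the citation --- Plancherel for the two conservation identities, the change of variables $\eta=\xi^3$ whose Jacobian exactly cancels $|\xi|^{-2}$ in the sharp Kato smoothing identity, the unit-interval covering combined with $H^s$-invariance of the groups to produce the factor $\langle T\rangle^{1/2}$, and the Littlewood--Paley plus Cauchy--Schwarz summation that pins the thresholds $s>1/2$ and $s>3/4$ --- is standard and correct.
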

	By duality and Christ-Kiselev lemma, one has the following lemma.
	\begin{lemma}[\cite{molinet2004well}]\label{dual2}
		For $s>3/4$, $T>0$, we have
		\begin{align*}
			\left\|\mathscr{B}(f)\right\|_{C([0,T];H^s)}\lesssim \|J^sf\|_{L_x^1L_T^2},\quad \left\|\mathscr{B}(f)\right\|_{L_x^2L_T^\infty}\lesssim \langle T\rangle^{1/2}\|J^sf\|_{L_x^1L_T^2}.
		\end{align*}
	\end{lemma}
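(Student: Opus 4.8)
The plan is to derive both inequalities from the linear estimates of Lemma \ref{maximalesti} by duality, and then to restore the Duhamel time cut-off $\int_0^t$ by means of the Christ--Kiselev lemma. The first ingredient is the adjoint of the Kato smoothing bound $\|\partial_x K(t)v_0\|_{L_x^\infty L_t^2}\lesssim \|v_0\|_{L^2}$. Pairing with $w\in L^2$ and using $K(t)^\ast = K(-t)$ and $\partial_x^\ast = -\partial_x$, followed by the Cauchy--Schwarz inequality in $t$ at fixed $x$, I would obtain the dual smoothing estimate
\begin{align*}
    \Big\|\int_{\mathbb{R}}\partial_x K(-t')\,h(t')\,dt'\Big\|_{L^2} \lesssim \|h\|_{L_x^1 L_t^2}.
\end{align*}

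Since $J^s$ commutes with both $K(t)$ and $\partial_x$, it commutes with $\mathscr{B}$, so it suffices to control the global-in-time operator $\bar{\mathscr{B}}(f)(t) := \int_{\mathbb{R}}\partial_x K(t-t')f(t')\,dt' = K(t)\,H$, where $H := \int_{\mathbb{R}}\partial_x K(-t')f(t')\,dt'$. For the first estimate I use that $K(t)$ is unitary on $L^2$: applying $J^s$ and the dual smoothing estimate to $J^s f$ gives $\|J^s\bar{\mathscr{B}}(f)(t)\|_{L^2} = \|J^s H\|_{L^2}\lesssim \|J^s f\|_{L_x^1 L_t^2}$ uniformly in $t$. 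For the second estimate I invoke the maximal function bound of Lemma \ref{maximalesti} (this is where the hypothesis $s>3/4$ and the factor $\langle T\rangle^{1/2}$ are needed): $\|\bar{\mathscr{B}}(f)\|_{L_x^2 L_T^\infty} = \|K(t)H\|_{L_x^2 L_T^\infty}\lesssim \langle T\rangle^{1/2}\|H\|_{H^s}\lesssim \langle T\rangle^{1/2}\|J^s f\|_{L_x^1 L_T^2}$, the last step again being the dual smoothing estimate applied to $J^s f$.

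It remains to replace $\bar{\mathscr{B}}$ by the retarded operator $\mathscr{B}$, whose kernel truncates the $t'$-integration to $[0,t]$. Here I would apply the Christ--Kiselev lemma: the source norm $L_x^1 L_T^2$ carries time-integrability exponent $2$, while both target norms ($C([0,T];H^s)$ and $L_x^2 L_T^\infty$) are $L^\infty$ in time; since $2<\infty$, the exponent condition $p<q$ of Christ--Kiselev is satisfied, so the retarded operator inherits the bounds just proved for $\bar{\mathscr{B}}$. The step I expect to require the most care is this mixed-norm application of Christ--Kiselev, where on the source the spatial norm lies outside the temporal one and on the target the temporal norm is outermost; the variant established in \cite{molinet2004well} is designed precisely for this configuration. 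Finally, continuity of $t\mapsto \mathscr{B}(f)(t)$ in $H^s$, needed for the $C([0,T];H^s)$ statement, follows from a standard density argument, which completes the proof.
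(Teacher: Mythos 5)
Your proposal is correct and follows exactly the route the paper itself indicates: the paper offers no detailed proof, only the one-line justification ``by duality and Christ--Kiselev lemma'' with a citation to \cite{molinet2004well}, and your argument fleshes out precisely that --- the dual of the Kato smoothing bound from Lemma \ref{maximalesti}, the factorization of the non-retarded operator as $K(t)H$ (unitarity for the $C([0,T];H^s)$ bound, the maximal function estimate with $s>3/4$ for the $L_x^2L_T^\infty$ bound), and the Christ--Kiselev lemma to restore the retarded time cut-off. You are also right to single out the mixed-norm application of Christ--Kiselev (space-outermost source $L_x^1L_T^2$, sup-in-time targets) as the delicate point, and deferring that variant to \cite{molinet2004well} is exactly what the paper does.
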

	We also need the following Leibniz-type estimate.
	\begin{lemma}[Theorem 4 in \cite{benea2016multiple}]\label{leibnitz}
		Let $D^s:= \mathscr{F}^{-1}|\xi|^s\mathscr{F}$, $J^s:= \mathscr{F}^{-1}\langle\xi\rangle^s\mathscr{F}$. For $s\geq 0$, $1<q_1,r_1,q_2,r_2\leq \infty$, $1/q_1+1/q_2 = 1/q$, $1/r_1+1/r_2 = 1/r$, $1\leq q,r<\infty$, we have
		\begin{align*}
			\|D^s(uv)\|_{L_x^rL_t^q}&\lesssim \|D^su\|_{L_x^{r_1}L_t^{q_1}}\|v\|_{L_x^{r_2}L_t^{q_2}}+\|D^sv\|_{L_x^{r_1}L_t^{q_1}}\|u\|_{L_x^{r_2}L_t^{q_2}},\\
			\|J^s(uv)\|_{L_x^rL_t^q}&\lesssim \|J^su\|_{L_x^{r_1}L_t^{q_1}}\|v\|_{L_x^{r_2}L_t^{q_2}}+\|J^sv\|_{L_x^{r_1}L_t^{q_1}}\|u\|_{L_x^{r_2}L_t^{q_2}}.
		\end{align*}
	\end{lemma}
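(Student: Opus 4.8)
The plan is to reduce everything to a fractional Leibniz rule for the homogeneous operator $D^s$, which acts only in the spatial variable, while treating time as a passive parameter: since the functions live on $\mathbb{R}_{t,x}$ and $D^s,J^s$ are $x$-frequency multipliers, the inner norm $L_t^q$ plays the role of a Banach-space-valued target and the whole estimate is a vector-valued product rule in one spatial dimension. First I would dispose of the inhomogeneous operator. Writing $\langle\xi\rangle^s = m_0(\xi) + |\xi|^s m_\infty(\xi)$, where $m_0(\xi)=\chi(\xi)\langle\xi\rangle^s$ is a smooth compactly supported symbol and $m_\infty(\xi)=(1-\chi(\xi))\langle\xi\rangle^s/|\xi|^s$ is a smooth symbol of order zero, the $m_0$ piece is a bounded (Mikhlin) multiplier, so $\|m_0(D)(uv)\|_{L_x^rL_t^q}\lesssim \|uv\|_{L_x^rL_t^q}\lesssim \|u\|_{L_x^{r_1}L_t^{q_1}}\|v\|_{L_x^{r_2}L_t^{q_2}}$ by plain H\"older in both variables; the $m_\infty$ piece equals $m_\infty(D)D^s$, and $m_\infty(D)$ is bounded on the mixed-norm space, so it reduces to the $D^s$ inequality. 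Thus the entire weight of the argument falls on $D^s$.

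For the $D^s$ estimate I would run a Bony paraproduct decomposition in $x$, splitting $uv$ into a low--high piece $\Pi_{lh}(u,v)=\sum_N P_{\ll N}u\,P_N v$, a symmetric high--low piece, and a high--high piece $\Pi_{hh}(u,v)=\sum_{N_1\sim N_2}P_{N_1}u\,P_{N_2}v$. On $\Pi_{lh}$ the output frequency is $\sim N$, so $D^s$ behaves like $N^s$ and can be moved onto the high factor: one rewrites $D^s\Pi_{lh}(u,v)$ as a bilinear multiplier whose symbol, after extracting the factor $|\xi_2|^s$ (absorbed into $D^s v$), is a Coifman--Meyer symbol obeying $|\partial_{\xi_1}^a\partial_{\xi_2}^b m|\lesssim (|\xi_1|+|\xi_2|)^{-a-b}$. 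The high--low term is symmetric and yields the $\|D^s u\|\,\|v\|$ contribution. For $\Pi_{hh}$ the output frequency $M$ ranges over all $M\lesssim N_1$; here I would bound $D^sP_M(P_{N_1}u\,P_{N_2}v)$ by assigning the loss $M^s\le N_1^s$ to the $u$ factor, again producing the $\|D^s u\|\,\|v\|$ structure, then sum in $M$ and in $N_1\sim N_2$ using Fefferman--Stein square functions together with the near-orthogonality of the output frequencies.

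The common input at every step is a mixed-norm, vector-valued Coifman--Meyer theorem: for a symbol $m$ of Coifman--Meyer type the bilinear operator $T_m$ satisfies $\|T_m(f,g)\|_{L_x^rL_t^q}\lesssim \|f\|_{L_x^{r_1}L_t^{q_1}}\|g\|_{L_x^{r_2}L_t^{q_2}}$ for the stated H\"older-conjugate exponents, including the endpoints $q_i,r_i=\infty$ on the intermediate factors. Because no derivative acts in $t$, the time variable enters only as an $L_t^q$-valued parameter, so what is genuinely required is the $L_t^q$-valued extension of Coifman--Meyer, uniform across the spatial estimates, over the full range of exponents with $q$ and $r$ varying independently.

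The main obstacle is precisely this last point. The scalar, one-dimensional Coifman--Meyer theorem is classical, but its mixed-norm $L_x^rL_t^q$ form with $q,r$ independent and with $\infty$ endpoints permitted on the intermediate factors cannot be reached by a naive iteration of vector-valued extensions. The high--high interaction is the delicate piece, since there one must extract an honest gain of $s$ derivatives and perform the summation over output scales while keeping the mixed-norm bounds sharp; this is exactly where the stopping-time/helicoidal machinery is needed, and it is the content of Theorem 4 in \cite{benea2016multiple}, on which I would ultimately rely.
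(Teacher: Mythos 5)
Your argument is sound, but its load-bearing step—the mixed-norm, vector-valued Coifman--Meyer/paraproduct bound with $L^\infty$ intermediate exponents—is exactly the content of Theorem 4 in \cite{benea2016multiple}, which is precisely what the paper invokes: the lemma is stated there as a citation with no proof given. So your proposal (the reduction of $J^s$ to $D^s$ plus the Bony decomposition being standard scaffolding) is essentially the same approach as the paper's, namely deferring to the helicoidal-method result of Benea--Muscalu.
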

	\subsubsection{Multi-linear estimates}
	Firstly, we show the control of boundary term.
	\begin{lemma}\label{boundaryterm}
		Let $s_1\geq 0$, $s_2\leq s_1+2$, $s_2<2s_1+3/2$. Then $
		\|B(u_0)\|_{H^{s_2}}\lesssim \|u_0\|_{H^{s_1}}^2$.
	\end{lemma}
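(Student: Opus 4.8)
The plan is to reduce the claim to a bilinear Fourier multiplier estimate at fixed time and then run a Cauchy--Schwarz (Schur test) argument in the non-resonant frequency regions. First I would take the spatial Fourier transform of $B(u_0)=-\partial_x T_M(u_0,u_0)$,
\[
\widehat{B(u_0)}(\xi)=-\frac{i\xi}{(2\pi)^{1/2}}\sum_{N_1\nsim N^2}\int_{\xi_1+\xi_2=\xi}\frac{\psi_{N_1}(\xi_1)\psi_N(\xi)}{\xi^3-\xi_1^2+\xi_2^2}\,\widehat{u_0}(\xi_1)\widehat{\bar u_0}(\xi_2)\,d\xi_1,
\]
and use the identity $\xi^3-\xi_1^2+\xi_2^2=\xi(\xi^2+\xi-2\xi_1)$ (valid since $\xi_2=\xi-\xi_1$), so that the factor $i\xi$ from $\partial_x$ cancels the $\xi$ in the denominator and the surviving symbol $\frac{1}{\xi^2+\xi-2\xi_1}$ has no singularity at $\xi=0$. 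Writing $f(\xi)=\langle\xi\rangle^{s_1}|\widehat{u_0}(\xi)|$ and $g(\xi)=\langle\xi\rangle^{s_1}|\widehat{\bar u_0}(\xi)|$ (so that $\|f\|_{L^2}=\|g\|_{L^2}=\|u_0\|_{H^{s_1}}$), the lemma reduces to
\[
\left\|\langle\xi\rangle^{s_2}\int_{\xi_1+\xi_2=\xi}\frac{\chi_{N_1\nsim N^2}}{|\xi^2+\xi-2\xi_1|}\frac{f(\xi_1)g(\xi_2)}{\langle\xi_1\rangle^{s_1}\langle\xi_2\rangle^{s_1}}\,d\xi_1\right\|_{L^2_\xi}\lesssim\|f\|_{L^2}\|g\|_{L^2}.
\]

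This is the same bilinear form as in the sufficiency part of \eqref{sstokdv}, but with the oscillatory factor dropped and, crucially, with the resonant interaction $N_1\sim N^2$ removed; it is exactly this removal that relaxes the constraint $s_2\le 4s_1$ to $s_2<2s_1+3/2$. Since the resonance $\xi^2+\xi-2\xi_1=0$ sits at $|\xi_1|\sim|\xi|^2$, on the support $N_1\nsim N^2$ one has $|\xi^2+\xi-2\xi_1|\sim\max\{N^2,N_1\}$. I would split the frequency region into: (A) $N_1\gg N^2$, where $N_2\sim N_1$ and $N\ll N_1^{1/2}$; (B1) $N_1\ll N$, where $N_2\sim N$ and the denominator is $\sim N^2$; and (B2) $N\lesssim N_1\ll N^2$, where $N_2\sim N_1$ and the denominator is again $\sim N^2$ (the low-output case $N=1$ forces $N_1\gg1$, so only region (A) occurs there).

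In each region I would invoke the elementary bound $\|\int m(\xi,\xi_1)f(\xi_1)g(\xi-\xi_1)\,d\xi_1\|_{L^2_\xi}\lesssim\|f\|_{L^2}\|g\|_{L^2}$ with operator constant $\min\{\sup_\xi\|m(\xi,\cdot)\|_{L^2_{\xi_1}},\ \sup_{\xi_1}\|m(\cdot,\xi_1)\|_{L^2_\xi}\}$, choosing the Cauchy--Schwarz direction adapted to the region. For (B1) I would put $L^2$ in $\xi_1$ (fixing $\xi$): the symbol is $\sim\langle\xi\rangle^{s_2-s_1-2}\langle\xi_1\rangle^{-s_1}$ and $\int_{|\xi_1|\ll N}\langle\xi_1\rangle^{-2s_1}\,d\xi_1$ yields $s_2\le s_1+2$ when $s_1>1/2$ and $s_2\le 2s_1+3/2$ when $s_1\le 1/2$ (the $s_1=1/2$ logarithm being absorbed by strictness). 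For (A) and (B2) the input direction is lossy near the resonance boundary, so I would place $L^2$ on the output variable $\xi$ (fixing $\xi_1$): region (A) gives $s_2\le 4s_1+3/2$, while region (B2), where $\int_{N_1^{1/2}\ll|\xi|\lesssim N_1}\langle\xi\rangle^{2s_2-4}\,d\xi\sim N_1^{2s_2-3}$, gives precisely $s_2\le 2s_1+3/2$. Since $s_1\ge0$ forces $4s_1+3/2\ge 2s_1+3/2$, all three constraints are dominated by the hypotheses $s_2\le s_1+2$ and $s_2<2s_1+3/2$.

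The main obstacle is the sharp treatment of the high-high interactions near the resonant threshold $N_1\sim N^2$ (region (B2), and region (B1) for small $s_1$): a Cauchy--Schwarz in the input frequency $\xi_1$ over-weights the dense set of high frequencies near resonance and, for $s_1$ small, falls short of the threshold (it only controls up to $s_2\le 4s_1+1$ in (B2)), whereas placing the $L^2$ on the output frequency exploits the $\sim N^2$ spread of $\xi$ and recovers the sharp exponent $2s_1+3/2$ uniformly in $s_1$. The remaining work is routine bookkeeping: verifying the lower bound $|\xi^2+\xi-2\xi_1|\sim\max\{N^2,N_1\}$ throughout $N_1\nsim N^2$ and checking the elementary convergence sub-cases (e.g. $2s_2+1\gtrless0$ in (A) and $2s_2-4\gtrless-1$ in (B2)), all of which only improve the bound.
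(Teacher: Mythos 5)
Your argument is correct and shares the paper's core reduction---the cancellation $\xi^3-\xi_1^2+\xi_2^2=\xi(\xi^2+\xi-2\xi_1)$ and the non-resonance bound $|\xi^2+\xi-2\xi_1|\sim\max\{N^2,N_1\}$ on the support of $\psi_{N_1}(\xi_1)\psi_N(\xi)$ with $N_1\nsim N^2$---but it finishes by a genuinely different route. The paper simply discards the extra gain when $N_1\gg N^2$, bounding the whole symbol by $\langle\xi\rangle^{s_2-2}$, which reduces the lemma to $\||\mathscr{F}^{-1}(|\hat{u}_0|)|^2\|_{H^{s_2-2}}\lesssim\|u_0\|_{H^{s_1}}^2$; this it closes in one stroke by citing the sharp Sobolev multiplication estimate $H^{s_1}\times H^{s_1}\to H^{s_2-2}$, whose validity conditions $s_1\geq 0$, $s_2-2\leq s_1$, $s_2-2<2s_1-1/2$ are exactly the hypotheses of the lemma. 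You instead retain the weight $\max\{N^2,N_1\}^{-1}$ and verify the resulting bilinear estimate by hand with region-adapted Cauchy--Schwarz; this is self-contained (no appeal to the product-estimate literature), and in your region (A), where $N_1\gg N^2$, the retained factor $N_1^{-1}$ even yields the extra room $s_2\leq 4s_1+3/2$ that the paper's cruder bound throws away---which incidentally explains why the citation route suffices. One imprecision to repair: in your region (B2) the claim $N_2\sim N_1$ fails in the boundary sub-case $N_1\sim N$, where $\xi_2=\xi-\xi_1$ may be small; there you cannot replace $\langle\xi_2\rangle^{-s_1}$ by $N_1^{-s_1}$, but keeping it inside the $L^2$ integral (in either Cauchy--Schwarz direction) the factor $\int_{|\xi_2|\lesssim N}\langle\xi_2\rangle^{-2s_1}\,d\xi_2$ contributes $O(1)$ for $s_1>1/2$ and $O(N^{1-2s_1})$ for $s_1<1/2$, returning exactly the constraints $s_2\leq s_1+2$, respectively $s_2<2s_1+3/2$, as in your region (B1). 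With that repair your proof is complete and yields the stated lemma under the given hypotheses.
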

	\begin{proof}[\textbf{Proof}]
		By the definition of $B(u_0)$, we have
		\begin{align*}
			\|B(u_0)\|_{H^{s_2}}\sim\left\|\langle \xi\rangle^{s_2}\sum_{N_1\nsim N^2} \int_{\mathbb{R}}\frac{\psi_N(\xi)\psi_{N_1}(\xi_1)}{\xi^2+\xi-2\xi_1}\hat{u}_0(\xi_1)\bar{\hat{u}}_0(\xi_1-\xi)~d\xi_1\right\|_{L^2_{\xi}}.
		\end{align*}
		Since $N_1\nsim N^2$, we have $|\xi^2+\xi-2\xi|\sim \max\{N_1,N^2\}$ for $|\xi|,|\xi_1|$ in the support of $\psi_N, \psi_{N_1}$. Thus, we have
		\begin{align*}
			\|B(u_0)\|_{H^{s_2}}&\lesssim \left\|\sum_{N_1\nsim N^2} \int_{\mathbb{R}}\frac{N^{s_2}\psi_N(\xi)\psi_{N_1}(\xi_1)}{\max\{N^2,N_1\}}|\hat{u}_0(\xi_1)||{\hat{u}}_0(\xi_1-\xi)|~d\xi_1\right\|_{L^2_{\xi}}\\
			&\lesssim \left\| \int_{\mathbb{R}}\langle \xi\rangle^{s_2-2}|\hat{u}_0(\xi_1)||{\hat{u}}_0(\xi_1-\xi)|~d\xi_1\right\|_{L^2_{\xi}}\\
			&\sim \||\mathscr{F}^{-1}(|\hat{u}_0|)|^2\|_{H^{s_2-2}}.
		\end{align*}
		Then, by the Sobolev multiply estimate (\cite{tao2001multilinear}, page 855), for $s_1\geq 0$, $s_2-2\leq s_1$, $s_2-2<2s_1-1/2$, we have
		$\|B(u_0)\|_{H^{s_2}}\lesssim \|u_0\|_{H^{s_1}}^2$.
	\end{proof}
	\begin{lemma}\label{vv}
		Let $s>3/4$, $0<T\leq 1$. Then,
		\begin{align*}
			\|\mathscr{B}(v_1v_2)\|_{Y^s_T}\lesssim T^{1/2}\|v_1\|_{Y^{s}_T}\|v_2\|_{Y^s_T}.
		\end{align*}
	\end{lemma}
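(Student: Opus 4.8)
The plan is to reduce the $Y^s_T$-estimate to a single space--time bound on the product $v_1v_2$ by means of the dual estimate recorded in Lemma \ref{dual2}, and then to distribute the smoothing operator $J^s$ across the product using the Leibniz rule of Lemma \ref{leibnitz}, pairing each resulting factor with whichever piece of the $Y^s_T$-norm is adapted to it.

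First I would unfold the definition $\|w\|_{Y^s_T}=\|w\|_{C([0,T];H^s)}+\|w\|_{L_x^2L_T^\infty}$ and invoke Lemma \ref{dual2} (available because $s>3/4$) to bound both summands of $\|\mathscr{B}(v_1v_2)\|_{Y^s_T}$ by $\|J^s(v_1v_2)\|_{L_x^1L_T^2}$; here the hypothesis $T\le 1$ makes $\langle T\rangle^{1/2}\lesssim 1$ harmless in the second inequality of that lemma. It then suffices to establish
$$\|J^s(v_1v_2)\|_{L_x^1L_T^2}\lesssim T^{1/2}\|v_1\|_{Y^s_T}\|v_2\|_{Y^s_T}.$$
To this end I would apply Lemma \ref{leibnitz} with $(r,q)=(1,2)$ and exponents $r_1=r_2=2$, $q_1=2$, $q_2=\infty$, which satisfy $1/r_1+1/r_2=1$ and $1/q_1+1/q_2=1/2$ and lie in the admissible range, obtaining
$$\|J^s(v_1v_2)\|_{L_x^1L_T^2}\lesssim \|J^sv_1\|_{L_x^2L_T^2}\|v_2\|_{L_x^2L_T^\infty}+\|J^sv_2\|_{L_x^2L_T^2}\|v_1\|_{L_x^2L_T^\infty}.$$
The undifferentiated factors $\|v_i\|_{L_x^2L_T^\infty}$ are controlled directly by $\|v_i\|_{Y^s_T}$, while for the differentiated factors I would use that $L_x^2L_T^2$ equals $L_T^2L_x^2$ by Fubini and then H\"older in time on $[0,T]$ to write $\|J^sv_i\|_{L_x^2L_T^2}\le T^{1/2}\|J^sv_i\|_{L_T^\infty L_x^2}=T^{1/2}\|v_i\|_{C([0,T];H^s)}\le T^{1/2}\|v_i\|_{Y^s_T}$, which supplies the factor $T^{1/2}$. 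Combining these displays yields the claim.

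I do not expect a genuine obstacle here, since the argument is purely structural; the only point demanding care is the bookkeeping of the mixed Lebesgue orders, namely routing the $C([0,T];H^s)=L_T^\infty L_x^2$ piece of the norm to the differentiated factor (where the gain $T^{1/2}$ is harvested) and the $L_x^2L_T^\infty$ piece to the undifferentiated factor, so that the exponents in Lemma \ref{leibnitz} match up and the factor $T^{1/2}$ is produced exactly once in each term.
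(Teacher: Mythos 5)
Your proof is correct and coincides with what the paper intends: Lemma \ref{vv} is stated without proof immediately after Lemmas \ref{dual2} and \ref{leibnitz} are recorded precisely for this purpose, and your combination of the dual estimate (with $\langle T\rangle^{1/2}\lesssim 1$ for $T\leq 1$) and the mixed-norm Leibniz rule with $(r,q)=(1,2)$, $r_1=r_2=2$, $q_1=2$, $q_2=\infty$, harvesting $T^{1/2}$ by H\"older in time on the differentiated factor, is exactly the intended route. The only point worth recording is that $J^s$ acts in $x$ alone, so applying Lemma \ref{leibnitz} to the time-restricted functions (equivalently, to $\chi_{[0,T]}(t)v_i$) is legitimate, as you implicitly did.
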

	\begin{lemma}\label{boundmaxi}
		$			\|B(u(t))\|_{L_x^2L_t^\infty}\lesssim \|u\|_{L_x^4L_t^\infty}^2$.
	\end{lemma}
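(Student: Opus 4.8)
The plan is to realize $B(u(t))=-\partial_xT_M(u(t),u(t))$ as an integral operator against a fixed, \emph{time-independent} kernel, and then to reduce the whole statement to a single $L^1$-bound on that kernel. First I would write out the Fourier definitions of $T_M$ and $M$ and insert $\hat u(t,\xi_1)=(2\pi)^{-1/2}\int u(t,y)e^{-iy\xi_1}\,dy$ and $\hat{\bar u}(t,\xi_2)=(2\pi)^{-1/2}\int \bar u(t,z)e^{-iz\xi_2}\,dz$ with $\xi_2=\xi-\xi_1$. The phase then collapses to $e^{i(x-z)\xi}e^{-i(y-z)\xi_1}$, giving
$$B(u(t))(x)=\int_{\mathbb R}\int_{\mathbb R} u(t,y)\,\bar u(t,z)\,\mathcal K(x-z,\,y-z)\,dy\,dz,\qquad \mathcal K(p,q)=c\int_{\mathbb R}\int_{\mathbb R}\xi M(\xi,\xi_1)\,e^{ip\xi}e^{-iq\xi_1}\,d\xi\,d\xi_1,$$
i.e.\ $\mathcal K$ is (up to a harmless constant) the inverse Fourier transform of the bilinear symbol $\xi M(\xi,\xi_1)$ in the dual variables $(p,q)\leftrightarrow(\xi,\xi_1)$. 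I claim the lemma follows at once once I show $\mathcal K\in L^1(\mathbb R^2)$.

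Indeed, set $U(x):=\|u(\cdot,x)\|_{L_t^\infty}$, so $\|U\|_{L_x^4}=\|u\|_{L_x^4L_t^\infty}$. Since $\mathcal K$ does not depend on $t$, taking absolute values and then the supremum in $t$ yields the pointwise bound
$$\|B(u(t))(x)\|_{L_t^\infty}\le\int_{\mathbb R}\int_{\mathbb R} U(y)\,U(z)\,|\mathcal K(x-z,y-z)|\,dy\,dz.$$
Changing variables $w=x-z$, $q=y-z$ and applying Minkowski's integral inequality in $x$ together with the translation-invariant Cauchy--Schwarz bound $\|U(\cdot)U(\cdot+q)\|_{L_x^2}\le\|U\|_{L_x^4}^2$, one gets
$$\big\|\,\|B(u(t))\|_{L_t^\infty}\big\|_{L_x^2}\le\|\mathcal K\|_{L^1}\,\|U\|_{L_x^4}^2=\|\mathcal K\|_{L^1}\,\|u\|_{L_x^4L_t^\infty}^2,$$
which is exactly the asserted estimate. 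All manipulations are justified for Schwartz $u$ and then extended by density.

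It therefore remains to prove $\mathcal K\in L^1$, and this is the only real point. I would decompose $\xi M=\sum_{N_1\nsim N^2}m_{N,N_1}$ with $m_{N,N_1}(\xi,\xi_1)=\psi_{N_1}(\xi_1)\psi_N(\xi)/(\xi^2+\xi-2\xi_1)$. On the support $|\xi|\sim N$, $|\xi_1|\sim N_1$ with $N_1\nsim N^2$ one has, exactly as in the proof of Lemma~\ref{boundaryterm}, $|\xi^2+\xi-2\xi_1|\sim D:=\max\{N^2,N_1\}$; moreover a $\xi$-derivative hitting the denominator costs $|2\xi+1|/D\lesssim N/D\lesssim N^{-1}$ (since $D\ge N^2$) and a $\xi_1$-derivative costs $1/D\lesssim N_1^{-1}$ (since $D\ge N_1$), so $|\partial_\xi^a\partial_{\xi_1}^b m_{N,N_1}|\lesssim D^{-1}N^{-a}N_1^{-b}$. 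Thus $m_{N,N_1}$ is a smooth symbol of size $D^{-1}$ on a box of dimensions $N\times N_1$, and nonstationary phase gives $|\mathcal K_{N,N_1}(p,q)|\lesssim D^{-1}NN_1\langle Np\rangle^{-10}\langle N_1q\rangle^{-10}$, hence $\|\mathcal K_{N,N_1}\|_{L^1}\lesssim D^{-1}=\max\{N^2,N_1\}^{-1}$. Summing first in $N_1$ (the range $N_1\ll N^2$ contributes $\lesssim N^{-2}\log N$, while $N_1\gg N^2$ gives a convergent geometric series $\lesssim N^{-2}$) and then in $N$ yields $\|\mathcal K\|_{L^1}\lesssim\sum_N N^{-2}\log N<\infty$. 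The hard part is thus entirely this symbol/kernel estimate and its summability; the reduction in the first two paragraphs is soft, its whole purpose being to convert the $L_t^\infty$ over a bilinear expression into a convolution against the positive kernel $|\mathcal K|$.
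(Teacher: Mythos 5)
Your proof is correct, and it shares the paper's overall skeleton --- realize $B(u(t))$ as integration of $u(t,y)\bar u(t,z)$ against a time-independent kernel, pull the $L_t^\infty$ inside, and reduce the lemma to integrability of that kernel --- but the execution of the kernel estimate is genuinely different. The paper never bounds $\|\varPhi\|_{L^1(\mathbb{R}^2)}$ directly; it instead constructs a tensor-product majorant $|\varPhi(x,y)|\lesssim h_1(x)h_2(y)$ with $h_1,h_2\in L^1(\mathbb{R})$, which takes a case analysis on $|x|,|y)\gtrless 1$, an auxiliary function $M\in L^2$ arising in the regime $|x|\le 1\le |y|$ (where one only gets $|\varPhi|\le y^{-2}M(x)$), and an interpolation of the bounds $|x|^{-1}$ and $y^{-2}M(x)$ to the integrable product $|x|^{-3/4}M(x)^{1/4}|y|^{-1/2}$; the factored form is then consumed by a Young-then-H\"older chain. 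You observe that the bare $L^1(\mathbb{R}^2)$ bound already suffices, since after the change of variables Minkowski plus the translation-invariant Cauchy--Schwarz estimate $\|U(\cdot+q)U\|_{L_x^2}\le\|U\|_{L_x^4}^2$ closes the argument --- a strictly weaker kernel input (the product majorant implies joint $L^1$, not conversely). You then prove joint integrability by dyadic decomposition and nonstationary phase: your verification that on each block $N_1\nsim N^2$ the denominator is $\sim D=\max\{N^2,N_1\}$ and that a derivative falling on it costs no more than one falling on the cutoffs (using $D\ge N^2$ for $\partial_\xi$ and $D\ge N_1$ for $\partial_{\xi_1}$) is exactly the right point, and it yields $\|\mathcal K_{N,N_1}\|_{L^1}\lesssim D^{-1}$, hence $\|\mathcal K\|_{L^1}\lesssim\sum_N N^{-2}\log N<\infty$. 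Your route is more systematic: uniform per-block symbol estimates replace the paper's ad hoc interpolation step, at the cost of a harmless logarithm in the summation; the paper's method, working with the summed kernel, extracts explicit pointwise decay (e.g.\ $|\varPhi|\lesssim x^{-2}$), which is more information than the lemma needs.
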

	\begin{proof}[\textbf{Proof}]
		By the definition of $B(u(t))$, we have
		\begin{align*}
			|B(u(t))(x)|&\sim \Bigg|\int_{\mathbb{R}^2_{y,z}}u(t,y)\bar{u}(t,z)\\
			&\qquad\cdot\int_{\mathbb{R}^2_{\xi_1,\xi}}\sum_{N_1\nsim N^2}\frac{\psi_N(\xi)\psi_{N_1}(\xi_1)}{\xi^2+\xi-2\xi_1}e^{i(z-y)\xi_1+i(x-z)\xi}~d\xi_1d\xi dydz\Bigg|.
		\end{align*}
		Denote 
		$$\int_{\mathbb{R}^2_{\xi_1,\xi}}\sum_{N_1\nsim N^2}\frac{\psi_N(\xi)\psi_{N_1}(\xi_1)}{\xi^2+\xi-2\xi_1}e^{i(z-y)\xi_1+i(x-z)\xi}~d\xi_1d\xi$$
		by $\varPhi(z-y,x-z)$.  We will show that there exists $h_1,h_2\in L^1$ such that $|\varPhi(x,y)|\lesssim h_1(x)h_2(y)$. Assume this. Then one has
		\begin{align*}
			\|B(u(t))\|_{L_x^2L_t^\infty}&\lesssim \left\|\int_{\mathbb{R}^2_{y,z}}\|u(t,y)\|_{L_t^\infty}\|u(t,z)\|_{L_t^\infty}h_1(z-y)h_2(x-z)~dydz\right\|_{L_x^2}\\
			&\lesssim \left\|\|u(t,z)\|_{L_t^\infty}\int_{\mathbb{R}_y}\|u(t,y)\|_{L_t^\infty}h_1(z-y)~dy\right\|_{L_z^2}\|h_2\|_{L^1}\\
			&\lesssim \|u\|_{L_x^4L_t^\infty}\left\|\int_{\mathbb{R}_y}\|u(t,y)\|_{L_t^\infty}h_1(z-y)~dy\right\|_{L_z^4}\|h_2\|_{L^1}\\
			&\lesssim \|h_1\|_{L^1}\|h_2\|_{L^1}\|u\|_{L_x^4L_t^\infty}^2.
		\end{align*}
		Now, we estimate $\varPhi(x,y)$. If $|x|\geq 1$, we have
		\begin{align*}
			\varPhi(x,y) &= \int_{\mathbb{R}^2_{\xi_1,\xi}}\sum_{N_1\nsim N^2}\frac{\psi_N(\xi)\psi_{N_1}(\xi_1)}{\xi^2+\xi-2\xi_1}e^{ix\xi_1+iy\xi}~d\xi_1d\xi\\
			&=-\frac{1}{x^2} \int_{\mathbb{R}^2_{\xi_1,\xi}}\sum_{N_1\nsim N^2}\partial_{\xi_1}^2\left(\frac{\psi_N(\xi)\psi_{N_1}(\xi_1)}{\xi^2+\xi-2\xi_1}\right)e^{ix\xi_1+iy\xi}~d\xi_1d\xi.
		\end{align*}
		Thus, 
		\begin{align*}
			|\varPhi(x,y)|&\lesssim x^{-2}\int_{\mathbb{R}^2}\sum_{N_1,N} \frac{\varphi_N(\xi)}{\max\{N^2,N_1\}N_1^2}\chi_{|\xi_1|\lesssim N_1}~d\xi_1d\xi\lesssim x^{-2}.
		\end{align*}
		If $|y|\geq 1$, $|x|\leq 1$, we only consider the case $N_1\gg N^2$. The argument for $N_1\ll N^2$ is simpler. Integrating by parts, we have
		\begin{align*}
			&\quad\left| \int_{\mathbb{R}^2_{\xi_1,\xi}}\sum_{N_1\gg N^2}\frac{\psi_N(\xi)\psi_{N_1}(\xi_1)}{\xi^2+\xi-2\xi_1}e^{ix\xi_1+iy\xi}~d\xi_1d\xi\right|\\
			&=y^{-2}\left| \int_{\mathbb{R}^2_{\xi_1,\xi}}\sum_{N_1\gg N^2}\partial_{\xi}^2\left(\frac{\psi_N(\xi)\psi_{N_1}(\xi_1)}{\xi^2+\xi-2\xi_1}\right)e^{ix\xi_1+iy\xi}~d\xi_1d\xi\right|\\
			&\lesssim y^{-2}\sum_N \int_{\mathbb{R}_\xi}\left|\int_{\mathbb{R}_{\xi_1}}\sum_{N_1\gg N^2}\partial_{\xi}^2\left(\frac{\psi_N(\xi)\psi_{N_1}(\xi_1)}{\xi^2+\xi-2\xi_1}\right)e^{ix\xi_1}~d\xi_1\right|d\xi\\
			&:=y^{-2}M(x).
		\end{align*}
		By Minkowski inequality and Plancherel identity, we have
		\begin{align*}
			\|M\|_{L^2_x}&\lesssim \sum_N\int_{\mathbb{R}_{\xi}}\left\|\sum_{N_1\gg N^2}\partial_{\xi}^2\left(\frac{\psi_N(\xi)\psi_{N_1}(\xi_1)}{\xi^2+\xi-2\xi_1}\right)\right\|_{L^2_{\xi_1}}~d\xi\\
			&\lesssim \sum_N \int_{\mathbb{R}_\xi} \frac{\chi_{|\xi|\lesssim N}}{N^2}~d\xi\lesssim 1.
		\end{align*}
		By similar argument for $|x|\geq 1, |y|\geq 1$, one has $|\varPhi(x,y)|\lesssim |x|^{-2}|y|^{-2}$. 
		
		For $|x|\leq 1$, $|y|\leq 1$, 
		we have
		\begin{align*}
			|\varPhi(x,y)| &=\left|\frac{1}{x} \int_{\mathbb{R}^2_{\xi_1,\xi}}\sum_{N_1\nsim N^2}\partial_{\xi_1}\left(\frac{\psi_N(\xi)\psi_{N_1}(\xi_1)}{\xi^2+\xi-2\xi_1}\right)e^{ix\xi_1+iy\xi}~d\xi_1d\xi\right|\\
			&\lesssim |x|^{-1}\int_{\mathbb{R}^2}\sum_{N_1,N} \frac{\psi_N(\xi)}{\max\{N^2,N_1\}N_1}\chi_{|\xi_1|\lesssim N_1}~d\xi_1d\xi\lesssim |x|^{-1}.
		\end{align*}
		Note that by the former argument, we also have $|\varPhi(x,y)|\leq y^{-2}M(x)$ where $M\in L^2$. $|\varPhi(x,y)|\lesssim |x|^{-3/4}M(x)^{1/4}|y|^{-1/2}$. Note that by H\"{o}lder inequality, we have $|x|^{-3/4}M(x)^{1/4}\chi_{|x|\leq1}\in L^1$. Let 
		$$h_1(x) = \langle x\rangle^{-2}+(|x|^{-3/4}M(x)^{1/4}+M(x))\chi_{|x|\leq 1}, h_2(y) = |y|^{-1/2}\chi_{|y|\leq 1}+\langle y\rangle^{-2}.$$
		We have $|\varPhi(x,y)|\lesssim h_1(x)h_2(y)$, $h_1,h_2\in L^1$. We finish the proof.
	\end{proof}
	\begin{lemma}\label{threefour}
		Let $0<T\leq 1$, $s_1>0$, $3/4<s_2<2s_1+3/2$, $s_2\leq s_1+2$. Then,
		\begin{align*}
			\|C(u,v)\|_{Y^{s_2}_T}\lesssim T^{3/4} \|u\|_{X_T^{s_1}}^2\|v\|_{Y^{s_2}_T},\quad \|D(u)\|_{Y^{s_2}_T}\lesssim T^{1/2}\|u\|_{X^{s_1}_T}^4.
		\end{align*}
	\end{lemma}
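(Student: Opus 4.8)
The plan is to treat $C(u,v)$ and $D(u)$ as KdV--Duhamel terms whose inner nonlinearity carries the normal-form multiplier, and then to run essentially the same bilinear/regularity analysis as in Lemma~\ref{boundaryterm}, now with a time integration that supplies the small-$T$ gain. First I would unwind the definitions: writing $\mathscr{B}_M(a,b)=\mathscr{B}(\mathrm{Op}_M(a,\bar b))$, where $\mathscr{B}$ is the KdV Duhamel operator and $\mathrm{Op}_M$ is the bilinear Fourier multiplier with symbol $M(\xi,\xi_1)=(2\pi)^{-1/2}\sum_{N_1\nsim N^2}\psi_N(\xi)\psi_{N_1}(\xi_1)/\big(\xi(\xi^2+\xi-2\xi_1)\big)$, one expresses $C(u,v)=\mathscr{B}(g_C)$ and $D(u)=\mathscr{B}(g_D)$, with $g_C=i\,\mathrm{Op}_M(u,\overline{uv})-i\,\mathrm{Op}_M(uv,\bar u)$ (cubic, two $u$-factors and one $v$) and $g_D$ the analogous quartic expression in $u$. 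Since $s_2>3/4$ throughout the region $4/3<s_1+1<s_2$, Lemma~\ref{dual2} applies and, using $\langle T\rangle^{1/2}\sim1$ for $0<T\le1$, reduces the claim to
\begin{align*}
\|J^{s_2}g_C\|_{L_x^1L_T^2}\lesssim T^{3/4}\|u\|_{X_T^{s_1}}^2\|v\|_{Y^{s_2}_T},\qquad \|J^{s_2}g_D\|_{L_x^1L_T^2}\lesssim T^{1/2}\|u\|_{X_T^{s_1}}^4 .
\end{align*}

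Next I would exploit the smoothing of $M$. On its support $N_1\nsim N^2$ one has $|\xi^2+\xi-2\xi_1|\sim\max\{N^2,N_1\}$ exactly as in Lemma~\ref{boundaryterm}, so $|M|\lesssim\big(N\max\{N^2,N_1\}\big)^{-1}$; after the $\partial_x$ already accounted for inside $\mathscr{B}$ this leaves an effective order $-2$ operator acting on the product of the inputs. Decomposing dyadically and pulling out $\langle\xi_1\rangle^{-s_1}\langle\xi_2\rangle^{-s_1}$ on the $u$-slots (and the $H^{s_2}$ weight on the $v$-slot), the symbol weight $\langle\xi\rangle^{s_2}|M|$ is governed by $N^{s_2}/\max\{N^2,N_1\}$, and the bilinear sums in $x$ close under precisely the two conditions appearing for the boundary term: $s_2\le s_1+2$, from the regime $N_1\lesssim N^2$ where the full order $-2$ smoothing must pay for the target regularity, and $s_2<2s_1+3/2$, from the high-high regime $N_1\sim N_2\gg N$ where the one-dimensional product of two $H^{s_1}$ factors loses half a derivative. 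Because $v\in Y^{s_2}_T$ with $s_2>1/2$ embeds into $L_x^\infty$, the nested factor $uv$ (resp. $|u|^2u$) is controlled at regularity $s_1$ by the Leibniz rule of Lemma~\ref{leibnitz}, so the whole analysis collapses to a two-$u$-factor bilinear estimate structurally identical to Lemma~\ref{boundaryterm}.

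It remains to extract the small-$T$ factors. I would place the $u$-factors in the Strichartz norms coming from the embeddings $U^2_S\hookrightarrow L_t^qL_x^r$ and the remaining lower-frequency factor in $L_x^2L_T^\infty$ via the maximal estimate of Lemma~\ref{maximalesti} (legitimate since the order $-2$ smoothing lifts that factor above regularity $1/2$), together with $\|v\|_{L_x^2L_T^\infty}\le\|v\|_{Y^{s_2}_T}$; in each regime the available time integrability strictly exceeds what the $L_x^1L_T^2$ target demands, and Hölder in $t$ on $[0,T]$ converts the surplus into $T^{3/4}$ for the cubic $g_C$ and $T^{1/2}$ for the quartic $g_D$. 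I expect the main obstacle to be the simultaneous treatment of the two extreme regimes: retaining $s_2$ as large as $s_1+2$ forces a lossless use of the full order $-2$ smoothing (tight at $s_2=s_1+2$), while the high-high interaction of the two $u$-factors near the excluded diagonal $N_1\sim N^2$ is exactly what pins the strict inequality $s_2<2s_1+3/2$; reconciling these with the nested product structure of $uv$ and $|u|^2u$, so that the Leibniz rule distributes regularity correctly and still leaves enough time integrability for the $T^{3/4}$ and $T^{1/2}$ gains, is the delicate point.
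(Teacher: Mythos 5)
Your high-level reading of $C(u,v)$ and $D(u)$ is right: they are Duhamel integrals of bilinear multipliers whose symbol gains $\max\{N^2,N_1\}$ on the support $N_1\nsim N^2$ (the $\partial_x$ cancels the remaining factor of $\xi$), and the thresholds $s_2\le s_1+2$ and $s_2<2s_1+3/2$ do come from the regimes you identify. But your pivotal middle step has a genuine gap. After invoking Lemma \ref{dual2} you commit to proving $\|J^{s_2}g_C\|_{L_x^1L_T^2}\lesssim T^{3/4}\|u\|_{X^{s_1}_T}^2\|v\|_{Y^{s_2}_T}$, and you then propose to close the estimate by "pulling out $\langle\xi_1\rangle^{-s_1}\langle\xi_2\rangle^{-s_1}$" dyadically so that "the bilinear sums in $x$ close." That is not an argument available in $L_x^1L_T^2$: every frequency-side manipulation you appeal to (distributing $\langle\xi\rangle^{s_2}/\max\{N^2,N_1\}$ over the inputs, the half-derivative loss that produces $s_2<2s_1+3/2$) is a Plancherel/Cauchy--Schwarz computation in $L^2_\xi$, and Plancherel is unavailable once the outer norm is $L^1$ in $x$. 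A weighted bilinear multiplier such as $J^{s_2}T_M$ acts on $L^1_x$-based mixed norms only through pointwise kernel bounds of tensor type --- precisely the technology the paper develops in Lemma \ref{boundmaxi}, but there only for the boundary term $B(u(t))$, without the $J^{s_2}$ weight and in the norm $L^2_xL_t^\infty$. You neither state nor prove the analogous kernel estimates, and they would have to encode the entire smoothing numerology; the same objection undermines your extraction of $T^{3/4}$ and $T^{1/2}$, which presumes you can place factors in maximal/Strichartz norms inside that unproved $L^1_xL^2_T$ bilinear bound.

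The detour through Lemma \ref{dual2} is also unnecessary, and the paper's proof shows the cheap route: since the normal-form symbol already supplies two derivatives of smoothing, no Kato-smoothing dual norm is needed. The paper bounds both components of the $Y^{s_2}_T$ norm by $\int_0^T\|\cdot\|_{H^{s_2}}\,dt'$ (Minkowski for the $C([0,T];H^{s_2})$ part, the $K(t)$ maximal estimate of Lemma \ref{maximalesti} with $s_2>3/4$ for the $L_x^2L_T^\infty$ part), and then estimates the bilinear expression pointwise in $t'$ on the $L^2_\xi$ side, splitting $s_1>1/2$ (where $\|\hat u\|_{L^1_\xi}\lesssim\|u\|_{H^{s_1}}$ closes the sum under $s_2\le s_1+2$) from $0<s_1\le 1/2$ (where Cauchy--Schwarz produces the $\min\{N,N_1\}^{1/2}$ loss and the constraint $s_2<2s_1+3/2$), arriving at the pointwise bound $\|u\|_{H^{s_1}}\|v\|_{H^{s_2}}(\|u\|_{L^\infty}+\|u\|_{H^{s_1}})$. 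The powers of $T$ then come from H\"older in time alone, via $U^2_S\hookrightarrow L_t^4L_x^\infty$: one $L^\infty_x$ factor yields $T^{3/4}$ for $C(u,v)$, two yield $T^{1/2}$ for $D(u)$. To rescue your route you would have to prove tensor kernel bounds for $J^{s_2}T_M$ in the spirit of Lemma \ref{boundmaxi}; otherwise, replace the $L^1_xL^2_T$ reduction by this $L^1_tH^{s_2}_x$ argument.
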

	\begin{proof}[\textbf{Proof}]
		By Plancherel identity and maximal function estimate, we have ($s_2>3/4$)
		\begin{align*}
			&\quad\|C(u,v)\|_{Y^{s_2}_T}\\
			&\lesssim \int_0^T\Bigg\|\int_{\mathbb{R}}\sum_{N_1\nsim N^2}\frac{N^{s_2}\psi_N(\xi)\psi_{N_1}(\xi_1)}{\max\{N^2,N_1\}} |\widehat{uv}|(t',\xi_1)|\hat{u}|(t',\xi_1-\xi)~d\xi_1\Bigg\|_{L^2_\xi}dt'.
		\end{align*}
		If $s_1>1/2$, $s_2\leq s_1+2$, we have
		\begin{align*}
			&\quad \Bigg\|\int_{\mathbb{R}}\sum_{N_1\nsim N^2}\frac{N^{s_2}\psi_N(\xi)\psi_{N_1}(\xi_1)}{\max\{N^2,N_1\}} |\widehat{uv}|(t',\xi_1)|\hat{u}|(t',\xi_1-\xi)~d\xi_1\Bigg\|_{L^2_\xi}\\
			&\lesssim \Bigg\|\int_{\mathbb{R}}\sum_{N_1, N}\frac{N^{s_2-2}\psi_N(\xi)\psi_{N_1}(\xi_1)}{N^{s_1}}\\
			&\qquad\cdot(\langle\xi_1\rangle^{s_1}+\langle\xi_1-\xi\rangle^{s_1}) |\widehat{uv}|(t',\xi_1)|\hat{u}|(t',\xi_1-\xi)~d\xi_1\Bigg\|_{L^2_\xi}\\
			&\lesssim \Bigg\|\int_{\mathbb{R}}(\langle\xi_1\rangle^{s_1}+\langle\xi_1-\xi\rangle^{s_1}) |\widehat{uv}|(t',\xi_1)|\hat{u}|(t',\xi_1-\xi)~d\xi_1\Bigg\|_{L^2_\xi}\\
			&\lesssim \|uv\|_{H^{s_1}}\|\hat{u}\|_{L^1}+\|\widehat{uv}\|_{L^1}\|u\|_{H^{s_1}}\lesssim \|u\|^2_{H^{s_1}}\|v\|_{H^{s_2}}.
		\end{align*}
		If $0<s_1\leq 1/2$, $1/2<s_2< 2s_1+3/2$, for $N_1\nsim N$, we have
		\begin{align*}
			&\quad \Bigg\|\int_{\mathbb{R}}\sum_{N_1\nsim N}\frac{N^{s_2}\psi_N(\xi)\psi_{N_1}(\xi_1)}{\max\{N^2,N_1\}} |\widehat{uv}|(t',\xi_1)|\hat{u}|(t',\xi_1-\xi)~d\xi_1\Bigg\|_{L^2_\xi}\\
			&\lesssim \Bigg\|\int_{\mathbb{R}}\sum_{N_1, N}\frac{N^{s_2-2}\psi_N(\xi)\psi_{N_1}(\xi_1)}{N_1^{s_1}N^{s_1}}\\
			&\qquad\cdot\langle\xi_1\rangle^{s_1}\langle\xi_1-\xi\rangle^{s_1} |\widehat{uv}|(t',\xi_1)|\hat{u}|(t',\xi_1-\xi)~d\xi_1\Bigg\|_{L^2_\xi}\\
			&\lesssim \sum_{N_1, N}\frac{N^{s_2-2-s_1}}{N_1^{s_1}}\min\{N,N_1\}^{1/2}\|uv\|_{H^{s_1}}\|u\|_{H^{s_1}}\\
			&\lesssim \|u\|_{H^{s_1}}\|v\|_{H^{s_2}}(\|u\|_{L^\infty}+\|u\|_{H^{s_1}}).
		\end{align*}
		For $N_1\sim N$, $s_2< 2s_1+3/2$, we have
		\begin{align*}
			&\quad \Bigg\|\int_{\mathbb{R}}\sum_{N_1\sim N}\frac{N^{s_2}\psi_N(\xi)\psi_{N_1}(\xi_1)}{\max\{N^2,N_1\}} |\widehat{uv}|(t',\xi_1)|\hat{u}|(t',\xi_1-\xi)~d\xi_1\Bigg\|_{L^2_\xi}\\
			&\lesssim \Bigg\|\sum_{N_1\sim N}N^{s_2-2}\sum_{M\lesssim N}M^{-s_1+{1/2}}\psi_N(\xi)\|\psi_{N_1}(\xi_1)|\widehat{uv}|(t',\xi_1)\|_{L^2}\|u\|_{H^{s_1}}\Bigg\|_{L^2_\xi}\\
			&\lesssim\Bigg\|\sum_{N_1\sim N}\psi_N(\xi)\|\langle\xi_1\rangle^{s_1}\psi_{N_1}(\xi_1)|\widehat{uv}|(t',\xi_1)\|_{L^2}\|u\|_{H^{s_1}}\Bigg\|_{L^2}\\
			&\lesssim \|uv\|_{H^{s_1}}\|u\|_{H^{s_1}}\lesssim \|u\|_{H^{s_1}}\|v\|_{H^{s_2}}(\|u\|_{L^\infty}+\|u\|_{H^{s_1}}).
		\end{align*}
		Overall, we obtain
		\begin{align*}
			\|C(u,v)\|_{Y^{s_2}_T}&\lesssim \int_0^T\|u(t')\|_{H^{s_1}}\|v(t')\|_{H^{s_2}}(\|u(t')\|_{L^\infty}+\|u(t')\|_{H^{s_1}})~dt'\\
			&\lesssim T\|u\|_{X^{s_1}_T}^2\|v\|_{Y^{s_2}_T}+T^{3/4}\|u\|_{X^{s_1}_T}\|u\|_{L_T^4L_x^\infty}\|v\|_{Y^{s_2}_T}\\
			&\lesssim T^{3/4}\|u\|_{X^{s_1}_T}^2\|v\|_{Y^{s_2}_T} .
		\end{align*}
		By the same argument, we have
		\begin{align*}
			\|D(u)\|_{Y^{s_2}_T}&\lesssim \int_0^T \||u|^2u(t')\|_{H^{s_1}}\|u(t')\|_{H^{s_1}}~dt'\\
			&\lesssim \int_0^T \|u(t')\|_{H^{s_1}}^2\|u(t')\|_{L^\infty}^2~dt'\lesssim T^{1/2}\|u\|_{X^{s_1}_T}^4.
		\end{align*}
		We finish the proof of this lemma.
	\end{proof}
	\begin{lemma}\label{resounuuv}
		Let $0<T<1$, $5/4<s_2\leq 4s_1$. Then we have
		$$\|R(u)\|_{Y^{s_2}_T}\lesssim T^{1/2} \|u\|_{X^{s_1}_T}^2.$$
	\end{lemma}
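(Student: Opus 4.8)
The starting point is the decomposition $R(u)=\mathscr B(G)$ with $G=\sum_N P_N\big(P_{N^2}u\,P_{N^2}\bar u\big)$, the sum running over the resonant configuration $N_1\sim N_2\sim N^2$, so that the KdV output frequency $N$ and the Schr\"odinger input frequency $N^2$ are locked by $N\sim N_1^{1/2}$. The resonance function $\xi(2\xi_1-\xi-\xi^2)$ vanishes on the set $\xi_1\approx\xi^2/2$, hence no dispersive smoothing is available and the normal form used for $N(u)$ is useless here; the whole point is that the resonant trilinear estimate \eqref{timeres} of Lemma \ref{frequecylocsbilinear} already supplies the decisive gain $T^{1/2}$ on exactly this set. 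I would therefore bound both constituents of $\|\cdot\|_{Y^{s_2}_T}=\|\cdot\|_{C([0,T];H^{s_2})}+\|\cdot\|_{L^2_xL^\infty_T}$ by duality, keeping the KdV propagator structure of the dual function so that \eqref{timeres} can be invoked. (Note that collapsing to $\|J^{s_2}G\|_{L^1_xL^2_T}$ via Lemma \ref{dual2} discards this structure and, since $L^1_x$ sees no transversal cancellation, loses a full power of $N$ through the maximal function at frequency $N^2$; that route reaches only $s_2\le 4s_1-1$, so it cannot prove the sharp range.)

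For the $C([0,T];H^{s_2})$ norm, fix $t\in[0,T]$ and test against $\phi\in H^{-s_2}$. Writing $w(t')=K(t'-t)\phi$, integration by parts in $x$ together with $K(s)^*=K(-s)$ turns $\langle\mathscr B(G)(t),\phi\rangle$ into $\sum_N\int_0^t\!\!\int \partial_x\big(P_{N^2}u\,P_{N^2}\bar u\big)\,\overline{P_Nw}\,dx\,dt'$. Since $w$ is a free KdV evolution, $\|P_Nw\|_{U^2_K}=\|P_N\phi\|_{L^2}$, and \eqref{timeres} bounds each summand by $T^{1/2}\|P_{N^2}u\|_{U^2_S}^2\|P_N\phi\|_{L^2}$. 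With $\|\phi\|_{H^{-s_2}}\le1$ I write $\|P_N\phi\|_{L^2}\le N^{s_2}c_N$, $\|c_N\|_{\ell^2}\le1$, and reduce to $\sum_N N^{s_2}\|P_{N^2}u\|_{U^2_S}^2=\sum_N N^{s_2-4s_1}\|J^{s_1}P_{N^2}u\|_{U^2_S}^2$. Because $s_2\le 4s_1$ the weight $N^{s_2-4s_1}\le1$, and the square-function bound for $U^2_S$ (already used in the proof of Lemma \ref{bilinearestioutkdv}) gives $\sum_N\|J^{s_1}P_{N^2}u\|^2_{U^2_S}\lesssim\|J^{s_1}u\|^2_{U^2_S}=\|u\|_{X^{s_1}_T}^2$, closing the estimate uniformly in $t$.

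For the $L^2_xL^\infty_T$ norm I dualize against $\Psi\in L^2_xL^1_T$; exchanging the order of integration produces the anti-retarded KdV Duhamel object $\tilde W(t')=\int_{t'}^T K(t'-t)\Psi(t)\,dt$, whose time derivative is $-K(-t')\Psi(t')$, so that its total variation yields $\|\tilde W\|_{V^2_K}\lesssim\|\Psi\|_{L^1_TL^2_x}\le\|\Psi\|_{L^2_xL^1_T}$. I would then run the same trilinear bound with $w=\tilde W$; the one new point is that $\tilde W$ lies in $V^2_K$ rather than $U^2_K$, so one must either record the $V^2_K$-version of \eqref{timeres} (re-running its proof with the high-modulation cut $Q^K_{>L}$, exactly as in \eqref{timenonreson}--\eqref{good}) or pay the embedding $V^2_K\hookrightarrow U^{2+}_K$, and it is this mild loss, combined with the KdV maximal estimate of Lemma \ref{maximalesti}, that accounts for the threshold $s_2>5/4$. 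The summation in $N$ is identical, giving $\|\mathscr B(G)\|_{L^2_xL^\infty_T}\lesssim T^{1/2}\|u\|_{X^{s_1}_T}^2$. The genuinely hard step throughout is the resonant set $\xi_1\approx\xi^2/2$, where the phase degenerates; everything hinges on \eqref{timeres}, whose gain $T^{1/2}$ comes from the oscillatory-integral bound $\big\||1-e^{itH}|/H\big\|_{L^2_\xi}\lesssim t^{1/2}$ along the degenerate phase $H=\xi(\xi-2\xi_1-\xi^2)$ (cf.\ the proof of \eqref{linearwave}), while the low output frequencies $N\lesssim1$ are disposed of separately by a crude H\"older--Bernstein estimate.
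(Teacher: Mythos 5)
Your treatment of the $C([0,T];H^{s_2})$ component is sound and is essentially the paper's argument: the paper bounds $\|J^{s_2}R(u)\|_{V^2_{K,T}}$ by duality against $U^2_K$ and invokes \eqref{timeres} of Lemma \ref{frequecylocsbilinear}, which is the same mechanism as your pointwise-in-$t$ pairing with the free wave $w(t')=K(t'-t)\phi$; the square-function summation $\sum_N N^{s_2-4s_1}\|J^{s_1}P_{N^2}u\|_{U^2_S}^2\lesssim\|u\|_{X^{s_1}_T}^2$ under $s_2\le 4s_1$ is also exactly what the paper does (cf.\ the proof of Lemma \ref{bilinearestioutkdv}).

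The $L^2_xL^\infty_T$ component, however, contains a genuine error: your chain $\|\tilde W\|_{V^2_K}\lesssim\|\Psi\|_{L^1_TL^2_x}\le\|\Psi\|_{L^2_xL^1_T}$ uses Minkowski's integral inequality in the wrong direction — one has $\|\Psi\|_{L^2_xL^1_T}\le\|\Psi\|_{L^1_TL^2_x}$, not the reverse, and the gap between the two is unbounded (take $\Psi(t,x)=g(x)\,\epsilon^{-1}\chi_{|t-t_0(x)|<\epsilon}$ with $t_0$ spread over an interval: the right side stays $\|g\|_{L^2}$ while the left side blows up like $\epsilon^{-1/2}$). If your inequality were true, every $L^2_xL^\infty_T$ maximal bound would follow trivially from energy estimates, making the restriction $\tilde s>3/4$ in Lemma \ref{maximalesti} — and hence the hypothesis $s_2>5/4$ of this lemma — superfluous; your own closing remark attributing the threshold to the $V^2_K\hookrightarrow U^{2+}_K$ embedding cannot repair this, since that embedding costs only an arbitrarily small modulation loss, not the $N^{3/4}$ that is actually needed. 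The paper's route for this component is different: it first pays the maximal-function embedding $\|v\|_{L^2_xL^\infty_T}\lesssim\|J^{\tilde s}v\|_{U^2_{K,T}}$, $\tilde s>3/4$, i.e.\ a factor $N^{3/4+\epsilon}$ on the output, reduces to free Schr\"odinger waves via the transfer principle (Proposition 2.16 in \cite{hadac2009well}), and then estimates the dual pairing not by \eqref{timeres} but by the bilinear Strichartz estimate \eqref{sstran} together with the crude bound $\|P_Nv\|_{L^2_{x,T}}\lesssim T^{1/2}\|v\|_{V^2_K}$ (this H\"older step, not the resonant oscillatory integral, is the source of $T^{1/2}$ here), yielding the factor $N^{-4s_1+5/4+\epsilon}$ whose summability is precisely where $4s_1>5/4$ — guaranteed by $5/4<s_2\le 4s_1$ — enters. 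You would need to replace your duality computation by this (or an equivalent) argument for the lemma to hold in the stated range.
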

	\begin{proof}[\textbf{Proof}]
		By Lemma \ref{frequecylocsbilinear}, we have
		\begin{align*}
			\|R(u)\|_{C([0,T];H^{s_2})}\lesssim \|J^{s_2}R(u)\|_{V^2_{K,T}}\lesssim T^{1/2} \|J^{s_1}u\|_{U^2_{S,T}}^2\lesssim T^{1/2}\|u\|_{X^{s_1}_T}^2.
		\end{align*}
		By general extension result Proposition 2.16 in \cite{hadac2009well}, we only need to show
		\begin{align*}
			\|R(S(t)J^{-s_1}u_0)\|_{L_x^2L_T^\infty}\lesssim T^{1/2}\|u_0\|_{L^2}^2.
		\end{align*} 			
		Since $\|v\|_{L_{x}^2L_T^\infty}\lesssim \|J^{\tilde{s}} v\|_{U^2_{K,T}}$, $\forall~\tilde{s}>3/4$, $0<T<1$, then for any $\epsilon>0$ we have
		\begin{align*}
			&\|R(S(t)J^{-s}u_0)\|_{L_x^2L_T^\infty}\lesssim \sum_{N\sim 1}\|P_NR(S(t)J^{-s_1}u_0)\|_{L_x^2L_T^\infty}\\
			&\quad + \sum_{N_1\sim N_2\sim N^2\gg 1} N^{3/4+\epsilon}\|\mathscr{B}(P_N(P_{N_1}S(t)J^{-s_1}u_0P_{N_2}S(t)J^{-s_1}\bar{u}_0))\|_{U^2_{K,T}}.
		\end{align*}
		For the first term, it is easy to obtain
		$$\sum_{N\sim 1}\|P_NR(S(t)J^{-s}u_0)\|_{L_x^2L_T^\infty}\lesssim T \|u_0\|_{L^2}^2.$$
		For the second term, by duality we need to estimate
		\begin{align*}
			\int_0^T\int_{\mathbb{R}}\partial_x(P_{N_1}S(t)J^{-s_1}u_0P_{N_2}S(t)J^{-s_1}\bar{u}_0)P_Nv~dxdt.
		\end{align*}
		By Cauchy-Schwarz inequality and \eqref{sstran}, the upper term is controlled by
		\begin{align*}
			&\quad\|\tilde{P}_N(P_{N_1}S(t)J^{-s_1}u_0P_{N_2}S(t)J^{-s_1}\bar{u}_0)\|_{L_{x,T}^2}\|\partial_x P_N v\|_{L^2_{x,T}}\\
			&\lesssim N^{-1/2}\|J^{-s_1}P_{N_1}u_0\|_{L^2}\|J^{-s_1}P_{N_2}u_0\|_{L^2}N\|P_Nv\|_{L_{x,T}^2}\\
			&\lesssim N^{-4s_1+1/2}T^{1/2}\|P_{N_1}u_0\|_{L^2}\|P_{N_2}u_0\|_{L^2}\|v\|_{V^2_K}.
		\end{align*}
		Thus one has
		\begin{align*}
			&\quad\|RS(t)J^{-s}u_0\|_{L_x^2L_T^\infty}\\
			&\lesssim \sum_{N_1\sim N_2}N^{-4s_1+5/4+\epsilon}T^{1/2}\|P_{N_1}u_0\|_{L^2}\|P_{N_2}u_0\|_{L^2}+T\|u_0\|_{L^2}^2\lesssim T^{1/2}\|u_0\|_{L^2}^2.
		\end{align*}
		The last inequality we use the condition $4s_1>5/4$.
	\end{proof}
	\begin{lemma}\label{uesti}
		Let $0<T<1$, $s_2\geq s_1\geq 1/4$. We have
		\begin{align*}
			\|\mathscr{A}(uv)\|_{X^{s_1}_T}\lesssim T^{3/4}\|u\|_{X^{s_1}_T}\|v\|_{Y^{s_2}_T},\quad \|\mathscr{A}(|u|^2u)\|_{X^{s_1}_T}\lesssim T^{1/2}\|u\|_{X^{s_1}_T}^3.
		\end{align*}
	\end{lemma}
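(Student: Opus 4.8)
The plan is to treat both inequalities as Duhamel estimates for the Schr\"{o}dinger propagator in the $U^2_S$-based space $X^{s_1}_T$, and to reduce them by duality to multilinear $L^2_{t,x}$ pairings. Since $J^{s_1}$ is an $x$-Fourier multiplier it commutes with both $S(t)$ and $\mathscr{A}$, so that $\|\mathscr{A}(F)\|_{X^{s_1}_T}=\|\mathscr{A}(J^{s_1}F)\|_{U^2_{S,T}}$. I would then invoke the standard $U^2$--$V^2$ duality for the Duhamel operator (a consequence of Theorem \ref{dual}), which gives
\begin{equation*}
\|\mathscr{A}(J^{s_1}F)\|_{U^2_{S,T}}\lesssim \sup_{\|w\|_{V^2_S}\le 1}\left|\int_0^T\int_{\mathbb{R}} J^{s_1}F\,\bar w\,dx\,dt\right|.
\end{equation*}
With $F=uv$ and $F=|u|^2u$ this turns the two claims into a trilinear and a quadrilinear estimate respectively, which I would prove by H\"{o}lder's inequality, the fractional Leibniz rule (Lemma \ref{leibnitz}), the Strichartz embeddings $U^2_S\hookrightarrow L^8_tL^4_x$ and $V^2_S\hookrightarrow L^8_tL^4_x$, and Sobolev embedding for the $v$-factor.

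For the bilinear term I would first apply H\"{o}lder in $x$ to pair $J^{s_1}(uv)\in L^{4/3}_x$ against $w\in L^4_x$, and then the Leibniz rule to distribute $J^{s_1}$, giving the pointwise-in-$t$ bound
\begin{equation*}
\|J^{s_1}(uv)\|_{L^{4/3}_x}\lesssim \big(\|J^{s_1}u\|_{L^4_x}+\|u\|_{L^4_x}\big)\|v\|_{H^{s_2}},
\end{equation*}
where I use $s_2\ge s_1$ to replace $\|v\|_{H^{s_1}}$ and $\|v\|_{L^2}$ by $\|v\|_{H^{s_2}}$, and the Sobolev embedding $H^{s_1}\hookrightarrow L^4_x$ (valid precisely for $s_1\ge 1/4$) for the factor $u$. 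Integrating in $t$ by H\"{o}lder with exponents $(\infty,8,8)$ --- placing $\|v\|_{H^{s_2}}$ in $L^\infty_T$ via $v\in C([0,T];H^{s_2})$, and $J^{s_1}u$, $w$ in $L^8_T L^4_x$ via the Strichartz embeddings --- the product lies in $L^4_T$, and the extra time-integration produces the gain $T^{1-1/4}=T^{3/4}$. Using $\|u\|_{L^8_TL^4_x}\lesssim\|u\|_{U^2_{S,T}}\le\|u\|_{X^{s_1}_T}$ (the multiplier $J^{-s_1}$ being bounded on $U^2_S$, its symbol being $\le 1$) and $\|w\|_{L^8_TL^4_x}\lesssim\|w\|_{V^2_S}\le 1$ then yields the claimed bound.

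For the cubic term the same duality reduces matters to estimating $\int_0^T\int J^{s_1}(|u|^2u)\,\bar w\,dx\,dt$. H\"{o}lder in $x$ again pairs $L^{4/3}_x$ with $L^4_x$, and the Leibniz rule gives $\|J^{s_1}(|u|^2u)\|_{L^{4/3}_x}\lesssim \|J^{s_1}u\|_{L^4_x}\|u\|^2_{L^4_x}$. Now all four factors $J^{s_1}u$, $u$, $u$, $w$ are free Schr\"{o}dinger waves controlled in $L^8_TL^4_x$, so their product lies in $L^2_T$ and the time-integration costs $T^{1-1/2}=T^{1/2}$; this gives $\lesssim T^{1/2}\|u\|^3_{X^{s_1}_T}\|w\|_{V^2_S}$, hence the second estimate.

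The mild technical points, rather than a genuine obstacle, are the correct invocation of the $U^2$--$V^2$ Duhamel duality and the bookkeeping of H\"{o}lder exponents that produces exactly the powers $T^{3/4}$ and $T^{1/2}$. The one structural feature to keep in mind is that $v$ is not a free Schr\"{o}dinger wave, so Schr\"{o}dinger Strichartz estimates cannot be applied to it; instead it must be handled through the $Y^{s_2}_T$-norm and Sobolev embedding, and it is precisely this that forces the hypotheses $s_2\ge s_1\ge 1/4$.
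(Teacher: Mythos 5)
Your proposal is correct and follows essentially the same route as the paper: the paper proves the cubic bound by citing Lemma \ref{multilinearoutsch}, whose proof is exactly your duality-plus-fractional-Leibniz argument with the embeddings $U^2_S\hookrightarrow V^2_S\hookrightarrow U^8_S\hookrightarrow L^8_tL^4_x$ and the $T^{1/2}$ gain from four $L^8_T$ factors, and it proves the bilinear bound via the same H\"{o}lder scheme $\|\mathscr{A}(uv)\|_{X^{s_1}_T}\lesssim T^{3/4}\|J^{s_1}u\|_{L^8_TL^4_x}\|J^{s_1}v\|_{L^\infty_TL^2_x}$, with the $v$-factor absorbed into $\|v\|_{Y^{s_2}_T}$ through its $C([0,T];H^{s_2})$ component, just as you propose. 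Your only cosmetic deviation is invoking $H^{s_1}\hookrightarrow L^4_x$ for the low-order Leibniz term where one could simply use boundedness of $J^{-s_1}$ on $L^4_x$, which changes nothing of substance.
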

	\begin{proof}[\textbf{Proof}]
		By Lemma \ref{multilinearoutsch}, one has $\|\mathscr{A}(|u|^2u)\|_{X^{s_1}_T}\lesssim T^{1/2}\|u\|_{X^{s_1}_T}^3$. We also have
		\begin{align*}
			\|\mathscr{A}(uv)\|_{X^{s_1}_T}&\lesssim T^{3/4}\|J^{s_1}u\|_{L_T^{8}L^4_x}\|J^{s_1}v\|_{L_T^\infty L^2_x}\\
			&\lesssim T^{3/4}\|J^{s_1}u\|_{U^2_{S,T}}\|v\|_{Y^{s_1}_T}\\
			&\lesssim T^{3/4}\|u\|_{X^{s_1}_T}\|v\|_{Y^{s_1}_T}.
		\end{align*}
		We conclude the proof.
	\end{proof}
	\begin{prop}
		Let $s_1>5/16$, $s_1+1<s_2\leq \max\{4s_1,s_1+2\}$. \eqref{model} is local well-posed in $H^{s_1}\times H^{s_2}$.
	\end{prop}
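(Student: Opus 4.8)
The plan is to solve the normal-form system \eqref{modifiedeq} for the pair $(u,w)$ by a contraction mapping argument in $X^{s_1}_T\times Y^{s_2}_T$, and then recover a solution of \eqref{model} through the explicit quadratic substitution $v=w+B(u)-B(u_0)$. Concretely, for fixed data $(u_0,v_0)$ I would define the map $\mathcal{T}(u,w)=(\mathcal{T}_1,\mathcal{T}_2)$ by the right-hand sides of \eqref{modifiedeq}, with every occurrence of $v$ replaced by $w+B(u)-B(u_0)$, and seek a fixed point in a ball
\[
D=\{(u,w):\|u\|_{X^{s_1}_T}\le C_1\|u_0\|_{H^{s_1}},\ \|w\|_{Y^{s_2}_T}\le C_2(\|v_0\|_{H^{s_2}}+\|u_0\|_{H^{s_1}}^2)\}
\]
for suitable $C_1,C_2$ and small $T$. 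The point of passing to $w$ is that the normal-form computation has already removed the non-resonant part of $\partial_x(|u|^2)$ (the source of the derivative loss), trading it for the boundary term $B$ and the milder corrections $C(u,v)$ and $D(u)$; only the resonant piece $R(u)$ survives, and it is controlled by transversality rather than by absorbing a derivative.

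For the $u$-component I would combine the free estimate $\|\chi_{[0,\infty)}(t)S(t)u_0\|_{X^{s_1}_T}\lesssim\|u_0\|_{H^{s_1}}$ with Lemma \ref{uesti}, which bounds $\mathscr{A}(uv)$ and $\mathscr{A}(|u|^2u)$ with gains $T^{3/4}$ and $T^{1/2}$ (here $s_2\ge s_1\ge 1/4$ and $\|v\|_{Y^{s_1}_T}\le\|v\|_{Y^{s_2}_T}$ are used). For the $w$-component the linear term obeys $\|K(t)v_0\|_{Y^{s_2}_T}\lesssim\langle T\rangle^{1/2}\|v_0\|_{H^{s_2}}$ by the maximal estimate of Lemma \ref{maximalesti} (valid since $s_2>s_1+1>3/4$), while the nonlinear contributions are supplied by Lemma \ref{threefour} for $C(u,v)$ and $D(u)$, by Lemma \ref{resounuuv} for the resonant term $R(u)$, and by Lemma \ref{vv} for $\mathscr{B}(v^2/2)$. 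To close the substitution $v=w+B(u)-B(u_0)$ I would use $\|v\|_{Y^{s_2}_T}\lesssim\|w\|_{Y^{s_2}_T}+\|u\|_{X^{s_1}_T}^2+\|u_0\|_{H^{s_1}}^2$, estimating the $C([0,T];H^{s_2})$ part of $B(u)$ and $B(u_0)$ by Lemma \ref{boundaryterm} applied pointwise in time (so $\|B(u(t))\|_{H^{s_2}}\lesssim\|u(t)\|_{H^{s_1}}^2$), and the $L_x^2L_T^\infty$ part by Lemma \ref{boundmaxi} together with the $L_x^4L_t^\infty$ maximal bound $\|u\|_{L_x^4L_T^\infty}\lesssim\|u\|_{X^{s_1}_T}$, which needs only $s_1\ge 1/4$.

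Collecting these, every nonlinear contribution carries a positive power of $T$, so for $T$ small (depending only on $\|u_0\|_{H^{s_1}}$ and $\|v_0\|_{H^{s_2}}$) the map $\mathcal{T}$ sends $D$ into itself. Since all the estimates above are multilinear, the same bounds applied to differences yield a Lipschitz constant $\lesssim T^{\theta}$ with $\theta>0$ on $D$, so $\mathcal{T}$ is a contraction and has a unique fixed point $(u,w)$; setting $v=w+B(u)-B(u_0)$ then produces the unique local solution of \eqref{model} with the stated continuity, and continuous dependence on the data follows in the usual way.

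The main obstacle is the joint treatment of the resonant term $R(u)$ and the boundary term $B$ at the low-regularity endpoint $s_1>5/16$. Unlike the other nonlinearities, $R(u)$ carries the full frequency-resonant interaction $N_1\sim N_2\sim N^2$ that modulation cannot improve, and its $L_x^2L_T^\infty$ bound in Lemma \ref{resounuuv} converges only under $4s_1>5/4$, i.e.\ exactly $s_1>5/16$, which is what pins down the lower endpoint of the region. The remaining delicate point is ensuring that the change of variables $w\leftrightarrow v$ is simultaneously well defined and quantitatively controlled on $D$ — in particular that $B(u)$ lands in $Y^{s_2}_T$ with the correct quadratic bound under the regularity restrictions $s_2\le s_1+2$ and $s_2<2s_1+3/2$ imposed by Lemmas \ref{boundaryterm} and \ref{threefour} — so that the normal-form reduction genuinely closes.
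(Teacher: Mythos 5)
Your proposal is correct and takes essentially the same route as the paper: the paper likewise solves \eqref{modifiedeq} for $(u,w)$ by contraction in $X^{s_1}_T\times Y^{s_2}_T$ with the substitution $v(t)=w(t)+B(u(t))-B(u_0)$, invoking Lemmas \ref{boundaryterm}--\ref{uesti} exactly as you list them, and then recovers the solution of \eqref{model} from the fixed point. Your identification of the condition $4s_1>5/4$ in Lemma \ref{resounuuv} as what pins down the endpoint $s_1>5/16$, and of Lemmas \ref{boundaryterm} and \ref{boundmaxi} (with the $L_x^4L_t^\infty$ maximal bound) as what makes the change of variables $w\leftrightarrow v$ quantitatively closed, matches the paper's accounting.
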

	\begin{proof}[\textbf{Proof}]
		Consider the mapping $\mathcal{T}$. Let $v(t) = w(t)+B(u(t))-B(u_0)$. Define
		\begin{equation*}
			\mathcal{T}:
			\begin{pmatrix}
				u\\
				w
			\end{pmatrix}
			\mapsto
			\begin{pmatrix}
				S(t)u_0-i\mathscr{A}(uv+|u|^2u)(t)\\ K(t)v_0+D(u)(t)+R(u)(t)+C(u,v)(t)-\mathscr{B}(v^2/2)(t)
			\end{pmatrix}.
		\end{equation*}
		Let $0<T<1$. By Lemmas \ref{boundaryterm}--\ref{uesti}, we have
		\begin{align*}
			\|\mathcal{T}(u,w)\|_{X^{s_1}_T\times Y^{s_2}_T}&\lesssim \|u_0\|_{H^{s_1}}+\|v_0\|_{H^{s_2}}+T^{3/4}(\|u\|_{X^{s_1}_T}\\
			&\quad+\|u\|_{X^{s_1}_T}^2)(\|w\|_{Y^{s_2}_T}+\|u\|_{X^{s_1}_T}^2)\\
			&\quad +T^{1/2}(\|u\|_{X^{s_1}_T}^2+\|u\|_{X^{s_1}_T}^3+\|u\|_{X^{s_1}_T}^4+\|w\|_{X^{s_1}_T}^2).
		\end{align*}
		Similarly one has
		\begin{align*}
			&\quad\|\mathcal{T}(u_1,w_1)-\mathcal{T}(u_2,w_2)\|_{X^{s_1}_T\times Y^{s_2}_T}\\
			&\lesssim T^{1/2} \|(u_1-u_2,w_1-w_2)\|_{X^{s_1}_T\times Y^{s_2}_T}\\
			&\qquad\cdot\left(1+\|(u_1,w_1)\|^4_{X^{s_1}_T\times Y^{s_2}_T}+\|(u_2,w_2)\|^4_{X^{s_1}_T\times Y^{s_2}_T}\right)
		\end{align*}
		Thus, for sufficiently small $T$, we obtain a solution of \eqref{modifiedeq}. Then, $(u,w+B(u)-B(u_0))\in C([0,T];H^{s_1})\times C([0,T];H^{s_2})$ is the solution of initial equation \eqref{model} with initial data $(u_0,v_0)$.
	\end{proof}
	\subsection{Lower region}\label{lowerreg}
	In this subsection, we show the local well-posedness of \eqref{model} with $5/4<s_1\leq 9/4$, $s_2 = -3/4$. 
	Consider the equation \eqref{scalingeq} with $\|v_0\|_{H^{-3/4}}\ll 1$, $0<\lambda\ll 1$. 
	
	By Lemma \ref{s2-3/4}, Proposition \ref{wellpose-3/4}, we have a solution of \eqref{scalingeq} $(u,v) \in \tilde{X}_{\lambda,T}^{1,b}\times F_T$ with initial data $(u_0,v_0)\in H^{s_1}\times H^{-3/4}$.
	
	The main problem comes from the term $uv$. We should gain $3$-order derivative for high modulation cases. To achieve this, we use norm form argument. 
	
	Define	bilinear operator $T_{M_\lambda}$ by
	$$T_{M_\lambda}(f,g)(x) = \mathscr{F}_\xi^{-1}\left(\int_{\xi_1+\xi_2 = \xi} M_\lambda(\xi,\xi_1)\hat{f}(\xi_1)\hat{g}(\xi_2)\right)$$
	where $ \xi_2 = \xi-\xi_1$ and
	$$M_\lambda(\xi,\xi_1) := \frac{1}{(2\pi)^{1/2}}\sum_{N\gg 1}\frac{\varphi_{N/8}(\xi_1)\psi_N(\xi)}{\lambda\xi^2-\lambda\xi_1^2-\xi_2^3}.$$
	Recall some notations in Subsection {\ref{borderlinecaselow}}. Integration by parts we have
	\begin{lemma}\label{reguimprolow}
		Let $(u,v)\in \tilde{X}_{\lambda,T}^{1,b}\times F_T$ be a solution of \eqref{scalingeq}. Then for $0\leq t\leq T$, one has
		\begin{align*}
			\mathscr{A}_\lambda(uv)& = B(u(t),v(t))-B(u_0,v_0)+ \lambda C_1(u,v)\\
			&\quad+\lambda^{-1} D_1(u,v)+C_2(u)+C_3(u,v)+R(u,v)
		\end{align*}
		where
		\begin{align*}
			&B(u_0,v_0)
			=-iT_{M_{\lambda}}(u_0,v_0),\\
			&C_1(u,v)=\mathscr{A}_\lambda(T_{M_\lambda}(uv,v)),~~
			C_2(u)=-\mathscr{A}_\lambda(u,\partial_x (|u|^2)),\\
			&C_3(u,v)=-\mathscr{A}_\lambda(u,v\partial_x v),~~
			D_1(u,v)=\mathscr{A}_\lambda(T_{M_\lambda}(|u|^2u,v)),\\
			&R(u,v)=\mathscr{A}_\lambda\left(\sum_{N}P_{N}(P_{>N/8}uv)\right).
		\end{align*}
	\end{lemma}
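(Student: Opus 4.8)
The statement is an exact identity, so the plan is to obtain it from a single integration by parts in time (a normal form transformation) applied to the bilinear Duhamel term $\mathscr{A}_\lambda(uv)$, after first isolating the nonresonant frequency interaction. First I would pass to the spatial Fourier side and write, for each output frequency localized to $|\xi|\sim N$,
\begin{align*}
\widehat{\mathscr{A}_\lambda(P_N(P_{\leq N/8}u\,v))}(t,\xi)
= \frac{e^{-i\lambda t\xi^2}}{(2\pi)^{1/2}}\int_0^t\!\!\int_{\xi_1+\xi_2=\xi}\varphi_{N/8}(\xi_1)\psi_N(\xi)\,e^{it'\Omega}\,a(t',\xi_1)\,b(t',\xi_2)\,d\xi_1\,dt',
\end{align*}
where $a(t',\xi_1)=e^{i\lambda t'\xi_1^2}\hat u(t',\xi_1)$ and $b(t',\xi_2)=e^{-it'\xi_2^3}\hat v(t',\xi_2)$ are the interaction-picture amplitudes and $\Omega=\Omega(\xi,\xi_1)=\lambda\xi^2-\lambda\xi_1^2-\xi_2^3$ is the resonance function, i.e. the denominator appearing in $M_\lambda$. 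On the support of $\varphi_{N/8}(\xi_1)\psi_N(\xi)$ one has $|\xi_2|\sim N$, and since $\lambda<1$ and $N\gg 1$, $|\Omega|\sim|\xi_2|^3\sim N^3$; this is exactly the nonresonant regime in which we expect to gain three derivatives, and it is the region encoded by $M_\lambda$. The complementary piece $\sum_N \mathscr{A}_\lambda(P_N(P_{>N/8}u\,v))$ is, by definition, the remainder term $R(u,v)$, so it can be set aside at once.

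On the nonresonant piece I would integrate by parts in $t'$ using $e^{it'\Omega}=\partial_{t'}(e^{it'\Omega})/(i\Omega)$. The boundary contributions at $t'=t$ and $t'=0$ reassemble, after restoring the free phases, into $-iT_{M_\lambda}(u(t),v(t))$ and $+iT_{M_\lambda}(u_0,v_0)$, which are precisely $B(u(t),v(t))$ and $-B(u_0,v_0)$; here the factor $(i\Omega)^{-1}$ together with $\varphi_{N/8}\psi_N$, summed over $N$, is exactly the multiplier $M_\lambda(\xi,\xi_1)$. The interior term is $-\int_0^t e^{it'\Omega}(i\Omega)^{-1}\partial_{t'}(ab)\,dt'$, and the product rule splits it into one piece in which $\partial_{t'}$ falls on the Schrödinger amplitude $a$ and one in which it falls on the KdV amplitude $b$.

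To evaluate the two interior pieces I would substitute the equations \eqref{scalingeq}. Differentiating $a$ and using $i\partial_t u+\lambda\partial_{xx}u=\lambda uv+\lambda^{-1}|u|^2u$, the free part $i\lambda\xi_1^2\hat u$ cancels and one is left with the two nonlinear inputs $\lambda\,\widehat{uv}$ and $\lambda^{-1}\,\widehat{|u|^2u}$ in place of $\hat u(\cdot,\xi_1)$; reinserting $(i\Omega)^{-1}\varphi_{N/8}\psi_N$ and the outer $\mathscr{A}_\lambda$ yields exactly $\lambda C_1(u,v)=\lambda\mathscr{A}_\lambda(T_{M_\lambda}(uv,v))$ and $\lambda^{-1}D_1(u,v)=\lambda^{-1}\mathscr{A}_\lambda(T_{M_\lambda}(|u|^2u,v))$. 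Differentiating $b$ and using $\partial_t v+\partial_{xxx}v+v\partial_x v=\partial_x(|u|^2)$, the free part $-i\xi_2^3\hat v$ cancels and the remaining inputs $\widehat{\partial_x(|u|^2)}$ and $-\widehat{v\partial_x v}$ replace $\hat v(\cdot,\xi_2)$, producing $C_2(u)$ and $C_3(u,v)$ (where the two-argument $\mathscr{A}_\lambda(\cdot,\cdot)$ abbreviates $\mathscr{A}_\lambda(T_{M_\lambda}(\cdot,\cdot))$), the overall signs being absorbed into the interior-term sign. Collecting the boundary terms, the four interior terms, and the remainder gives the claimed identity.

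The bookkeeping of signs and constants is routine; the genuine issue is justifying the integration by parts for the actual solution rather than for a free evolution. The hard part will be to make the $t'$-pointwise and termwise manipulations legitimate in the solution class $\tilde{X}^{1,b}_{\lambda,T}\times F_T$: since $b>1/2$ guarantees $u\in C([0,T];H^{1})$ and the structure of $F_T$ controls $v$, I would first prove the identity for smooth, rapidly decaying solutions—where Fubini, the $t'$-integration by parts, and the interchange with the $\sum_N$ decomposition are all unproblematic—and then pass to the limit using the continuity of each operator $B,C_1,C_2,C_3,D_1,R$ on these spaces, which is supplied by the accompanying estimates. One should also verify that the $N$-summation defining $M_\lambda$ converges on the relevant frequency supports and that the modulation localizations implicit in the amplitudes do not obstruct the boundary evaluation.
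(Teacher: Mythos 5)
Your proposal is correct and coincides with the paper's own argument: the paper's entire proof of this lemma is the phrase ``Integration by parts we have,'' and your computation --- isolating the nonresonant interaction via the cutoffs $\varphi_{N/8}(\xi_1)\psi_N(\xi)$, writing $e^{it'\Omega}=\partial_{t'}(e^{it'\Omega})/(i\Omega)$, substituting the two equations of \eqref{scalingeq} into $\partial_{t'}a$ and $\partial_{t'}b$ so the free phases cancel, and matching boundary and interior terms with $B$, $C_1$, $D_1$, $C_2$, $C_3$ and the complementary piece with $R$ --- is precisely the intended one, including your correct reading of the two-argument $\mathscr{A}_\lambda(\cdot,\cdot)$ as $\mathscr{A}_\lambda(T_{M_\lambda}(\cdot,\cdot))$ and your attention to justifying the formal manipulations by smoothing and density. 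The one discrepancy is immaterial: the actual phase is $\lambda\xi^2-\lambda\xi_1^2+\xi_2^3$, so the sign of $\xi_2^3$ in the paper's definition of $M_\lambda$ (which you adopted verbatim) is a harmless typo, since only $|\Omega|\sim N^3$ on the relevant support, together with signs and factors of $i$ absorbed into the definitions, enters the subsequent estimates.
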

	Let $\tilde{u}(t)=u(t)-\lambda B(u(t),v(t))$. Then $\tilde{u}$ satisfies the integral equation
	\begin{equation}\label{eqfornorm}
		\begin{aligned}
			\tilde{u}(t)&=u(0)-\lambda B(u(0),v(0)) +\lambda R(\tilde{u}+\lambda B(u,v),v)\\
			&\quad+\lambda^2C_1(u,v)+D_1(u,v)+\lambda C_2(u)+\lambda C_3(u,v)\\
			&\quad+\lambda^{-1}\int_0^t S_\lambda(t-t')(|\tilde{u}+\lambda B(u,v)|^2(\tilde{u}+\lambda B(u,v)))~dt'.
		\end{aligned}
	\end{equation}
	We will solve $\tilde{u}$ by using Bourgain space $\tilde{X}^{s_1,\tilde{b}}_{\lambda,T}$ for some $\tilde{b}>1/2$.
	\begin{lemma}\label{boundterm}
		Let $s_0>1/2$. Then
		$\|B(u_0,v_0)\|_{H^s}\lesssim \|u_0\|_{H^{s_0}}\|v_0\|_{H^{s-3}}$.
	\end{lemma}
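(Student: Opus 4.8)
The plan is to work entirely on the Fourier side and reduce the statement to a pointwise bound on the multiplier $M_\lambda$ together with an elementary convolution estimate. Writing $\widehat{B(u_0,v_0)}(\xi)=-i\int_{\mathbb{R}} M_\lambda(\xi,\xi_1)\hat{u}_0(\xi_1)\hat{v}_0(\xi-\xi_1)\,d\xi_1$, the goal $\|B(u_0,v_0)\|_{H^s}\lesssim\|u_0\|_{H^{s_0}}\|v_0\|_{H^{s-3}}$ becomes an $L^2_\xi$ bound on $\langle\xi\rangle^s$ times this integral. First I would record the support constraints coming from the definition of $M_\lambda$: the factor $\psi_N(\xi)$ forces $|\xi|\sim N$, while $\varphi_{N/8}(\xi_1)$ forces $|\xi_1|\lesssim N/8$; hence $|\xi_1|\ll|\xi|$, $|\xi-\xi_1|\sim|\xi|\sim N$, and only $O(1)$ dyadic values of $N$ contribute for each fixed $\xi$, so that in particular the whole symbol is supported in $\{|\xi|\gg 1\}$.

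The key step is the pointwise bound $|M_\lambda(\xi,\xi_1)|\lesssim\langle\xi\rangle^{-3}$ on this support, which is really a non-resonance (lower bound on the denominator) statement. I would factor the denominator as
\[
\lambda\xi^2-\lambda\xi_1^2-\xi_2^3=\xi_2\bigl(\lambda(\xi+\xi_1)-\xi_2^2\bigr),\qquad \xi_2=\xi-\xi_1 .
\]
Since $|\xi_2|\sim N$ we have $\xi_2^2\sim N^2$, whereas $0<\lambda<1$ and $N\gg 1$ give $|\lambda(\xi+\xi_1)|\lesssim\lambda N\ll N^2$; therefore $|\lambda(\xi+\xi_1)-\xi_2^2|\sim N^2$ and the denominator has size $\sim N\cdot N^2=N^3$. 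Combined with the $O(1)$ overlap of the dyadic pieces this yields $|M_\lambda(\xi,\xi_1)|\lesssim N^{-3}\sim\langle\xi\rangle^{-3}$, valid on the region $|\xi_1|\lesssim|\xi|$, $|\xi|\gg1$. I expect this denominator/non-resonance estimate to be the only substantive point; everything else is routine.

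To finish, I would use $|\xi-\xi_1|\sim|\xi|$ on the support to trade the weight, writing $\langle\xi\rangle^{s-3}\sim\langle\xi-\xi_1\rangle^{s-3}$, so that
\[
\langle\xi\rangle^s\,|\widehat{B(u_0,v_0)}(\xi)|\lesssim\int_{\mathbb{R}}|\hat{u}_0(\xi_1)|\,\langle\xi-\xi_1\rangle^{s-3}|\hat{v}_0(\xi-\xi_1)|\,d\xi_1 =\bigl(|\hat{u}_0|\ast(\langle\cdot\rangle^{s-3}|\hat{v}_0|)\bigr)(\xi).
\]
Taking $L^2_\xi$ and applying Young's inequality ($L^1\ast L^2\hookrightarrow L^2$) bounds this by $\|\hat{u}_0\|_{L^1_\xi}\,\|\langle\cdot\rangle^{s-3}\hat{v}_0\|_{L^2}=\|\hat{u}_0\|_{L^1_\xi}\,\|v_0\|_{H^{s-3}}$. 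It then remains to control $\|\hat{u}_0\|_{L^1}$, and here the hypothesis $s_0>1/2$ enters: by Cauchy--Schwarz, $\|\hat{u}_0\|_{L^1}\le\|\langle\cdot\rangle^{-s_0}\|_{L^2}\,\|\langle\cdot\rangle^{s_0}\hat{u}_0\|_{L^2}\lesssim\|u_0\|_{H^{s_0}}$, since $\langle\cdot\rangle^{-s_0}\in L^2(\mathbb{R})$ exactly when $s_0>1/2$. Combining these estimates gives $\|B(u_0,v_0)\|_{H^s}\lesssim\|u_0\|_{H^{s_0}}\|v_0\|_{H^{s-3}}$, completing the proof.
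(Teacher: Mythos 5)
Your proof is correct and follows essentially the same route as the paper: both bound the multiplier by $|M_\lambda(\xi,\xi_1)|\lesssim \langle\xi\rangle^{-3}$ on the support $|\xi_1|\ll|\xi|\sim|\xi-\xi_1|\sim N$ (you make the factorization $\lambda\xi^2-\lambda\xi_1^2-\xi_2^3=\xi_2(\lambda(\xi+\xi_1)-\xi_2^2)$ explicit, which the paper leaves implicit), then conclude via $L^1\ast L^2\hookrightarrow L^2$ together with $\|\hat{u}_0\|_{L^1}\lesssim\|u_0\|_{H^{s_0}}$ for $s_0>1/2$, exactly as in the paper's $\|u_0\|_{\mathscr{F}L^1}\|v_0\|_{H^{s-3}}$ bound. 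No gaps.
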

	\begin{proof}[\textbf{Proof}]
		By the definition of $M_\lambda$ we have
		\begin{align*}
			\|B(u_0,v_0)\|_{H^s}&\lesssim \left\|N^{s-3}\int_{\mathbb{R}_{\xi_1}}\varphi_N(\xi_1)|\widehat{u_0}(\xi_1)|\psi_N(\xi)|\widehat{v_0}(\xi-\xi_1)|~d\xi_1\right\|_{l^2_{N\gg 1}}\\
			&\lesssim \|u_0\|_{\mathscr{F}L^1}\|v_0\|_{H^{s-3}}\lesssim \|u_0\|_{H^{s_0}}\|v_0\|_{H^{s-3}}.
		\end{align*}
		We finish the proof.
	\end{proof}
	\begin{lemma}\label{reguimpr}
		Let $(u,v) \in \tilde{X}_{\lambda,T}^{1,b}\times F_T$ be a solution of \eqref{scalingeq}. Then there exists $\tilde{b}>1/2$ such that
		$$C_1(u,v),D_1(u,v),C_2(u),C_3(u,v),R(B(u,v),v)\in \tilde{X}^{9/4,\tilde{b}}_{\lambda,T}.$$
	\end{lemma}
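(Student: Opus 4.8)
The plan is to estimate each of the five terms by first peeling off the Duhamel operator $\mathscr{A}_\lambda$ and then exploiting the three-derivative gain carried by the multiplier $M_\lambda$. Concretely, I would use the standard inhomogeneous Bourgain estimate $\|\mathscr{A}_\lambda(F)\|_{\tilde{X}^{9/4,\tilde{b}}_{\lambda,T}}\lesssim T^{\theta}\|F\|_{\tilde{X}^{9/4,\tilde{b}-1}_{\lambda,T}}$, valid for $1/2<\tilde{b}\leq 1$ and some $\theta>0$, reducing matters to bounding the space-time norm of the integrand at spatial regularity $9/4$ and negative modulation weight. The crucial input is a symbol estimate for $M_\lambda$: on its support one has $|\xi_1|\lesssim N/8$, $|\xi|\sim N\gg 1$, hence $|\xi_2|\sim N$, and since $0<\lambda<1$ the resonance function satisfies $|\lambda\xi^2-\lambda\xi_1^2-\xi_2^3|\sim N^3$; consequently $|M_\lambda(\xi,\xi_1)|\lesssim N^{-3}$ together with matching derivative bounds, so that $T_{M_\lambda}$ acts as a bilinear Fourier multiplier of order $-3$ in the output frequency. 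This gain of three derivatives is exactly what upgrades inputs built from $u\in\tilde{X}^{1,b}_\lambda$ and $v\in F_T$ to the target regularity $9/4$.

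For $C_1$ and $D_1$ the first slot ($uv$, resp.\ $|u|^2u$) is frequency-localized to $|\xi_1|\lesssim N/8$ and hence harmless after the gain, while the second slot is $v$ at frequency $\xi_2\sim N$; matching the weights $N^{9/4}\cdot N^{-3}=N^{-3/4}$ against the natural $H^{-3/4}$ regularity of $v$ closes these estimates once $uv$ and $|u|^2u$ are controlled at low output frequency by H\"older, Strichartz, and the product estimates already available. For $C_2=-\mathscr{A}_\lambda(T_{M_\lambda}(u,\partial_x(|u|^2)))$ the high-frequency factor $\partial_x(|u|^2)$ at frequency $\sim N$ forces one factor of $u$ to frequency $\gtrsim N$, producing a decay $N^{-1}$ from $u\in H^1$ which, combined with the derivative factor $N$ and the multiplier gain $N^{-3}$, comfortably beats $N^{9/4}$. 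For $R(B(u,v),v)$ I would invoke Lemma \ref{boundterm} to place $B(u,v)\in H^{9/4}$ and then run the Schr\"odinger--KdV transversal estimates of Lemma \ref{bilinearrefined} in the comparable-frequency regime $|\xi_1|\gtrsim N/8\sim N$: when $v$ is the low-frequency factor there is no loss since the output frequency matches that of $B$, while when both factors are high-frequency the large output modulation supplies the needed smoothing.

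The hardest term is $C_3=-\mathscr{A}_\lambda(T_{M_\lambda}(u,v\partial_x v))$, because the high-frequency slot $v\partial_x v=\tfrac12\partial_x(v^2)$ carries a derivative on $v\in H^{-3/4}$, precisely the critical KdV regularity. Here I would not estimate $v\partial_x v$ crudely but instead invoke the sharp KdV bilinear estimate (Lemma \ref{purekdv}) to control $\partial_x(v^2)$ in the KdV Bourgain space at regularity $-3/4$, using the $\|v\|_{F_T}$ norm; the three-derivative gain of $M_\lambda$ then converts the output weight $N^{9/4}$ into exactly the $N^{-3/4}$ that matches this KdV estimate, with $u$ entering only at low frequency. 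The main technical obstacles throughout are (i) establishing the symbol and derivative bounds on $M_\lambda$ carefully enough to split it into dyadic pieces amenable to the transversal $L^2$ estimates, and (ii) verifying that the negative modulation weight $\tilde{b}-1<0$ does not destroy summability over $N$ and over the modulation variables. I expect the choice of $\tilde{b}$ slightly above $1/2$, together with the $l^1_L$ structure in the $Y^{-3/4,1/2,1}$ component of $\|v\|_{F}$, to provide precisely the room needed to close all the sums, so that each term lands in $\tilde{X}^{9/4,\tilde{b}}_{\lambda,T}$ as claimed.
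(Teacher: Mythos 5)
Your framework is the right one and matches the paper's for the easy terms: reduce via the inhomogeneous Duhamel estimate to bounds in $\tilde{X}^{9/4,\tilde{b}-1}_{\lambda}$, and exploit the symbol bound $|M_\lambda(\xi,\xi_1)|\lesssim N^{-3}$ coming from $|\lambda\xi^2-\lambda\xi_1^2-\xi_2^3|\sim N^3$ on the support of $M_\lambda$ (with $0<\lambda\ll 1$). Your counting $N^{9/4}\cdot N^{-3}=N^{-3/4}$ against the $H^{-3/4}$-type regularity of $v$ is exactly how the paper closes $C_1$, $D_1$ and $C_2$ (estimates \eqref{uvv}--\eqref{uuu}). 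But your treatment of the two hardest terms has genuine gaps. For $C_3$: Lemma \ref{purekdv} is precisely \emph{not} available at $s=-3/4$ --- that lemma holds if and only if $s>-3/4$, and $-3/4$ is the well-known endpoint at which the bilinear KdV estimate fails in standard Bourgain spaces. Even if you substitute the Guo--Wang endpoint estimate in the $Y^{-3/4,1/2,1}$ framework, the two-step composition does not close: a bilinear KdV estimate controls $\partial_x(v^2)$ only with a \emph{negative} modulation weight (the $+1/2$ is normally restored by the Airy Duhamel integral), but inside $\mathscr{A}_\lambda(T_{M_\lambda}(u,\cdot))$ there is no Airy Duhamel operator, so in the second step you would be multiplying $u$ against a function known only in a negative-modulation space, which H\"older-type arguments cannot absorb. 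The paper avoids the two-step route entirely: it dualizes and proves the quadrilinear estimate \eqref{dualfour} directly, using the joint resonance $\langle\tau_1+\lambda\xi_1^2\rangle+\langle\tau_2-\xi_2^3\rangle+\langle\tau_3-\xi_3^3\rangle+\langle\tau+\lambda\xi^2\rangle\gtrsim N\max\{|\xi_2|,|\xi_3|\}^2$ --- a quantity mixing the Schr\"odinger modulations of $u$ and the output with the Airy modulations of \emph{both} $v$ factors, invisible to any pure KdV bilinear estimate --- together with maximal/smoothing norms ($L_x^4L_t^\infty$, $L_x^\infty L_t^2$) and the $l^1_L$ structure of $F$.

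The term $R(B(u,v),v)$ has a similar defect in your plan: you cannot place $B(u,v)\in H^{9/4}$ via Lemma \ref{boundterm} and then invoke the transversal estimates of Lemma \ref{bilinearrefined}, because those estimates require inputs in $U^2_S$ or $U^2_K$ (and are stated for $\lambda=1$), whereas $B(u,v)$ is a bilinear Schr\"odinger--Airy expression adapted to \emph{neither} flow; as a single black-box function it carries no usable modulation localization, and "the large output modulation supplies the needed smoothing" cannot be implemented without access to its internal structure. The paper instead unpacks $B$: since $u$ inside $B$ sits at frequency $\lesssim N/8$ of the output, one has identities of the type $P_N(P_{\gtrsim N}B(u,v)\,P_1v)=P_N(B(u,P_{\sim N}v)P_1v)$, and the general case $P_N(P_{>N/8}B(u,v)P_Kv)$ reduces again to the quadrilinear estimate \eqref{dualfour}. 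So the missing idea in your proposal is the direct four-function modulation analysis behind \eqref{dualfour}; without it, neither $C_3$ nor $R(B(u,v),v)$ closes, even though your architecture for the remaining three terms is sound.
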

	\begin{proof}[\textbf{Proof}]
		We only need to prove
		\begin{align}
			\|T_{M_\lambda}(uv,v)\|_{\tilde{X}_{\lambda}^{9/4,\tilde{b}-1}}&\lesssim \lambda^{-1/6}\|u\|_{\tilde{X}_{\lambda}^{1,b}}\|v\|_{F}^2,\label{uvv}\\
			\|T_{M_\lambda}(|u|^2u,v)\|_{\tilde{X}_{\lambda}^{9/4,\tilde{b}-1}}&\lesssim \|u\|_{\tilde{X}^{1,b}_{\lambda}}^3\|v\|_{F},\label{uuuv}\\
			\|T_{M_\lambda}(u,\partial_x(|u|^2))\|_{\tilde{X}_{\lambda}^{9/4,\tilde{b}-1}}&\lesssim \|u\|_{\tilde{X}^{1,b}_{\lambda}}^3,\label{uuu}\\
			\|T_{M_\lambda}(u,v\partial_xv)\|_{\tilde{X}^{9/4,\tilde{b}-1}_{\lambda}}&\lesssim \|u\|_{\tilde{X}^{1,b}_{\lambda}}\|v\|_{F}^2\label{uvvt}
		\end{align}
		and
		\begin{equation}\label{buvv}
			\left\|\sum_N P_N(P_{>N/8}B(u,v)v)\right\|_{\tilde{X}_{\lambda}^{9/4,\tilde{b}-1}}\lesssim \|u\|_{\tilde{X}^{1,b}_{\lambda}}\|v\|_{F}^2.
		\end{equation}
		For \eqref{uvv}, due to the definition of $M_\lambda$, one has
		\begin{align*}
			\|T_{M_\lambda}(uv,v)\|_{\tilde{X}^{9/4,\tilde{b}-1}_\lambda}\sim \|N^{9/4}\|\tilde{P}_N T_{M_{\lambda}}(uv,P_{N}v)\|_{\tilde{X}^{0,\tilde{b}-1}_\lambda}\|_{l^2_{N\gg 1}}.
		\end{align*}
		For the term $T_{M_\lambda}(uP_{1}v,P_Nv)$, we have
		\begin{align*}
			&\quad \|\tilde{P}_N T_{M_{\lambda}}(uP_{1}v,P_{N}v)\|_{\tilde{X}^{0,\tilde{b}-1}_\lambda}\\
			&\lesssim \sup_{\|w\|_{L^2_{\tau,\xi}}\leq 1}\int_{\mathbb{R}^4}\frac{|M_{\lambda}(\xi,\xi_1)\widehat{uP_1 v}(\tau_1,\xi_1)||\widehat{P_Nv}(\tau-\tau_1,\xi-\xi_1)w(\tau,\xi)|}{\langle\tau+\lambda\xi^2\rangle^{1-\tilde{b}}}\\
			&\lesssim N^{-3}\sup_{\|w\|_{L^2_{\tau,\xi}\leq 1}} \|uP_{1}v\|_{L^2_{t,x}}\|\mathscr{F}^{-1}(|\widehat{P_Nv}|)\|_{L_{t,x}^6}\|\mathscr{F}^{-1}(|w|\langle\tau+\lambda\xi^2\rangle^{\tilde{b}-1})\|_{L^3_{t,x}}
		\end{align*}
		By refined Strichartz estimate for Schr\"{o}dinger equation, for $\tilde{b}\leq 3/4$ one has
		\begin{align*}
			&\quad\|\tilde{P}_N T_{M_{\lambda}}(uP_{1}v,P_{N}v)\|_{\tilde{X}^{0,\tilde{b}-1}_\lambda}\\
			&\lesssim N^{-19/6}\lambda^{-1/12}\|u\|_{L_x^\infty L_t^2}\|P_1 v\|_{L_x^2L_t^\infty}\|P_N v\|_{X^{0,1/2,1}}\\
			&\lesssim N^{-29/12}\lambda^{-1/12}\|u\|_{\tilde{X}^{1,b}_\lambda}\|v\|_{F}^2.
		\end{align*}
		Thus we obtain 
		$$
		\|N^{9/4}\|\tilde{P}_N T_{M_{\lambda}}(uP_1v,P_{N}v)\|_{\tilde{X}^{0,\tilde{b}-1}_\lambda}\|_{l^2_{N\gg 1}}\lesssim \lambda^{-1/12}\|u\|_{\tilde{X}^{1,b}_\lambda}\|v\|_{F}^2.$$
		For the term $T_{M_\lambda}(uP_Kv,P_Nv)$, $K\geq 2$, in fact we would estimate 
		$$\left\|\mathscr{F}^{-1}_{\tau,\xi}\int_{\mathbb{R}^2}|M_{\lambda}(\xi,\xi_1)\widehat{uP_K v}(\tau_1,\xi_1)||\widehat{P_Nv}(\tau-\tau_1,\xi-\xi_1)|~d\xi_1d\tau_1\right\|_{\tilde{X}^{0,\tilde{b}-1}_\lambda}.$$
		Since $\|P_N u\|_{\tilde{X}^{s,b}_{\lambda}} = \|\mathscr{F}^{-1}(|\widehat{P_Nu}|)\|_{\tilde{X}^{s,b}_{\lambda}}$, $\|P_N v\|_{Y^{s,b,q}} = \|\mathscr{F}^{-1}(|\widehat{P_Nv}|)\|_{Y^{s,b,q}}$, without of loss of generality we would assume that $\hat{u},\hat{v}\geq 0$. Thus  for $0<\epsilon<4$ one has
		\begin{align*}
			&\quad\int_{\mathbb{R}^4}\frac{|M_{\lambda}(\xi,\xi_1)\widehat{uP_K v}(\tau_1,\xi_1)||\widehat{P_Nv}(\tau-\tau_1,\xi-\xi_1)w(\tau,\xi)|}{\langle\tau+\lambda\xi^2\rangle^{1-\tilde{b}}}\\
			&\lesssim N^{-3}\|P_{\ll N}(uP_{K}v)\|_{L^{(6-\epsilon)/(5-\epsilon)}_{x}L_t^{2(6-\epsilon)/(4-\epsilon)}}\|P_N v\|_{L_x^\infty L_t^2}\\
			&\quad\cdot\|\mathscr{F}^{-1}(|w|\langle \tau+\lambda\xi^2\rangle^{\tilde{b}-1})\|_{L_{t,x}^{6-\epsilon}}.
		\end{align*}
		For $\tilde{b}\leq (12-\epsilon)/(24-4\epsilon)$, one has $$\|\mathscr{F}^{-1}(|w|\langle \tau+\lambda\xi^2\rangle^{\tilde{b}-1})\|_{L_{t,x}^{6-\epsilon}}\lesssim \lambda^{-1/6}\|w\|_{L^2_{t,x}}.$$
		Thus for $K\lesssim N$ we obtain ($\epsilon\ll 1$)
		\begin{align*}
			&\quad\|T_{M_\lambda}(uP_Kv,P_Nv)\|_{\tilde{X}^{0,\tilde{b}-1}_\lambda}\\
			&\lesssim N^{-3}\lambda^{-1/6}\|P_{\ll N}(uP_{K}v)\|_{L^{(6-\epsilon)/(5-\epsilon)}_{x}L_t^{2(6-\epsilon)/(4-\epsilon)}}\|P_N v\|_{L_x^\infty L_t^2}\\
			&\lesssim N^{-4}\lambda^{-1/6}\|u\|_{L_{x}^{2}L_t^\infty}\|P_Kv\|_{L_{t,x}^{2(6-\epsilon)/(4-\epsilon)}}\|P_Nv\|_{X^{0,1/2,1}}\\
			&\lesssim N^{-4}\lambda^{-1/6}K^{3/4}\|u\|_{\tilde{X}^{1,\tilde{b}}_\lambda}\|v\|_{F}^2.
		\end{align*}
		For $K\gg N$ one has
		\begin{align*}
			&\quad\|T_{M_\lambda}(uP_Kv,P_Nv)\|_{\tilde{X}^{0,\tilde{b}-1}_\lambda}\\
			& = \|T_{M_\lambda}(P_{\sim K}uP_Kv,P_Nv)\|_{\tilde{X}^{0,\tilde{b}-1}_\lambda}\\
			&\lesssim N^{-3}\lambda^{-1/6}\|P_{\ll N}(P_{\sim K}uP_K v)\|_{L_x^{8(6-\epsilon)/(34-7\epsilon)}L_t^{2(6-\epsilon)/(4-\epsilon)}}\|P_N v\|_{L_x^8 L_t^2}\\
			&\lesssim N^{-3}\lambda^{-1/6}\|P_{\sim K}u\|_{L_x^2 L_t^\infty}\|P_K v\|_{L_x^{8(6-\epsilon)/(10-3\epsilon)}L_t^{2(6-\epsilon)/(4-\epsilon)}}\|P_N v\|_{X^{-3/4,1/2,1}}.
		\end{align*}
		By scaling argument and $\|P_Ku\|_{L_x^2L_t^\infty}\lesssim K^{1/2} \|u\|_{X^{0,\tilde{b}}}$, we have $$\|P_{\sim K}u\|_{L_x^2 L_t^\infty}\lesssim \langle \lambda^{1/2}K\rangle^{1/2}\|P_{\sim K}u\|_{X_\lambda^{0,\tilde{b}}}\lesssim K^{-1/2}\|P_{\sim K}u\|_{X^{1,\tilde{b}}_\lambda}.$$
		By H\"{o}lder inequality one has
		\begin{align*}
			&\quad\|P_K v\|_{L_x^{8(6-\epsilon)/(10-3\epsilon)}L_t^{2(6-\epsilon)/(4-\epsilon)}}\\
			&\lesssim \|P_K v\|_{L_{t,x}^2}^{8/(3(6-\epsilon))}\|P_K v\|_{L_{t,x}^8}^{(22-9\epsilon)/(12(6-\epsilon))}\|P_K v\|_{L_{x}^\infty L_t^2}^{1/4}\\
			&\lesssim K^{-1/4}\|P_Kv\|_{X^{0,1/2,1}}.
		\end{align*}
		Thus we have
		\begin{align*}
			&\quad\|T_{M_\lambda}(uP_Kv,P_Nv)\|_{\tilde{X}^{0,\tilde{b}-1}_\lambda}\\
			&\lesssim N^{-3}\lambda^{-1/6}\|P_{\sim K}u\|_{X^{1,\tilde{b}}_\lambda}\|P_K v\|_{X^{-3/4,1/2,1}}\|P_Nv\|_{X^{-3/4,1/2,1}}.
		\end{align*}
		Thus we obtain
		\begin{align*}
			&\quad\|T_{M_\lambda }(uP_{>1}v,P_Nv)\|_{\tilde{X}^{0,\tilde{b}-1}_\lambda}\leq \sum_{K\geq 2} \|T_{M_\lambda }(uP_Kv,P_Nv)\|_{\tilde{X}^{0,\tilde{b}-1}_\lambda}\\
			&\lesssim \sum_{K\lesssim N}N^{-4}\lambda^{-1/6}K^{3/4}\|u\|_{\tilde{X}^{1,\tilde{b}}_\lambda}\|v\|_{F}\\
			&\quad +\sum_{K\gg N} N^{-3}\lambda^{-1/6}\|P_{\sim K}u\|_{X^{1,\tilde{b}}_\lambda}\|P_K v\|_{X^{-3/4,1/2,1}}\|P_Nv\|_{X^{-3/4,1/2,1}}\\
			&\lesssim N^{-13/4}\|u\|_{\tilde{X}^{1,\tilde{b}}_\lambda}\|v\|_{F}^2+N^{-3}\lambda^{-1/6}\|u\|_{\tilde{X}^{1,\tilde{b}}_\lambda}\|v\|_F\|P_Nv\|_{X^{-3/4,1/2,1}}.
		\end{align*}
		Overall one has
		\begin{align*}
			\|T_{M_\lambda}(uv,v)\|_{\tilde{X}^{9/4,\tilde{b}-1}_\lambda}\lesssim \lambda^{-1/6}\|u\|_{\tilde{X}^{1,\tilde{b}}_\lambda}\|v\|_F^2.
		\end{align*}
		For \eqref{uuuv} we have
		\begin{align*}
			\|T_{M_\lambda}(|u|^2u,v)\|_{\tilde{X}^{9/4,\tilde{b}-1}_\lambda}&\sim \|N^{9/4}\|T_{M_\lambda}(|u|^2u,P_Nv)\|_{\tilde{X}^{0,\tilde{b}-1}_\lambda}\|_{l^2_{N\gg 1}}\\
			&\lesssim \|N^{9/4}N^{-3}\|\widehat{u}\|_{L_{\tau,\xi}^1}^3\|P_N v\|_{L_{t,x}^2}\|_{l^2_{N\gg 1}}\\
			&\lesssim \|u\|_{\tilde{X}^{1,\tilde{b}}_{\lambda}}^3\|v\|_F.
		\end{align*}
		For \eqref{uuu} we have
		\begin{align*}
			\|T_{M_\lambda}(u,\partial_x (|u|^2))\|_{\tilde{X}^{9/4,\tilde{b}-1}_\lambda}&\sim \|N^{9/4}\|T_{M_\lambda}(u,P_N\partial_x (|u|^2))\|_{\tilde{X}^{0,\tilde{b}-1}_\lambda}\|_{l^2_{N\gg 1}}\\
			&\lesssim \|N^{9/4}N^{-3}N\|\widehat{u}\|_{L_{\tau,\xi}^1}\|P_N (|u|^2)\|_{L_{t,x}^2}\|_{l^2_{N\gg 1}}\\
			&\lesssim \|u\|_{\tilde{X}^{1,\tilde{b}}_{\lambda}}\||u|^2\|_{\tilde{X}_\lambda^{1,0}}\\
			&\lesssim \|u\|_{\tilde{X}^{1,\tilde{b}}_\lambda}^3.
		\end{align*}
		For \eqref{uvvt} we have
		\begin{align*}
			&\quad\|T_{M_\lambda}(u,v\partial_xv)\|_{\tilde{X}^{9/4,\tilde{b}-1}_{\lambda}}\\
			&\lesssim \|T_{M_\lambda}(u,\partial_x(P_1vP_{\sim N}v))\|_{\tilde{X}^{9/4,\tilde{b}-1}_{\lambda}}+\|T_{M_\lambda}(u,\partial_x(P_{>1}vP_{>1}v))\|_{\tilde{X}^{9/4,\tilde{b}-1}_{\lambda}}.
		\end{align*}
		For the first term one has
		\begin{align*}
			&\quad\|T_{M_\lambda}(u,\partial_x(P_1vP_{\sim N}v))\|_{\tilde{X}^{9/4,\tilde{b}-1}_{\lambda}}\\
			&\lesssim \|N^{9/4}N^{-3}\|P_1vP_{\sim N}v\|_{L_{t,x}^2}\|\hat{u}\|_{L_{\tau,\xi}^1}\|_{l^2_{N\gg 1}}\\
			&\lesssim \|N^{-3/4}\|P_1v\|_{L_x^2 L_t^\infty}\|P_{\sim N}v\|_{L_x^\infty L_t^2}\|_{l^2_{N\gg 1}}\|u\|_{\tilde{X}^{1,\tilde{b}}_\lambda}\\
			&\lesssim \|u\|_{\tilde{X}^{1,\tilde{b}}_\lambda}\|v\|_{F}^2.
		\end{align*}
		For the second term one has
		\begin{align*}
			&\quad\|T_{M_\lambda}(u,\partial_x(P_{>1}vP_{>1}v))\|_{\tilde{X}^{9/4,\tilde{b}-1}_{\lambda}}^2\\
			&\lesssim \sum_{N\gg 1}N^{1/2}\left\|\int_{\mathbb{R}_{\tau_1,\xi_1}^2}\frac{\varphi_N(\xi_1)\psi_N(\xi)|\hat{u}(\tau_1,\xi_1)||\widehat{(P_{>1}v)^2}|(\tau-\tau_1,\xi-\xi_1)}{\langle\tau+\lambda\xi^2\rangle^{1-\tilde{b}}}\right\|_{L^2_{\tau,\xi}}^2.
		\end{align*}
		Without loss of generality we would assume that $\hat{u},\hat{v}\geq 0$. 
		Let $\lambda_1 = \langle\tau_1+\lambda\xi_1^2\rangle$, $\lambda_2 = \langle\tau_2-\xi_2^3\rangle$, $\lambda_3 = \langle\lambda\tau_3-\xi_3^3\rangle$, $\lambda = \langle\tau+\lambda\xi^2\rangle$. Note that for $\xi_1+\xi_2+\xi_3 = \xi$, $\tau_1+\tau_2+\tau_3 = \tau$, $|\xi_1|\ll N$, $|\xi|\sim N$ one has
		$$\lambda_1+\lambda_2+\lambda_3+\lambda\gtrsim |\xi_2^3+\xi_3^3+\lambda\xi_1^2-\lambda\xi^2|\sim N\max\{|\xi_2|,|\xi_3|\}^2.$$
		By symmetry we can assume $|\xi_2|\gtrsim |\xi_3|$. Let $f_1,f_2,g_1,g_2\geq 0$, 
		$\Omega$ be the set $|\xi_1|\ll |\xi|\sim N,\lambda_j\sim L_j,\lambda\sim L$, $|\xi_2|\sim N_2\gtrsim |\xi_3|\sim N_3$. We claim: There exists $\epsilon>0$ such that
		
		\begin{equation}\label{dualfour}
			\begin{aligned}
				&\quad\int_{\substack{\xi_1+\xi_2+\xi_3 = \xi,\\\tau_1+\tau_2+\tau_3 = \tau}}\langle\xi_1\rangle^{-1}\chi_{\Omega}f_1(\tau_1,\xi_1)g_1(\tau_2,\xi_2)g_2(\tau_3,\xi_3)f_2(\tau,\xi)\\
				&\lesssim (L_1L_2L_3)^{\frac{1}{2}}L^{1-\tilde{b}-\epsilon}N^{-\frac{1}{4}-\epsilon}(N_2N_3)^{-\frac{3}{4}}\|f_1\|_{L^2}\|g_1\|_{L^2}\|g_2\|_{L^2}\|f_2\|_{L^2}.
			\end{aligned}
		\end{equation}			
		Using this one has
		\begin{align*}
			&\quad\left\|\int_{\mathbb{R}_{\tau_1,\xi_1}^2}\frac{\varphi_N(\xi_1)\psi_N(\xi)|\hat{u}(\tau_1,\xi_1)||\widehat{(P_{>1}v)^2}|(\tau-\tau_1,\xi-\xi_1)}{\langle\tau+\lambda\xi^2\rangle^{1-\tilde{b}}}\right\|_{L^2_{\tau,\xi}}\\
			&\lesssim N^{-1/4-\epsilon/2} \|u\|_{\tilde{X}^{1,\tilde{b}}_\lambda}\|v\|_{F}^2
		\end{align*}
		and then concludes the proof of \eqref{uvvt}. To show \eqref{dualfour} we divide the integral into several parts. Let $L_{\max} = \max\{L_1,L_2,L_3,L\}$.
		
		If $L = L_{\max}\gtrsim NN_2^2$, $0<\epsilon\ll 1$, $\tilde{b}\leq 5/8-\epsilon$, then one has
		\begin{align*}
			&\quad\int_{\substack{\xi_1+\xi_2+\xi_3 = \xi,\\\tau_1+\tau_2+\tau_3 = \tau}}\langle\xi_1\rangle^{-1}\chi_{\Omega,L=L_{\max}}f_1(\tau_1,\xi_1)g_1(\tau_2,\xi_2)g_2(\tau_3,\xi_3)f_2(\tau,\xi)\\
			&\lesssim \|\mathscr{F}^{-1}(\langle\xi_1\rangle^{-1}\chi_{\lambda_1\sim L_1}f_1)\|_{L_x^4L_t^\infty}\|\mathscr{F}^{-1}(g_1\chi_{\lambda_2\sim L_2,|\xi_2|\sim N_2})\|_{L_x^\infty L_t^2}\\
			&\quad\cdot\|\mathscr{F}^{-1}(g_2\chi_{\lambda_2\sim L_3,|\xi_2|\sim N_3})\|_{L_x^4 L_t^\infty}\|f_2\|_{L^2}\\
			&\lesssim L_1^{1/2}\|f_1\|_{L^2} N_2^{-1}L_2^{1/2}\|g_1\|_{L^2}N_3^{1/4}L_3^{1/2}\|g_2\|_{L^2}\|f_2\|_{L^2}\\
			&\lesssim (L_1L_2L_3)^{1/2}L^{1-\tilde{b}-\epsilon}N^{-1/4-\epsilon}(N_2N_3)^{-3/4}\|f_1\|_{L^2}\|g_1\|_{L^2}\|g_2\|_{L^2}\|f_2\|_{L^2}.
		\end{align*}
		If $L_2 = L_{\max}\gtrsim NN_2^2$, $\tilde{b}\leq 7/12-4\epsilon/3$, one has
		\begin{align*}
			&\quad\int_{\substack{\xi_1+\xi_2+\xi_3 = \xi,\\\tau_1+\tau_2+\tau_3 = \tau}}\langle\xi_1\rangle^{-1}\chi_{\Omega,L=L_{\max}}f_1(\tau_1,\xi_1)g_1(\tau_2,\xi_2)g_2(\tau_3,\xi_3)f_2(\tau,\xi)\\
			&\lesssim \|\mathscr{F}^{-1}(\langle\xi_1\rangle^{-1}\chi_{\lambda_1\sim L_1}f_1)\|_{L^4_{x}L_{t}^\infty}\|g_1\|_{L^2}\\
			&\quad\cdot\|\mathscr{F}^{-1}(g_2\chi_{\lambda_2\sim L_3,|\xi_2|\sim N_3})\|_{L_x^\infty L_t^2}\|\mathscr{F}^{-1}(f_2\chi_{\lambda\sim L,|\xi|\sim N})\|_{L_x^4L_t^\infty}\\
			&\lesssim (L_1L_3L)^{1/2}N_3^{-1}N^{1/4}\|f_1\|_{L^2}\|g_1\|_{L^2}\|g_2\|_{L^2}\|f_2\|_{L^2}\\
			&\lesssim (L_1L_2L_3)^{1/2}L^{1-\tilde{b}-\epsilon}N^{-1/4-\epsilon}(N_2N_3)^{-3/4}\|f_1\|_{L^2}\|g_1\|_{L^2}\|g_2\|_{L^2}\|f_2\|_{L^2}.
		\end{align*}
		The argument for $L_3 = L_{\max}$ is similar. If $L_1 = L_{\max}$, $\tilde{b}\leq 7/12-4\epsilon/3$ one has
		\begin{align*}
			&\quad\int_{\substack{\xi_1+\xi_2+\xi_3 = \xi,\\\tau_1+\tau_2+\tau_3 = \tau}}\langle\xi_1\rangle^{-1}\chi_{\Omega,L=L_{\max}}f_1(\tau_1,\xi_1)g_1(\tau_2,\xi_2)g_2(\tau_3,\xi_3)f_2(\tau,\xi)\\
			&\lesssim \|f_1\|_{L^2}\|\mathscr{F}^{-1}(g_1\chi_{\lambda_2\sim L_2, |\xi_3|\sim N_2})\|_{L_x^\infty L_t^2}\\
			&\quad\cdot\|\mathscr{F}^{-1}(g_2\chi_{\lambda_3\sim L_3,|\xi_3|\sim N_3})\|_{L_x^4 L_t^\infty}\|\mathscr{F}^{-1}(f_2\chi_{\lambda\sim L,|\xi|\sim N})\|_{L_x^4 L_t^\infty}\\
			&\lesssim (L_2L_3L)^{1/2}N_2^{-1}N_3^{1/4}N^{1/4}\|f_1\|_{L^2}\|g_1\|_{L^2}\|g_2\|_{L^2}\|f_2\|_{L^2}\\
			&\lesssim (L_1L_2L_3)^{1/2}L^{1-\tilde{b}-\epsilon}N^{-1/4-\epsilon}(N_2N_3)^{-3/4}\|f_1\|_{L^2}\|g_1\|_{L^2}\|g_2\|_{L^2}\|f_2\|_{L^2}.
		\end{align*}
		Thus we obtain \eqref{dualfour} and then \eqref{uvv}.
		
		\eqref{buvv} is equivalent to $$\|N^{9/4} \|P_N(P_{> N/8}B(u,v)v)\|_{\tilde{X}_{\lambda}^{0,\tilde{b}-1}}\|_{l^2_{N}}\lesssim \|u\|_{\tilde{X}^{1,b}_{\lambda}}\|v\|_{F}^2.$$
		For the term $P_N(P_{\gtrsim N}B(u,v)P_1v) = P_N(B(u,P_{\sim N}v)P_1 v)$ we have
		\begin{align*}
			&\quad\|P_N(P_{\gtrsim N}B(u,v)P_1v)\|_{\tilde{X}^{0,\tilde{b}-1}_\lambda}\\
			&\leq \|(P_{\gtrsim N}B(u,P_{\sim N}v))P_1v\|_{L^2_{t,x}}\\
			&\lesssim N^{-3}\|(\mathscr{F}_x u)(t,\xi)\|_{L_\xi^1L_t^\infty}\|P_{\sim N }v\|_{L_{t,x}^2}\|v\|_F.
		\end{align*}
		Thus 
		\begin{align*}
			\|N^{9/4}(P_{> N/8}B(u,v)P_1v)\|_{\tilde{X}_{\lambda}^{0,\tilde{b}-1}}\|_{l^2_{N\gg 1}}&\lesssim \|\hat{u}\|_{L^1_{\tau,\xi}}\|N^{-3/4}P_{\sim N}v\|_{l^2_{N\gg 1}L_{t,x}^2}\|v\|_F\\&\lesssim \|u\|_{\tilde{X}^{1,\tilde{b}}_\lambda}\|v\|_F^2.
		\end{align*}
		For other cases, we would assume that $\hat{u}, \hat{v}\geq 0$. Let $K\geq 2$.
		\begin{align*}
			&\quad\|P_N(P_{> N/8}B(u,v)P_{K}v)\|_{\tilde{X}_\lambda^{0,\tilde{b}-1}}^2\\
			&\lesssim \left\|\int_{\mathbb{R}^2}\frac{\psi(\xi/N)\varphi_{>N/8}(\xi_1)\widehat{B(u,v)}(\tau_1,\xi_1)\widehat{P_K v}(\tau-\tau_1,\xi-\xi_1)}{\langle\tau+\lambda\xi^2\rangle^{2-2\tilde{b}}}~d\tau_1d\xi_1\right\|_{L^2_{\tau,\xi}}^2.
		\end{align*}
		If $K\lesssim N$ we only need to consider
		$P_{N}(B(P_{\ll N}u,P_{\sim N}v)P_K v)$. This case can reduce to \eqref{dualfour}. We have
		$$\|P_{N}(B(P_{\ll N}u,P_{\sim N}v)P_K v)\|_{\tilde{X}^{0,\tilde{b}-1}_\lambda}\lesssim N^{-13/4-\epsilon}\|u\|_{\tilde{X}^{1,\tilde{b}}_\lambda}\|v\|_F^2.$$
		Thus
		\begin{align*}
			\left\|\sum_{K\lesssim N}N^{9/4}\|P_{N}(B(u,v)P_K v)\|_{\tilde{X}^{0,\tilde{b}-1}_\lambda}\right\|_{l^2_{N\gg 1}}\lesssim \|u\|_{\tilde{X}^{1,\tilde{b}}_\lambda}\|v\|_{F}^2.
		\end{align*}
		If $K\gg N$ we only need to consider $P_N(B(P_{\ll K}u,P_{\sim K}v)P_Kv)$.
		Following the argument for \eqref{dualfour} one has
		\begin{align*}
			&\quad\|P_N(B(P_{\ll K}u,P_{\sim K}v)P_Kv)\|_{\tilde{X}^{0,\tilde{b}-1}_\lambda}\\
			&\lesssim N^{-13/4-\epsilon}\|u\|_{\tilde{X}^{1,\tilde{b}}_\lambda}K^{-3/2}\|P_{\sim K}v\|_{Y^{0,1/2,1}}\|P_K v\|_{Y^{0,1/2,1}}.
		\end{align*}
		Thus we obtain
		\begin{align*}
			\left\|\sum_{K\gg N}N^{9/4}\|P_{N}(B(u,v)P_K v)\|_{\tilde{X}^{0,\tilde{b}-1}_\lambda}\right\|_{l^2_{N\gg 1}}\lesssim \|u\|_{\tilde{X}^{1,\tilde{b}}_\lambda}\|v\|_{F}^2.
		\end{align*}
		We finish the proof of the lemma.
	\end{proof}
	\begin{lemma}\label{Rtildeu}
		Let $s>0, 0<T<1$. There exists $\epsilon_0>0$ such that for any $0<\epsilon\leq \epsilon_0$ one has
		$$\left\|\sum_NP_N(P_{>N/8 }\tilde{u}v)\right\|_{\tilde{X}^{s,-1/2+2\epsilon}_{\lambda,T}}\lesssim \lambda^{-1/2}\|\tilde{u}\|_{\tilde{X}^{s,1/2+\epsilon}_{\lambda,T}}\|v\|_{F_T}.$$
	\end{lemma}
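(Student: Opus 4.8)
The plan is to follow the architecture of the proof of Lemma~\ref{s2-3/4}; the only new feature is the frequency regime, since here the Schr\"odinger factor $P_{>N/8}\tilde{u}$ carries a frequency $|\xi_1|>N/8$ comparable to, or larger than, the output frequency $|\xi|\sim N$, whereas in Lemma~\ref{s2-3/4} one had $|\xi_1|\ll|\xi|$. First I would reduce to the untruncated estimate (the $\chi_{[0,T)}$ restriction is treated exactly as in Lemma~\ref{s2-3/4} and \cite{guo2010well}) and dualize: since the dual of $\tilde{X}^{s,-1/2+2\epsilon}_\lambda$ is $\tilde{X}^{-s,1/2-2\epsilon}_\lambda$, it suffices to prove
\begin{align*}
\left|\sum_N\int_{\mathbb{R}^2}(P_{>N/8}\tilde{u})\,v\,\overline{P_Nw}\,dt\,dx\right|\lesssim \lambda^{-1/2}\,\|\tilde{u}\|_{\tilde{X}^{s,1/2+\epsilon}_\lambda}\|v\|_F\|w\|_{\tilde{X}^{-s,1/2-2\epsilon}_\lambda}.
\end{align*}
After a Littlewood--Paley decomposition $\tilde{u}=\sum P_{N_1}\tilde{u}$, $v=\sum P_{N_2}v$, the constraints $\xi_1+\xi_2=\xi$, $|\xi_1|>N/8$, $|\xi|\sim N$ leave only two configurations: (i) $N_1\sim N$ with $N_2\lesssim N$, and (ii) $N_1\sim N_2\gg N$. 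The contributions of $N\lesssim 1$ and of $N_2\lesssim 1$ (where $v$ is measured by $\|P_1v\|_{L^2_xL^\infty_t}$ and one invokes the maximal estimate) are disposed of as in \cite{guo2010well}.

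Next I would insert the modulation projections $Q^{S,\lambda}_{L_1}$, $Q^K_{L_2}$, $Q^{S,\lambda}_L$ and use the resonance identity
\begin{align*}
(\tau+\lambda\xi^2)-(\tau_1+\lambda\xi_1^2)-(\tau_2-\xi_2^3)=\lambda(\xi^2-\xi_1^2)+\xi_2^3=\xi_2\bigl(\lambda(\xi+\xi_1)+\xi_2^2\bigr)=:H,
\end{align*}
so that $L_{\max}:=\max\{L_1,L_2,L\}\gtrsim|H|$. In configuration (ii) one has $|H|\sim N_1^3$, and, crucially, the frequency weight is $N^s/N_1^s=(N/N_1)^s$, a genuine gain because $s>0$; together with the large resonance this makes the high--high--to--low interaction harmless, with no $\lambda$-loss. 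In configuration (i), $|\xi+\xi_1|\sim N$ gives $|H|\sim N_2\max\{\lambda N,\,N_2^2\}$ and $N^s/N_1^s\sim1$.

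Away from the resonant curve, configuration (i) is closed exactly by the case analysis of Lemma~\ref{s2-3/4}. When the KdV modulation dominates ($L_2=L_{\max}$) one pairs $\tilde{u}$ and $w$ by the $\lambda$-scaled Schr\"odinger--Schr\"odinger bilinear estimate (the scaled form of \eqref{sstran}, with transversality $\sim\lambda N_2$), producing the factor $(\lambda N_2)^{-1/2}$; combined with the weight $N_2^{3/4}$ of $v$ coming from $Y^{-3/4,1/2,1}$ and the modulation gain $L_2^{-1/2+2\epsilon}\lesssim|H|^{-1/2+2\epsilon}$ with $|H|\sim N_2^3$, this yields $\lambda^{-1/2}N_2^{-5/4+O(\epsilon)}$, summable in all dyadic parameters once $\epsilon_0$ is small; this is where the loss $\lambda^{-1/2}$ enters. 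When instead $L_1$ or $L$ dominates, one pairs a Schr\"odinger factor with $v$ via the transversal estimates \eqref{kdvtran1}--\eqref{kdvtran2}, with a milder loss.

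The main obstacle is the resonant window of configuration (i), namely $N_2\sim(\lambda N)^{1/2}$, where $H$ vanishes, both the Schr\"odinger--Schr\"odinger and the Schr\"odinger--KdV pairings cease to be transversal (the group velocities $2\lambda\xi_1$ and $3\xi_2^2$ coincide), and the modulation gain degenerates; here the transversal estimates are the wrong tool and would overspend in $\lambda$. Instead I would argue directly with the oscillatory integral, as in the resonant estimate \eqref{linearwave} of Lemma~\ref{frequecylocsbilinear}: on the resonance one has $|\partial_{\xi_2}H|\sim\lambda N$, so the change of variables $\xi_2\mapsto H$ together with Cauchy--Schwarz gives a factor $(\lambda N)^{-1/2}$, which against the weight $N_2^{3/4}\sim(\lambda N)^{3/8}$ of $v$ leaves $(\lambda N)^{-1/8}\le\lambda^{-1/2}$, still summable in $N$. (For $N\lesssim\lambda^{-1}$ this window lies in the already-treated range $N_2\lesssim1$.) Assembling the three regimes and summing the geometric series in $N,N_1,N_2$ and in the modulation variables, all convergent for $0<\epsilon\le\epsilon_0$ small, completes the proof.
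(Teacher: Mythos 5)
Your proposal is correct in outline, but it takes a genuinely different, self-contained route from the paper. The paper's proof is essentially a two-step reduction: the $P_1v$ contribution is handled exactly as you suggest (H\"older plus the maximal-function norm $\|P_1v\|_{L_x^2L_T^\infty}$, with Bernstein), and for $P_{>1}v$ the authors observe that since $N_1\gtrsim N/8$ on the support one has $N^s\lesssim N_1^s$, so the $s$-weight transfers onto $\tilde u$ and the whole lemma reduces to the frequency-localized $s=0$ bilinear estimate $\|\tilde uP_{>1}v\|_{\tilde X^{0,-1/2+2\epsilon}_\lambda}\lesssim\lambda^{-1/2}\|\tilde u\|_{\tilde X^{0,1/2+\epsilon}_\lambda}\|v\|_{Y^{-3/4,1/2,1}}$, which is cited directly from Lemma 3.4(a) of \cite{guo2010well} with no frequency-relation hypothesis at all. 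What you do instead is re-derive the content of that citation by hand: dualization, the resonance identity $H=\xi_2(\lambda(\xi+\xi_1)+\xi_2^2)$, the case analysis in $L_{\max}$, and the resonant window $N_2\sim(\lambda N)^{1/2}$ where $|\partial_{\xi_2}H|\sim\lambda N$ produces the $\lambda^{-1/2}$ loss. This buys transparency (your account makes explicit exactly where the $\lambda$-loss originates, which the paper hides inside the reference) at the cost of length and of redoing a known estimate; the paper's route buys brevity and robustness, since the cited estimate needs no decomposition into your configurations (i) and (ii) in the first place.

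Two small caveats on your sketch, neither fatal. First, in configuration (i) with $L_2=L_{\max}$ you write the modulation gain as $|H|^{-1/2+2\epsilon}$ with $|H|\sim N_2^3$, but by your own formula $|H|\sim N_2\max\{\lambda N,N_2^2\}$ this holds only for $N_2\gtrsim(\lambda N)^{1/2}$; in the complementary nonresonant strip $2\le N_2\ll(\lambda N)^{1/2}$ one has $|H|\sim\lambda NN_2$, and the numerology still closes because that regime forces $\lambda N\gg1$ (otherwise the window sits inside $N_2\lesssim1$, as you note), but the exponent bookkeeping should be redone there rather than quoted from the $N_2^3$ case. Second, your resonant-window argument is phrased via the free-solution oscillatory integral \eqref{linearwave}, which in Lemma \ref{frequecylocsbilinear} is transferred through $U^2$-atoms; here the dual factor $w$ lives in $\tilde X^{-s,1/2-2\epsilon}_\lambda$ with exponent below $1/2$, so the atomic transference is unavailable and the same gain must instead be extracted by the standard Cauchy--Schwarz set-measure computation on modulation-localized pieces (the measure of $\{\xi_1:|H|\lesssim L_{\max}\}$ being $\lesssim L_{\max}/(\lambda N)$), exactly as in \cite{guo2010well}. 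With these adjustments your argument goes through.
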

	\begin{proof}[\textbf{Proof}]
		For low frequency of $v$ we have
		\begin{align*}
			\left\|\sum_NP_N(P_{> N/8}\tilde{u}P_1v)\right\|_{\tilde{X}^{s,-1/2+2\epsilon}_{\lambda,T}}
			&\lesssim \|N^sP_{> N/8}\tilde{u}P_1v\|_{L_T^2 l^2_N L_x^2}\\
			&\lesssim T^{1/2}\|\tilde{u}\|_{L_t^\infty H_x^s}\|P_1 v\|_{L_{x,T}^\infty}\\
			&\lesssim T^{1/2} \|\tilde{u}\|_{\tilde{X}^{s,1/2+\epsilon}_{\lambda,T}}\|v\|_{F_T}.
		\end{align*}
		For other parts we only need to show $$\|P_{N}(P_{> N/8}\tilde{u}P_{>1}v)\|_{\tilde{X}^{0,-1/2+2\epsilon}_\lambda}\lesssim \lambda^{-1/2} \|P_{> N/8}\tilde{u}\|_{\tilde{X}^{0,1/2+\epsilon}_\lambda}\|v\|_{Y^{-3/4,1/2,1}}.$$	
		In fact one has 
		$$\|\tilde{u}P_{>1}v\|_{\tilde{X}^{0,-1/2+2\epsilon}_\lambda}\lesssim \lambda^{-1/2} \|\tilde{u}\|_{\tilde{X}^{0,1/2+\epsilon}_\lambda}\|v\|_{Y^{-3/4,1/2,1}}.$$
		See for example Lemma 3.4 (a) in \cite{guo2010well}.
	\end{proof}
	\begin{lemma}\label{cubicinitial}
		Let $s>1/2, T>0$.  $u,v,w\in C_TH_x^s$. Then
		\begin{align*}
			\|u\bar{v}w\|_{\tilde{X}^{s,0}_{\lambda,T}}\lesssim T^{1/2}\|u\|_{C_TH_x^s}\|v\|_{C_TH_x^s}\|w\|_{C_TH_x^s}.
		\end{align*}
	\end{lemma}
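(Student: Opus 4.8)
The plan is to exploit that the Bourgain index $b=0$ makes the space $\tilde X^{s,0}_\lambda$ collapse to a plain weighted $L^2$. Since $\langle\tau+\lambda\xi^2\rangle^0=1$, Plancherel in both $t$ and $x$ gives $\|f\|_{\tilde X^{s,0}_\lambda}=\|\langle\xi\rangle^s\hat f\|_{L^2_{\tau,\xi}}=\|J^sf\|_{L^2_{t,x}}$, independently of $\lambda$. Passing to the restriction on $[0,T]$ (the infimum over extensions is attained by the zero extension when $b=0$), I would record that $\|u\bar v w\|_{\tilde X^{s,0}_{\lambda,T}}=\|J^s(u\bar v w)\|_{L^2([0,T]\times\mathbb{R})}$. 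Thus the $\lambda$-dependence drops out entirely and the task reduces to a space-time $L^2$ bound on the trilinear product.

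Next, for each fixed $t\in[0,T]$ I would invoke the standard multiplication estimate: since $s>1/2$, $H^s(\mathbb{R})$ is a Banach algebra, and iterating the bilinear bound yields the trilinear one $\|fgh\|_{H^s}\lesssim\|f\|_{H^s}\|g\|_{H^s}\|h\|_{H^s}$. Because complex conjugation leaves the $H^s$ norm invariant, this gives, for a.e.\ $t$,
$$\|J^s(u\bar v w)(t)\|_{L^2_x}=\|u(t)\bar v(t)w(t)\|_{H^s}\lesssim\|u(t)\|_{H^s}\|v(t)\|_{H^s}\|w(t)\|_{H^s}.$$

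Finally I would square this pointwise-in-time inequality, integrate over $t\in[0,T]$, and dominate each factor by its supremum in time, i.e.\ use $L^2_T\hookrightarrow T^{1/2}L^\infty_T$:
$$\|J^s(u\bar v w)\|_{L^2_{t,x}}^2\lesssim\int_0^T\|u(t)\|_{H^s}^2\|v(t)\|_{H^s}^2\|w(t)\|_{H^s}^2\,dt\leq T\,\|u\|_{C_TH^s}^2\|v\|_{C_TH^s}^2\|w\|_{C_TH^s}^2.$$
Taking square roots produces the factor $T^{1/2}$ and the claimed estimate. There is no serious obstacle here; the only points deserving a word of justification are the collapse of the $b=0$ space to $\|J^s\cdot\|_{L^2_{t,x}}$ and the sharp-threshold algebra property of $H^s$ at $s>1/2$, both of which are classical.
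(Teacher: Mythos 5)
Your proposal is correct and coincides with the paper's own argument: both collapse the $b=0$ Bourgain norm to $\|J^s(\cdot)\|_{L^2_{x,T}}$, apply the algebra property of $H^s$ for $s>1/2$ pointwise in time, and use H\"{o}lder in $t$ over $[0,T]$ to extract the factor $T^{1/2}$. Your write-up merely makes explicit two points the paper leaves implicit (the equality of the restriction norm with the slab $L^2$ norm at $b=0$, and the invariance of $H^s$ under conjugation), so no further comment is needed.
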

	\begin{proof}[\textbf{Proof}]
		By algebraic property of $H^s$, $s>1/2$ we have
		\begin{align*}
			\|u\bar{v}w\|_{\tilde{X}^{s,0}_{\lambda,T}}=\|J^s(u\bar{v}w)\|_{L_{x,T}^2}\lesssim T^{1/2} \|J^su\|_{L_T^\infty L_x^2}\|J^sv\|_{L_T^\infty L_x^2}\|J^sw\|_{L_T^\infty L_x^2}.
		\end{align*}
		We conclude the proof.
	\end{proof}
	By Lemmas \ref{boundterm}--\ref{cubicinitial} and standard argument, we obtain
	\begin{prop}
		Let $(u,v) \in \tilde{X}_{\lambda,T}^{1,b}\times F_T$ be a solution of \eqref{scalingeq} with initial data $u_0\in H^{s_1}$, $5/4<s\leq 9/4$. Then $(u,v)\in C([0,T];H^{s}\times H^{-3/4})$. Also the mapping $H^{s}\times H^{-3/4}\longrightarrow C([0,T];H^{s}\times H^{-3/4}),~(u_0,v_0)\mapsto (u,v)$ is continuous.
	\end{prop}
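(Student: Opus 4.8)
The plan is to prove this proposition as a \emph{persistence of regularity} statement: starting from the base solution $(u,v)\in \tilde{X}^{1,b}_{\lambda,T}\times F_T$ supplied by Proposition \ref{wellpose-3/4} (legitimate because $s>5/4>1$ forces $u_0\in H^1$, which is inside the range $0\le s_1<5/4$ of that proposition), I would upgrade the regularity of the first component $u$ from $1$ up to $s$ in a single contraction run for the normal-form variable $\tilde u = u-\lambda B(u,v)$, carried out \emph{directly} at regularity $s$. The point is that the normal form of Lemma \ref{reguimprolow} replaces the bad term $\mathscr{A}_\lambda(uv)$ by a boundary term plus source terms that gain three derivatives, so no iterative climbing is needed: the frozen sources already sit at level $9/4\ge s$.

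Concretely, I would read \eqref{eqfornorm} as a fixed-point equation for $\tilde u$ in $\tilde{X}^{s,\tilde b}_{\lambda,T}$ for a suitable $\tilde b>1/2$, treating the already-known $u$ (at regularity $1$) and $v\in F_T$ as given data, and split its right-hand side into three groups. First, the boundary term $u(0)-\lambda B(u(0),v(0))$ lies in $H^s$: the free data $u_0\in H^s$ is immediate, while $B(u_0,v_0)\in H^s$ by Lemma \ref{boundterm}, whose hypothesis $v_0\in H^{s-3}$ is met exactly by $v_0\in H^{-3/4}$ when $s=9/4$ and with room to spare for $s<9/4$; this is precisely the origin of the ceiling $s\le 9/4$. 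Second, the frozen source terms $\lambda^2 C_1(u,v)$, $D_1(u,v)$, $\lambda C_2(u)$, $\lambda C_3(u,v)$ and $\lambda^2 R(B(u,v),v)$ depend only on the known $(u,v)$ and belong to $\tilde{X}^{9/4,\tilde b}_{\lambda,T}\hookrightarrow \tilde{X}^{s,\tilde b}_{\lambda,T}$ by Lemma \ref{reguimpr}. Third, the genuinely $\tilde u$-dependent remainder consists of $\lambda R(\tilde u,v)$, bounded in $\tilde{X}^{s,-1/2+2\epsilon}_{\lambda,T}$ by Lemma \ref{Rtildeu} with total gain $\lambda\cdot\lambda^{-1/2}=\lambda^{1/2}$ together with a factor $\|v\|_{F_T}$, and the cubic Duhamel term $\lambda^{-1}\int_0^t S_\lambda(t-t')\,|\tilde u+\lambda B(u,v)|^2(\tilde u+\lambda B(u,v))\,dt'$, whose contributions are controlled in $\tilde{X}^{s,\tilde b}_{\lambda,T}$ by Lemma \ref{cubicinitial} and the linear estimate $\|\mathscr{A}_\lambda(F)\|_{\tilde X^{s,\tilde b}_\lambda}\lesssim \|F\|_{\tilde X^{s,\tilde b-1}_\lambda}$, producing a factor $T^{1/2}$.

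Since the third group carries Lipschitz constants that are small in $\lambda^{1/2}\|v\|_{F_T}$ and in $T$, I would fix the ball radius from the first two (bounded) groups and then choose $\|v_0\|_{H^{-3/4}}$, $\lambda$ and $T$ small enough to make the map a contraction on that ball of $\tilde{X}^{s,\tilde b}_{\lambda,T}$, producing a fixed point $\tilde u\in \tilde{X}^{s,\tilde b}_{\lambda,T}$. The crucial consistency step is uniqueness: this high-regularity fixed point must agree with $u-\lambda B(u,v)$, because both solve \eqref{eqfornorm} and coincide already in the weaker space $\tilde{X}^{1,\tilde b}_{\lambda,T}$; hence $u=\tilde u+\lambda B(u,v)$ with $\tilde u\in\tilde{X}^{s,\tilde b}_{\lambda,T}$ and $B(u,v)\in\tilde{X}^{9/4,\tilde b}_{\lambda,T}$, so $u\in\tilde{X}^{s,\tilde b}_{\lambda,T}\hookrightarrow C([0,T];H^s)$. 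Combined with $v\in F_T\hookrightarrow C([0,T];H^{-3/4})$ this gives $(u,v)\in C([0,T];H^s\times H^{-3/4})$, and undoing the scaling transfers the conclusion to \eqref{model}. Continuity of the data-to-solution map follows by running the same contraction with two data sets, all the estimates being multilinear and hence Lipschitz on bounded sets, together with continuity of the change of variables $u\mapsto\tilde u$ and its inverse via Lemmas \ref{boundterm} and \ref{reguimpr}.

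I expect the main obstacle in assembling this argument to be the bookkeeping of the competing powers of $\lambda$ and $T$: the cubic term carries an adverse factor $\lambda^{-1}$, which must be absorbed by taking $T$ small \emph{depending on} $\lambda$, while simultaneously the $\lambda^{1/2}$ gain in $\lambda R(\tilde u,v)$ and the smallness of $\|v_0\|_{H^{-3/4}}$ must keep every $\tilde u$-dependent term genuinely contractive and the frozen sources uniformly bounded. The hard analytic content, namely the three-derivative regularization encoded in Lemma \ref{reguimpr} (in particular the quadri-linear high-modulation estimate \eqref{dualfour}), is already in hand, so the remaining work is the careful but routine balancing of these scaling parameters.
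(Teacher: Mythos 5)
Your proposal is correct and follows essentially the same route as the paper: the paper likewise runs a contraction for \eqref{eqfornorm} in $\tilde{X}^{s,\tilde{b}}_{\lambda,T}$ using Lemmas \ref{boundterm}--\ref{cubicinitial} (boundary term via Lemma \ref{boundterm} with $v_0\in H^{s-3}$ giving the ceiling $s\leq 9/4$, frozen sources at level $9/4$ via Lemma \ref{reguimpr}, and the $\tilde{u}$-dependent terms via Lemmas \ref{Rtildeu} and \ref{cubicinitial}), then recovers $u=\tilde{u}+\lambda B(u,v)\in C([0,T];H^s)$ and gets continuity of the data-to-solution map from the same multilinear estimates. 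Your write-up is in fact more explicit than the paper's terse proof on the consistency/uniqueness step (identifying the high-regularity fixed point with $u-\lambda B(u,v)$ via coincidence in the weaker space) and on the $\lambda$--$T$ bookkeeping, but these are elaborations of the same argument, not a different one.
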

	\begin{proof}[\textbf{Proof}]
		$(u,v) \in \tilde{X}_{\lambda,T}^{1,b}\times F_T$ be the solution of \eqref{scalingeq}. Then by Lemmas \ref{reguimpr}--  \ref{cubicinitial}, we obtain the unique solution $\tilde{u}\in \tilde{X}^{s,\tilde{b}}_{\lambda,T}$ of equation \eqref{eqfornorm} by standard contraction mapping argument. Then by Lemma \ref{boundterm}, $u = \tilde{u}+\lambda B(u,v)\in C([0,T];H^s)$. Since the mapping $(u_0,v_0)\mapsto B(u,v)$ and $(u_0,v_0)\mapsto \tilde{u}$ are continuous from $H^{s}\times H^{-3/4}$ to $C([0,T];H^s\times H^{-3/4})$, we finish the proof.
	\end{proof}
	\begin{rem}
		For the region $s_2>-3/4$, $s_2+2<s_1\leq s_2+3$, the argument is simpler than the case $s_2 = -3/4$. In fact, for $s_2>-3/4$, let $\lambda = 1$ in the former argument and we substitute $\tilde{X}^{1,b}_{\lambda}$, $F$ by $X^{s_2+2,b_1}$, $Y^{s_2,b_2}$ respectively. Let $(u,v)\in X^{s_2+2,b_1}_T\times Y^{s_2,b_2}_T$ be the solution of \eqref{model} with initial data $(u_0,v_0)\in H^{s_1}\times H^{s_2}$, $s_2+2<s_1\leq s_2+3$. Then we have $B(u,v)\in C_TH^{s_2+3}_x$ and
		\begin{align*}
			C_1(u,v),D_1(u,v),C_2(u),C_3(u,v),R(B(u,v),v)\in X^{s_2+3,\tilde{b}}_{T}
		\end{align*}
		for some $\tilde{b}>1/2$. Combining with Lemmas \ref{Rtildeu}--\ref{cubicinitial}, we obtain the unique solution $\tilde{u}\in X^{s_1,\tilde{b}}_{T}$ of equation \eqref{eqfornorm}. Then by the same argument as the case $s_2 = -3/4$, we know that $(u,v)$ is in $C([0,T];H^{s_1}\times H^{s_2})$ and relies on $(u_0,v_0)\in H^{s_1}\times H^{s_2}$ continuously. 
	\end{rem}
	
	\textbf{Acknowledgments:} The first and second authors are supported in part by the NSFC, grant 12171007. The second author is also supported in part by the NSFC, grant 12301116. The authors would like to thank Professors Boling Guo and Baoxiang Wang for their invaluable support and encouragement.

	\phantomsection 
	\bibliographystyle{amsplain}
	\addcontentsline{toc}{section}{References}
	\bibliography{reference}

	\scriptsize\textsc{Yingzhe Ban: The Graduate School of China Academy of Engineering Physics, Beijing, 100088, P.R. China}
	
	\textit{E-mail address}: \textbf{banyingzhe22@gscaep.ac.cn}
	\vspace{20pt}
	
	\scriptsize\textsc{Jie Chen: School of Sciences, Jimei University, Xiamen 361021, P.R. China}
	
	\textit{E-mail address}: \textbf{jiechern@163.com}
	\vspace{20pt}
	
	\scriptsize\textsc{Ying Zhang: The Graduate School of China Academy of Engineering Physics, Beijing, 100088, P.R. China}
	
	\textit{E-mail address}: \textbf{zhangying21@gscaep.ac.cn}
\end{document}